\newcolumntype{L}[1]{>{\raggedright\let\newline\\\arraybackslash\hspace{0pt}}m{#1}}
\newcolumntype{C}[1]{>{\centering\let\newline\\\arraybackslash\hspace{0pt}}m{#1}}
\newcolumntype{R}[1]{>{\raggedleft\let\newline\\\arraybackslash\hspace{0pt}}m{#1}}
\newtheorem{theorem}{Theorem}[subsection]
\newtheorem{definition-theorem}[theorem]{Definition-Theorem}
\newtheorem{lemma}[theorem]{Lemma}
\newtheorem{corollary}[theorem]{Corollary}
\newtheorem{proposition}[theorem]{Proposition}
\theoremstyle{definition}
\newtheorem{definition}[theorem]{Definition}
\newtheorem{example}[theorem]{Example}
\newtheorem{remark}[theorem]{Remark}
\newcommand{\lperp}[1]{\prescript{\perp}{}{#1}}
\newcommand{\rperp}[1]{#1^\perp}
\newcommand{\add}{\mathsf{add}}
\newcommand{\Filt}{\mathsf{Filt}}
\DeclareMathOperator{\Fac}{\mathsf{Fac}}
\newcommand{\wide}{\mathsf{wide}}
\newcommand{\brick}{\mathsf{brick}}
\newcommand{\sbrick}{\mathsf{sbrick}}
\newcommand{\tors}{\mathsf{tors}}
\newcommand{\tex}{\tau\text{-}\mathsf{ex}}
\newcommand{\arc}{\mathrm{arc}}
\newcommand{\btex}{\mathsf{br}\text{-}\tau\text{-}\mathsf{ex}}
\newcommand{\stopc}{\mathsf{stop\text{-}chain}}
\newcommand{\satc}{\mathsf{s\text{-}chain}}
\newcommand{\mods}{\mathsf{mod}}
\newcommand{\Hom}{\mathrm{Hom}}
\newcommand{\Ext}{\mathrm{Ext}}
\newcommand{\End}{\mathrm{End}}
\newcommand{\rk}{\mathrm{rk}}
\newcommand{\trig}{\tau\text{-}\mathsf{rigid}}
\newcommand{\itrig}{\mathsf{ind}\text{-}\tau\text{-}\mathsf{rigid}}
\newcommand{\trigW}[1]{\tau_{#1}\text{-}\mathsf{rigid}}
\newcommand{\itrigW}[1]{\mathsf{ind}\text{-}\tau_{#1}\text{-}\mathsf{rigid}}
\newcommand{\shard}{\mathrm{shard}}
\newcommand{\shardint}{\mathrm{shard}\text{-}\mathrm{int}}
\newcommand{\ppo}{\mathrm{ppo}}
\newcommand{\ppoadm}{\mathrm{ppo}\text{-}\mathrm{adm}}
\newcommand{\word}{\mathrm{word}}
\newcommand{\wrd}{\mathrm{wd}}
\newcommand{\shuff}{\mathrm{shuff}}
\newcommand{\brlab}{\mathrm{br}\textnormal{-}\mathrm{lab}}
\newcommand{\cov}{\mathrm{cov}}
\newcommand{\arclab}{\mathrm{arc}\textnormal{-}\mathrm{lab}}
\newcommand{\T}{\mathcal{T}}
\newcommand{\W}{\mathcal{W}}
\newcommand{\U}{\mathcal{U}}
\renewcommand{\P}{\mathcal{P}}
\newcommand{\Q}{\mathcal{Q}}
\newcommand{\J}{\mathcal J}
\newcommand{\R}{\mathbb{R}}
\newcommand{\N}{\mathbb{N}}
\newcommand{\X}{\mathcal{X}}
\author{Eric J. Hanson}
\address{D\'epartement de Math\'ematiques, LACIM, Universit\'e du Qu\'ebec \`a Montr\'eal and\newline\indent D\'epartement de Math\'ematiques, Universit\'e de Sherbrooke}
\email{eric.james.hanson@usherbrooke.ca}
\subjclass[2020]{16G20, 05E10 (primary), 06A07, 20F55, 52C35 (secondary)}
\title{$\tau$-exceptional sequences and the shard intersection order in type A}
\date{\today}
\begin{document}
\maketitle

\begin{abstract}
	Reading's ``shard intersection order'' on the symmetric group can be realized as the ``lattice of wide subcategories'' of the corresponding preprojective algebra. In this paper, we first use Bancroft's combinatorial model for the shard intersection order to associate a unique shard to each downward cover relation. We then show that, under the correspondence with wide subcategories, this process coincides with Jasso's ``$\tau$-tilting reduction''. In particular, this yields a combinatorial model for this algebras's ``$\tau$-exceptional sequences'' (defined by Buan and Marsh). We formulate this model using the combinatorics of arc diagrams. Finally, we use our model to give a new representation-theoretic proof that the shard intersection order is EL-shellable in type A.
\end{abstract}

\setcounter{tocdepth}{1}
\tableofcontents

\section{Introduction}

This paper comprises a story in two parts: Coxeter combinatorics and the representation theory of finite-dimensional algebras.

We begin our story with Reading's \emph{shard intersection order}. In \cite{readingSIO}, Reading defines a rule for cutting the hyperplanes of a simplicial hyperplane arrangement into pieces called \emph{shards}. He shows that each shard corresponds to a join-irreducible element of the poset of regions, and defines a lattice structure (the shard intersection order) on the set of intersections of shards. The shard intersection order of a Coxeter arrangement in particular has the corresponding noncrossing partition lattice as a sublattice.

Following its introduction, combinatorial models for the shard intersection order in types A, B, and D were established in \cite{bancroft,petersen}. In all three types, these models were used to show that the shard intersection order is EL-shellable, and one of our results is a new proof of this fact in type A. The objects comprising the model in type A are called ``permutation preorders'', and feature heavily throughout this paper. Furthermore, in \cite{reading}, Reading uses the related constructions of ``arcs'' and ``noncrossing arc diagrams'' 
to understand canonical join representations in the weak order. These objects again play a central role in the present paper. In short, we use arcs to define a ``reduction'' process for permutation preorders. Doing so allows us to associate an arc to each cover relation in the shard intersection order, and we classify the sequences of such arcs corresponding to maximal chains as (complete) ``ppo-admissible sequences'' (see Definition~\ref{def:ppo_admissible_sequence}).

While the definition of the shard intersection order utilizes the geometry of a hyperplane arrangement, it is shown in \cite[Section~7-7.4]{reading_lattice} that the order can also be defined from the ``join-irreducible labeling'' of any finite congruence uniform (or more generally semidistributive) lattice. (There are also extensions to infinite lattices, but we restrict to the finite case to simplify the exposition.) The resulting generalization is sometimes known as the ``core label order'', see e.g. \cite{muhle}.

Another class of semidistrbutive lattices which has received considerable attention in the last several years is that of ``lattices of torsion classes'' of finite-dimensional algebra, see e.g. \cite{BCZ,DIRRT,GM,IRRT,IRTT}. In the recent work \cite{enomoto_lattice}, it is shown explicitly that the core-label order of a lattice of torsion classes is isomorphic to the corresponding ``lattice of wide subcategories". A special case of this fact was also implicit in the work \cite{thomas}, in which Thomas explained how shards (of Coxeter arrangements) can be understood in terms of the representation theory of the corresponding ``preprojective algebra''. (Mizuno had previously shown in \cite{mizuno} that the lattice of torsion classes of the preprojective algebra is isomorphic to the weak order of the corresponding Coxeter group.) Thomas's work was extended in the recent paper \cite{mizuno_shards}, which established a notion of shards for any finite-dimensional algebra with a finite lattice of torsion classes. Such algebras are called ``$\tau$-tilting finite" \cite{DIJ}, a property which is defined using the \emph{$\tau$-tilting theory} of Adachi, Iyama, and Reiten \cite{AIR}. This brings us to the second part of our story.

Since its introduction a decade ago, $\tau$-tilting theory
has proven invaluable in generalizing results about representations of quivers to results about arbitrary finite-dimensional algebras. For example, one of the fundamental results in $\tau$-tilting theory is that, over any finite-dimensional algebra, every ``$\tau$-rigid module'' is a direct summand of a ``$\tau$-tilting module'' \cite[Theorem~2.10]{AIR}. Over hereditary algebras, this recovers the classical fact that every ``rigid module'' is a direct summand of a ``tilting module''.

In this paper, we focus on Buan and Marsh's generalization of \emph{exceptional sequences} of quiver representations \cite{CB_exceptional,ringel_exceptional} to \emph{$\tau$-exceptional sequences} over arbitrary finite-dimensional algebras \cite{BM_exceptional}. These sequences are connected to many other areas of mathematics. For example, classical exceptional sequences are known to parameterize certain saturated chains in the corresponding lattice of noncrossing partitions \cite{IS}, while $\tau$-exceptional sequences parameterize certain saturated chains in the corresponding lattice of wide subcategories \cite{BuH,BM_wide}. Similarly, classical ``signed'' exceptional sequences parameterize the morphisms of the ``cluster morphism category'', which in finite type serves as an Eilenberg MacLane space for the corresponding ``picture group'' \cite{IT_signed,ITW}, while ``signed'' $\tau$-exceptional sequences parameterize the morphisms of the ``$\tau$-cluster morphism category" \cite{BuH,BM_wide,borve}, which in special cases serve as Eilenberg MacLane spaces the associated (generalized) ``picture groups'' \cite{HI,HI2}.
$\tau$-exceptional sequences have also been linked to the existence of certain types of ``stratifying systems'' \cite{MT}.

The above results establish many motivations and theoretical foundations for the study of $\tau$-exceptional sequences, but there are very few concrete examples in the literature beyond the hereditary case. (See e.g. \cite{araya,CG,GIMO,IM,IguSen,maresca} for many examples over hereditary algebras.) The aim of this paper is to provide such an example. More precisely, we establish a bijection between the
the $\tau$-exceptional sequences over \emph{preprojective algebras of type $A_n$} (as well as certain quotients of these algebras) and the ppo-admissible sequences of arcs defined in Definition~\ref{def:ppo_admissible_sequence}. This continues a line of successful research aimed at modeling various representation-theoretic constructions over these algebras, such as their $\tau$-tilting modules \cite{IRRT,mizuno}, their bricks and semibricks \cite{asai,BCZ,enomoto}, and their 2-term simple minded collections \cite{BaH,mizuno2}.

\subsection{Organization and main results}\label{sec:organization}

The contents of this paper are as follows. Note also that we provide an index of notation at the end of the paper for the convenience of the reader.

In Section~\ref{sec:combinatorics}, we discuss the main combinatorial objects we use throughout the paper. We first consider \emph{arcs} (Definition~\ref{def:wd}) and two special collections thereof: \emph{clockwise-ordered arc diagrams} (Definition~\ref{def:clockwise}) and \emph{noncrossing arc diagrams} (Definition~\ref{def:noncrossing}). We then recall the notion of a \emph{permutation preorder} (Definition~\ref{def:perm_preorder}) from \cite{bancroft}, and explain the partial order which allows them to realize the ``shard intersection order'' of the symmetric group $\mathfrak{S}_{n+1}$ (Definition~\ref{def:perm_preorder2}). We also recall results from \cite{bancroft,reading} regarding how permutations, noncrossing arc diagrams, and permutation preorders are related (Proposition~\ref{prop:bijection_ppo}). In Section~\ref{sec:shards}, we provide definitions of \emph{shards}, \emph{shard intersections}, and the \emph{shard intersection order} which are suitable for use in this paper (Definition~\ref{def:shards} and Definition-Theorem~\ref{defthm:sio}). We then recall known results about cover relations in the shard intersection order (Proposition~\ref{prop:covers_ppo}). Finally, In Section~\ref{sec:recursive}, we study a recursive property of permutation preorders. More precisely, for a fixed permutation preorder $\P$, we show that the permutation preorders below $\P$ in the shard intersection order can be described by choosing a new permutation preorder on every partition element of $\P$ (Proposition~\ref{prop:perm_preorder_recursive}).

In Section~\ref{sec:admissibility}, we first define what it means for an arc to be \emph{ppo-admissible} with respect to a fixed permutation preorder $\P$ (Definition~\ref{def:ppo_admissible}). The definition is designed to yield the following.
 
 \begin{proposition}[Proposition~\ref{prop:ppo_admissible_shard}]\label{prop:mainA}
	Let $\P$ be a permutation preorder. Then the arcs which are $\P$-ppo-admissible are precisely those whose shards contain the shard intersection associated to $\P$.
\end{proposition}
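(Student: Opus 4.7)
The plan is to prove both directions by translating the geometric condition ``the shard $\Sigma_\alpha$ contains the shard intersection $\psi(\P)$'' into a combinatorial statement about how the arc $\alpha$ interacts with the blocks and preorder structure of $\P$, and then matching that statement term-by-term with the definition of $\P$-ppo-admissibility (Definition~\ref{def:ppo_admissible}). Throughout, I would work through the bijection between arcs and shards implicit in Definition~\ref{def:shards} and the bijection between permutation preorders and shard intersections recorded in Proposition~\ref{prop:bijection_ppo}.

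For one direction, I would fix an arc $\alpha$ and unfold what $\Sigma_\alpha \supseteq \psi(\P)$ requires. In type $A_n$, each shard $\Sigma_\alpha$ is a codimension-one cell of a hyperplane $\{x_i = x_j\}$ cut out by additional inequalities determined by the upper and lower points of $\alpha$, while $\psi(\P)$ is an intersection of such shards, one for each join-irreducible below $\P$ (cf.\ Proposition~\ref{prop:covers_ppo}). Containment $\psi(\P) \subseteq \Sigma_\alpha$ is then equivalent to a collection of explicit inequalities comparing the endpoints and upper/lower points of $\alpha$ with the blocks of $\P$'s underlying partition and the preorder among them. I would verify that these inequalities are precisely the combinatorial conditions defining ppo-admissibility. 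The converse is obtained by reading the same chain of equivalences in reverse: starting from $\P$-ppo-admissibility of $\alpha$, the same translation produces the inequalities that witness $\psi(\P) \subseteq \Sigma_\alpha$.

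The main obstacle will be the bookkeeping in this translation. The recursive description of permutation preorders in Proposition~\ref{prop:perm_preorder_recursive} should be the key tool here: it organizes $\P$ block-by-block in exactly the way the shard condition decomposes, so it lets me test ppo-admissibility one block at a time rather than globally. With that structural viewpoint in hand, the verification reduces to a finite case analysis based on where the endpoints and passage points of $\alpha$ lie relative to the blocks of $\P$. The delicate cases will likely be those in which the arc's endpoints lie inside a single block or its upper/lower sequence skips over several blocks; care will be needed to ensure that each such configuration yields matching conditions on both sides of the equivalence, and that no spurious inequalities sneak in from the shard intersection description.
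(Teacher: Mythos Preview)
Your overall strategy---translating the containment $\Psi(\P) \subseteq \Sigma(\gamma)$ into coordinate inequalities and matching these against Definition~\ref{def:ppo_admissible}---is exactly what the paper does. However, the paper's execution is far more direct than you anticipate: neither the recursive decomposition (Proposition~\ref{prop:perm_preorder_recursive}) nor any real case analysis is needed. Both $\Sigma(\gamma)$ and $\Psi(\P)$ are cut out by explicit lists of linear inequalities in Definition~\ref{def:shards}, and the containment can be checked by inspection in a few lines. For the forward direction, if $\gamma$ is $\P$-ppo-admissible with endpoint block $S_\gamma$, then any $x \in \Psi(\P)$ has $x_{\ell(\gamma)} = x_{r(\gamma)}$ (both indices lie in $S_\gamma$), and the remaining defining inequalities of $\Sigma(\gamma)$ follow immediately from conditions (2) and (3) of Definition~\ref{def:ppo_admissible} together with the preorder relations encoded by $\Psi(\P)$. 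Conversely, the equality $x_{\ell(\gamma)} = x_{r(\gamma)}$ holding on all of $\Psi(\P)$ forces $\ell(\gamma)$ and $r(\gamma)$ into the same block, and the shard inequalities at intermediate nodes then force conditions (2) and (3). The ``delicate cases'' you worry about do not arise, because every intermediate node automatically lies in the closed support of $S_\gamma$ and is therefore comparable to it under $\preceq_\P$.

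One minor correction: the bijection between permutation preorders and shard intersections is Proposition~\ref{prop:shards}, not Proposition~\ref{prop:bijection_ppo} (the latter relates permutations, noncrossing arc diagrams, and permutation preorders, with no reference to shards).
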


\noindent We then introduce a process for moving down a cover relation in the shard intersection order called \emph{ppo-reduction} (Definition~\ref{def:ppo_reduction}). We discuss how this relates to the recursive property of permutation preorders, then prove our first main result.

\begin{theorem}[Theorem~\ref{thm:edge-labeling}]\label{thm:mainB}
	Let $\P < \Q$ be a cover relation in the shard intersection order (realized as an order on the set of permutation preorders). Then there exists a unique arc $\gamma$ such that $\P$ is the ppo-reduction of $\Q$ at $\gamma$. Moreover, the shard intersection corresponding to $\Q$ is the intersection of that corresponding to $\P$ with the shard corresponding to $\gamma$.
\end{theorem}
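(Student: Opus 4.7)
The plan is to combine the recursive decomposition of permutation preorders in Proposition~\ref{prop:perm_preorder_recursive} with the shard-theoretic characterization of ppo-admissibility in Proposition~\ref{prop:mainA} to simultaneously construct $\gamma$ and force its uniqueness. First I would reduce to a local problem. By Proposition~\ref{prop:perm_preorder_recursive}, the principal order ideal below $\Q$ in the shard intersection order decomposes as a product, indexed by the partition elements of $\Q$, of smaller shard intersection orders. Consequently, a cover relation $\P \lessdot \Q$ is witnessed by a cover relation in exactly one factor, so there is a unique partition element $B$ of $\Q$ on which $\P$ and $\Q$ differ, and their restrictions to $B$ form a cover in the shard intersection order on the symmetric group of $B$.

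Next I would produce $\gamma$ by invoking Proposition~\ref{prop:covers_ppo}, applied to the local cover $\P|_B \lessdot \Q|_B$: this supplies an arc whose endpoints lie in $B$, and I would take this to be the candidate $\gamma$. To verify that $\P$ is the ppo-reduction of $\Q$ at $\gamma$, I would unwind Definition~\ref{def:ppo_reduction} and check that the prescribed local modification on $B$ agrees with the effect of ppo-reduction at $\gamma$, while the absence of $\gamma$-endpoints outside $B$ guarantees that the remaining partition elements of $\Q$ are unchanged. The main obstacle here is precisely this bookkeeping: the definition of ppo-reduction must be shown to align, block-by-block, with the product decomposition, and one must check the edge cases involved when a single arc could potentially interact with several blocks.

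For uniqueness, suppose a second arc $\gamma'$ also satisfies that $\P$ is the ppo-reduction of $\Q$ at $\gamma'$. Since ppo-reduction can only alter the preorder within blocks touched by the arc's endpoints, the analysis of the previous step forces the endpoints of $\gamma'$ to lie in $B$ as well. Both $\gamma$ and $\gamma'$ are then $\Q$-ppo-admissible, so by Proposition~\ref{prop:mainA} the shards of both contain the shard intersection of $\Q$. As both produce the same local cover $\P|_B \lessdot \Q|_B$, the labeling of local covers by arcs recalled in Proposition~\ref{prop:covers_ppo} forces $\gamma = \gamma'$.

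Finally, for the identification of shard intersections, one containment is essentially free: $\P < \Q$ gives that the shard intersection of $\Q$ lies in that of $\P$, and $\gamma$ being $\Q$-ppo-admissible gives by Proposition~\ref{prop:mainA} that the shard intersection of $\Q$ lies in the shard of $\gamma$. For the reverse containment, I would use a rank/codimension count: the rank in the shard intersection order equals the codimension of the corresponding shard intersection, so $\P \lessdot \Q$ corresponds to a codimension jump of exactly one; since the shard of $\gamma$ is codimension one inside the intersection of hyperplanes cutting out the shard intersection of $\P$, the intersection must coincide with the shard intersection of $\Q$. The subtle point to verify is that the intersection is transverse, i.e.\ that the shard of $\gamma$ is not already contained in the shard intersection of $\P$; this is precisely the content of $\gamma$ being $\Q$-ppo-admissible but not $\P$-ppo-admissible, again via Proposition~\ref{prop:mainA}.
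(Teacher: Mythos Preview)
Your overall architecture---reduce to a single block $B$ of $\Q$ via Proposition~\ref{prop:perm_preorder_recursive}, work locally, then lift back---is close to what the paper does, at least for the uniqueness part. But there is a genuine gap in the core of the argument: you repeatedly invoke Proposition~\ref{prop:covers_ppo} as though it ``supplies an arc'' or records a ``labeling of local covers by arcs''. It does neither. Proposition~\ref{prop:covers_ppo} only says that $\P \lessdot \Q$ is equivalent to $|\P| = |\Q|+1$, equivalently to a unit codimension drop. No arc appears anywhere in that statement. So after your reduction you are left with a cover $\P|_B \lessdot \{[|B|-1]\}$ and you still owe both an explicit construction of $\gamma$ witnessing it as a ppo-reduction \emph{and} a proof that this $\gamma$ is unique. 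The paper handles existence by writing down the arc directly from the two partition elements $T_1, T_2 \in \P$ that merge to form a block of $\Q$ (taking $\ell(\gamma)=\min T_2$, $r(\gamma)=\max T_1$, and determining each intermediate letter from the $\preceq_\Q$-relation), and handles uniqueness by a direct case analysis in the base case $\Q=\{[n]\}$, distinguishing whether $U_{\{[n]\}}(\gamma)$ and $L_{\{[n]\}}(\gamma)$ can be swapped. Your proposal supplies no substitute for either step.

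The codimension argument for the ``moreover'' is also not quite complete. You correctly observe that $\Psi(\Q) \subseteq \Psi(\P)\cap\Sigma(\gamma)$ and that $\gamma$ not being $\P$-ppo-admissible prevents $\Sigma(\gamma)\supseteq\Psi(\P)$. But shards and shard intersections are polyhedral cones, not linear subspaces, so matching dimensions plus containment does not immediately give equality; one must also check that $\Psi(\P)\cap\Sigma(\gamma)$ is itself a shard intersection (or otherwise that it has the same linear span as $\Psi(\Q)$). The paper sidesteps this by direct computation from the explicit defining inequalities once the arc is in hand.
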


In Section~\ref{ppo_admissible3}, we introduce \emph{ppo-admissible sequences} (Definition~\ref{def:ppo_admissible_sequence}), which are sequences of arcs corresponding to successive applications of ppo-reduction. We then show that every ppo-admissible sequence is also a clockwise-ordered arc diagram (Corollary~\ref{cor:ppo_adm_clockwise_ordered}).

Section~\ref{sec:background} is where the focus of the paper shifts towards the representation theory of finite-dimensional algebras. We give the definition of \emph{$\tau$-exceptional sequences} (Definition~\ref{def:tau_exceptional}) and explain their relationship with wide subcategories (Theorem~\ref{thm:sat_top}) and bricks (Section~\ref{sec:bricks}). In Section~\ref{sec:preproj}, we study the bricks over the preprojective algebra $\Pi(A_n)$ and its ``gentle quotient'' $RA_n$ (see Definition~\ref{def:preproj}). We first recall the bijection between bricks over these algebras and arcs on $n+1$ nodes from \cite{BCZ,mizuno2} (Proposition~\ref{prop:arcBricks}). We then explain how this bijection can be used to understand when certain Hom- and Ext-spaces between bricks vanish (Proposition~\ref{prop:hom_ext}).

In Section~\ref{sec:mainResults}, we explicitly relate the notions of ppo-reduction and ppo-admissible sequences from Section~\ref{sec:admissibility} to the $\tau$-tilting reduction and (brick-)$\tau$-exceptional sequences of the algebras $RA_n$ and $\Pi(A_n)$. First, in Section~\ref{sec:wide}, we explain how wide subcategories over the algebras $RA_n$ and $\Pi(A_n)$ are related to noncrossing arc diagrams and permutation preorders by combining new and old bijections into a commutative diagram (Theorem~\ref{thm:wide}). Then, in Sections~\ref{sec:tau_ex_RAn} and~\ref{sec:tau_exceptional_preproj}, we prove the second main result of this paper.

\begin{theorem}[Theorem~\ref{thm:tau_exceptional_RAn} and Corollary~\ref{cor:brick_tau}]\label{thm:mainC}
    The bijection between the set of bricks over $RA_n$ (resp. over $\Pi(A_n)$) and arcs on $n+1$ nodes induces a bijection between the set of $\tau$-exceptional sequences over $RA_n$ (resp. brick-$\tau$-exceptional sequences over $\Pi(A_n)$) and the set of ppo-admissible sequences of arcs.
\end{theorem}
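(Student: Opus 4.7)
The plan is to induct on the length of the sequence, exploiting the recursive structure that both sides enjoy. By Definition~\ref{def:tau_exceptional}, a $\tau$-exceptional sequence $(M_1, \ldots, M_k)$ over $RA_n$ consists of a brick $M_1$ in a wide subcategory $\W$ together with a $\tau$-exceptional sequence $(M_2, \ldots, M_k)$ in the $\tau$-perpendicular category of $M_1$ inside $\W$. Likewise, by Definition~\ref{def:ppo_admissible_sequence}, a ppo-admissible sequence $(\gamma_1, \ldots, \gamma_k)$ with respect to a permutation preorder $\P$ consists of a $\P$-ppo-admissible arc $\gamma_1$ and a ppo-admissible sequence $(\gamma_2, \ldots, \gamma_k)$ with respect to the ppo-reduction of $\P$ at $\gamma_1$. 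The goal is to match these two recursions step by step.

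First I would invoke Theorem~\ref{thm:wide} to identify wide subcategories of $RA_n$ with permutation preorders in a way that intertwines the lattice of wide subcategories with the shard intersection order. Under this identification, the brick/arc bijection of Proposition~\ref{prop:arcBricks} restricts, by Proposition~\ref{prop:mainA}, to a bijection between the bricks lying in the wide subcategory $\W_\P$ attached to $\P$ and the $\P$-ppo-admissible arcs. This handles the matching of the first terms $M_1 \leftrightarrow \gamma_1$.

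The heart of the induction is the verification that $\tau$-perpendicular reduction matches ppo-reduction at the level of a single cover. By Theorem~\ref{thm:mainB}, ppo-reducing $\P$ at $\gamma$ yields the unique downward cover $\P' < \P$ whose shard intersection is the intersection of that of $\P$ with the shard of $\gamma$; via Theorem~\ref{thm:wide} (and the general identification in \cite{enomoto_lattice} of the core-label order of $\tors$ with the wide-subcategory lattice), this is the unique downward cover $\W' < \W_\P$ whose brick label is the brick $M$ corresponding to $\gamma$. The remaining ingredient is the observation that Jasso's $\tau$-tilting reduction identifies the $\tau$-perpendicular category of $M$ inside $\W_\P$ with precisely this $\W'$, at which point the induction closes and Theorem~\ref{thm:tau_exceptional_RAn} follows.

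The statement for brick-$\tau$-exceptional sequences over $\Pi(A_n)$ then drops out as Corollary~\ref{cor:brick_tau}: the bricks, the wide subcategories, and the ppo-reduction data for $\Pi(A_n)$ are governed by the same combinatorics as for $RA_n$, and a brick-$\tau$-exceptional sequence by definition records exactly this data, so the bijection for $RA_n$ transfers verbatim. The main obstacle will be the identification of the $\tau$-perpendicular category of a single brick with the wide subcategory produced by the corresponding ppo-reduction; this should follow by combining Proposition~\ref{prop:perm_preorder_recursive} (which allows $\W_\P$ to be regarded as equivalent to the module category of a product of smaller algebras of the same form) with Proposition~\ref{prop:hom_ext} (which ensures that Hom- and Ext-vanishing between bricks is read off faithfully from arc diagrams, so that the inductive hypothesis can be applied inside $\W_\P$).
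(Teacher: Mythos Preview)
Your outline is essentially the paper's approach: both induct on the length of the sequence and reduce to the key identification $\J_\W(X) = \eta(\J_{\eta^{-1}(\W)}(\sigma(X)))$ for a brick $X$ in a wide subcategory $\W$ (this is Lemma~\ref{lem:tau_perp}). You correctly flag this as ``the main obstacle.''

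One caution: your middle paragraph tries to establish this identification by matching brick labels of cover relations, appealing to Theorem~\ref{thm:mainB} and \cite{enomoto_lattice}. As written this is circular. The brick label of a cover $\W' \subset \W$ in $\wide(RA_n)$ is \emph{defined} as the unique indecomposable $M$ with $\J_\W(M) = \W'$ (cf.\ Theorem~\ref{thm:sat_top}), so the statement ``$\J_\W(M) \subset \W$ has brick label $M$'' is tautological and does not by itself pin down which cover $\J_\W(M)$ is. To make your argument work you would need an independent result equating this $\tau$-tilting label with the shard label coming from the core-label order; such a statement is plausible but not supplied by the references you cite. The paper instead proves Lemma~\ref{lem:tau_perp} directly: it shows that any brick $Y \in \W$ with $Y \notin \J_\W(X)$ has $\sigma(Y)$ not $\J_\P(\sigma(X))$-ppo-admissible (a case analysis via Corollary~\ref{cor:clockwise_reduction}, Corollary~\ref{cor:ppo_adm_clockwise_ordered}, and Proposition~\ref{prop:hom_ext}), then concludes equality by comparing ranks. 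Your final paragraph points in this direction.

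For $\Pi(A_n)$ the paper takes a slightly cleaner route than ``the combinatorics are the same'': it invokes an abstract result (Proposition~\ref{prop:BaH2}, from \cite{BaH2}) that any lattice isomorphism $\tors\Lambda \cong \tors\Lambda'$ induces a bijection on brick-$\tau$-exceptional sequences, applied to the known isomorphism $\tors(\Pi(A_n)) \cong \tors(RA_n)$.
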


\noindent 
Note that it is implicit in Theorem~\ref{thm:mainC} is that the brick-$\tau$-exceptional sequences of $RA_n$ and over $\Pi(A_n)$ coincide (Corollary~\ref{cor:brick_tau}). The majority of the work towards proving Theorem~\ref{thm:mainC} is done in Lemma~\ref{lem:tau_perp}, where we show that $\tau$-perpendicular categories over $RA_n$ and ppo-reduction correspond to one another under this bijection.
We also conclude Section~\ref{sec:tau_exceptional_preproj} by explaining how one can use Asai's combinatorial realization of the ``brick-$\tau$-rigid correspondence'' for $\Pi(A_n)$ \cite{asai} to give a model for the $\tau$-exceptional sequences over $\Pi(A_n)$ (Corollary~\ref{cor:tau_An}).

Finally, in Section~\ref{sec:EL}, we use Theorem~\ref{thm:mainC} to give a new proof of the fact (shown in \cite{bancroft}) that the shard intersection order on the symmetric group, or equivalently the lattice of wide subcategories of either $RA_n$ or $\Pi(A_n)$, is EL-shellable (Theorem~\ref{thm:EL_labeling}).

\subsection{Acknowledgements}

The author is thankful to Emily Barnard, Karin Baur, Aslak Bakke Buan, Colin Defant, Benjamin Dequ{\^e}ne, Hern\'an Ibarra-Mejia, Ray Maresca, Hugh Thomas, Nathan Williams, and Xinrui You for many insightful discussions related to this project. The author was supported by the Canada Research Chairs program (CRC-2021-00120) and NSERC Discovery Grants (RGPIN-2022-03960 and RGPIN/04465-2019).


\section{Arcs, permutation preorders, and shard intersections}\label{sec:combinatorics}

In this section, we recall two types of combinatorial data which can be associated to permutations. We first consider \emph{arcs} (and more generally \emph{noncrossing arcs diagrams}), which were introduced in \cite{reading} to understand canonical join representations in the weak order. We also recall the definition of \emph{clockwise-ordered arc diagrams} from \cite{HY}. We then recall the definition of a \emph{permutation preorder}. These were introduced by Bancroft in \cite{bancroft} as a combinatorial model for Reading's \emph{shard intersection order} on the symmetric group $\mathfrak{S}_{n+1}$ \cite{readingSIO}. (See also \cite{petersen} where this formulation is reproved and extended to Coxeter groups of type B and D.) We explain this connection in Section~\ref{sec:shards}. Finally, in Section~\ref{sec:recursive}, we consider a recursive property of permutation preorders.

\subsection{Arcs and preorders associated to permutations}\label{sec:perm_preorder}

Fix $n \in \mathbb{N}$ and denote $[n] = \{0,1,\ldots,n\}$.

\begin{definition}\label{def:wd}\
    \begin{enumerate}
    	\item Let $\mathrm{word}(\text{ueo})$ be the set of nonempty words on the letters u, e, and o. We treat $\{\text{u}, \text{e}, \text{o}\}$ as a totally ordered set with $\text{u} < \text{e} < \text{o}$. An \emph{arc} on $n+1$ nodes is an element $\gamma = \left(\ell(\gamma),s_{\ell(\gamma)+1}s_{\ell(\gamma)+2}\cdots s_{r(\gamma)}\right) \in [n-1] \times \mathrm{word}(\text{ueo})$ which satisfies the following:
    \begin{enumerate}
    	\item $s_{r(\gamma)} = \text{e}$ and $s_{i} \neq \text{e}$ for $\ell(\gamma) < i < r(\gamma)$.
	\item $r(\gamma) \leq n$.
    \end{enumerate}
    We denote by $\arc(n)$ the set of arcs on $n+1$ nodes.
    \item Let $\gamma = \left(\ell(\gamma),s_{\ell(\gamma)+1}s_{\ell(\gamma)+2}\cdots s_{r(\gamma)}\right)\in \arc(n)$. We say that $\ell(\gamma)$ and $r(\gamma)$ are the \emph{(left and right) endpoints} of $\gamma$. For $\ell(\gamma) < i < r(\gamma)$, we say that $\gamma$ passes \emph{under} $i$ if $s_i = \text{u}$ and we say that $\gamma$ passes \emph{over} $i$ if $s_i = \text{o}$.
        \item Consider $[n]$ as a set of $n+1$ nodes in $\mathbb{R}^2$, arranged in increasing order at the points $(i,0)$. A \emph{realization} of an arc $\gamma = \left(\ell(\gamma),s_{\ell(\gamma)+1}s_{\ell(\gamma)+2}\cdots s_{r(\gamma)}\right) \in \arc(n)$ is a continuous function $[\ell(\gamma),r(\gamma)] \rightarrow \mathbb{R}^2$ (which we also denote by $\gamma$) which satisfies the following:
    \begin{enumerate}	
    	\item $\gamma(\ell(\gamma)) = 0 = \gamma(r(\gamma))$.
	\item For all $i \in (\ell(\gamma),r(\gamma))\cap \mathbb{Z}$: If $s_i = \text{u}$ then $\gamma(i) < 0$ and if $s_i = \text{o}$ then $\gamma(i) > 0$.
    \end{enumerate}
     \end{enumerate}
\end{definition}

For example, the arcs in Figure~\ref{fig:arcs} are given by $\gamma_1 = (2,\text{ue})$, $\gamma_2 = (4,\text{e})$, and $\gamma_3 = (0,\text{uooue})$.

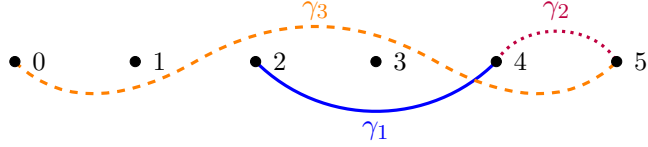
\begin{figure}
\begin{tikzpicture}[scale=0.8]

\draw[blue,very thick,smooth] (2,0) [out = -45,in = -135] to (6,0);

\draw[orange,dashed,very thick,smooth] (-2,0) [out = -45,in = -150] to (1,0) [out = 30,in = 150] to (5,0) [out = -30,in = -135] to (8,0);

\draw[purple,very thick,dotted] (6,0) [out = 60,in = 120] to (8,0);

\node at (4,-1.15) {{\color{blue}$\gamma_1$}};
\node at (3,0.85) {{\color{orange}$\gamma_3$}};
\node at (7,0.85) {{\color{purple}$\gamma_2$}};

\node at (-1.6,0) {{\small 0}};
\node at (0.4,0) {{\small 1}};
\node at (2.4,0) {{\small 2}};
\node at (4.4,0) {{\small 3}};
\node at (6.4,0) {{\small 4}};
\node at (8.4,0) {{\small 5}};

\filldraw  (0,0) circle (2.4pt)
 (-2,0) circle (2.4pt)
 (2,0) circle (2.4pt)
 (4,0) circle (2.4pt) 
 (6,0) circle (2.4pt)
 (8,0) circle (2.4pt)
 ;
\end{tikzpicture}
\caption{Realizations of arcs on 6 nodes.}\label{fig:arcs}
\end{figure}

\begin{remark}\label{def:wd}
	In several places in the literature (e.g. \cite{barnard,BCZ,BaH,BR,enomoto,mizuno2,reading}), one first defines an arc as a path in $\mathbb{R}^2$, then establishes an equivalence relation based on whether this arc passes above or below certain nodes. We have opted to instead define an arc in terms of its combinatorial data, as has been done in e.g. \cite{PPR}. Our choice of how to encode this data is made to simplify the definition of the total order used to prove the existence of an EL-labeling in Section~\ref{sec:EL}.
\end{remark}

\begin{definition}\label{def:crossing}
	Let $\gamma, \rho \in \arc(n)$.
	\begin{enumerate}
		\item We say that $\gamma$ and $\rho$ have a \emph{shared (left or right) endpoint} if $\ell(\gamma) = \ell(\rho)$ or $r(\gamma) = r(\rho)$.
		\item We say that $\gamma$ and $\rho$ have a \emph{contested endpoint} if $\ell(\gamma) = r(\rho)$ or $\ell(\rho) = r(\gamma)$.
		\item We say that $\gamma$ and $\rho$ have a \emph{nontrivial crossing} if for any two realizations of $\gamma$ and $\rho$ there exists $x \in (\ell(\gamma),r(\gamma)) \cap (\ell(\rho),r(\rho))$ such that $\gamma(x) = \rho(x)$.
	\end{enumerate}
\end{definition}

\begin{remark}\label{rem:crossing}
	Suppose that $\gamma = \left(\ell(\gamma),s_{\ell(\gamma)+1}\cdots s_{r(\gamma)}\right), \rho = \left(\ell(\rho),t_{\ell(\rho)+1} \cdots t_{\rho}\right) \in \arc(n)$ cross nontrivially. Then there must exist integers $i < j$ in $[\ell(\gamma),r(\gamma)] \cap [\ell(\rho),r(\rho)] \cap \mathbb{Z}$ such that $\gamma$ must be ``above'' $\rho$ and $i$ and ``below'' $\rho$ at $j$, or vice versa. More precisely, $i$ and $j$ satisfy the following.
	\begin{enumerate}
		\item $i$ is not a shared left endpoint.
		\item $j$ is not a shared right endpoint.
		\item One of the following holds:
		\begin{enumerate}
			\item (i) $s_j \leq \text{e} \leq t_j$, (ii) $\ell(\gamma) = i$ or $s_i = \text{o}$, and (iii) $\ell(\rho) = i$ or $t_i = \text{u}$.
			\item (i) $t_j \leq \text{e} \leq s_j$, (ii) $\ell(\rho) = i$ or $t_i = \text{o}$, and (iii) $\ell(\gamma) = i$ or $s_i = \text{u}$.
		\end{enumerate}
	\end{enumerate}
	See also \cite[Lemma~4.8]{HY} for a proof of a similar statement which implies this one.
	\end{remark}
	
For example, consider the arcs in Figure~\ref{fig:arcs}. Then $\gamma_1$ and $\gamma_2$ have a contested endpoint, while $\gamma_2$ and $\gamma_3$ have a shared right endpoint. There is a nontrivial crossing between $\gamma_1$ and $\gamma_3$, which can be seen from taking $\gamma = \gamma_3$, $\rho = \gamma_1$, $i \in \{2,3\}$, and $j = 4$ in the setup of Remark~\ref{rem:crossing}.

For use in relating crossings between arcs to morphisms between bricks over some algebra (Section~\ref{sec:preproj}), we also wish to take note of the fact that $\gamma_3$ is drawn above $\gamma_1$ immediately to the left of their intersection point. We do so with the following definition.

\begin{definition}\label{def:hom_direction}
	In the setup of Remark~\ref{rem:crossing}, suppose there exist $i$ and $j$ which satisfy (1), (2), and (3a). Then we say there is a nontrivial crossing \emph{directed} from $\gamma$ to $\rho$.
\end{definition}

So, in Figure~\ref{fig:arcs}, there is a nontrivial crossing directed from $\gamma_3$ to $\gamma_1$, but not vice versa.

We similarly want to record the fact that $\gamma_2$ is drawn above $\gamma_3$ at their shared right endpoint. Again for reasons seen later, the convention is reversed in the case of a shared left endpoint; that is, we use the following definition.

\begin{definition}\label{def:clockwise}\
    \begin{enumerate}
    	\item Let $\gamma = \left(\ell(\gamma),s_{\ell(\gamma)+1} \cdots s_{r(\gamma)}\right), \rho = \left(\ell(\rho),t_{\ell(\rho)+1}\cdots t_{r(\rho)}\right) \in \arc(n)$.
    \begin{enumerate}
        \item Suppose that $\ell(\gamma) = \ell(\rho)$. We say that $\rho$ is \emph{clockwise} of $\gamma$ if (i) $\gamma$ and $\rho$ do not have a nontrivial crossing, and (ii) either $r(\gamma) < r(\rho)$ and $t_{r(\gamma)} = \text{o}$ or $r(\gamma) > r(\rho)$ and $s_{r(\rho)} = \text{u}$.
        \item Suppose that $r(\gamma) = r(\rho)$. We say that $\rho$ is \emph{clockwise} of $\gamma$ if (i) $\gamma$ and $\rho$ do not have a nontrivial crossing, and (ii) either $\ell(\gamma) < \ell(\rho)$ and $s_{\ell(\rho)} = \text{u}$ or $\ell(\gamma) > \ell(\rho)$ and $t_{\ell(\gamma)} = \text{o}$.
    \end{enumerate}
    	\item  Let $\omega 
 = (\gamma_k,\ldots,\gamma_1)$ be an ordered set of arcs in $\arc(n)$. We say that $\omega$ is a \emph{clockwise-ordered arc diagram} (of length $k$) if for all $1 \leq i < j \leq k$, one of:
    \begin{enumerate}
        \item $\ell(\gamma_i) = \ell(\gamma_j)$ and $\gamma_j$ is clockwise of $\gamma_i$,
        \item $r(\gamma_i) = r(\gamma_j)$ and $\gamma_j$ is clockwise of $\gamma_j$, or
        \item $\gamma_i$ and $\gamma_j$ do not have a shared endpoint, a contested endpoint, or a nontrivial crossing.
    \end{enumerate}
    We denote by $\arc_{cw}(n,k)$ the set of clockwise-ordered arc diagrams on $n+1$ of length $k$ and $\arc_{cw}(n) := \bigcup_{k \in \N}\arc_{cw}(n,k)$.
    \end{enumerate}
\end{definition}

So, in Figure~\ref{fig:arcs} the arc $\gamma_2$ is clockwise of the arc $\gamma_3$. An example of a clockwise-ordered arc diagram is shown in Figure~\ref{fig:clockwise}.

\begin{figure}
\begin{tikzpicture}[scale=0.8]

\draw[blue,very thick,smooth]
    (-2,0) [out = 45,in = 180] to (1,1.5) [out = 0,in = 135] to (4,0);

\draw[purple,very thick, smooth,dotted]
    (-2,0) [out = 30,in = 180] to (0,1) [out = 0,in = 150] to (2,0);

\draw[orange,very thick,smooth,dashed]
    (0,0) to (2,0);

\draw[purple,very thick,smooth,dotted]
    (0,0) [out = -45,in = 180] to (2,-1) [out = 0,in = -135] to (4,0);

\node [blue] at (2.5,0.75) {$\gamma_3$};
\node [purple] at (0,0.75) {$\gamma_4$};
\node [orange] at (1,-0.25) {$\gamma_1$};
\node [purple] at (3,-0.5) {$\gamma_2$};

\filldraw  (0,0) circle (2.4pt)
 (-2,0) circle (2.4pt)
 (2,0) circle (2.4pt)
 (4,0) circle (2.4pt) 
 ;
\end{tikzpicture}
\caption{The sequence $(\gamma_4,\gamma_3,\gamma_2,\gamma_1)$ is a clockwise-ordered arc diagram on 4 nodes.}\label{fig:clockwise}
\end{figure}
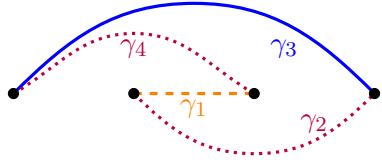

It is shown in \cite[Proposition~6.6]{HY} that the arcs comprising a clockwise-ordered arc diagram together form the edge-set of a planar bipartite graph. A consequence is that if $\omega \in \arc_{cw}(n)$, then there exist realizations of the arcs in $\omega$ which intersect only at shared endpoints. (It is immediate from the definition that this is possible for any pair of arcs in $\omega$, but requires proof that there exist fixed realizations that work for all pairs.) Another consequence is that the length of a clockwise-ordered arc diagram is (tightly) bounded above by $\max\{1,2n-2\}$.

We now recall the definition of a noncrossing arc diagram from \cite{reading}.

\begin{definition}\label{def:noncrossing}
    Let $\mathcal{A} \subseteq \arc(n)$ be a set of arcs. We say that $\mathcal{A}$ is a \emph{noncrossing arc diagram} if no pair or arcs $\gamma \neq \rho \in \arc(\mathcal{A})$ have either have a shared endpoint or a nontrivial crossing.
\end{definition}

For example, the arcs in Figure~\ref{fig:bijection_ppo} form a noncrossing arc diagram.

Similarly to the situation with clockwise-ordered arc diagrams, it is shown in \cite{reading} that if $\mathcal{A}$ is a noncrossing arc diagram, then there exist fixed realizations of the arcs in $\mathcal{A}$ which do not intersect (except possibly at contested endpoints). This justifies using the name ``noncrossing arc diagram'' rather than ``pairwise noncrossing arc diagram''.

\begin{figure}
\begin{tikzpicture}[scale=0.5]

\draw [very thick,smooth] 
(-2,0) to (0,0);

\node at (-1,0.5){$\gamma_{w,0}$};

\draw [very thick,smooth] 
(2,0) [out = 45, in = 180]  to (4,1) [out = 0,in = 135] to (6,0);

\draw [very thick,smooth] 
(6,0) [out = -60, in = 180]  to (9.5,-1) [out = 0,in = 180] to (12.5,1) [out = 0, in = 135] to (14,0);

\draw [very thick,smooth] 
(4,0) [out = -60, in = 180]  to (8,-1.5) [out = 0,in = -120] to (12,0);

\node at (4,1.5){$\gamma_{w,4}$};

\node at (8,-2){$\gamma_{w,6}$};

\node at (12,1.4){$\gamma_{w,3}$};
        
\filldraw  (0,0) circle (2.4pt)
 (-2,0) circle (2.4pt)
 (2,0) circle (2.4pt)
 (4,0) circle (2.4pt) 
 (6,0) circle (2.4pt)
 (8,0) circle (2.4pt)
 (10,0) circle (2.4pt)
 (12,0) circle (2.4pt)
 (14,0) circle (2.4pt)
 ;

\node at (-1.8,-0.5) {{\small 0}};
\node at (0.4,0) {{\small 1}};
\node at (2.4,0) {{\small 2}};
\node at (4.4,0) {{\small 3}};
\node at (6.4,0) {{\small 4}};
\node at (8.4,0) {{\small 5}};
\node at (10.4,0) {{\small 6}};
\node at (12.4,0) {{\small 7}};
\node at (14.4,0) {{\small 8}};

\begin{scope}[shift = {(20,0)}]
    \draw (3,0) -- (2,-2);
    \draw (3,0) -- (4,-2);
    \draw (3,0) -- (3,2);

    \node [draw,fill=white] at (-1,0) {01};
    \node [draw,fill=white] at (3,0) {248};
    \node [draw,fill=white] at (2,-2) {5};
    \node [draw,fill=white] at (4,-2) {6};
    \node [draw,fill=white] at (3,2) {37};
\end{scope}

\end{tikzpicture}
\caption{The noncrossing arc diagram (left) and permutation preorder (right) corresponding to the permutation $105684273 \in \mathfrak{S}_9$ from Example~\ref{ex:bijection_ppo}.}\label{fig:bijection_ppo}
\end{figure}
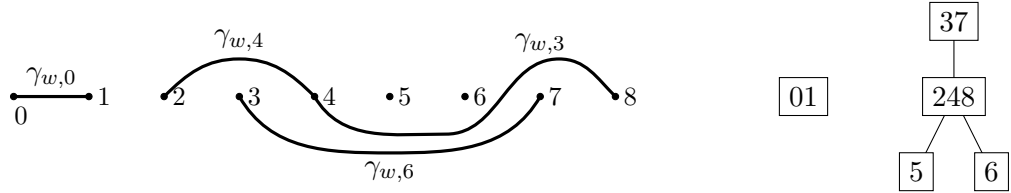

We also need the following.

\begin{definition}\label{def:cover}
    Let $\X = (\X,\leq_\X)$ be a poset.
    \begin{enumerate}
        \item We say a relation $x \leq_\X y$ in $\X$ is a \emph{cover relation} if $x \neq y$ and $x \leq_\X z \leq_\X y$ implies $z\in \{x,y\}$. We denote by $\cov(\X)$ the set of cover relations in $\X$.
        \item Let $\T = (\T,\leq_\T)$ be a totally ordered set. We call a function $f: \cov(\X) \rightarrow \T$ a \emph{edge-labeling} of $\X$ by $\T$.
     \end{enumerate}
\end{definition}

We now recall the definition of a permutation preorder from \cite{bancroft} (see also \cite{petersen}).

\begin{definition}\label{def:perm_preorder}\
    \begin{enumerate}
        \item Let $S \subseteq [n]$. Then the \emph{closed integer support} of $S$, is the set $$\overline{S} := \{\min(S),\min(S)+1,\ldots,\max(S)\}.$$ 
        Likewise, the \emph{closed support} of $S$ is the interval $[\min(S),\max(S)] \subseteq \R$. 
        \item  A \emph{permutation preorder} on $[n]$ is a partition $\P$ of $[n]$ and a partial order $\preceq_{\P}$ on the elements of $\P$ such that the following hold.
    \begin{enumerate}
        \item Let $S,S' \in \P$. If $\overline{S} \cap \overline{S'} \neq \emptyset$, then $S$ and $S'$ are related under $\preceq_{\P}$. In other words, if two partition elements are ``overlapping'' then they must be related
        \item Let $S \preceq_{\P} S'$ be a cover relation of $\preceq_{\P}$. Then $\overline{S} \cap \overline{S'} \neq \emptyset$. In other words, if two partition elements are related by a cover relation then they must be ``overlapping''.
    \end{enumerate}
    We denote by $\ppo(n)$ the set of permutation preorders of $[n]$.
    \end{enumerate}
\end{definition}

\begin{remark}\label{rem:perm_perorder}
    For simplicity, we will use $\P$ to represent the permutation preorder $(\P,\preceq_{\P})$. We note that this, and also using $\preceq_{\P}$ to denote the partial order on $\P$, is a slight abuse of notation. For example, $\P = \{\{0,2\},\{1\}\}$ with $\{0,2\} \preceq_{\P} \{1\}$ and $\Q = \{\{0,2\},\{1\}\}$ with $\{1\} \preceq_{\Q} \{0,2\}$ are both permutation preorders with the same underlying partition of $[n]$. 
\end{remark}

See Example~\ref{ex:bijection_ppo} below for an additional example.

We next recall the relationship between noncrossing arc diagrams and permutation preorders. We consider the elements of $\mathfrak{S}_{n+1}$ as automorphisms of the set $[n]$, with multiplication following function composition. We write a permutation $w \in \mathfrak{S}_{n+1}$ in its one-line notation $w = w_0w_1\cdots w_n$, where $w_i = w(i)$. For example, we write 021 for the permutation $0 \mapsto 0$, $1 \mapsto 2$, $2 \mapsto 1$. A \emph{descending run} of a permutation $w$ is a (consecutive) subword $w_iw_{i+1}\cdots w_j$ of $w$ such that (i) either $i = 0$ or $w_{i-1} < w_i$, (ii) either $j = n$ or $w_{j+1} > w_j$, and (iii) $w_k > w_{k+1}$ for all $i \leq k < j$. Each permutation $w$ can be partitioned into its descending runs. For example 021 = $0|21$ consists of two descending runs, with the descending run 0 coming before the descending run 21. It is then clear that each permutation is uniquely determined by its descending runs and the order in which they appear when $w$ is read from left to right. On the other hand, not all orderings will be possible. For example, in $\mathfrak{S}_{3}$, there is no way for the descending run 0 to come after 21, since the resulting permutation 210 would consist of only a single descending run. Thus in general, one must specify the relative order of only a subset of the decreasing runs in order to uniquely describe a permutation. In \cite{bancroft}, Bancroft showed that this is precisely the data determined by a permutation preorder. More precisely, we have the following.

\begin{proposition}\cite[Proposition~3.8]{bancroft}\cite[Theorem~3.1]{reading}\label{prop:bijection_ppo}\textnormal{(see also \cite[Proposition~2]{petersen})}
    There is a commutative diagram of bijections
    $$\begin{tikzcd}
        \mathfrak{S}_{n+1} \arrow[dr,"\mu" below left] \arrow[r,"\delta"] & \arc_{nc}(n) \arrow[d,"\mu'"]\\
        & \ppo(n)
    \end{tikzcd}$$
    where the maps $\delta$, $\mu'$, and $\mu^{-1}$ are given as follows.
    \begin{enumerate}
        \item Let $w = w_0\cdots w_n \in \mathfrak{S}_{n+1}$ and let $D(w) = \{i \in [n] \mid w_i > w_{i+1}\}$. For $i \in D(w)$ and $w_{i+1} < j < w_i$, let
        $$s_i^j = \begin{cases} \textnormal{u} & j \text{ precedes } w_i \text{ in } w\\\textnormal{o} & j \text{ succeeds } w_{i+1} \text{ in }w.\end{cases}$$
        For each $i \in D(w)$, let $\gamma_{w,i}  = (w_i,s_{w_i + 1}^i\cdots s_{w_{i+1}-1}^i\textnormal{e})$. Then $\delta(w) = \{\gamma_{w,i}\}_{i \in D(w)}$.
        \item Let $\mathcal{A} \in \arc_{nc}(w)$. The underlying partition of $\mu(w')$ is the set of connected components of the corresponding graph; that is, two nodes $i,j \in [n]$ lie in the same partition element if and only if they are connected by a path traveling along (some realization of) the arcs in $\mathcal{A}$. Given $S, S' \in \mu(w)$ with $\overline{S} \cap \overline{S'} \neq \emptyset$, we set $S \preceq_{\mu'(w)} S'$ if there exists an arc $\gamma \in \mathcal{A}$ and a node $j \in S'$ such that $\ell(\gamma), r(\gamma) \in S$ and $\gamma$ passes below $j$.
        \item Let $\P\in \ppo(n)$. For each $S \in \P$, let $w_{S}$ be the word formed by putting the elements of $S$ in decreasing order. Then the partition elements of $\P$ can be uniquely ordered $S_1,\ldots,S_{|\P|}$ such that for all $1 \leq i < j \leq |\P|$ either (i) $S_i \preceq_\P S_j$, or (ii) $S_i$ and $S_j$ are incomparable under $\preceq_\P$ and $\min(S_i) < \max(S_j)$. Then $\mu^{-1}(\P) = w_{S_1}\cdots w_{S_{|\P|}}$.
 	\end{enumerate}
\end{proposition}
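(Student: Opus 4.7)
The plan is to treat the proposition as an assembly of previously-established results together with a commutativity check. The bijectivity of $\delta$, with the explicit formula in (1), is essentially \cite[Theorem~3.1]{reading}. The bijectivity of $\mu$ and the inverse formula in (3) is essentially \cite[Proposition~3.8]{bancroft} (reproved and extended in \cite[Proposition~2]{petersen}). Since we may \emph{define} $\mu'$ to be $\mu \circ \delta^{-1}$, bijectivity of $\mu'$ is automatic, so the only remaining work is to verify that this composition is computed by the explicit rule in (2).

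To carry out this verification I would fix $w \in \mathfrak{S}_{n+1}$ and set $\mathcal{A} = \delta(w)$. First I would match the underlying partitions. Each arc $\gamma_{w,i}$ in $\mathcal{A}$ has its two endpoints at $w_i$ and $w_{i+1}$, which are consecutive entries in the same descending run of $w$; conversely, every consecutive pair within a descending run arises this way as $i$ ranges over $D(w)$. Consequently the connected components of the graph on $[n]$ whose edges are the arcs of $\mathcal{A}$ are exactly the descending runs of $w$, which is the partition underlying $\mu(w)$.

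Next I would match the partial orders. Let $S, S' \in \mu(w)$ with $\overline{S} \cap \overline{S'} \neq \emptyset$. The relation $S \preceq_{\mu(w)} S'$ is the ordering forced by the criterion in (3); concretely, $w_S$ must precede $w_{S'}$ in the one-line notation of $w$. In $\mu'(\mathcal{A})$, the relation $S \preceq S'$ requires an arc $\gamma \in \mathcal{A}$ with both endpoints in $S$ and a node $j \in S'$ such that $\gamma$ passes below $j$. Unpacking the rule $s_i^j = \text{u} \iff j$ precedes $w_i$ in $w$, the "passes below" condition translates directly into the statement that $j$ lies to the appropriate side of the descent pair $(w_i, w_{i+1})$ in the one-line notation of $w$. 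A short case analysis, split by the relative configuration of $\overline{S}$ and $\overline{S'}$ (overlapping, one nested inside the other, etc.), then matches the two relations.

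I expect the main obstacle to be the bookkeeping in this order-matching step: each "u" or "o" label in the word encoding an arc must be correctly converted into a precedence constraint among descending runs, and these constraints must be pooled so that they exactly reproduce the ordering forced in $\mu(w)$ by the criterion (i) versus (ii) in (3). Everything else reduces either to an appeal to the cited literature or to an immediate consequence of the construction.
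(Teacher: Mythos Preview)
The paper does not actually give a proof of this proposition: it is stated with citations to \cite{bancroft}, \cite{reading}, and \cite{petersen} and then illustrated by Example~\ref{ex:bijection_ppo}, with no \texttt{proof} environment. Your proposal---treat $\delta$ and $\mu$ as known bijections from the cited literature, define $\mu' := \mu \circ \delta^{-1}$, and verify directly that this composite is computed by the rule in (2)---is exactly the right way to flesh out what the paper leaves implicit, and your outline of the partition-matching and order-matching steps is correct.
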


\begin{example}\label{ex:bijection_ppo}
    Let $w = 105684273 \in \mathfrak{S}_{9}$. The partition of $w$ into descending runs is $w = 10|5|6|842|73$. In the notation of Proposition~\ref{prop:bijection_ppo}(1), we then have $D(w) = \{0, 4, 5, 7\}$. The noncrossing arc diagram $\delta(w)$ is then as shown in the left diagram of Figure~\ref{fig:bijection_ppo}.

    By construction, we have that the underlying partition of $\mu'(\delta(w)) = \mu(w)$ corresponds to the partition of $w$ into descending runs. Thus in symbols we have $\mu(w) = \{\{0,1\}, \{2,4,8\},\{3,7\}, \{5\}, \{6\}\}$ with $\{5\} \preceq_{\mu(w)} \{2,4,8\} \preceq_{\mu(w)} \{3,7\}$ and likewise replacing $\{5\}$ with $\{6\}$. In \cite{bancroft}, Bancroft visualizes $\mu(w)$ as in the right diagram of Figure~\ref{fig:bijection_ppo}. This visualization can be obtained from the right diagram by ``collapsing'' the nodes in the same connected component while remembering which connected components are drawn above one another. Note that going ``upwards'' in the right diagram represents going ``up'' a relation $\preceq_{\mu(w)}$, but corresponds to passing to a connected component which lies ``lower'' in the left diagram.

    Finally, the unique partial order on the partition elements of $\mu(w)$ which satisfies the conditions (i) and (ii) in item (3) is $\{0,1\} < \{5\} < \{2,4,7\} < \{3,6\}$. Thus we see that $\mu^{-1}(\mu(w)) = w$, as expected.
\end{example}

In \cite{bancroft}, Bancroft considers the set of permutations preorders equipped with the following partial order.

\begin{definition}\label{def:perm_preorder2}
    Let $\P,\Q \in \ppo(n)$. We write $\P \leq_{\ppo} \Q$ if the following hold.
    \begin{enumerate}
        \item As partitions of $[n]$, $\P$ refines $\Q$; that is, for every $S \in \P$ there exists (a unique) $S_\Q \in \Q$ such that $S \subseteq S_\Q$.
        \item For all $S, S' \in \P$ with $S \preceq_\P S'$, one has $S_\Q \preceq_\Q S'_\Q$.
    \end{enumerate}
    By abuse of notation, we denote the poset  $(\ppo(n),\leq_{\ppo})$ just by $\ppo(n)$.
\end{definition}

\begin{remark}
    As the name suggests, one can also view a permutation preorder as a preorder (a partial order without the antisymmetry condition) on $[n]$. The relation $\leq_{\ppo}$ can then be seen as the inclusion order on the set of relations.
\end{remark}

\subsection{The shard intersection order}\label{sec:shards}

In \cite{readingSIO}, Reading defines an alternative partial order on the elements of a finite Coxeter group called the \emph{shard intersection order}. (The partial order is actually defined more generally for any simplicial hyperplane arrangement, but we focus only on the Coxeter group case.) This is done by giving a set of rules which ``break'' the hyperplanes in the corresponding Coxeter arrangement into pieces (called shards), which are either full hyperplanes or convex cones of codimension 1. A shard intersection is then defined to be an intersection of such cones, and the shard intersection order is the reverse containment relation on the set of shard intersections. Based upon the discussion in \cite[Sections~3 and 4]{bancroft} (see also \cite[Section~2.1]{petersen}), we forego the formal definition of the shard intersection order in favor of an explanation of how arcs and permutation preorders can be used to define the corresponding cones in $\mathbb{R}^{n+1}$.

Let $x_0,\ldots,x_n$ denote the standard coordinates in $\mathbb{R}^{n+1}$. The root system of the Coxeter group $\mathfrak{S}_{n+1}$ then sits in the orthogonal complement of the vector $(1,1,\ldots,1)$. We can then specify a (convex) cone in their space via a choice of relations of the form $x_i \leq x_j$ or $x_i = x_j$. This leads to the following definitions.

\begin{definition}\label{def:shards}\
	\begin{enumerate}
		\item Let $\gamma = (\ell(\gamma),s_{\ell(\gamma)+1}\cdots s_{r(\gamma)})$. Denote $S_u = \{i \mid s_i = \textnormal{u}\}$ and $S_o = \{i \mid s_i = \textnormal{o}\}$ and $S_e = \{\ell(\gamma),r(\gamma)\}$. Define a cone $\Sigma(\gamma) \subseteq \mathbb{R}^n$ via the relations:
		$$\{x_i \leq x_j \mid i \in S_e, j \in S_u \cup S_e\} \cup \{x_i \leq x_j \mid i \in S_o \cup S_e, j \in S_e\}.$$
		We refer to $\Sigma(\gamma)$ as the \emph{shard} of $\gamma$. We denote by $\shard(n) = \{\Sigma(\gamma)\mid \gamma \in \arc(n)\}$ the set of shards.
		\item Let $\P \in \ppo(n)$. For $i \in [n]$, denote by $S_i \in \P$ the unique partition element for which $i \in S_i$. We then define a cone $\Psi(\P)$ via the relations
		$\{x_i \leq x_j \mid S_i \preceq_\P S_j\}.$
		We refer to $\Psi(\P)$ as the \emph{shard intersection} of $\P$. We denote by $\shardint(n) = \{\Psi(\P) \mid \P \in \ppo(n)\}$ the set of shard intersections.
	\end{enumerate}
\end{definition}

The following results are all discussed in \cite[Sections~3 and~4]{bancroft} and \cite[Section~2.1]{petersen}. They can also be deduced directly from the definitions.

\begin{proposition}\label{prop:shards}\
	\begin{enumerate}
		\item The map $\Sigma: \arc(n) \rightarrow \shard(n)$ is a bijection.
		\item The map $\Psi(\P): \ppo(n) \rightarrow \shardint(n)$ is a bijection.
		\item Given $\P, \Q \in \ppo(n)$, one has that $\P \leq_\ppo \Q$ if and only if $\Psi(\P) \supseteq \Psi(\Q)$.
	\end{enumerate}
\end{proposition}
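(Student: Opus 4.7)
The plan is to address the three items in turn, with most of the work concentrated in a single technical fact: on the cone $\Psi(\P)$, the inequality $x_i \leq x_j$ holds throughout $\Psi(\P)$ if and only if $S_i \preceq_\P S_j$, and consequently $x_i = x_j$ holds throughout $\Psi(\P)$ if and only if $S_i = S_j$. The ``if'' direction is immediate from the defining relations together with transitivity. For the ``only if'' direction, I would, for each pair $S, S' \in \P$ with $S \not\preceq_\P S'$, choose a linear extension $\lambda$ of $(\P,\preceq_\P)$ in which $S'$ precedes $S$; setting $x_k := \lambda(S_k) \in \Z$ then produces a point of $\Psi(\P)$ with $x_i > x_j$ for any $i \in S, j \in S'$, witnessing that $x_i \leq x_j$ is not forced.

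Given this lemma, item (2) is immediate: the partition underlying $\P$ is recovered from $\Psi(\P)$ as the equivalence relation ``$x_i = x_j$ holds throughout $\Psi(\P)$,'' and the order $\preceq_\P$ is then read off from the remaining forced inequalities, so $\Psi$ is injective; surjectivity holds by definition. Item (1) I would prove by the same template applied to the simpler cone of a single arc. From $\Sigma(\gamma)$ one recovers $\{\ell(\gamma),r(\gamma)\}$ as the unique pair of indices whose coordinates are forced equal; for each $i$ strictly between them exactly one of $x_{\ell(\gamma)} \leq x_i$ or $x_i \leq x_{\ell(\gamma)}$ is forced, determining whether $s_i = \textnormal{u}$ or $s_i = \textnormal{o}$; and coordinates outside $[\ell(\gamma),r(\gamma)]$ are completely unconstrained. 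The existence of witnessing points can be checked by direct parametrization (e.g., set $x_{\ell(\gamma)} = x_{r(\gamma)} = 0$, $x_i = 1$ for $s_i = \textnormal{u}$, $x_i = -1$ for $s_i = \textnormal{o}$, and anything for outside coordinates).

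Item (3) then follows formally. The inclusion $\Psi(\Q) \subseteq \Psi(\P)$ holds if and only if every defining inequality of $\Psi(\P)$ holds on $\Psi(\Q)$, which by the technical lemma is equivalent to: for every $i,j \in [n]$ with $S_i^\P \preceq_\P S_j^\P$ one has $S_i^\Q \preceq_\Q S_j^\Q$, where $S_k^\P$ and $S_k^\Q$ denote the parts of $\P$ and $\Q$ containing $k$. Specializing to $i,j$ lying in the same $\P$-part (where $S_i^\P \preceq_\P S_j^\P$ and its reverse both hold) yields $S_i^\Q = S_j^\Q$, giving the refinement condition of Definition~\ref{def:perm_preorder2}(1); the remaining cases yield the order-preservation condition of Definition~\ref{def:perm_preorder2}(2). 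Thus $\Psi(\Q) \subseteq \Psi(\P)$ if and only if $\P \leq_\ppo \Q$.

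The main obstacle is the ``only if'' direction of the technical lemma, since it requires producing enough explicit points of $\Psi(\P)$ to witness every non-forced inequality; the linear-extension construction handles this uniformly, but one must verify that the ``closed integer support'' conditions in Definition~\ref{def:perm_preorder} do not interfere (they do not, since those conditions only constrain which $\preceq_\P$ relations exist, not how they are expressed as cone inequalities). Everything else in the proposition amounts to unpacking of definitions.
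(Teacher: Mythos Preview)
Your proposal is correct. The paper does not actually give a proof of this proposition; it simply states that the results ``are all discussed in \cite[Sections~3 and~4]{bancroft} and \cite[Section~2.1]{petersen}'' and ``can also be deduced directly from the definitions.'' Your argument supplies precisely such a direct deduction: the linear-extension construction for the key lemma (that $x_i \leq x_j$ is forced on $\Psi(\P)$ iff $S_i \preceq_\P S_j$) is the right tool, and the remaining unpacking of Definitions~\ref{def:shards} and~\ref{def:perm_preorder2} is accurate. In short, you have written out what the paper leaves to the reader or to the cited references.
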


In \cite{readingSIO}, Reading defines the shard intersection order as the reverse-containment order on the set of shard intersections. In light of Proposition~\ref{prop:shards}, we take the following as our definition for the purposes of this paper.

\begin{definition-theorem}\label{defthm:sio}
    The \emph{shard intersection order} on $\mathfrak{S}_{n+1}$ is the partial order $\leq_{\mathrm{sio}}$ defined by $w \leq_{\mathrm{sio}} u$ if and only if $\mu(w) \leq_{\ppo} \mu(u)$.
\end{definition-theorem}

Cover relations in $\ppo(n)$ can be characterized as follows:

\begin{proposition}\label{prop:covers_ppo}\cite[Proposition~1.1]{readingSIO}\cite[Proposition~4.1]{bancroft}
	Let $\P \leq_\ppo \Q$ be a relation in $\ppo(n)$. Then the following are equivalent.
		\begin{enumerate}
			\item $(\P \leq \Q) \in \cov(\ppo(n))$.
			\item $|\P| = |\Q| + 1$.
			\item $\dim(\Psi(\P)) = \dim(\Psi(\Q))-1$.
		\end{enumerate}
	In particular, the poset $\ppo(n)$ is graded by the rank function $\P \mapsto n + 1 - |\P|$.
\end{proposition}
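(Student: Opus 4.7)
Plan: The proof breaks into a dimension computation, which handles $(2)\Leftrightarrow (3)$ and the grading claim simultaneously, followed by the two implications between $(1)$ and $(2)$.

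First I would compute $\dim\Psi(\P)$ directly from Definition~\ref{def:shards}. Reflexivity $S_i\preceq_\P S_i$ forces both $x_i\leq x_j$ and $x_j\leq x_i$ for every pair $i,j$ in a common $\P$-block, hence $x_i=x_j$; these equalities cut $n+1-|\P|$ dimensions out of $\R^{n+1}$. Between the resulting $|\P|$ block-coordinates only genuine inequalities remain, and any strict linear extension of $\preceq_\P$ witnesses an interior point, so $\dim\Psi(\P)=|\P|$ (equivalently $|\P|-1$ after passing to $\R^{n+1}/\langle(1,\ldots,1)\rangle$). This immediately gives $(2)\Leftrightarrow (3)$ and shows that $\P\mapsto n+1-|\P|$ is a grading.

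For $(2)\Rightarrow (1)$, suppose $|\P|=|\Q|+1$ and $\P\leq_\ppo \R\leq_\ppo \Q$. Since partition refinements compose, $|\R|\in\{|\Q|,|\Q|+1\}$. The key lemma I would invoke is that a permutation preorder is completely determined by its underlying partition together with the orientation of each overlapping block-pair: axiom~(a) of Definition~\ref{def:perm_preorder} forces every overlapping pair to be comparable, axiom~(b) forces every cover relation of $\preceq_\P$ to be an overlapping pair, and all other relations arise by transitive closure. Thus when $|\R|=|\Q|$ the two share an underlying partition; the refinement $\R\leq_\ppo\Q$ forces $\preceq_\R\subseteq\preceq_\Q$, and since both must already relate every overlapping pair in one common direction, $\R=\Q$. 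The case $|\R|=|\P|$ is symmetric, yielding $\R\in\{\P,\Q\}$.

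For $(1)\Rightarrow (2)$ I would argue by contraposition: given $|\P|\geq|\Q|+2$ I construct an intermediate $\R$ with $\P<_\ppo\R<_\ppo\Q$. Since $\P$ strictly refines $\Q$ by at least two blocks, some $\Q$-block $S_\Q$ contains at least two $\P$-blocks, and because the integer supports of the $\P$-blocks contained in $S_\Q$ cover $\overline{S_\Q}$, a pigeonhole argument produces two $\P$-blocks $S_1,S_2\subseteq S_\Q$ with $\overline{S_1}\cap\overline{S_2}\neq\emptyset$. Axiom~(a) makes them comparable in $\preceq_\P$, and one may sharpen the choice so that $S_1\prec_\P S_2$ is a cover relation. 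Merging $S_1$ and $S_2$ into a single block and inheriting the remaining relations defines $\R$, with $|\R|=|\P|-1\geq|\Q|+1$. The hardest step in this direction is verifying that the merged $\R$ still satisfies axiom~(a): a block $T\in\R$ may overlap $S_1\cup S_2$ through the ``gap'' between $S_1$ and $S_2$ without overlapping either individually, and its comparability with $S_1\cup S_2$ cannot be read off from $\preceq_\P$ alone. Resolving this requires transporting the relation through $\preceq_\Q$ using the common ambient block $S_\Q$ and pulling back along $\P\leq_\ppo\Q$; antisymmetry of $\preceq_\R$ is easier, as the choice of $S_1\prec_\P S_2$ as a cover relation prevents the merger from introducing any two-cycle.
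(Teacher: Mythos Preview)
The paper does not prove this proposition; it is stated with citations to Reading and Bancroft and then used freely in later arguments. So your proposal is an attempt at a self-contained proof where the paper has none. Your dimension computation and the implication $(2)\Rightarrow(1)$ are fine, and the key lemma you invoke there (that a permutation preorder is determined by its partition together with the orientation of each overlapping block-pair) is correct and a clean way to see it.

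There is, however, a genuine gap in your argument for $(1)\Rightarrow(2)$. The claim that ``the integer supports of the $\P$-blocks contained in $S_\Q$ cover $\overline{S_\Q}$'' is false, and without it your pigeonhole step produces nothing. Concretely, take $n=8$, let $\Q=\{\{0,1,5,6\},\{2,3,7,8\},\{4\}\}$ with $\{4\}\prec_\Q\{2,3,7,8\}\prec_\Q\{0,1,5,6\}$, and let $\P=\{\{0,1\},\{2,3\},\{4\},\{5,6\},\{7,8\}\}$ with the empty relation. Then $\P\leq_\ppo\Q$ and $|\P|=|\Q|+2$, but no two $\P$-blocks overlap at all; in particular the $\P$-blocks $\{0,1\}$ and $\{5,6\}$ inside $S_\Q=\{0,1,5,6\}$ have supports $[0,1]$ and $[5,6]$, which cover neither $\overline{S_\Q}=[0,6]$ nor each other. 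Your construction therefore never gets off the ground, even though an intermediate $\R$ (merge $\{0,1\}$ and $\{5,6\}$, then orient the newly overlapping pairs as dictated by $\Q$) certainly exists.

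The fix is to drop the overlap requirement entirely: pick any two $\P$-blocks $S_1,S_2$ lying in a common $\Q$-block and merge them. You already anticipated the resulting difficulty with axiom~(a), and your instinct to transport the missing orientations through $\preceq_\Q$ is the right one; but since $S_1$ and $S_2$ need not be $\preceq_\P$-comparable, you can no longer lean on ``$S_1\prec_\P S_2$ is a cover relation'' for antisymmetry. A cleaner route is to work from the top down rather than the bottom up: given $\P<_\ppo\Q$, show directly that some $\Q$-block $S_\Q$ splits in $\P$, choose any $\Q$-ppo-admissible arc $\gamma$ with both endpoints in $S_\Q$, and take $\R=\J_\Q(\gamma)$; then $|\R|=|\Q|+1$ by construction and one checks $\P\leq_\ppo\R$ from the description of $U_\Q(\gamma),L_\Q(\gamma)$. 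This is essentially how the paper later parameterizes covers (Theorem~\ref{thm:edge-labeling}), though there the implication $(1)\Rightarrow(2)$ is already assumed via the citation.
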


One consequence of Proposition~\ref{prop:covers_ppo} is that if $(\P \leq_\ppo \Q) \in \cov(\ppo(n))$, then there exists a shard $\Sigma(\gamma)$ such that $\Psi(\P) \cap \Sigma(\gamma) = \Psi(\Q)$. However, this shard will in general not be unique. One of the goals of this paper is to describe a canonical choice of such a shard (or more precisely of the arc corresponding to such a shard) using permutation preorders. As a result, we establish an edge labeling of $\ppo(n)$ by $\arc(n)$. (We will see $\arc(n)$ as a totally ordered set under the lexicographic order on $[n-1] \times \wrd(\textnormal{ueo})$.) We will then give an interpretation of this edge-labeling using the representation theory of preprojective algebras (Theorem~\ref{thm:tau_exceptional_RAn}) and use this interpretation to prove that it is an ``EL-labeling'' (Theorem~\ref{thm:EL_labeling}).


\subsection{Recursive decomposition}\label{sec:recursive}

The aim of this section is to prove the following result, which will be useful in setting up induction arguments throughout the paper.

\begin{proposition}\label{prop:perm_preorder_recursive}
    Let $\P \in \ppo(n)$. Equip $\prod_{S \in \P} \ppo(|S|-1)$ with the product of the partial orders $\leq_{\ppo}$. (By abuse of notation, we also denote the resulting partial order by $\leq_{\ppo}$). Then there is an order-preserving bijection
    $$\prod_{S \in \P} \ppo(|S|-1) \rightarrow \{\Q \in \ppo(n) \mid \Q \leq_{\ppo} \P\}.$$
\end{proposition}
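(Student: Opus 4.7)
The plan is to construct mutually inverse, order-preserving maps between the two sets. For each $S \in \P$, let $\phi_S \colon \{0,1,\ldots,|S|-1\} \to S$ denote the unique order-preserving bijection. A preliminary observation that I will use repeatedly is that $\phi_S$ preserves and reflects closed-integer-support overlap: for $T, T' \subseteq S$, one has $\overline{T} \cap \overline{T'} \neq \emptyset$ in $[n]$ if and only if $\overline{\phi_S^{-1}(T)} \cap \overline{\phi_S^{-1}(T')} \neq \emptyset$ in $\{0,\ldots,|S|-1\}$, since $\phi_S$ sends minima to minima and maxima to maxima.

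The inverse map $\Psi$ will take $\Q \leq_{\ppo} \P$ to the tuple whose $S$-component has partition $\phi_S^{-1}(\Q|_S)$, where $\Q|_S = \{T \in \Q \mid T \subseteq S\}$, with partial order obtained by restricting $\preceq_\Q$. To verify each $S$-component lies in $\ppo(|S|-1)$, condition (a) of Definition~\ref{def:perm_preorder} is immediate from the same condition in $\Q$; for condition (b), the key point is that any element $T'' \in \Q$ strictly between two elements $T, T' \in \Q|_S$ under $\preceq_\Q$ must itself lie in $\Q|_S$, because $\Q \leq_{\ppo} \P$ forces $S \preceq_\P S_{T''} \preceq_\P S$ and hence $S_{T''} = S$ by antisymmetry. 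Thus cover relations in the restriction coincide with cover relations of $\Q$ between the appropriate elements, and their overlapping-support condition transfers via the preliminary observation.

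The forward map $\Phi$ will send $(\P_S)_S$ to the permutation preorder $\Q$ with partition $\bigcup_S \phi_S(\P_S)$ (visibly refining $\P$) and partial order generated by (i) the within-group relations pulled from $\phi_S(\P_S)$ for each $S$, together with (ii) the cross-group relations $T \preceq T'$ whenever $T \subseteq S$, $T' \subseteq S'$ with $S \neq S'$, $S \preceq_\P S'$, and $\overline{T} \cap \overline{T'} \neq \emptyset$. Antisymmetry of the generated relation follows because every generator is weakly order-increasing on the containing $\P$-element, so any cycle collapses under antisymmetry of $\preceq_\P$. Condition (a) holds because any two overlapping partition elements either share a group (handled by (a) in $\P_S$) or have related containing $\P$-elements by (a) in $\P$ (handled by (ii)). The delicate point is condition (b): any chain of generators connecting two elements in the same $S$ must stay within $S$, else two opposite $\preceq_\P$ inequalities would contradict antisymmetry; hence a within-group cover in $\Q$ is exactly a cover in $\P_S$ (overlap by (b) in $\P_S$), while a cross-group cover in $\Q$ must come directly from (ii), which guarantees overlap by construction.

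Finally, $\Psi \circ \Phi = \mathrm{id}$ is immediate from the construction, and $\Phi \circ \Psi = \mathrm{id}$ follows by verifying that $\Q$ and $\Phi(\Psi(\Q))$ share the same cover relations, and hence the same partial order: within-group covers match by construction, and the cross-group covers of $\Q$ have overlapping supports by (b) in $\Q$ and are therefore precisely the relations produced by (ii) in $\Phi$. Order-preservation in both directions is then a routine check using that refinement of partitions and inclusion of partial orders pass through $\phi_S$ componentwise. I expect the main obstacle to be the verification of condition (b) in the construction of $\Phi$; this rests entirely on antisymmetry of $\preceq_\P$ forbidding cover relations from mixing within-group and cross-group generators, and everything else is formal once the separation between the two types of generator is established.
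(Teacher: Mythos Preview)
Your proposal is correct and takes essentially the same approach as the paper: both construct the same pair of maps (your $\Phi,\Psi$ are the paper's $\chi_\P,\xi_\P$), verify the permutation-preorder axioms using the same ``chains between elements of $S$ stay in $S$'' observation via antisymmetry of $\preceq_\P$, and check order-preservation. The only minor difference is that you directly verify $\Psi\circ\Phi=\mathrm{id}$ and $\Phi\circ\Psi=\mathrm{id}$, whereas the paper argues that both maps are injective and order-preserving and then concludes the compositions are identities; your direct route is arguably cleaner.
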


We prove Proposition~\ref{prop:perm_preorder_recursive} in several steps, beginning with establishing some notation. We then give a detailed example in Example~\ref{ex:ppo_reduction}.

 Fix $\P \in \ppo(n)$. For $S \in \P$, we denote by
    $v_0^S < \cdots < v_{|S|-1}^S$
the elements of $S$ in increasing order.

Let $(\Q_S)_{S \in \P} \in \prod_{S \in \P} \ppo(|S|-1)$. For $S \in \P$ and $T \in \Q_S$, we denote $\widetilde{T} := \{v_i^S\mid i \in T\}$. As a partition of $[n]$, we then define
    \begin{equation}\chi_\P((\Q_S)_S) := \bigcup_{S \in \P}\bigcup_{T \in \Q_S} \widetilde{T}.\end{equation}\label{eqn:chi}
 To make this into a permutation preorder, consider $S, S' \in \P$, $T \in \Q_S$, and $T' \in \Q_{S'}$. If $\overline{\widetilde{T}} \cap \overline{\widetilde{T'}} \neq \emptyset$, we set $\widetilde{T} \preceq_{\chi_\P((\Q_S)_S)} \widetilde{T'}$ if either (i) $S = S'$ and $T \preceq_{\Q_S} T'$ or (ii) $S \neq S'$ and $S \preceq_\P S'$. We then define $\preceq_{\chi_\P((\Q_S)_S)}$ to be the transitive closure of this relation.

\begin{lemma}\label{lem:preorder1}
    $\chi_\P((\Q_S)_S)$ (together with the relation $\preceq_{\chi_\P((\Q_S)_S)}$) is a permutation preorder on $[n]$ and $\chi_\P((\Q_S)_S) \leq_\ppo \P$.
\end{lemma}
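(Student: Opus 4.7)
The plan is to verify the four requirements in turn: that $\chi_\P((\Q_S)_S)$ is a partition of $[n]$, that $\preceq_{\chi_\P((\Q_S)_S)}$ is a partial order (the rest is built in: reflexivity is inherited from the transitive closure or added explicitly, and condition (a) will force the axioms to close up), that the overlap/cover axioms (a) and (b) of Definition~\ref{def:perm_preorder} hold, and finally that $\chi_\P((\Q_S)_S)\leq_\ppo \P$.

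First I would dispose of the easy bookkeeping. For each $S\in\P$, the partition $\Q_S$ of $\{0,\dots,|S|-1\}$ pulls back via $i\mapsto v_i^S$ to a partition of $S$ whose blocks are exactly the $\widetilde T$ with $T\in\Q_S$. Since $\P$ itself partitions $[n]$, taking the union over $S\in\P$ yields a partition, which is $\chi_\P((\Q_S)_S)$.

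Next, the key structural observation: there is a well-defined surjection $\pi:\chi_\P((\Q_S)_S)\to\P$ sending $\widetilde T\subseteq S$ to $S$, and every \emph{generating} relation $\widetilde T\preceq\widetilde{T'}$ satisfies $\pi(\widetilde T)\preceq_\P \pi(\widetilde{T'})$ (strictly when $\pi(\widetilde T)\neq\pi(\widetilde{T'})$, by case (ii)). This monotonicity is preserved by transitive closure, so I get compatibility with $\P$ for the whole relation $\preceq_{\chi_\P((\Q_S)_S)}$. Antisymmetry then follows: a hypothetical cycle $\widetilde T\preceq\widetilde{T'}\preceq\widetilde T$ pushes to a cycle in $\P$, forcing $\pi(\widetilde T)=\pi(\widetilde{T'})=:S$ and, since any change of $\P$-block along a direct chain is strict, forcing every intermediate block in the two chains to also equal $S$. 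Each link in the chains is then a case-(i) relation $T_i\preceq_{\Q_S}T_{i+1}$, so antisymmetry of $\Q_S$ closes the argument. This same push-forward remark will immediately give both conditions of Definition~\ref{def:perm_preorder2}, hence $\chi_\P((\Q_S)_S)\leq_\ppo\P$.

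For condition (a), assume $\overline{\widetilde T}\cap\overline{\widetilde{T'}}\neq\emptyset$. If $\widetilde T$ and $\widetilde{T'}$ lie in different blocks $S\neq S'$ of $\P$, then $\overline{\widetilde T}\subseteq\overline S$ and $\overline{\widetilde{T'}}\subseteq\overline{S'}$ force $\overline S\cap\overline{S'}\neq\emptyset$, so $S$ and $S'$ are $\preceq_\P$-comparable and the defining case (ii) puts $\widetilde T$ and $\widetilde{T'}$ in relation. If instead both lie in the same $S$, I use the monotonicity $v_0^S<\cdots<v_{|S|-1}^S$: since $\min(\widetilde T)=v_{\min(T)}^S$ and $\max(\widetilde T)=v_{\max(T)}^S$ (and likewise for $T'$), the intervals $[\min(T),\max(T)]$ and $[\min(T'),\max(T')]$ overlap iff $[\min(\widetilde T),\max(\widetilde T)]$ and $[\min(\widetilde{T'}),\max(\widetilde{T'})]$ do. Hence $\overline T\cap\overline{T'}\neq\emptyset$ and condition (a) for $\Q_S$ yields a relation $T\preceq_{\Q_S}T'$ (or the reverse), triggering case (i). Condition (b) is essentially automatic: a cover relation cannot be a proper composite in the transitive closure (any intermediate element would contradict the cover property), so it must be one of the generating relations, whose defining case demanded overlap from the outset.

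The main obstacle — and the step most worth double-checking — is the antisymmetry argument, because the transitive closure interleaves case-(i) and case-(ii) steps and one needs the strictness-on-$\P$-changes observation to confine any cycle to a single block. Everything else is essentially dictated by the definitions once that push-forward $\pi$ is in place.
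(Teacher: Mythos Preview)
Your proof is correct and follows essentially the same approach as the paper. The paper's argument is terser---it asserts that antisymmetry ``follows'' from antisymmetry of $\preceq_\P$ and each $\preceq_{\Q_S}$, verifies condition (a) by the same case split (different block versus same block) you use, declares condition (b) ``clear from the construction,'' and deduces $\chi_\P((\Q_S)_S)\leq_\ppo\P$ from the chain decomposition---while your version makes the push-forward map $\pi$ and the strictness-on-block-change observation explicit, which is a cleaner way to package the antisymmetry step.
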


\begin{proof}
    We first show that  $\preceq_{\chi_\P((\Q_S)_S)}$ is a partial order. Since we have defined $\preceq_{\chi_\P((\Q_S)_S)}$ as the transitive closure of a reflexive relation, it suffices to show that $\preceq_{\chi_\P((\Q_S)_S)}$ is antisymmetric. To see this, we note that any relation $\widetilde{T} \preceq_{\chi_\P((\Q_S)_S)} \widetilde{T'}$ can be broken into a sequence of relations of the form (i) and (ii) from the definition of $\preceq_{\chi_\P((\Q_S)_S)}$. Since the partial orders $\preceq_\P$ and $\preceq_{\Q_S}$ for $S \in \P$ are all antisymmetric, it follows that $\preceq_{\chi_\P((\Q_S)_S)}$ will be antisymmetric as well.
    
    We now show that $\chi_\P((\Q_S)_S$ is a permutation preorder. It is clear from the construction that $\chi_\P((\Q_S)_S$ satisfies property (b) in Definition~\ref{def:perm_preorder}(2), so consider $S, S' \in \P$, $T \in \Q_S$, and $T' \in \Q_{S'}$ such that $\overline{\widetilde{T}} \cap \overline{\widetilde{T}'} \neq \emptyset$. Then in particular $\overline{S} \cap \overline{S'} \neq \emptyset$, and so without loss of generality $S \preceq_\P S'$ since $\P$ is a permutation preorder. If $S = S'$, if follows that $\overline{T} \cap \overline{T}' \neq \emptyset$. Since $\Q_S$ is a permutation preorder and the map $T \rightarrow\widetilde{T}$ given by $i \mapsto v_i^S$ is order-preserving, this means that without loss of generality $T \preceq_{\Q_S} T'$ . Thus $\widetilde{T} \preceq_{\chi_\P((\Q_S)_S} \widetilde{T'}$, and so property (a) in Definition~\ref{def:perm_preorder}(2) is satisfied in this case. The other possibility is that $S \neq S'$. But then again $\widetilde{T} \preceq_{\chi_\P((\Q_S)_S} \widetilde{T'}$ by construction. We conclude that $\chi_\P((\Q_S)_S$ is indeed a permutation preorder.
    
    It remains to show that $\chi_\P((\Q_S)_S \leq_{\ppo} \P$. This follows by breaking the relations of $\preceq_{\chi_\P((\Q_S)_S)}$ into chains as in the proof of antisymmetry and noting that $\preceq_\P$ is transitive.
\end{proof}

We now construct a candidate inverse for $\chi_\P$. Let $\Q \in \ppo(n)$ and suppose that $\Q \leq_\ppo \P$. For $S \in \P$, and $T \in \Q$ such that $T \subseteq S$, we denote $\widehat{T} = \left\{i \in [|S|-1] \mid v_i^S \in T\right\}$. As a partition of $[|S|-1]$, we then define
    \begin{equation} \xi_\P(\Q)_S = \bigcup_{T \in \Q\text{ s.t. } T \subseteq S} \widehat{T}.\end{equation}\label{eqn:xi}
To make this into a permutation preorder, consider $\widehat{T},\widehat{T'} \in \xi_\P(\Q)_S$.  We then set $\widehat{T} \preceq_{\xi_\P(\Q)_S} \widehat{T'}$ whenever $T \preceq_\Q \widehat{T'}$.

\begin{lemma}\label{lem:preorder2}
    $\xi_\P(\Q)_S$ (together with the relation $\preceq_{\xi_\P(\Q)_S}$) is a permutation preorder on $[|S|-1]$.
\end{lemma}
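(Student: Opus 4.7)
The plan is to verify three things in turn: that $\xi_\P(\Q)_S$ is a set partition of $[|S|-1]$, that $\preceq_{\xi_\P(\Q)_S}$ is a partial order, and that both conditions of Definition~\ref{def:perm_preorder}(2) hold. The key technical input throughout will be that the map $i \mapsto v_i^S$ is a strictly increasing bijection $[|S|-1] \to S$, so it preserves and reflects all of the order-theoretic data of interest (minima, maxima, closed integer supports, and the like).

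First I would note that, since $\Q \leq_\ppo \P$, the subcollection $\{T \in \Q : T \subseteq S\}$ is a partition of $S$, and pulling back through $v_\bullet^S$ produces a partition $\{\widehat{T}\}_{T \subseteq S}$ of $[|S|-1]$. Reflexivity, transitivity, and antisymmetry of $\preceq_{\xi_\P(\Q)_S}$ are then immediate from restricting $\preceq_\Q$ to this subcollection. For axiom (a) of Definition~\ref{def:perm_preorder}(2), if $\overline{\widehat{T}} \cap \overline{\widehat{T'}} \ni k$ then $v_k^S \in \overline{T} \cap \overline{T'}$ by monotonicity, so applying the corresponding axiom for $\Q$ yields (without loss of generality) $T \preceq_\Q T'$, and hence $\widehat{T} \preceq_{\xi_\P(\Q)_S} \widehat{T'}$ by definition.

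The hard part, and the one step that requires real care, is axiom (b): that any cover $\widehat{T} \prec_{\xi_\P(\Q)_S} \widehat{T'}$ satisfies $\overline{\widehat{T}} \cap \overline{\widehat{T'}} \neq \emptyset$. My strategy is to promote such a cover to a cover in $\Q$ itself. Suppose, for a contradiction, that there exists $T'' \in \Q \setminus \{T, T'\}$ with $T \prec_\Q T'' \prec_\Q T'$, and let $T''_\P \in \P$ denote the element containing $T''$. Then Definition~\ref{def:perm_preorder2}(2), applied to $\Q \leq_\ppo \P$, gives $S = T_\P \preceq_\P T''_\P \preceq_\P T'_\P = S$, so antisymmetry of $\preceq_\P$ forces $T''_\P = S$, i.e.\ $T'' \subseteq S$. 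Then $\widehat{T''} \in \xi_\P(\Q)_S$ strictly separates $\widehat{T}$ and $\widehat{T'}$, contradicting the cover hypothesis. So $T \prec_\Q T'$ is itself a cover in $\Q$, and axiom (b) applied to $\Q$ produces an integer $m \in \overline{T} \cap \overline{T'}$. Assuming without loss of generality $\min T \leq \min T'$, one sees that the index $j$ with $v_j^S = \min T'$ lies in $\overline{\widehat{T}} \cap \overline{\widehat{T'}}$ (using $v^S_{\max \widehat{T}} = \max T \geq \min T'$ and $v^S_{\min \widehat{T}} = \min T \leq \min T'$ together with the strict monotonicity of $v^S_\bullet$), completing the verification.
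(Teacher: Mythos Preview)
Your proof is correct and follows essentially the same approach as the paper: both rest on the fact that $i \mapsto v_i^S$ is strictly increasing, so that $\overline{\widehat{T}} \cap \overline{\widehat{T'}} \neq \emptyset$ if and only if $\overline{T} \cap \overline{T'} \neq \emptyset$. Your treatment of axiom~(b) is in fact more explicit than the paper's: you spell out, using condition~(2) of Definition~\ref{def:perm_preorder2}, why any $T'' \in \Q$ strictly between $T$ and $T'$ must already lie in $S$, so that a cover in $\xi_\P(\Q)_S$ lifts to a genuine cover in $\Q$ --- a step the paper's proof only gestures at.
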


\begin{proof}
    It follows immediately from the fact that $\preceq_\Q$ is a partial order and the fact that the association $v_i^S \mapsto i$ is order-preserving that each $\preceq_{\xi_\P(\Q)_S}$ is a partial order. The fact that $\xi_\P(\Q)_S$ is a permutation order then follows from the fact that $\overline{\widehat{T}} \cap \overline{\widehat{T'}} \neq \emptyset$ if and only if $\overline{T} \cap \overline{T'} \neq \emptyset$ for all $T, T' \in \Q$ such that $T \cup T' \subseteq S$. For example, we have that $\widehat{T} \preceq_{\xi_\P(\Q)_S} \widehat{T'}$ is a cover relation in $\xi_\P(\Q)_S$ if and only if $T \preceq_\Q T'$ and there does not exist $T'' \in \Q$ such that (i) $T'' \subseteq S$, (ii) $T \neq T'' \neq T'$, and (iii) $T \preceq_\Q T'' \preceq_\Q T'$. Finally, it is straightforward
\end{proof}

We denote $\xi_\P(\Q) := (\xi_\P(\Q)_S)_{S \in \P}$. We are now prepared to prove Proposition~\ref{prop:perm_preorder_recursive}.

\begin{proof}[Proof of Proposition~\ref{prop:perm_preorder_recursive}.]
    We will prove that $\chi_\P$ and $\xi_\P$ are order-preserving bijections.

    {\bf Step 1:} $\chi_\P$ is order-preserving. Suppose $(\Q_S)_{S} \leq_{\ppo} (\mathcal{R}_S)_{S}$. It is clear from the construction that $\chi_\P((\Q_S)_S)$ refines $\chi_\P((\mathcal{R}_S)_S)$ as a partition on $[n]$. Thus consider $S_1, S_2 \in \P$, $T_1 \in \Q_{S_1}$, and $T_2 \in \Q_{S_2}$ such that $\widehat{T_1} \preceq_{\chi_\P((\Q_S)_S)} \widehat{T_2}$. By induction, we can assume that this relation has either form (i) or form (ii) from the definition of $\chi_\P$. Suppose first the relation has form (i). Since $\Q_{S_1} \leq_{\ppo} \mathcal{R}_{S_1}$, it follows that there exist $T_1', T_2' \in \mathcal{R}_{S_1}$ such that $T_1 \subseteq T_1', T_2 \subseteq T_2'$, and $T_1' \preceq_{\mathcal{R}_{S_1}} T_2'$. Since the association $i \mapsto v_i^{S_1}$ is order preserving, then also $\widetilde{T_1'} \preceq_{\xi_\P((\mathcal{R}_S)_S)}\widetilde{T_2'}$ and we are done. Now suppose that the relation has form (ii). Then again there exist $T_1' \in \mathcal{R}_{S_1}$ and $T_2' \in \mathcal{R}_{S_2}$ such that $T_1 \subseteq T_1'$ and $T_2 \subseteq T_2'$. Since $S_1 \preceq_\P S_2$ by the assumption the relation has type (ii), we again conclude that $\widetilde{T_1'} \preceq_{\xi_\P((\mathcal{R}_S)_S)}\widetilde{T_2'}$.

    {\bf Step 2:} $\xi_\P$ is order-preserving. This follows using a similar argument as in Step 1.

    {\bf Step 3:} $\chi_\P$ is injective. Suppose $\chi_\P((\Q_S)_S) = \chi_\P((\mathcal{R}_S)_S)$. By construction, it follows that the underlying partitions of each $\Q_S = \mathcal{R}_S$ must coincide. Thus let $S \in \P$ and suppose $T \preceq_{\Q_S} T'$. Then there exists a chain $T = T_0 \preceq_{\Q_S} \cdots \preceq_{\Q_S} T_k = T'$ such that $\overline{T_i} \cap \overline{T_{i_1}} \neq \emptyset$ for all $0 \leq i < k$. By the definition of $\chi_\P$, we then have that $\overline{\widetilde{T_i}} \cap \overline{\widetilde{T_{i+1}}} \neq \emptyset$ and that $\widetilde{T_i} \preceq_{\chi_\P((\mathcal{R}_S)_S)} \widetilde{T_{i+1}}$ for all $i$. Since $\preceq_\P$ is antisymmetric, this means there exists a chain $T_i = T_i^0 \preceq_{\mathcal{R}_S} \cdots \preceq_{\mathcal{R}_S} T_i^{j_i} = T_{i+1}$. We conclude that $T_i \preceq_{\mathcal{R}_S} T_{i+1}$, and so $\Q_S \leq_\ppo \mathcal{R}_S$. By symmetry, it follows that $(\Q_S)_S = (\mathcal{R}_S)_S$.

    {\bf Step 4:} $\xi_\P$ is injective. This can be shown using a similar argument as in Step 3.

    {\bf Conclusion:} Since both $\xi_\P$ and $\chi_\P$ are injective, we have that both $\xi_\P \circ \chi_\P$ and $\chi_\P \circ \xi_\P$ are bijections. Since these compositions are also order-preserving, this implies that they must be the respective identities.
\end{proof}

\begin{example}\label{ex:ppo_reduction}
    Let $\P = \{\{0,2,3\}, \{1,4,5\}\} \in \ppo(5)$ with $S_1 := \{1,4,5\} \preceq_\P  \{0,2,3\} =: S_2$. In the notation above, we have
    $$v_0^{S_1} = 1,\qquad v_1^{S_1} = 4,\qquad v_2^{S_1} = 5,$$
    $$v_0^{S_2} = 0,\qquad v_1^{S_2} = 2,\qquad v_2^{S_2} = 3.$$
    Identifying each $v_i^{S_j}$ with $i$, we then let $\Q_{S_1} = \left\{\left\{v_0^{S_1},v_2^{S_1}\right\},\left\{v_1^{S_1}\right\}\right\}$ with $\left\{v_1^{S_1}\right\} \preceq_{\Q_{S_1}} \left\{v_0^{S_1},v_2^{S_1}\right\}$ and $\Q_{S_2} = \left\{\left\{v_0^{S_1},v_1^{S_2}\right\},\left\{v_2^{S_2}\right\}\right\}$ with $\preceq_{\Q_{S_2}}$ the empty relation. Then $$\mathcal{R}:= \chi_\P(\Q_{S_1},\Q_{S_2}) = \{\{0,2\},\{1,5\},\{3\},\{4\}\}$$
    with cover relations
    $$\{4\} \preceq_\mathcal{R} \{1,5\},\qquad \{1,5\} \preceq_\mathcal{R} \{0,2\},\qquad \{1,5\} \preceq_\mathcal{R} \{3\}.$$
    A visual depiction of the permutation preorders $\P$, $\Q_{S_1}$, and $\Q_{S_2}$ is given in Figure~\ref{fig:ppo_reduction}. Note that we do indeed have $\mathcal{R} \leq_{\ppo} \P$. Note also that the relation $\{4\} \preceq_\mathcal{R} \{3\}$ comes from taking the transitive closure at the end of the definition of $\chi_\P$. Finally, it is straightforward to verify that $\xi_\P(\chi_\P(\Q_{S_1},\Q_{S_2})) = (\Q_{S_1},\Q_{S_2})$.
\end{example}

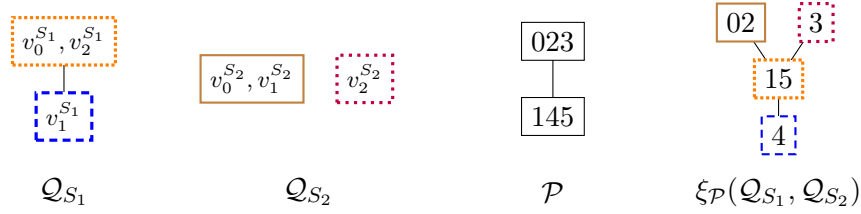
\begin{figure}
\begin{tikzpicture}
    \draw (0,-0.5)--(0,0.5);
    \node [draw=blue,fill = white,very thick, densely dashed] at (0,-0.5) {\footnotesize $v_1^{S_1}$};
    \node [draw=orange,fill = white,very thick, densely dotted] at (0,0.5) {\footnotesize $v_0^{S_1}, v_2^{S_1}$};

    \node at (0,-1.5) {$\Q_{S_1}$};

    \node [draw=purple,fill = white,very thick, dotted] at (4,0) {\footnotesize $v_2^{S_2}$};
    \node [draw=brown,fill = white,thick] at (2.5,0) {\footnotesize $v_0^{S_2}, v_1^{S_2}$};

    \node at (3.25,-1.5) {$\Q_{S_2}$};

    \draw (6.5,-0.5)--(6.5,0.5);
    \node [draw,fill = white] at (6.5,-0.5) {145};
    \node [draw,fill = white] at (6.5,0.5) {023};

    \node at (6.5,-1.5) {$\P$};

    \draw (9.5,0)--(9.5,-0.75);
    \draw (9.5,0)--(9,0.75);
    \draw (9.5,0)--(10,0.75);
    \node [draw=orange,fill = white,very thick,densely dotted] at (9.5,0) {15};
    \node [draw=blue,fill = white,thick,densely dashed] at (9.5,-0.75) {4};
    \node [draw=brown,fill = white,thick] at (9,0.75) {02};
    \node [draw=purple,fill = white,very thick,dotted] at (10,0.75) {3};

    \node at (9.5,-1.5) {$\xi_\P(\Q_{S_1},\Q_{S_2})$};

\end{tikzpicture}
\caption{The permutation preorders in Example~\ref{ex:ppo_reduction}. Boxes with the same color/texture represent the assignment of each $v_i^{S_j}$ in $[n]$. For example, $v_0^{S_1} = 1$ and $v_2^{S_1} = 5$.}\label{fig:ppo_reduction}
\end{figure}


\section{ppo-admissibility}\label{sec:admissibility}

In this section, we define a notion of ``admissibility'' for an arc with respect to a permutation preorder. We then use this notion to define a reduction process for permutation preorders and study the sequences of arcs corresponding to iterations of this reduction process.

\subsection{ppo-admissible arcs}\label{sec:ppo_adm}

In this section, we introduce \emph{ppo-admissible arcs} (Definition~\ref{def:ppo_admissible}). We show that these arcs correspond precisely to those shards which contain some shard intersection. Furthermore, we will see in Section~\ref{sec:wide} that these correspond to the ``bricks'' which live in some ``wide subcategory'' of the preprojective algebra (see Section~\ref{sec:background} for the definitions). The main result of this section is Proposition~\ref{prop:ppo_admissible_recursive}, in which we establish a compatibility between the notion of ppo-admissibility and the recursive nature of permutation preordered established in Section~\ref{sec:recursive}.

\begin{definition}\label{def:ppo_admissible}
    Let $\P \in \ppo(n)$ be a permutation preorder and let $\gamma = \left(\ell(\gamma),s_{\ell(\gamma)+1}\cdots s_{r(\gamma)}\right) \in \arc(n)$. We say that $\gamma$ is \emph{$\P$-ppo-admissible} if the following hold.
    \begin{enumerate}
        \item There exists $S_\gamma \in \P$ such that $\ell(\gamma), r(\gamma) \in S(\gamma)$.
        \item Let $S_\gamma \neq S \in \P$ and suppose that $S_\gamma \preceq_\P S$. Then $s_k = \textnormal{o}$ for all $\ell(\gamma) < k < r(\gamma)$ with $k \in S$.
        \item Let $S_\gamma \neq S \in \P$ and suppose that $S \preceq_\P S_\gamma$. Then $s_k = \textnormal{u}$ for all $\ell(\gamma) < k < r(\gamma)$ with $k \in S$.
    \end{enumerate}
    We denote by $\arc_\P(n)$ the set of $\P$-ppo-admissible arcs.
\end{definition}

\begin{example}\label{ex:ppo_adm}\
    \begin{enumerate}
        \item Let $\{[n]\} \in \ppo(n)$ be the permutation preorder consisting of a single partition element. Then every arc in $\arc(n)$ is $\{[n]\}$-$\ppo$-admissible.
        \item Let $\P \in \ppo(n)$ be the permutation preorder where each $i \in [n]$ lies in its own partition element. Note that this means $\preceq_{\P}$ is the empty relation. Then there are no arcs which are $\P$-$\ppo$-admissible.
        \item Let $w \in \mathfrak{S}_{n+1}$. Then every arc in the noncrossing arc diagram $\delta(w)$ is $\mu(w)$-$\ppo$-admissible. In particular, this example specializes to (1) and (2) by taking $w = n(n-1)\cdots 0$ and $w = 01\cdots n$, respectively.
        \item Consider $w$ as in Example~\ref{ex:bijection_ppo} (see Figure~\ref{fig:bijection_ppo}). Then there are six arcs which are $\mu(w)$-$\ppo$-admissible. They are the four arcs in $\delta(w)$ together with the arcs formed by gluing together the ends of $\gamma_{w,4}$ and $\gamma_{w,3}$ at their contested endpoint and perturbing the resulting ``arc'' so that it passes either above or below the node 4. For example, an arc connecting the nodes 5 and 6 will not be $\mu(w)$-$\ppo$-admissible because its endpoints will lie in distinct partition elements of $\mu(w)$. Likewise, an arc connecting the nodes 2 and 4 while passing below the node 3 will not be $\mu(w)$-$\ppo$-admissible because it fails condition (3) in Definition~\ref{def:ppo_admissible} for $S_\gamma = \{2,4,8\}$ and $S = \{3,7\}$.
    \end{enumerate}
\end{example}

Definition~\ref{def:ppo_admissible} is partially motivated by the following result.

\begin{proposition}[Proposition~\ref{prop:mainA}]\label{prop:ppo_admissible_shard}
	Let $\P \in \ppo(n)$ and $\gamma \in \arc(n)$. Then $\gamma$ is $\P$-ppo-admissible if and only if $\Psi(\P) \subseteq \Sigma(\gamma)$.
\end{proposition}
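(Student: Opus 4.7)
The plan is to reduce the containment $\Psi(\P) \subseteq \Sigma(\gamma)$ to a list of linear (in)equality verifications, exploiting the fact that both sets are polyhedral cones presented explicitly by weak (in)equalities in the coordinates $x_0, \ldots, x_n$. Thus $\Psi(\P) \subseteq \Sigma(\gamma)$ holds if and only if every defining (in)equality of $\Sigma(\gamma)$ is a consequence of those defining $\Psi(\P)$. I would first catalogue the three types of defining relations for $\Sigma(\gamma)$ --- the endpoint equality $x_{\ell(\gamma)} = x_{r(\gamma)}$, the under-inequalities involving $k \in S_u$, and the over-inequalities involving $k \in S_o$ --- and then match them against the relations $x_i \leq x_j$ for $S_i \preceq_\P S_j$ that cut out $\Psi(\P)$.

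For the forward direction, suppose $\gamma$ is $\P$-ppo-admissible with partition element $S_\gamma$ containing $\ell(\gamma)$ and $r(\gamma)$, and let $x \in \Psi(\P)$. Condition (1) of Definition~\ref{def:ppo_admissible} forces $S_{\ell(\gamma)} = S_\gamma = S_{r(\gamma)}$, so the endpoint equality holds on $\Psi(\P)$. For an interior index $\ell(\gamma) < k < r(\gamma)$, I would split on whether $k \in S_\gamma$: if so, then $x_k = x_{\ell(\gamma)}$ and the relevant inequality is trivially an equality. Otherwise, the overlap $k \in \overline{S_\gamma} \cap \overline{S_k}$ together with Definition~\ref{def:perm_preorder} forces $S_k$ and $S_\gamma$ to be $\preceq_\P$-comparable, and the contrapositives of conditions (2)--(3) of Definition~\ref{def:ppo_admissible} pin down the direction of the comparison according to whether $s_k = \textnormal{u}$ or $s_k = \textnormal{o}$. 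Substituting this relation into the defining inequalities of $\Psi(\P)$ produces exactly the inequality between $x_k$ and the endpoint coordinates demanded by $\Sigma(\gamma)$.

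For the reverse direction, suppose $\Psi(\P) \subseteq \Sigma(\gamma)$. Since $\Sigma(\gamma)$ lies inside the hyperplane $x_{\ell(\gamma)} = x_{r(\gamma)}$, the containment forces this equality to hold throughout $\Psi(\P)$. However, the equalities forced globally on $\Psi(\P)$ are precisely those of the form $x_i = x_j$ with $S_i = S_j$, so $\ell(\gamma)$ and $r(\gamma)$ must lie in a common partition element $S_\gamma$, giving condition (1). Conditions (2) and (3) are then verified by contrapositive: assuming some interior $k \in S \neq S_\gamma$ witnesses a failure of one of these conditions, I would produce an explicit point of $\Psi(\P)$ on which the corresponding inequality of $\Sigma(\gamma)$ fails. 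Such a point can be built by choosing a linear extension of $\preceq_\P$ and assigning real values to the partition elements in strict accordance with it, slightly perturbing the value on $S$ to break the designated inequality of $\Sigma(\gamma)$ while preserving all relations of $\Psi(\P)$.

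The main obstacle is the bookkeeping in this reverse construction: the witness point must simultaneously respect the full transitive closure of $\preceq_\P$ and violate the specific $\Sigma(\gamma)$-inequality under consideration, and verifying that this is possible requires checking that the admissibility failure does not also force the offending relation to be an $\Psi(\P)$-equality in disguise. This reduces to the overlap axiom of permutation preorders, which is precisely the mechanism that converts ``$S_k$ incomparable to $S_\gamma$'' into an actual $\preceq_\P$-relation between them, and so controls which ordering constraints $\Psi(\P)$ imposes. Aside from this, the argument is essentially a careful case analysis unpacking the definitions.
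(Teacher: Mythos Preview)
Your proposal is correct and takes essentially the same approach as the paper: both directions proceed by matching the defining (in)equalities of $\Sigma(\gamma)$ against those of $\Psi(\P)$, using the overlap axiom of permutation preorders to guarantee comparability of $S_\gamma$ with any $S_k$ for interior $k$. The only difference is one of presentation in the reverse direction: where you argue by contrapositive and explicitly build a witness point via a linear extension of $\preceq_\P$, the paper argues directly, observing that once $S_{\ell(\gamma)} = S_{r(\gamma)}$ is established, the comparability of $S_i$ with $S_\gamma$ forces the value of $s_i$ (the implicit witness being exactly the kind of point you describe). Your version makes explicit the step the paper leaves to the reader, but the mathematical content is the same.
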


\begin{proof}
	For $i \in [n]$, let $S_i \in \P$ be the partition element for which $i \in S_i$. Similarly denote $S_u, S_o$, and $S_e$ as in Definition~\ref{def:shards}.
	
	Suppose first that $\gamma$ is $\P$-ppo-admissible and let $x \in \Sigma(\gamma)$. Let $S_\gamma$ be as in Definition~\ref{def:ppo_admissible}. Since $S_e \subseteq S_\gamma$, we have that $x_{\ell(\gamma)} = x_{r(\gamma)}$. Now let $i \in S_u$. Since $i \in \overline{S_\gamma}$, this means $S_i \preceq_\P S_\gamma$, and thus that $x_i \leq x_{\ell(\gamma)}$ and $x_i \leq x_{r(\gamma)}$. The situation with $i \in S_o$ is similar. We conclude that $x \in \Sigma(\gamma)$.
	
	Now suppose that $\Sigma(\gamma) \supseteq \psi(\P)$. Then $x_{\ell(\gamma)} = x_{r(\gamma)}$ for all $x \in \Psi(\P)$, and so $S_{\ell(\gamma)} = S_{r(\gamma)}$. Now consider $\ell(\gamma) < i < r(\gamma)$. If $i \in S_{\ell(\gamma)}$ there is nothing to show. Otherwise either $S_{\ell(\gamma)} \preceq_\P S_i$ (and so $i \in S_u$) or $S_i \preceq_\P S_{\ell(\gamma)}$ (and so $i \in S_o$). We conclude that $\gamma$ is $\P$-ppo-admissible.
	\end{proof}
	
As an immediate consequence of Proposition~\ref{prop:ppo_admissible_shard}, we obtain the following.

\begin{lemma}\label{lem:ppo_admissible}
    Let $\P \leq_\ppo \Q$ be permutation preorders. Then every $\P$-ppo-admissible arc is also $\Q$-ppo-admissible.
\end{lemma}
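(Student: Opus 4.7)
The plan is to derive this immediately by combining Proposition~\ref{prop:ppo_admissible_shard} with the order-reversing bijection between permutation preorders and shard intersections from Proposition~\ref{prop:shards}(3). The strategy is purely geometric: translate the hypothesis on $\gamma$ into a containment of cones, use the reverse-containment interpretation of $\leq_\ppo$ to chain the containments, then translate back.

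Concretely, I would argue as follows. Suppose $\gamma$ is $\P$-ppo-admissible. By Proposition~\ref{prop:ppo_admissible_shard}, this is equivalent to the inclusion $\Psi(\P) \subseteq \Sigma(\gamma)$. Since $\P \leq_\ppo \Q$, Proposition~\ref{prop:shards}(3) gives $\Psi(\Q) \subseteq \Psi(\P)$. Composing the two inclusions yields $\Psi(\Q) \subseteq \Sigma(\gamma)$, and a second application of Proposition~\ref{prop:ppo_admissible_shard} (now in the direction from the shard containment to admissibility) shows that $\gamma$ is $\Q$-ppo-admissible.

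Since the result follows by a two-line chain of containments with no combinatorial case analysis, there is essentially no obstacle, which is exactly why the lemma is labeled ``an immediate consequence''. The only thing to double-check is that one is invoking the correct direction of Proposition~\ref{prop:shards}(3) (refinement-of-partitions corresponds to reverse-containment of cones) so that the chain of inclusions goes the right way; once this is set up correctly, the conclusion is automatic.
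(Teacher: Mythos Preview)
Your proposal is correct and is exactly the argument the paper has in mind: the lemma is stated immediately after Proposition~\ref{prop:ppo_admissible_shard} as ``an immediate consequence'' thereof, and the only additional ingredient is the reverse-containment characterization from Proposition~\ref{prop:shards}(3), which you invoke correctly.
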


We conclude this section by establishing 
the relationship between ppo-admissibility and the recursive nature of permutation preorders described in Section~\ref{sec:recursive}. In what follows, we freely borrow notation from the arguments in Section~\ref{sec:recursive}.

\begin{proposition}\label{prop:ppo_admissible_recursive}
    Let $\P \in \ppo(n)$.
    \begin{enumerate}
    \item Then there is a bijection
    $$\phi_\P: \arc_\P(n) \rightarrow \bigsqcup_{S \in \P} \arc(|S|-1)$$
    given as follows. Let $\gamma = \left(\ell(\gamma),s_{\ell(\gamma)+1}\cdots s_{r(\gamma)}\right)$. Then there exists a unique $S_\gamma \in \P$ such that $\ell(\gamma) =: v_i^{S_\gamma}, r(\gamma) \in S_\gamma$. Let $s \in \word(\textnormal{oue})$ be the word obtained from $s_{\ell(\gamma)+1}\cdots s_{r(\gamma)}$ by deleting every letter $s_j$ with $j \notin S_\gamma$. We denote $\phi_\P(\gamma) := (i, s) \in \arc(|S_\gamma|-1)$.
    \item Let $\Q \leq_{\ppo}\P$, and let $\gamma \in \arc_{\P}(n)$. Let $S_\gamma \in \P$ be as in (1). Then $\gamma$ is $\Q$-$\ppo$-admissible if and only if $\phi_\P(\gamma)$ is $\xi_\P(\Q)_{S_\gamma}$-$\ppo$-admissible. In particular, $\phi_\P$ restricts to a bijection
    $$\phi_\P : \arc_\Q(n) \rightarrow \bigsqcup_{S \in \P} \arc_{\xi_\P(\Q)_{S}}(|S|-1).$$
    \end{enumerate}
\end{proposition}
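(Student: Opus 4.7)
For part (1), the plan is to first check that $\phi_\P$ produces a valid arc and then to write down an explicit inverse. To see the arc axioms hold for $\phi_\P(\gamma) = (i,s)$, I would use that $r(\gamma) \in S_\gamma$ (by condition (1) of $\P$-ppo-admissibility), so $r(\gamma) = v_{j'}^{S_\gamma}$ for some $j'$ with $i < j' \leq |S_\gamma|-1$; the terminal letter $s_{r(\gamma)} = \textnormal{e}$ then survives the deletion and sits in the final position of $s$, while non-$\textnormal{e}$-ness of the internal letters of $s$ is inherited from the corresponding axiom for $\gamma$.

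For the inverse $\psi_\P$, given $(i,s') \in \arc(|S|-1)$ for some $S \in \P$, I would set $\ell := v_i^S$ and take $r := v_{j'}^S$ where $j'$ is the right endpoint of $(i,s')$, then build the internal word position by position: at $k = v_m^S \in S$ take the $m$th letter of $s'$, and at $k \notin S$ use that $\min(S) \leq \ell < k < r \leq \max(S)$ forces $k \in \overline{S} \cap \overline{S_k}$, where $S_k \in \P$ contains $k$; thus $S$ and $S_k$ are $\preceq_\P$-comparable and I can set $s_k := \textnormal{u}$ if $S_k \preceq_\P S$ and $s_k := \textnormal{o}$ otherwise. The resulting tuple is $\P$-ppo-admissible by construction, and $\phi_\P \circ \psi_\P = \mathrm{id}$ is immediate. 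The identity $\psi_\P \circ \phi_\P = \mathrm{id}$ uses that conditions (2)--(3) of Definition~\ref{def:ppo_admissible} force the value of $s_k$ at any $k \notin S_\gamma$ to be exactly the value prescribed by $\psi_\P$.

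For part (2), the main observation I plan to exploit is that, when $\Q \leq_\ppo \P$, the $\Q$-ppo-admissibility conditions on $s_k$ at positions $k \notin S_\gamma$ are already implied by the $\P$-ppo-admissibility of $\gamma$. Indeed, letting $T_\gamma \in \Q$ denote the (necessarily unique) partition element containing $\ell(\gamma)$ and $r(\gamma)$ when it exists, one has $T_\gamma \subseteq S_\gamma$ by $\Q \leq_\ppo \P$; for $k \in T' \in \Q$ with $T' \subseteq S' \neq S_\gamma$, the relation $k \in \overline{T_\gamma} \cap \overline{T'}$ forces $T_\gamma$ and $T'$ to be $\preceq_\Q$-comparable, and the compatibility built into $\Q \leq_\ppo \P$ forces the direction of this comparison to agree with that of $S_\gamma$ and $S'$ under $\preceq_\P$. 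Thus the $\Q$-admissibility constraint on $s_k$ coincides with the $\P$-admissibility constraint already satisfied, and $\Q$-ppo-admissibility of $\gamma$ reduces to the existence of $T_\gamma$ together with the constraints on $s_k$ for $k \in S_\gamma$.

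Both reduced conditions then translate transparently to $\xi_\P(\Q)_{S_\gamma}$-ppo-admissibility of $\phi_\P(\gamma)$: writing $k = v_m^{S_\gamma}$ for $k \in S_\gamma$, the letter $s_k$ equals the $m$th letter of $\phi_\P(\gamma)$, and the order-preserving correspondence $T \leftrightarrow \widehat{T}$ from Section~\ref{sec:recursive} matches the remaining conditions term by term. The bijective restriction $\arc_\Q(n) \to \bigsqcup_S \arc_{\xi_\P(\Q)_S}(|S|-1)$ then follows from part (1) combined with this equivalence. The main bookkeeping obstacle I anticipate is verifying the forced-direction step carefully; everything else amounts to unwinding the definitions of $\phi_\P$, $\xi_\P$, and ppo-admissibility in parallel.
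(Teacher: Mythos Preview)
Your proposal is correct and follows essentially the same approach as the paper. For part~(1) you construct the same explicit inverse $\psi_\P$ that the paper writes down; for part~(2) both arguments split on whether the relevant $\Q$-block $T'$ lies inside $S_\gamma$ or not, with the outside case handled by $\P$-admissibility plus the compatibility condition in $\Q \leq_{\ppo} \P$, and the inside case handled via the order-preserving correspondence $T \leftrightarrow \widehat{T}$---your packaging (first eliminate the $k \notin S_\gamma$ constraints, then translate the rest) is perhaps slightly cleaner, but the content is the same.
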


\begin{proof}
    (1) It is clear that $\phi_\P$ gives a surjective map $\arc_\P(n) \rightarrow \bigsqcup_{S \in \P} \arc(|S|-1)$. Thus let $S \in \P$ and $\rho = \left(\ell(\rho),s_{\ell(\rho)+1}\cdots s_{r(\rho)}\right) \in \arc(|S|-1)$. We denote $\psi_\P(\rho) = \left(\ell(\psi_\P(\rho)),s'_{\ell(\psi_\P(\rho))+1}\cdots s'_{r(\psi_\P(\rho))}\right)$, the unique arc in $\arc(n)$ which satisfies all of the following.
    \begin{enumerate}
        \item $\ell(\psi_\P(\rho)) = v_{\ell(\rho)}^S$ and $r(\psi_\P(\rho)) = v_{r(\rho)}^S$.
        \item $s'_{v_i^S} = \textnormal{o}$ if and only if $s_i = \text{o}$ for all $\ell(\rho) < i < r(\rho)$.
        \item Let $v_{\ell(\rho)}^S < i < v_{r(\rho)}^S$ such that $i \notin S$, and let $T \in \P$ such that $i \in T$. Then $s'_i = \textnormal{u}$ if and only if $T \preceq_\P S$.
    \end{enumerate}
    It is an immediate consequence of the construction that $\psi_\P(\rho)$ is $\P$-$\ppo$-admissible and that $\phi_\P$ and $\psi_\P$ are inverses.

    (2) Let $\gamma$ and $\ell(\gamma) = v_i^{S_\gamma}$ be as in the statement of (1). Denote $r' := r(\phi_\P(\gamma))$ and $\phi_\P(\gamma) = (i,s'_{i+1}\cdots s'_{r'})$.

    Suppose that $\phi_\P(\gamma)$ is $\xi_\P(\Q)_{S_\gamma}$-$\ppo$-admissible. Then there exists $T_\gamma \in \Q$ with $T_\gamma \subseteq S_\gamma$ such that $\ell(\gamma), r(\gamma) \in T_\gamma$. That is, $\gamma$ satisfies condition (1) in the definition of $\Q$-$\ppo$-admissibility (Definition~\ref{def:ppo_admissible}).

    Now let $T_\gamma \neq T \in \P$ and suppose that $T_\gamma \preceq_\Q T$. Suppose there exists $\ell(\gamma) < k < r(\gamma)$ with $k \in T$. In particular, this means $\overline{T} \cap \overline{T_\gamma} \neq \emptyset$. We then have two possibilities.

    Suppose first that $T \subseteq S_\gamma$. Then by the construction of $\xi_\P$, we have that $\widehat{T_\gamma} \preceq_{\xi_\P(\Q} \widehat{T}$. Since $\phi_\P(\gamma)$ is $\xi_\P(\Q)_{S_\gamma}$-$\ppo$-admissible, it follows that $s_{v_k^{S_\gamma}} = s'_k = \textnormal{o}$.

    Now suppose that $T \not \subseteq S_\gamma$. Then there exists $S_\gamma \neq S \in \P$ such that $T \subseteq S$. In particular, we have that $\emptyset \neq T_\gamma \cap T \subseteq S_\gamma \cap S$, and so $S_\gamma \preceq_\P S$ by the assumption that $\Q \leq_{\ppo} \P$. Since $\gamma$ is $\P$-$\ppo$-admissible, it follows that $s_k = \textnormal{o}$.

    We have thus shown that $\gamma$ satisfies condition (2) in the definition of $\Q$-$\ppo$-admissibility (Definition~\ref{def:ppo_admissible}). The proof that is also satisfies condition (3) is analogous.

    We have proved that if $\phi_\P(\gamma)$ is $\xi_\P(\Q)_{S_\gamma}$-$\ppo$-admissible then $\gamma$ is $\Q$-$\ppo$-admissible. The reverse implication follows from a similar argument. The ``in particular'' part of the statement is then an immediate consequence of Lemma~\ref{lem:ppo_admissible}.
\end{proof}

\begin{example}
    Let $w, \delta(w)$, and $\mu(w)$ be as in Example~\ref{ex:bijection_ppo} (see Figure~\ref{fig:bijection_ppo}). Let $\gamma$ and $\gamma'$ be the arcs formed by gluing together the arcs $\gamma_{w,4}$ and $\gamma_{w,3}$ at their contested endpoint and perturbing upwards and downwards at node 4, respectively. Then $\phi_\P(\gamma)$ and $\phi_\P(\gamma')$ are the arcs on 3 nodes which connect 0 and 2 and pass above and below the node 1, respectively. Note in particular that $\psi_\P(\phi_\P(\gamma)) = \gamma$ and $\psi_\P(\phi_\P(\gamma')) = \gamma'$ pass above the nodes 3 and 7 and below the nodes 5 and 6 as a result of the relations in $\mu(w)$.
\end{example}

One consequence of Proposition~\ref{prop:ppo_admissible_recursive} is the following.

\begin{corollary}\label{cor:clockwise_reduction}
    Let $\P \in \ppo(n)$ and let $\gamma, \rho \in \arc_\P(n)$. Let $S_\gamma, S_\rho \in \P$ be the partition elements for which $\ell(\gamma),r(\gamma) \in S_\gamma$ and $\ell(\rho),r(\rho) \in S_\rho$.
    \begin{enumerate}
        \item Suppose $S_\gamma \neq S_\rho$. Then $\gamma$ and $\rho$ do not have a shared endpoint, a contested endpoint, or a nontrivial crossing.
        \item Suppose $S_\gamma = S_\rho$. Then:
            \begin{enumerate}
            	\item The following are equivalent:
				\begin{enumerate}
					\item $\ell(\gamma) = \ell(\rho)$ (resp. $r(\gamma) = r(\rho)$) and $\rho$ is clockwise of $\gamma$.
          				\item $\ell(\phi_\P(\gamma)) = \ell(\phi_\P(\rho))$ (resp. $r(\phi_\P(\gamma)) = r(\rho_\P(\rho))$) and $\phi_\P(\rho)$ is clockwise of $\phi_\P(\gamma)$.
				\end{enumerate}
            \item The following are equivalent:
				\begin{enumerate}
					\item There is a nontrivial crossing directed from $\gamma$ to $\rho$.
          				\item There is a nontrivial crossing directed from $\phi_\P(\gamma)$ to $\phi_\P(\rho)$.
				\end{enumerate}
	\item The following are equivalent:
				\begin{enumerate}
					\item $\gamma$ and $\rho$ have a contested endpoint.
          				\item $\phi_\P(\gamma)$ and $\phi_\P(\rho)$ have a contested endpoint.
				\end{enumerate}
        \end{enumerate}
    \end{enumerate}
\end{corollary}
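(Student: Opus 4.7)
The plan is to handle (1) by a case analysis combining Remark~\ref{rem:crossing} with the letter constraints of $\P$-ppo-admissibility, and to handle (2) by observing that $\phi_\P$ preserves verbatim the combinatorial data entering the definitions of shared/contested endpoints, clockwise order, and directed crossings.

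For (1), distinctness of $S_\gamma$ and $S_\rho$ forces them to be disjoint, so there are no shared or contested endpoints. Suppose there is a nontrivial crossing. The overlap of endpoint ranges forces $\overline{S_\gamma}\cap\overline{S_\rho}\neq\emptyset$, hence (WLOG) $S_\gamma\preceq_\P S_\rho$. I pick witnesses $i<j$ from Remark~\ref{rem:crossing}; by the symmetry of swapping the two arcs it suffices to treat case (3a). If $j$ lies strictly inside both arcs, then $s_j,t_j\in\{\textnormal{u},\textnormal{o}\}$ and (3a)(i) forces $s_j=\textnormal{u}$, $t_j=\textnormal{o}$. Whether the partition element containing $j$ is $S_\gamma$, $S_\rho$, or a third $S$ comparable to both (by overlap at $j$ and transitivity of $\preceq_\P$), conditions (2)/(3) of Definition~\ref{def:ppo_admissible} applied to one or both of $\gamma,\rho$ force $s_j=t_j$, a contradiction. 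Boundary placements ($j=r(\gamma)$ or $j=r(\rho)$) reduce to the same contradiction once one notes that $\ell(\gamma)\notin S_\rho$ and $\ell(\rho)\notin S_\gamma$ force $j$ to be strictly interior to the other arc.

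For (2), let $S:=S_\gamma=S_\rho$. The map $k\mapsto v_k^S$ is an order-preserving bijection between $[|S|-1]$ and $S$, so endpoint equalities and inequalities transfer directly; this proves (c) and the endpoint halves of (a). The clockwise conditions of Definition~\ref{def:clockwise}(1) inspect letters of $\gamma$ and $\rho$ at positions in $S$, which $\phi_\P$ retains verbatim by construction. Combined with (b), this yields (a).

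The crux is (b). I will show that any witness pair $i<j$ from Remark~\ref{rem:crossing}(3a) for a directed crossing $\gamma\to\rho$ can be chosen in $S$. The key observation is that at any interior position $k\notin S$, the partition element $S_k$ containing $k$ meets $\overline{S}$ and is therefore $\preceq_\P$-comparable to $S$; conditions (2)/(3) of Definition~\ref{def:ppo_admissible} applied to both $\gamma$ and $\rho$ then force $s_k=t_k\in\{\textnormal{u},\textnormal{o}\}$. For $j=k$ this contradicts $s_j\leq\textnormal{e}\leq t_j$; for interior $i=k$ it rules out the simultaneous requirements $s_i=\textnormal{o}$, $t_i=\textnormal{u}$. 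Boundary values of $i,j$ lie in $S$ automatically, since the endpoints of $\gamma$ and $\rho$ are in $S$. Hence the witnesses can be chosen in $S$ and translate bijectively under $v_k^S\leftrightarrow k$ into witnesses for $\phi_\P(\gamma)\to\phi_\P(\rho)$, with all letter conditions preserved; the argument reverses to lift witnesses back. The main obstacle is the bookkeeping in (1) when a witness coincides with an endpoint of one arc and one must reexamine Remark~\ref{rem:crossing}'s side conditions instead of the interior inequality.
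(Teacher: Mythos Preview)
Your approach is the same as the paper's---unwind the definitions of $\phi_\P$ and ppo-admissibility and appeal to Remark~\ref{rem:crossing}---and your treatment of (2) is correct and in fact more detailed than the paper's one-line references. However, there is a genuine gap in your argument for (1).

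You make two ``without loss of generality'' reductions: first $S_\gamma \preceq_\P S_\rho$, and then to case (3a) of Remark~\ref{rem:crossing} ``by the symmetry of swapping the two arcs''. These are incompatible: swapping $\gamma$ and $\rho$ exchanges (3a) and (3b) but also reverses the assumed direction of $\preceq_\P$, so you cannot impose both simultaneously. Concretely, if you keep $S_\gamma \preceq_\P S_\rho$ and land in case (3b), then at an interior witness $j$ you have $s_j = \textnormal{o}$ and $t_j = \textnormal{u}$; taking the partition element containing $j$ to be $S_\gamma$ (so condition~(3) of Definition~\ref{def:ppo_admissible} for $\rho$ gives $t_j = \textnormal{u}$, consistent) yields no contradiction from either arc. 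Looking at $j$ alone does not suffice. The paper's proof avoids this by \emph{not} assuming a direction for $\preceq_\P$: instead it uses the witness $i$ (via $t_i = \textnormal{u}$ in the chosen sub-case) to \emph{derive} $S_\gamma \preceq_\P S_\rho$, and then uses $j$ to obtain $S_\rho \preceq_\P S \preceq_\P S_\gamma$ for the block $S$ containing $j$, closing the sandwich to force $S_\gamma = S_\rho$. Your argument is easily repaired along these lines---use $i$ to pin down the direction of $\preceq_\P$ rather than assuming it, then run your analysis at $j$---but as written the two reductions together leave case (3b) unhandled.
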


\begin{proof}
    (1) We prove the contrapositive. If $\gamma$ and $\rho$ have either a shared endpoint or a contested endpoint, then necessarily $S_1 = S_2$ and we are done. Thus suppose $\gamma = \left(\ell(\gamma),s_{\ell(\gamma)+1}\cdots s_{r(\gamma)}\right)$ and $\rho = \left(\ell(\rho),t_{\ell(\rho)+1}\cdots t_{r(\rho)}\right)$ have a nontrivial crossing. Up to interchanging the roles of $\gamma$ and $\rho$, we can choose nodes $i < j$ which satisfy conditions (1), (2), and (3a) in Remark~\ref{rem:crossing}. We will prove the result in the case $\ell(\gamma) = i \neq \ell(\rho)$ and $s_j < t_j$, the other cases being similar. In this case, we have that $\ell(\rho) < \ell(\gamma) < r(\rho)$, and so $S_\gamma$ and $S_\rho$ are related by $\preceq_\P$. The assumption that $t_i = \text{u}$ then implies that $S_\gamma \preceq_\P S_\rho$. Now let $S \in \P$ such that $j \in S$. Since $s_j \in \{\text{u},\text{e}\}$, we have $S \preceq_\P S_\gamma$. Similarly, since $t_j \in \{\text{e},\text{o}\}$, we have $S_\rho \preceq_\P S$. We conclude that $S_\gamma = S_\rho$.

    (2a) This follows from Definition~\ref{def:clockwise} together with the definition of $\phi_\P$.
    
    (2b) This follows from Definition~\ref{def:hom_direction} and Remark~\ref{rem:crossing} together with the definition of $\phi_\P$.
    
    (2c) This follows immediately from the definition of $\phi_\P$.
\end{proof}

\subsection{ppo-reduction}\label{sec:ppo_adm2}

In this section, we introduce \emph{ppo-reduction} (Definitions~\ref{def:ppo_reduction}). We then use this definitions to construct an edge-labeling of $\ppo(n)$.

\begin{definition}\label{def:ppo_reduction}
    Let $\P \in \ppo(n)$ be a permutation perorder, and let $\gamma = \left(\ell(\gamma),s_{\ell(\gamma)+1}\cdots s_{r(\gamma)}\right)$ be a $\P$-ppo-admissible arc. Then the \emph{ppo-reduction} of $\P$ by $\gamma$, denoted $\J_\P(\gamma)$, is the permutation preorder defined as follows.
    \begin{itemize}
        \item Let $S_\gamma \in \P$ such that $\ell(\gamma), r(\gamma) \in S_\gamma$. Denote
        \begin{eqnarray*}
            U_\P(\gamma) &=& \{i \in S_\gamma \mid i < \ell(\gamma)\} \cup \{i \in S_\gamma \mid \ell(\gamma) < i < r(\gamma) \text{ and } s_i = \textnormal{u}\} \cup \{r(\gamma)\}\\
            L_\P(\gamma) &=& \{\ell(\gamma)\} \cup \{i \in S_\gamma \mid \ell(\gamma) < i < r(\gamma) \text{ and } s_i = \textnormal{o}\} \cup \{i \in S_\gamma) \mid r(\gamma) < i\}.
        \end{eqnarray*}
        As a partition of $[n]$, we set $\J_\P(\gamma) = (\P \setminus \{S_\gamma\}) \cup \{U_\P(\gamma), L_\P(\gamma)\}$.
        \item For $T, T' \in \J_\P(\gamma)$ such that $\overline{T} \cap \overline{T'} \neq \emptyset$, we set $T \preceq_{\J_\P(\gamma)} T'$ if one of the following hold.
        \begin{enumerate}
            \item $T, T' \notin \{U_\P(\gamma), L_\P(\gamma)\}$ and $T \preceq_\P T'$.
            \item $T \in \{U_\P(\gamma), L_\P(\gamma)\}$ and $S_\gamma \preceq_\P T'$. (In particular, $T' \notin \{U_\P(\gamma), L_\P(\gamma)\}$ in this case.)
            \item $T' \in \{U_\P(\gamma), L_\P(\gamma)\}$ and $T\preceq_\P S_\gamma$. (In particular, $T \notin \{U_\P(\gamma), L_\P(\gamma)\}$ in this case.)
            \item Either $T = T'$ or $T = U_\P(\gamma)$ and $T' = L_\P(\gamma)$.
        \end{enumerate}
        We then take $\preceq_{\J_\P(\gamma)}$ to be the transitive closure of this relation.
    \end{itemize}
    Note that $\J_\P(\gamma)$, together with the partial order $\preceq_{\J_\P(\gamma)}$, is indeed a permutation preorder which satisfies $\J_\P(\gamma) \leq_{\ppo} \P$.
\end{definition}

\begin{remark}\label{rem:cover_rel}
    Note that the number of partition elements increases by one when we transition from $\P$ to $\J_\P(\gamma)$. It follows that $\J_\P(\gamma) \leq_{\ppo} \P$ is a cover relation in $\ppo(n)$ by Proposition~\ref{prop:covers_ppo}. Moreover, we will show in Theorem~\ref{thm:edge-labeling} all cover relations are of this form.
\end{remark}

\begin{remark} The notation $\J_\P(\gamma)$ is intentionally based on that used to denote the ``$\tau$-perpendicular subcategory'' of some module (see Definition~\ref{def:tau_in_wide}(2) below). The precise relationship between these two reduction processes is described in Lemma~\ref{lem:tau_perp}.
\end{remark}

\begin{remark}\label{rem:ppo_reduction}
    The partial order $\preceq_{\J_\P(\gamma)}$ can also be described as follows. Define $\preceq'_{\J_\P(\gamma)}$ as in the second bullet point of Definition~\ref{def:ppo_reduction}, but without the assumption that $\overline{T}\cap \overline{T'} \neq \emptyset$. Then $\preceq'_{\J_\P(\gamma)}$ is a partial order which satisfies Definition~\ref{def:perm_preorder}(2a). The partial order $\preceq_{\J_\P(\gamma)}$ can then be obtained by iteratively deleting cover relations from the partial order $\preceq'_{\J_\P(\gamma)}$ until Definition~\ref{def:perm_preorder}(2b) is satisfied.
\end{remark}

\begin{example}\label{ex:ppo_reduction2}\
    \begin{enumerate}
        \item Let $\P = \{[11]\}$ and let $\gamma = (2,\textnormal{oouououe}) \in \arc(11)$. Then $\{0,1,5,7,9,10\}\preceq_{\J_{\{[11]\}}(\gamma)} \{2,3,4,6,8,11\}$, and these are the only partition elements of $\J_{\{[11]\}}(\gamma)$. Moreover, an arc $\rho \in \arc(n)$ is $\J_{\{[11]\}}(\gamma)$-$\ppo$-admissible if and only if it can be drawn entirely within one of the two outlined regions shown in Figure~\ref{fig:adm_sectors}. Note in particular that any arc which is both $\J_{\{[11]\}}(\gamma)$-$\ppo$-admissible and shares an endpoint with $\gamma$ will necessarily be clockwise from $\gamma$. We will prove that this is a general phenomenon in Corollary~\ref{cor:ppo_adm_clockwise_ordered}.
        
        \item Let $w, \delta(w)$, and $\mu(w)$ be as in Example~\ref{ex:bijection_ppo} (see also Figure~\ref{fig:bijection_ppo}). Let $\gamma =  (2,\textnormal{oouuoe})$ be the arc obtained by gluing together the contested endpoint of $\gamma_{w,4}$ and $\gamma_{w,3}$ and perturbing upwards at the node 4. Likewise, let $\rho = (2,\textnormal{ouuuoe})$ be the arc obtained by perturbing the endpoint downwards. Then $U_{\delta(w)}(\gamma) = \{4,8\}, L_{\delta(w)} = \{2\}$, $U_{\delta(w)}(\rho) = \{8\}$, and $L_{\delta(w)}(\rho) = \{2,4\}$. The cover relations in $\J_\P(\gamma)$ and $\J_\P(\rho)$ are then:
        $$\{5\} \preceq_{\J_\P(\gamma)} \{4,8\},\qquad \{6\} \preceq_{\J_\P(\gamma)} \{4,8\},\qquad \{4,8\} \preceq_{\J_\P(\gamma)} \{3,7\},$$
        $$\{5\} \preceq_{\J_\P(\rho)} \{3,7\},\qquad \{6\} \preceq_{\J_\P(\rho)} \{3,7\},\qquad \{2,4\} \preceq_{\J_\P(\rho)} \{3,7\}.$$
        Note in particular that $U_{\delta(w)}(\gamma) \not \preceq_{\J_\P(\gamma)} L_{\delta(w)}(\gamma)$ because $\overline{L_{\delta(w)}(\gamma)} \cap \overline{U_{\delta(w)}(\gamma)} = \emptyset,$ and likewise for $\rho$. 
    \end{enumerate}
\end{example}

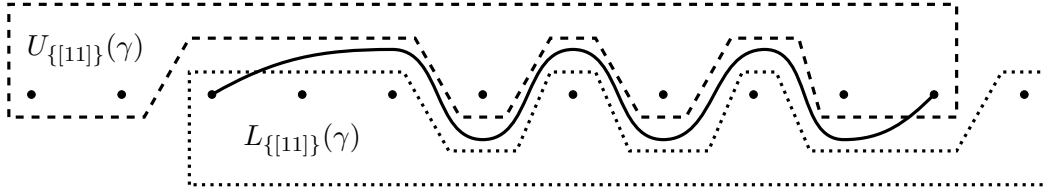
\begin{figure}
    \begin{tikzpicture}[scale=0.6]
        
    \draw[very thick,smooth]
     (2,0) [out = 30, in = 180] to (6,1) [out = 0, in = 180] to (8,-1) [out = 0, in = 180] to (10,1) [out = 0, in = 180] to (12, -1) [out = 0, in = 180] to (14.25, 1) [out = 0, in = 180] to (16, -1) [out = 0, in = -135] to (18,0);

     \draw[very thick, dashed]
     (-2.5, -0.5) -- (-2.5, 2) -- (18.5, 2) -- (18.5, -0.5) -- (15.5, -0.5) -- (15, 1.25) -- (13.5, 1.25) -- (12.5, -0.5) -- (11.5, -0.5) -- (10.5, 1.25) -- (9.5, 1.25) -- (8.5, -0.5) -- (7.5, -0.5) -- (6.5, 1.25) -- (1.5, 1.25) -- (0.5, -0.5) -- (-2.5, -0.5);

     \draw[very thick, dotted]
     (1.5, -2) -- (1.5, 0.5) -- (6.25, 0.5) -- (7.25, -1.25) -- (8.75, -1.25) -- (9.5, 0.5) -- (10.5, 0.5) -- (11.25, -1.25) -- (13, -1.25) -- (13.75, 0.5) -- (14.5, 0.5) -- (15.25, -1.25) -- (18.5, -1.25) -- (19.5, 0.5) -- (20.5, 0.5) -- (20.5,-2) -- (1.5,-2);

    \filldraw 
     (0,0) circle (2.4pt)
     (-2,0) circle (2.4pt)
     (2,0) circle (2.4pt)
     (4,0) circle (2.4pt) 
     (6,0) circle (2.4pt) 
     (8,0) circle (2.4pt) 
     (10,0) circle (2.4pt) 
     (12,0) circle (2.4pt) 
     (14,0) circle (2.4pt) 
     (16,0) circle (2.4pt) 
     (18,0) circle (2.4pt) 
     (20,0) circle (2.4pt) 
     ;

     \node at (-0.8,1) {$U_{\{[11]\}}(\gamma)$};
     \node at (4,-1) {$L_{\{[11]\}}(\gamma)$};
    \end{tikzpicture}
    \caption{The underlying partition of $\J_{\{[11]\}}(\gamma)$ for $\gamma$ as in Example~\ref{ex:ppo_reduction2}(1).}\label{fig:adm_sectors}
\end{figure}

The following establishes a compatibility between ppo-reduction and the recursive nature of permutation preorders established in Section~\ref{sec:recursive}. We freely borrow notation from the previous two subsections.

\begin{proposition}\label{prop:reduction_ppo_compatible}
    Let $\Q \leq_{\ppo} \P \in \ppo(n)$ and let $\gamma \in \arc_{\Q}(n)$. Let $S_\gamma \in \P$ be the partition element for which $\ell(\gamma), r(\gamma) \in S_\gamma$. For $S \in \P$, denote $\Q^0_S:= \xi_\P(\Q)_S$ and 
    $$\Q^1_S := \begin{cases} \J_{\Q^0_S}(\phi_\P(\gamma)) & S = S_\gamma\\\Q^0_S & S \neq S_\gamma.\end{cases}$$
    Then $\xi_\P(\J_\Q(\gamma))_S = \Q^1_S$.
\end{proposition}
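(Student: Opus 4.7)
My plan is to exploit Proposition~\ref{prop:perm_preorder_recursive}, which says $\chi_\P$ and $\xi_\P$ are mutually inverse order-preserving bijections; hence proving $(\xi_\P(\J_\Q(\gamma))_S)_{S \in \P} = (\Q^1_S)_{S \in \P}$ is equivalent to proving the single equality of permutation preorders $\chi_\P((\Q^1_S)_{S \in \P}) = \J_\Q(\gamma)$, which I target. First I would record that $\J_\Q(\gamma) \leq_{\ppo} \Q \leq_{\ppo} \P$ by the remark at the end of Definition~\ref{def:ppo_reduction} and transitivity (so $\xi_\P(\J_\Q(\gamma))$ is defined), and that $\phi_\P(\gamma)$ is $\Q^0_{S_\gamma}$-ppo-admissible by Proposition~\ref{prop:ppo_admissible_recursive}(2) (so $\Q^1_{S_\gamma}$ is defined). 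Let $T_\gamma \in \Q$ be the unique partition element with $\ell(\gamma), r(\gamma) \in T_\gamma$; since $\Q \leq_{\ppo} \P$ we have $T_\gamma \subseteq S_\gamma$.

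Next I would verify that the underlying partitions of $\chi_\P((\Q^1_S)_S)$ and $\J_\Q(\gamma)$ coincide. For $S \neq S_\gamma$ we have $\Q^1_S = \xi_\P(\Q)_S$, so after applying $\widetilde{\cdot}$ these contribute exactly the partition elements of $\Q$ contained in $S$, which are left untouched by the reduction $\Q \mapsto \J_\Q(\gamma)$ since $T_\gamma \subseteq S_\gamma$. For $S = S_\gamma$ the parts of $\Q^1_{S_\gamma}$ are: (a) $\widehat{T}$ for $T \in \Q$ with $T \subseteq S_\gamma$ and $T \neq T_\gamma$, lifting via $\widetilde{\cdot}$ to $T$ itself; and (b) the two new pieces $U_{\Q^0_{S_\gamma}}(\phi_\P(\gamma))$ and $L_{\Q^0_{S_\gamma}}(\phi_\P(\gamma))$ into which $\widehat{T_\gamma}$ is split. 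Unwinding the construction of $\phi_\P$ (which preserves the letters u, o at positions in $S_\gamma$ and renumbers via the order-preserving map $j \mapsto v_j^{S_\gamma}$), a direct computation yields
$$\widetilde{U_{\Q^0_{S_\gamma}}(\phi_\P(\gamma))} = U_\Q(\gamma), \qquad \widetilde{L_{\Q^0_{S_\gamma}}(\phi_\P(\gamma))} = L_\Q(\gamma),$$
so the underlying partitions agree.

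Finally I would match the two partial orders. Both $\preceq_{\chi_\P((\Q^1_S)_S)}$ and $\preceq_{\J_\Q(\gamma)}$ are defined as transitive closures of generator relations on pairs of overlapping parts, so it suffices to check that the two sets of generators agree (overlap conditions in $[|S_\gamma|-1]$ and $[n]$ correspond under $j \mapsto v_j^{S_\gamma}$ for intra-$S_\gamma$ pairs, and can be tracked directly for cross-class pairs). A type-(1) generator of $\J_\Q(\gamma)$, namely $T \preceq_\Q T'$ with $T, T' \neq T_\gamma$, is reproduced by $\chi_\P$ via case (i) inside a single $\P$-class when $T, T'$ lie in the same class, and via case (ii) using $\Q \leq_\ppo \P$ when they lie in distinct classes. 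A type-(2) or type-(3) generator, say $U_\Q(\gamma) \preceq T'$ coming from $T_\gamma \preceq_\Q T'$, arises either from the $\J$-relation $U_{\Q^0_{S_\gamma}}(\phi_\P(\gamma)) \preceq \widehat{T'}$ inside $\Q^1_{S_\gamma}$ (when $T' \subseteq S_\gamma$) or from case (ii) of $\chi_\P$ using $S_\gamma \preceq_\P S'$ for $T' \subseteq S' \neq S_\gamma$ (implied by $\Q \leq_\ppo \P$). The type-(4) generator $U_\Q(\gamma) \preceq L_\Q(\gamma)$ matches $U_{\Q^0_{S_\gamma}}(\phi_\P(\gamma)) \preceq L_{\Q^0_{S_\gamma}}(\phi_\P(\gamma))$ inside $\Q^1_{S_\gamma}$. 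Conversely, every generator produced by $\chi_\P$ matches one of these four types, using the antisymmetry trick that if $T, T' \in \Q$ lie in distinct $\P$-classes with $S \preceq_\P S'$ and their closed supports overlap, then $T \preceq_\Q T'$ (the reverse would force $S = S'$). The main obstacle is precisely this case-by-case bookkeeping in the last paragraph: ensuring that the overlap conditions at the two scales behave compatibly and that the transitive closures on each side produce no spurious extra relations.
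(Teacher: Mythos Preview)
Your proposal is correct and follows essentially the same approach as the paper: both arguments first verify that the underlying partitions match (via the key identity $\widetilde{U_{\Q^0_{S_\gamma}}(\phi_\P(\gamma))} = U_\Q(\gamma)$ and its $L$-analog), then perform the same four-case analysis on pairs of parts to compare the partial orders. The only organizational difference is that the paper works directly with $\xi_\P$ (showing $\xi_\P(\J_\Q(\gamma))_S = \Q^1_S$ componentwise, translating relations back to $\J_\Q(\gamma)$ via $\widetilde{\cdot}$), whereas you apply the inverse bijection $\chi_\P$ upfront and compare generators of $\chi_\P((\Q^1_S)_S)$ against those of $\J_\Q(\gamma)$; this is a cosmetic dualization and the case-by-case bookkeeping is identical in content.
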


\begin{proof}
    First note that, as is implicit in the statement of the theorem, the arc $\phi_\P(\gamma)$ is indeed $\Q_S^0$-$\ppo$-admissible by Proposition~\ref{prop:ppo_admissible_recursive}.

    Let $S \in \P$. It follows from the definitions that the underlying partitions of $\xi_\P(\J_\Q(\gamma))_S = \Q^1_S$ coincide. Thus we need only show that they also have the same partial order. To do so, it suffices to prove that they coincide on all pairs $T,T' \in \Q^1_S$ which satisfy $\overline{T} \cap \overline{T'} \neq \emptyset$. Recall also that for such pairs, we have that $T \preceq_{\xi_\P(\J_\Q(\gamma))_S} T'$ if and only if $\widetilde{T} \preceq_{\J_\Q(\gamma)} \widetilde{T}'$ by the fact that $\xi_\P$ and $\chi_\P$ are inverses (Proposition~\ref{prop:perm_preorder_recursive}). Thus we need only show that the later condition is equivalent to requiring that $T \preceq_{\Q_S^1} T'$.
    
    Suppose first that $S \neq S_\gamma$. Then $\widetilde{T} \preceq_{\J_\Q(\gamma)} \widetilde{T'}$ if and only if $\widetilde{T} \preceq_\Q \widetilde{T'}$ by Definition~\ref{def:ppo_reduction}. But this is equivalent to requiring $T \preceq_{\Q^0_S} T'$ by the argument at the end of the previous paragraph. We conclude that $\xi_\P(\J_\Q(\gamma))_S = \Q_S^1$ in this case.

    Now suppose $S = S_\gamma$. Then there exists $T_\gamma \subseteq S_\gamma$ such that $T_\gamma \in \Q$ and $\ell(\gamma), r(\gamma) \in T_\gamma$. Moreover, we have that $\ell(\phi_\P(\gamma)), r(\phi_\P(\gamma)) \in \widehat{T}_\gamma$ by construction. We then have four cases to consider.

    Suppose first that $T \not \subseteq \widehat{T}_\gamma$ and $T' \not \subseteq \widehat{T}_\gamma$. Then we have $T, T' \in \Q_S^0$. It follows that $T \preceq_{\Q_S^1} T'$ if and only if $T \preceq_{\Q_S^0} T'$. Similarly, we have that $\widetilde{T}, \widetilde{T}' \in \Q$ and that $\widetilde{T} \preceq_{\J_\Q(\gamma)} \widetilde{T}'$ if and only if $\widetilde{T} \preceq_{\Q} \widetilde{T}'$. We conclude that $\widetilde{T} \preceq_{\J_\Q(\gamma)} \widetilde{T}'$ if and only if $T \preceq_{\Q_S^1} T'$ since we already know the analogous statement holds for the partial orders $\preceq_\Q$ and $\preceq_{\Q_S^0}$.

    Now suppose that $T \subseteq \widehat{T}_\gamma$ and $T' \not \subseteq \widehat{T}_\gamma$. Then we have $T' \in \Q_0^S$ and $\widetilde{T}' \in \Q$. It follows that $T \preceq_{\Q_1^S} T'$ if and only if $\widehat{T}_\gamma\preceq_{\Q_S^0} T'$, and likewise that $\widetilde{T} \preceq_{\J_\Q(\gamma)} \widetilde{T}'$ if and only if $T_\gamma \preceq_\Q \widetilde{T}'$. As in the previous case, this implies that $T \preceq_{\Q_1^S} T'$ if and only if $\widetilde{T} \preceq_{\J_\Q(\gamma)} \widetilde{T}'$. The case where $T \not\subseteq \widehat{T}_\gamma$ and $T' \subseteq \widehat{T}_\gamma$ is similar.

    Finally, suppose that $T \cup T' \subseteq \widehat{T}_\gamma$. Then $T \preceq_{\Q_S^1} T'$ if and only if either (a) $T = T'$ or (b) $T = U_{\Q_S^0}(\phi_\P(\gamma)) =: U_0$ and $T' = L_{\Q_S^0}(\phi_\P(\gamma)) =: L_0$. Similarly, $\widetilde{T} \preceq_{\J_\Q(\gamma)} \widetilde{T}'$ if and only if either (a') $\widetilde{T} = \widetilde{T}'$ or (b') $\widetilde{T} = U_\Q(\gamma)$ and $\widetilde{T}' = L_\Q(\gamma)$. But by construction, we have $\widetilde{U}_0 = U_\Q(S)$ and $\widetilde{L}_0 = L_\Q(S)$. Thus (a) is equivalent to (a') and (b) is equivalent to (b'). This proves the result.
\end{proof}

We have now laid the framework to prove the main result of this section.

\begin{theorem}[Theorem~\ref{thm:mainB}]\label{thm:edge-labeling}
	Let $(\P \leq_\ppo \Q) \in \cov(\ppo(n))$. There there exists a unique arc $\gamma \in \ppo_\Q(n)$ such that $\P = \J_\Q(\gamma)$. Moreover, this arc satisfies $\Psi(\Q) = \Sigma(\gamma) \cap \Psi(\P)$.
\end{theorem}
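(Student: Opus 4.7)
My plan is first to reduce to the case of a single-block $\Q$ via the recursive structure of permutation preorders, and then to verify existence and uniqueness by a direct construction. Using Propositions~\ref{prop:perm_preorder_recursive} and~\ref{prop:covers_ppo}, the cover relation $\P \leq_\ppo \Q$ corresponds, under the bijection $\xi_\Q$, to a tuple $(\xi_\Q(\P)_S)_{S \in \Q}$ in which exactly one component $\xi_\Q(\P)_{S^*}$ is a two-block cover of the maximum $\{[|S^*|-1]\}$, while all other components equal the maximum. Applying Proposition~\ref{prop:reduction_ppo_compatible} with the proposition's $\P$ and $\Q$ both equal to our $\Q$ shows that for any $\gamma \in \arc_\Q(n)$, the tuple $\xi_\Q(\J_\Q(\gamma))$ differs from $(\{[|S|-1]\})_{S \in \Q}$ only in its $S_\gamma$-component, which equals $\J_{\{[|S_\gamma|-1]\}}(\phi_\Q(\gamma))$. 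Combined with the bijection $\phi_\Q$ of Proposition~\ref{prop:ppo_admissible_recursive}(1), finding a unique $\gamma \in \arc_\Q(n)$ with $\J_\Q(\gamma) = \P$ thus reduces to the single-block claim: for each two-block $\P' \in \ppo(m)$, there is a unique $\rho \in \arc(m)$ with $\J_{\{[m]\}}(\rho) = \P'$.

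Write $\P' = \{A,B\}$. If $A \prec_{\P'} B$, then Definition~\ref{def:perm_preorder}(2b) forces $\overline{A} \cap \overline{B} \neq \emptyset$, and disjointness yields $\min B < \max A$; the arc with $\ell(\rho) = \min B$, $r(\rho) = \max A$, and middle letters $s_i = \textnormal{u}$ for $i \in A \cap (\ell,r)$ and $s_i = \textnormal{o}$ for $i \in B \cap (\ell,r)$ satisfies $U_{\{[m]\}}(\rho) = A$ and $L_{\{[m]\}}(\rho) = B$ by a direct check against Definition~\ref{def:ppo_reduction}. If instead $A$ and $B$ are incomparable, then $\overline{A} \cap \overline{B} = \emptyset$, and combined with $A \sqcup B = [m]$ this forces (after possibly relabeling) $A = [0,k]$ and $B = [k+1,m]$ for some $k$; the arc $\rho = (0, s_1 \cdots s_{m-1}\textnormal{e})$ with $s_i = \textnormal{o}$ for $1 \leq i \leq k$ and $s_i = \textnormal{u}$ for $k < i < m$ then works. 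Uniqueness follows because any such $\rho$ must satisfy $\ell(\rho) = \min L_{\{[m]\}}(\rho)$ and $r(\rho) = \max U_{\{[m]\}}(\rho)$, so the assignment of $\{A,B\}$ to $\{U,L\}$ is forced either by the comparability direction (in the comparable case) or by the constraint $\ell(\rho) < r(\rho)$ (in the incomparable case); middle letters are then determined by block memberships. This case analysis, especially the observation that the incomparable sub-case genuinely arises (from a cover relation with non-overlapping blocks) and is realized by a unique arc, is the subtlest part of the proof.

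For the shard-intersection equality $\Psi(\Q) = \Sigma(\gamma) \cap \Psi(\P)$, the inclusion $\subseteq$ follows from Proposition~\ref{prop:shards}(3) (giving $\Psi(\Q) \subseteq \Psi(\P)$, since $\P \leq_\ppo \Q$) and Proposition~\ref{prop:ppo_admissible_shard} (giving $\Psi(\Q) \subseteq \Sigma(\gamma)$, since the $\gamma$ constructed above is $\Q$-ppo-admissible). For the reverse inclusion, the reflexive relations $S \preceq_\P S$ in the definition of $\Psi(\P)$ force each block of $\P$ to carry a constant $x$-coordinate; in particular, $U_\Q(\gamma)$ and $L_\Q(\gamma)$ each carry a single value, and the relation $x_{\ell(\gamma)} = x_{r(\gamma)}$ from $\Sigma(\gamma)$ identifies those two values, so $U \cup L = S^*$ carries a single constant matching the value that $\Psi(\Q)$ demands for $S^*$. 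Since the remaining blocks of $\Q$ coincide with those of $\P$ and the $\Q$-relations either coincide with $\P$-relations or follow from them (via Definition~\ref{def:ppo_reduction}(2)--(3) and transitivity) after the merging $x_U = x_L$, every $x \in \Psi(\P) \cap \Sigma(\gamma)$ also lies in $\Psi(\Q)$.
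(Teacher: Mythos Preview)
Your proof is correct and closely parallels the paper's, but with a different organization. The paper proves \emph{existence} directly in the general case---it identifies the two blocks $T_1, T_2 \in \P$ whose union is the single block $S_\gamma \in \Q$, sets $a = \min T_2$, $b = \max T_1$, and builds $\gamma$ with $\ell(\gamma) = a$, $r(\gamma) = b$ and middle letters determined by $\preceq_\Q$; it then invokes the recursive machinery (Propositions~\ref{prop:perm_preorder_recursive} and~\ref{prop:reduction_ppo_compatible}) only for \emph{uniqueness}, reducing to the case $\Q = \{[n]\}$. You instead apply the reduction up front to transport both existence and uniqueness to the single-block case, and then treat that case directly. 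In the single-block case your uniqueness argument---observing that $\ell(\rho) = \min L_{\{[m]\}}(\rho)$ and $r(\rho) = \max U_{\{[m]\}}(\rho)$ force the $(U,L)$-assignment, either by the direction of comparability or by the requirement $\ell(\rho) < r(\rho)$---is in fact cleaner than the paper's, which argues by examining a potential pair of arcs with swapped $(U,L)$ and showing they produce different ppo-reductions. Your treatment of the ``moreover'' equality is also more detailed than the paper's (which simply says ``follows from direct computation''); the only step that could be tightened is the $\supseteq$ direction, where you assert that every $\preceq_\Q$-relation is recoverable from $\preceq_\P$-relations via Definition~\ref{def:ppo_reduction}(2)--(3) after merging $x_U = x_L$---this is true, but uses that any $\Q$-cover $S_\gamma \preceq_\Q S'$ yields, via the cover-overlap axiom and the structure of $U$ and $L$, at least one generating $\P$-relation $U \preceq_\P S'$ or $L \preceq_\P S'$.
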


\begin{proof}
(Existence): For $i \in [n]$, denote by $S_i \in \Q$ the partition element with $i \in S_i$. By Proposition~\ref{prop:covers_ppo}, we have that $|\P| = |\Q| + 1$. Thus there exist $T_1,T_2 \in \P$ such that $\Q = (\P \setminus \{T_1,T_2\}) \cup \{T_1 \cup T_2\}$. Moreover, by \cite[Section~4]{bancroft} we have that, up to interchanging the roles of $T_1$ and $T_2$, either $T_1 \preceq_\P T_2$ is a cover relation or $T_1$ and $T_2$ are not related by $\preceq_\P$. Set $a = \min T_2$ and $b = \max T_1$. In the first case, we have that $a < b$ automatically since $\overline{T_1} \cap \overline{T_2} \neq \emptyset$. In the second, we can assume $a < b$ without loss of generality.

For $i \in (a,b) \cap \mathbb{Z}$, denote
	$$s_i := \begin{cases} \textnormal{o} & i \in T_1 \text{ or } T_1 \cup T_2 \neq S_i \preceq_\Q T_1 \cup T_2\\ \textnormal{u} & i \in T_2 \text{ or } T_1 \cup T_2 \preceq_\Q S_i \neq T_1 \cup T_2.\end{cases}$$
We claim that $\gamma := (a,s_{a+1}\cdots s_{b-1}\textnormal{e})$ is $\Q$-ppo-admissible and satisfies $\P = \J_\Q(\gamma)$.

We first show that $\gamma \in \ppo_\Q(n)$. It is clear that $\ell(\gamma),r(\gamma) \in T_1 \cup T_2$. Thus let $i \in (a,b) \cap \mathbb{Z}$. If $i \in T_1 \cup T_2$ there is nothing to show. Otherwise there exists $j \in \{1,2\}$ such that $i \in \overline{T_j}$, and so $T_j$ and $S_i$ are related under $\preceq_\P$. If $T_j \preceq_\P S_i$, then $T_1 \cup T_2 \preceq_\Q S_i$ by Definition~\ref{def:perm_preorder2}, and so $s_i = \textnormal{o}$. Similarly, if $S_i \preceq T_j$, then $s_i = \textnormal{u}$. We conclude that $\gamma \in \ppo_\Q(n)$.

We now show that $\P = \J_\Q(\gamma)$. It suffices to show that $\P \leq_{\ppo} \J_\Q(\gamma) \leq \P$ by Proposition~\ref{prop:covers_ppo} and Remark~\ref{rem:cover_rel}. The fact that the underlying partitions of $\P$ and $\J_\Q(\gamma)$ coincide is an immediate consequence of the definitions. Thus let $S \preceq_{\P} S'$ be a cover relation in $\P$, so in particular $\overline{S} \cap \overline{S'} \neq \emptyset$. If neither $S$ nor $S'$ is in $\{T_1,T_2\}$, then by definition we have $S \preceq_\Q S'$, and thus also $S \preceq_{\J_\Q(\gamma)} S'$. Similarly, if $\overline{T_1} \cap \overline{T_2} \neq \emptyset$, then we have $T_1 = U_\Q(\gamma) \preceq_{\J_\Q(\gamma)} L_\Q(\gamma) = T_2$ by construction. Finally, suppose $S \in \{T_1,T_2\}$ and $S' \notin \{T_1,T_2\}$, the opposite case being similar. Then $T_1 \cup T_2 \preceq_\Q S'$ by Definition~\ref{def:perm_preorder2}, and so $S \preceq_{\J_\Q(\gamma)} S'$ by construction. This completes the proof.

(Uniqueness): We prove uniqueness in the case $\Q = \{[n]\}$. Let $\gamma, \rho \in \arc(n)$ and suppose $\J_{\{[n]\}}(\gamma) = \J_{\{[n]\}}(\rho)$. Denote $L := L_{\{[n]\}}(\gamma)$ and $U := U_{\{[n]\}}(\gamma)$. Then either (a) $L = L_{\{[n]\}}(\rho)$ and $U = U_{\{[n]\}}(\rho)$, or (b) $L = U_{\{[n]\}}(\rho)$ and $U = L_{\{[n]\}}(\rho)$. If we are in case (a), then it follows immediately from the definitions of $L$ and $U$ that $\gamma = \rho$. Thus suppose we are in case (b). Then we must have that $\overline{L} \cap \overline{U} = \emptyset$. The only two arcs which satisfy this property are those with left endpoint 0 and right endpoint $n$ which pass on the same side of every node in $(0,n)$. Thus suppose $\gamma$ is the arc which passes above every such node and that $\rho$ is the arc which passes below every such node. Then the arc $(0,\textnormal{e})$ is not $\J_{\{[n]\}}(\gamma)$-ppo-admissible, while the arc $(n-1,\textnormal{e})$ is. Conversely, the arc $(0,\textnormal{e})$ is $\J_{\{[n]\}}(\rho)$-ppo-admissible, while the arc $(n-1,\textnormal{e})$ is not. This contradiction the assumption that $\J_{\{[n]\}}(\gamma) = \J_{\{[n]\}}(\rho)$. This completes the proof in the case $\Q = \{[n]\}$.

We now prove uniqueness in the case $\Q \neq \{[n]\}$. Let $\gamma, \rho \in \arc_\Q(n)$ and suppose $\J_{\Q}(\gamma) = \J_{\Q}(\rho)$. Let $S \in \Q$ such that $\ell(\gamma), r(\gamma) \in S$, and note that also $\ell(\rho),r(\rho) \in S$ by the assumption. Denote $\Q_{S} := \xi_\Q(\Q)_{S} = \{[|S|-1]\} \in \ppo([|S|-1])$. Then by Propositions~\ref{prop:perm_preorder_recursive} and~\ref{prop:reduction_ppo_compatible}, we have that $\J_{\{[|S|-1]\}}(\phi_\Q(\gamma)) = \J_{\{[|S|-1]\}}(\phi_\Q(\rho))$. Since $\phi_\Q$ is a bijection, we have therefore reduced to the case $\Q = \{[n]\}$. This concludes the proof.

(Moreover part): Given the explicit description in the existence proof, the fact that $\Psi(\Q) = \Sigma(\gamma) \cap \Psi(\P)$ follows from direct computation.
\end{proof}

In the setup of Theorem~\ref{thm:edge-labeling}, we denote $\gamma =: \arclab(\P \leq_{\ppo} \Q)$. The theorem thus says we have a well-defined map $\arclab: \cov(\ppo(n)) \rightarrow \arc(n)$. In Section~\ref{sec:EL}, we will place a total order on $\arc(n)$ in order to treat this map as an edge-labeling.

\subsection{ppo-admissible sequences}\label{ppo_admissible3}

In this section, we iterate the process of ppo-reduction and examine the corresponding sequences of arcs arising from Theorem~\ref{thm:edge-labeling}. We begin with the following definition.

\begin{definition}\label{sat_top}
	Let $\X = (\X,\leq_\X)$ be a poset.
	\begin{enumerate}
		\item Let $\mathfrak{c} = (x_k \leq_\X x_{k-1} \leq_\X \cdots \leq_\X x_0)$ be a chain in $\X$. We say that $\mathfrak{c}$ is a \emph{saturated chain} if each relation $x_j \leq_\X x_{j-1}$ is a cover relation. We denote by $\satc(\X)$ the set of saturated chains in $\P$.
		\item Suppose that $\X$ has a unique maximal element $\widehat{1}$, and let $\mathfrak{c} = (x_k \leq_\X x_{k-1} <\leq_\X \cdots \leq_\X x_0)$ be a saturated chain $\X$. We say that $\mathfrak{c}$ is a \emph{saturated top chain} if $x_0 = \widehat{1}$. We denote by $\stopc(\X)$ the set of saturated top chains in $\X$.
	\end{enumerate}
\end{definition}

Note in particular that if $\X$ is finite, then every maximal chain is also a saturated top chain. We now state the main definition of this section.

\begin{definition}\label{def:ppo_admissible_sequence}
    Let $\omega = (\gamma_k,\ldots,\gamma_1)$ be a sequence of arcs in $\arc(n)$. Denote $\P_0 = \{[n]\}$, and for $1 \leq i < k$, recursively denote $\P_i := \J_{\P_{i-1}}(\gamma_i)$. We say that $\omega$ is a \emph{ppo-admissible sequence} if $\gamma_i$ is $\P_{i-1}$-ppo-admissible for all $1 \leq i \leq k$. For $k \in \{0,1,\ldots,n\}$, we denote by $\ppoadm(n,k)$ the set of ppo-admissible sequences of length $k$ in $\arc(n)$ and $\ppoadm(n) = \bigcup_{k \in \N} \ppoadm(n,k)$.
\end{definition}

\begin{remark}
    Observe that the maximum possible length of a ppo-admissible sequence is $n$ as a consequence of Remark~\ref{rem:cover_rel}. Moreover, by either Example~\ref{ex:ppo_adm}(3) or Theorem~\ref{thm:edge-labeling}, we have that for every $k < n$ and $(\gamma_k,\ldots,\gamma_1) \in \ppo(n,k)$ there exist $\gamma_{k+1},\ldots,\gamma_n \in \arc(n)$ such that $(\gamma_n,\ldots,\gamma_1) \in \ppo(n,n)$. In other words, every $\ppo$-admissible sequence can be ``completed'' to a $\ppo$-admissible sequence of length $n$. 
\end{remark}

\begin{example}\label{ex:ppo_admissible_sequence}\
    \begin{enumerate}
        \item Let $\omega = (\gamma_4,\gamma_3,\gamma_2,\gamma_1) \in \arc_{cw}(3)$ be as in Figure~\ref{fig:clockwise}. Then $\omega$ is not a $\ppo$-admissible sequence. Indeed, we have $\J_{\{[3]\}}(\gamma_1) = \{\{0,2\}, \{1,3\}\}$ with $\{1,3\} \preceq_{\J_{\{[3]\}}(\gamma_1)} \{0,2\}$. Thus while $\gamma_2$ is $\J_{\{[3]\}}(\gamma_1)$-$\ppo$-admissible, the arc $\gamma_3$ is not. It then follows from Lemma~\ref{lem:ppo_admissible} that $\gamma_3$ is also not $\J_{\J_{\{[4]\}}(\gamma_1)}(\gamma_2)$-$\ppo$-admissible.
        \item Let $\omega = (\gamma_4,\gamma_3,\gamma_2,\gamma_1) \in \arc_{cw}(4)$ be as in Figure~\ref{fig:ppo_adm1}. Then $\omega$ is a $\ppo$-admissible sequence. Indeed, let $\P_0 = \{[4]\}$ and for $i \in \{1,2, 3\}$, let $\P_i = \J_{\P_{i-1}}(\gamma_{i + 4})$. Then $\P_1 = \{\{0\}, \{1,2,3,4\}\}$, $\P_2 = \{\{0\}, \{1\}, \{2, 3, 4\}\}$, and $\P_3 = \{\{0\}, \{1\}, \{2\}, \{3,4\}\}$ with the relations $\preceq_{\P_1}$, $\preceq_{\P_2}$, and $\preceq_{\P_3}$ all empty. 
        \item Let $\omega = (\gamma_3,\gamma_2,\gamma_1) \in \arc_{cw}(6)$ be as in Figure~\ref{fig:ppo_adm2}. Then $\omega$ is a ppo-admissible sequence. Indeed, let $\P_0 = \{[6]\}$ and for $i \in \{1,2\}$, let $\P_i = \J_{\P_{i-1}}(\gamma_i)$. Then $\P_1 = \{\{0,4\},\{1,2,3,5,6\}\}$ with $\{0,4\} \preceq_{\P_1} \{1,2,3,5,6\}$ and $\P_2 = \{\{0,4\},\{1,2,3,5\},\{6\}\}$ with $\{0,4\} \preceq_{\P_2} \{1,2,3,5\}$. 
    \end{enumerate}
\end{example}

\begin{figure}
\begin{tikzpicture}[scale=0.8]

\draw[orange,very thick,smooth,dashed]
    (-2,0) [out = -45,in = 180] to (2,-1.75) [out = 0,in = -120] to (6,0);
\draw[blue,very thick,smooth]
    (0,0) [out = -45,in = 180] to (3,-1.25) [out = 0,in = -135] to (6,0);
\draw[orange,very thick,smooth,dashed]
    (2,0) [out = -45,in = 180] to (4,-0.75) [out = 0,in = -150] to (6,0);
\draw[blue,very thick,smooth]
    (4,0)  to (6,0);

\node [blue] at (4.5,-0.25) {$\gamma_4$};
\node [orange] at (3,-0.3) {$\gamma_3$};
\node [blue] at (1,-0.45) {$\gamma_2$};
\node [orange] at (-1,-0.5) {$\gamma_1$};

\filldraw 
 (0,0) circle (2.4pt)
 (-2,0) circle (2.4pt)
 (2,0) circle (2.4pt)
 (4,0) circle (2.4pt) 
 (6,0) circle (2.4pt) 
 ;
 
\end{tikzpicture}
\caption{The ppo-admissible sequence of arcs in Example~\ref{ex:ppo_admissible_sequence}(2)}\label{fig:ppo_adm1}
\end{figure}

\begin{figure}
\begin{tikzpicture}[scale=0.8]

\draw [blue, very thick] 
    (-2,0) [out = 45,in = 180] to (0.5,1.25) [out = 0,in = 135] to (3,0) [out = -45,in = 180] to (4,-0.5) [out = 0,in = -135] to (5,0) [out = 45,in = 180] to (6.5,1) [out = 0,in = 135] to (8,0);

\draw[orange, very thick,dashed]
    (-2,0) [out = 30,in = 180] to (-0,0.75) [out = 0,in = 135] to (1.5,0) [out = -45,in = 180] to (4,-1.25) [out = 0,in = -120] to (6,0);
;

\draw[purple, very thick,smooth,dotted]
    (-2,0) [out = 60,in=180] to (1,1.75) [out=0,in=120] to (4,0);

\node [purple] at (1,2) {$\gamma_1$};
\node [blue] at (6.5,1.25) {$\gamma_2$};
\node [orange] at (1.5,-0.5) {$\gamma_3$};

\filldraw  (0,0) circle (2.4pt)
(-4,0) circle (2.4pt)
 (-2,0) circle (2.4pt)
 (2,0) circle (2.4pt)
 (4,0) circle (2.4pt) 
 (6,0) circle (2.4pt)
 (8,0) circle (2.4pt)
 ;

\end{tikzpicture}
\caption{The ppo-admissible sequence of arcs in Example~\ref{ex:ppo_admissible_sequence}(3)}\label{fig:ppo_adm2}
\end{figure}

The following is a consequence of Theorem~\ref{thm:edge-labeling}.

\begin{corollary}\label{cor:sat_chain}
	The map $\arclab:\cov(\ppo(n)) \rightarrow \arc(n)$ induces a bijection $\stopc(\ppo(n)) \rightarrow \ppoadm(n)$.
\end{corollary}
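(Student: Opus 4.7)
The plan is to define the forward and backward maps explicitly and verify that they are mutually inverse, with essentially all the work absorbed into Theorem~\ref{thm:edge-labeling}. The key observation is that the definitions of $\stopc(\ppo(n))$ and $\ppoadm(n)$ are, up to the translation provided by $\arclab$ and $\J$, the same data. Note that $\{[n]\}$ is the unique maximum of $\ppo(n)$ (since $\leq_\ppo$ is the refinement order), so a saturated top chain has the form $(\P_k \leq_\ppo \P_{k-1} \leq_\ppo \cdots \leq_\ppo \P_0)$ with $\P_0 = \{[n]\}$ and each $\P_i \leq_\ppo \P_{i-1}$ a cover relation.

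First I would define the forward map $\Phi : \stopc(\ppo(n)) \to \ppoadm(n)$ by
$$\Phi(\P_k \leq_\ppo \cdots \leq_\ppo \P_0) := (\arclab(\P_k \leq_\ppo \P_{k-1}), \ldots, \arclab(\P_1 \leq_\ppo \P_0)).$$
To see this lies in $\ppoadm(n)$, unwind Theorem~\ref{thm:edge-labeling}: the arc $\gamma_i := \arclab(\P_i \leq_\ppo \P_{i-1})$ is, by definition of $\arclab$, the unique $\P_{i-1}$-ppo-admissible arc satisfying $\P_i = \J_{\P_{i-1}}(\gamma_i)$. An immediate induction on $i$ shows that the permutation preorders $\P_i$ defined iteratively from $\{[n]\}$ as in Definition~\ref{def:ppo_admissible_sequence} agree with those of the original chain, and hence each $\gamma_i$ is $\P_{i-1}$-ppo-admissible. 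Thus $\Phi$ is well defined.

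Next I would define the candidate inverse $\Psi : \ppoadm(n) \to \stopc(\ppo(n))$ by $\Psi(\gamma_k,\ldots,\gamma_1) := (\P_k \leq_\ppo \cdots \leq_\ppo \P_0)$ where $\P_0 = \{[n]\}$ and $\P_i := \J_{\P_{i-1}}(\gamma_i)$ for $1 \leq i \leq k$. By Remark~\ref{rem:cover_rel}, each relation $\P_i \leq_\ppo \P_{i-1}$ is a cover relation in $\ppo(n)$, so $\Psi(\gamma_k,\ldots,\gamma_1)$ is indeed a saturated top chain.

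Finally, I would verify $\Phi \circ \Psi = \mathrm{id}$ and $\Psi \circ \Phi = \mathrm{id}$ in one stroke. Both identities reduce to the statement that, for every cover relation $\P \leq_\ppo \Q$ in $\ppo(n)$ and every $\Q$-ppo-admissible arc $\gamma$, one has $\arclab(\J_\Q(\gamma) \leq_\ppo \Q) = \gamma$ and $\J_\Q(\arclab(\P \leq_\ppo \Q)) = \P$. Both of these are precisely the existence and uniqueness statements of Theorem~\ref{thm:edge-labeling}. There is no serious obstacle: the corollary is really just a repackaging of Theorem~\ref{thm:edge-labeling} combined with the iterative definition of $\ppoadm(n)$, so the proof will consist of writing out these translations cleanly.
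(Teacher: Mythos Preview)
Your proposal is correct and matches the paper's intended approach: the paper simply states this result as an immediate consequence of Theorem~\ref{thm:edge-labeling} without giving any further argument, and your explicit construction of the mutually inverse maps $\Phi$ and $\Psi$ is exactly the unpacking that justifies this.
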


We now use Proposition~\ref{prop:reduction_ppo_compatible} to establish several properties of ppo-admissible sequences.

\begin{definition}\label{def:shuffle}\ 
 \begin{enumerate}
        \item Let $\omega_1 = (x_{k_1}^1,\ldots,x_1^1)$ and $\omega_2 = (x_{k_2}^2,\ldots,x_1^2)$ be finite sequences. We denote by $\shuff(\omega_1,\omega_2)$ the set of sequences $(x_{k_1+k_2},\ldots, x_1)$ which satisfy both of the following.
        \begin{enumerate}
            \item For all $m \in \{1,2\}$ and $i \in \{1,\ldots,k_m\}$ there exists a unique $j(m,i) \in \{1,\ldots,k_1+k_2\}$ such that $x_i^m = x_{j(m,i)}$.
            \item For all $m \in \{1,2\}$ and $i < i' \in \{1,\ldots,k_m\}$ we have that $j(m,i) < j(m,i')$.
        \end{enumerate}
        We refer to $\shuff(\omega_1,\omega_2)$ as the set of \emph{shuffles} of $\omega_1$ and $\omega_m$.
        \item Let $\Omega_1$ and $\Omega_2$ be sets of finite sequences, and let $k \in \N$. For each $m \in \{1,2\}$ and $k' < k$, we denote by $\Omega_m^{k'}$ the set of sequences in $\Omega_m$ of length $k'$. We then denote
        $$\shuff(k,\Omega_1,\Omega_2) = \bigcup_{k' = 0}^k \left(\bigcup_{(\omega_1,\omega_2) \in \Omega_1^{k_1} \times \Omega_2^{k-k'}} \shuff(\omega_1,\omega_2)\right).$$
        We refer to $\shuff(k; \Omega_1,\Omega_2)$ as the set of \emph{$k$-shuffles} of the sequences in $\{\Omega_1,\Omega_2\}$.
        \item Let $\Omega_1,\Omega_2,\ldots,\Omega_p$ be sets of finite sequences. We recursively define $$\shuff(k;\Omega_1,\Omega_2,\ldots,\Omega_p) := \shuff(k;\Omega_1,\shuff(k;\Omega_2,\ldots,\Omega_p)).$$
    \end{enumerate}
\end{definition}

\begin{corollary}\label{cor:ppo_seq_shuffle}
    Let $\gamma \in \arc(n)$, and let $\ppoadm(n,k; \gamma)$ be the set of $\ppo$-admissible arc diagrams of length $k$ which start (on the right) with $\gamma$. Let $U:=U_{\{[n]\}}(\gamma)$ and $L:=L_{\{[n]\}}(\gamma)$ be as in Definition~\ref{def:ppo_reduction} and denote $\P_1 = \J_{\{[n]\}}(\gamma)$. Then
    there is a bijection $$\Phi^\gamma_k: \ppoadm(n,k; \gamma) \rightarrow \shuff\left(k-1; \ppoadm(|U|-1), \ppoadm(|L|-1)\right)$$
    given by $(\gamma_k,\ldots,\gamma_2,\gamma) \mapsto (\phi_{\P_1}(\gamma_k),\ldots,\phi_{\P_1}(\gamma_2))$.
\end{corollary}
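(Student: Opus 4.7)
The plan is to combine Propositions~\ref{prop:ppo_admissible_recursive} and~\ref{prop:reduction_ppo_compatible} with the fact that $\phi_{\P_1}$ is already a bijection $\arc_{\P_1}(n) \to \arc(|U|-1) \sqcup \arc(|L|-1)$, showing that after the initial reduction by $\gamma$ the iterated reduction of $\{[n]\}$ decouples into two independent reduction processes on $U$ and $L$. Fix $(\gamma_k,\ldots,\gamma_2,\gamma) \in \ppoadm(n,k;\gamma)$ and set $\P_i := \J_{\P_{i-1}}(\gamma_i)$ for $2 \leq i \leq k$. By Lemma~\ref{lem:ppo_admissible} each $\gamma_i$ is also $\P_1$-ppo-admissible, so Proposition~\ref{prop:ppo_admissible_recursive}(1) determines a unique side $S_i \in \{U,L\}$ for which $\ell(\gamma_i), r(\gamma_i) \in S_i$, and $\phi_{\P_1}(\gamma_i) \in \arc(|S_i|-1)$. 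This assignment $i \mapsto S_i$ will encode the shuffle structure.

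The first step is well-definedness. Writing $\xi_{\P_1}(\P_i) = (\Q^U_i, \Q^L_i) \in \ppo(|U|-1) \times \ppo(|L|-1)$, with initial values $\Q^U_1 = \{[|U|-1]\}$ and $\Q^L_1 = \{[|L|-1]\}$, iterating Proposition~\ref{prop:reduction_ppo_compatible} (applied with the outer preorder $\P_1$) shows that
\[
\Q^{S_i}_i = \J_{\Q^{S_i}_{i-1}}\bigl(\phi_{\P_1}(\gamma_i)\bigr), \qquad \Q^{S}_i = \Q^{S}_{i-1} \text{ for } S \neq S_i.
\]
So each side sees only the sub-sequence of arcs carrying its own label. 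Combined with Proposition~\ref{prop:ppo_admissible_recursive}(2), which says that $\P_{i-1}$-ppo-admissibility of $\gamma_i$ is equivalent to $\Q^{S_i}_{i-1}$-ppo-admissibility of $\phi_{\P_1}(\gamma_i)$, the $U$-labeled subsequence of $(\phi_{\P_1}(\gamma_k),\ldots,\phi_{\P_1}(\gamma_2))$ is an element of $\ppoadm(|U|-1)$ and the $L$-labeled subsequence an element of $\ppoadm(|L|-1)$. Hence $\Phi^\gamma_k$ lands in $\shuff\bigl(k-1;\ppoadm(|U|-1),\ppoadm(|L|-1)\bigr)$.

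For bijectivity, injectivity is immediate since a shuffle records both $\phi_{\P_1}(\gamma_i)$ and its side $S_i$, and these two pieces of data determine $\gamma_i$ because $\phi_{\P_1}$ restricts to a bijection from $\P_1$-ppo-admissible arcs with endpoints in $S_i$ onto $\arc(|S_i|-1)$ (Proposition~\ref{prop:ppo_admissible_recursive}(1)). For surjectivity, given a shuffle of $\omega_U \in \ppoadm(|U|-1)$ and $\omega_L \in \ppoadm(|L|-1)$, set $\gamma_i := \phi_{\P_1}^{-1}(\rho_i)$, where $\rho_i$ is the $i$-th entry of the shuffle read off on its recorded side. Running the two propositions in reverse (still inductively in $i$) certifies that each $\gamma_i$ is $\P_{i-1}$-ppo-admissible, so $(\gamma_k,\ldots,\gamma_2,\gamma)$ lies in $\ppoadm(n,k;\gamma)$ and maps to the given shuffle.

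The main obstacle is the bookkeeping required to align iterated reductions of the full preorder with iterated reductions of the two coordinates of $\xi_{\P_1}$ — in particular, knowing that the sides $U$ and $L$ evolve independently under the full reduction process. This is exactly what Proposition~\ref{prop:reduction_ppo_compatible} packages, so once one commits to applying it step-by-step, the shuffle interpretation is forced and the bijection falls out of the fact that $\xi_{\P_1}$ and $\phi_{\P_1}$ are already known bijections.
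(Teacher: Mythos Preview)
Your proposal is correct and follows essentially the same approach as the paper: both arguments iterate Proposition~\ref{prop:reduction_ppo_compatible} (with outer preorder $\P_1$) to show that $\xi_{\P_1}(\P_i)$ evolves independently on the $U$- and $L$-sides, and both use Proposition~\ref{prop:ppo_admissible_recursive} to transfer ppo-admissibility across $\phi_{\P_1}$. The paper packages this as a formal induction on $k$ while you unroll it into a direct step-by-step argument, but the content is the same.
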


\begin{proof}
    We prove the result by induction on $k$. For $k = 1$ there is nothing to show. Thus suppose $k > 1$ and that the result holds for all $k-1$.

    Now let $\omega = (\gamma_{k-1},\ldots,\gamma_2,\gamma) \in \ppoadm(n,k-1; \gamma)$ and denote $\Phi_{k-1}^\gamma = (\omega_U,\omega_L)$. By the induction hypothesis, we have that $\omega_1 \in \ppoadm(|U|-1)$ and $\omega_2 \in \ppoadm(|L|-1)$. For $1 < k < k$ recursively denote $\P_i = \J_{\P_{i-1}}(\gamma_i)$. Similarly, let $\Q_U^1 = \{[|U|-1]\}$ and $\Q_L^1 = \{[|L|-1]\}$, and for $S \in \{U,L\}$ and $1 < i < k$ resursively denote
    $$\Q_S^i = \begin{cases} \J_{\Q_S^{i-1}}(\phi_{\P_1}(\gamma_i)) & \ell(\gamma_i) \in S\\\Q_S^{i-1} & \ell(\gamma_i) \notin S.\end{cases}$$
    Then by Proposition~\ref{prop:reduction_ppo_compatible} we have that $\xi_{\P_1}(\P_i) = (\Q_U^i,\Q_L^i)$ for all $1 \leq i < k$. Moreover, we have that the map $\phi_{\P_1}$ induces a bijection
    $$\arc_{\P_{i-1}}(n) \rightarrow \arc_{\Q_U^i}(|U|-1) \sqcup \arc_{\Q_L^i}(|L|-1).$$
    This shows that the map $\phi_{\P_1}$ induces a bijection between those elements of $\ppo(n,k;\gamma)$ which begin with $(\gamma_{k-1},\ldots,\gamma_2,\gamma)$ and those $(k-1)$-shuffles of the sequences in $\ppoadm(|U|-1)$ and $\ppoadm(|L|-1)$ which start with $(\phi_{\P_1}(\gamma_{k-1}),\ldots,\phi_{\P_1}(\gamma_2))$. The fact that $\Phi_k^\gamma$ then follows from the induction hypothesis.
\end{proof}

\begin{corollary}\label{cor:ppo_adm_clockwise_ordered}
    Every ppo-admissible sequence is also a clockwise-ordered arc diagram.
\end{corollary}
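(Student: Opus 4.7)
My plan is to induct on $k$, using the recursive decomposition of Corollary~\ref{cor:ppo_seq_shuffle}. For $k \leq 1$ there is nothing to check. For $k \geq 2$, I write $\omega = (\gamma_k,\ldots,\gamma_1)$, set $\gamma := \gamma_1$ and $\P_1 := \J_{\{[n]\}}(\gamma)$, and let $U, L$ be the two parts of $\P_1$ (noting that $r(\gamma) \in U$ and $\ell(\gamma) \in L$). Corollary~\ref{cor:ppo_seq_shuffle} will then produce ppo-admissible sequences $\omega_U \in \ppoadm(|U|-1)$ and $\omega_L \in \ppoadm(|L|-1)$, both of length strictly less than $k$, whose shuffle is $(\phi_{\P_1}(\gamma_k),\ldots,\phi_{\P_1}(\gamma_2))$. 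The induction hypothesis will then supply that both $\omega_U$ and $\omega_L$ are clockwise-ordered arc diagrams.

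The next step is to promote clockwise-orderedness from $\omega_U$ and $\omega_L$ to the tail $(\gamma_k,\ldots,\gamma_2)$. Fixing $2 \leq i < j \leq k$, Lemma~\ref{lem:ppo_admissible} ensures that each of $\gamma_i, \gamma_j$ has both endpoints in one part of $\P_1$. If those parts differ, Corollary~\ref{cor:clockwise_reduction}(1) will supply option~(c) of Definition~\ref{def:clockwise}(2) directly. If they agree, the images $\phi_{\P_1}(\gamma_i)$ and $\phi_{\P_1}(\gamma_j)$ both appear, in that order, within the (by hypothesis, clockwise-ordered) subsequence $\omega_U$ or $\omega_L$; Corollary~\ref{cor:clockwise_reduction}(2) will then transfer the clockwise, contested-endpoint, and crossing data back to $\gamma_i, \gamma_j$.

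The main obstacle — the place where neither corollary applies directly — is checking the interaction between $\gamma = \gamma_1$ and each $\gamma_j$ with $j \geq 2$. Here $\gamma_j$ is $\P_1$-ppo-admissible, so its endpoints lie in a single part $S_j \in \{U, L\}$. Contested endpoints will be ruled out immediately, because $\ell(\gamma) \in L$ and $r(\gamma) \in U$ would force $\gamma_j$ to straddle both parts. To exclude a nontrivial crossing I will assume $S_j = U$ (with $S_j = L$ symmetric) and invoke Remark~\ref{rem:crossing}: alternatives~(3a) and~(3b) there will produce contradictions by pitting the letter they require of $\gamma_j$ at an interior $L$-node against what $\P_1$-ppo-admissibility, via $U \preceq_{\P_1} L$ (which must hold whenever the two parts overlap, and otherwise no such interior node exists), forces that letter to be.

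Finally, for the shared-endpoint subcase — either $r(\gamma_j) = r(\gamma)$ with $S_j = U$, or $\ell(\gamma_j) = \ell(\gamma)$ with $S_j = L$ — the clockwise condition of Definition~\ref{def:clockwise}(1) will be verified directly against the two possible relative positions of the remaining endpoints. In each case the required word-letter is supplied either by the defining characterization of the interior of $U$ or $L$ (when the node in question is interior to $\gamma$) or by condition~(2) or~(3) of Definition~\ref{def:ppo_admissible} (when it is interior to $\gamma_j$). This last case-check, together with the no-crossing step, will close the induction.
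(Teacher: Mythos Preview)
Your proposal is correct and follows the same strategy as the paper: induct on $k$, use Corollary~\ref{cor:ppo_seq_shuffle} together with Corollary~\ref{cor:clockwise_reduction} to handle all pairs among $\gamma_2,\ldots,\gamma_k$, and verify directly (via $\P_1$-ppo-admissibility and the explicit description of $U,L$) that $(\gamma_j,\gamma_1)$ is clockwise-ordered for each $j>1$. The paper organizes that direct check as a node-by-node comparison rather than via Remark~\ref{rem:crossing}, but the content is the same; one small caution is that in alternative~(3b) of Remark~\ref{rem:crossing} with $j=r(\gamma)$ the contradiction comes from $r(\gamma_j)>r(\gamma)$ forcing $r(\gamma_j)\in L$ rather than from an interior $L$-node, so be prepared to handle that boundary subcase separately.
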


\begin{proof}
    Let $\omega = (\gamma_k,\ldots,\gamma_1) \in \ppoadm(n,k)$. We prove the result by induction on $k$. For $k = 1$ there is nothing to show, thus suppose that $k > 1$ and that the result holds for all $k' < k$.

    We first claim that $(\gamma_j,\gamma_1) \in \arc_{cw}(n,2)$ for all $1 < j \leq k$. Indeed, recall that $\gamma_j$ is $\J_{\{[n]\}}(\gamma_1)$-$\ppo$-admissible. Thus suppose $\ell(\gamma_j), r(\gamma_j) \in L_{\{[n]\}}(\gamma_1) =: L$, the case where $\ell(\gamma_j), r(\gamma_j) \in U_{\{[n]\}}(\gamma_1) =:U$ being similar. Write $\gamma_1 = (\ell(\gamma_1),s_{\ell(\gamma_1)+1}^1\cdots s_{r(\gamma_1)})$ and likewise for $\gamma_j$. Now let $i$ be an integer in the common closed support of $\gamma_1$ and $\gamma_j$. There are then three possibilities.
    \begin{itemize}
        \item If $\ell(\gamma_1) = i \neq \ell(\gamma_j)$, then $s_i^j = \textnormal{u}$ by the definition of $\J_{\{[n]\}}(\gamma_1)$-$\ppo$-admissible.
        \item If $\ell(\gamma_1) \neq i = \ell(\gamma_2)$, then $s_i^1 = \textnormal{o}$ by the definition of $L$.
        \item If $\ell(\gamma_1) \neq i \neq \ell(\gamma_2)$, then either (a) $i \in U$ and $s_i^j = \textnormal{u}$ or (b) $i \in L$ and $s_i^1 = \textnormal{o}$ by the same reasoning as in the previous two bullet points.
    \end{itemize}

   Now let $i < i'$ be integers in the common closed support of $\gamma_1$ and $\gamma_j$. Then $i'$ must be as in the third bullet point above, and so $s_{i'}^j \leq s_{i'}^1$. It then follows from Remark~\ref{rem:crossing} and Definition~\ref{def:hom_direction} that there does not exist a nontrivial crossing directed from $\gamma_1$ to $\gamma_j$. Similarly, the three bullet points above, Remark~\ref{rem:crossing}, and Definition~\ref{def:hom_direction} together imply that there does not exist a nontrivial crossing directed from $\gamma_j$ to $\gamma_1$.

   Now suppose that $\ell(\gamma_1) = \ell(\gamma_j)$. (Note that $r(\gamma_1) = r(\gamma_j)$ is not possible due to the assumption that $\ell(\gamma_1) \in L$.) Again using the three bullet points above, it follows that $\gamma_j$ is clockwise from $\gamma_1$. This proves the claim.

   Now by Corollary~\ref{cor:ppo_seq_shuffle} there exist $\omega_U \in \ppoadm(|U|-1)$ and $\omega_L \in \ppoadm(|L|-1)$ and $\omega' \in \shuff(\omega_U,\omega_L)$ such that $\omega = (\omega',\gamma_1)$. Moreover, $\omega_U$ and $\omega_L$ are both clockwise-ordered by the induction hypothesis. The result then follows from Corollary~\ref{cor:clockwise_reduction}.
\end{proof}


\section{Representation theory background}\label{sec:background}

In this section, we begin to shift our focus to the representation theory of finite-dimensional algebras. In particular, we recall the definition of $\tau$-perpendicular subcategories (Definitions~\ref{def:tau_perp} and~\ref{def:tau_in_wide}) and (brick-)$\tau$-exceptional sequences (Definitions~\ref{def:tau_exceptional} and~\ref{def:tau_exceptional_brick}). As we will show explicitly in Sections~\ref{sec:tau_ex_RAn} and~\ref{sec:tau_exceptional_preproj}, these serve as representation-theoretic counterparts to the notions of ppo-reduction and ppo-admissible sequences. Finally, we discuss preprojective algebras of type $A$ in Section~\ref{sec:preproj}. 

Let $\Lambda$ be a finite-dimensional basic algebra over a field $K$. We denote by $\mods\Lambda$ the category of finitely-generated (right) $\Lambda$-modules. By a subcategory, we always means a full subcategory of $\mods\Lambda$ which is closed under isomorphisms. All modules will be assumed to be basic unless otherwise stated, and we write $\rk(-)$ for the number of indecomposable direct summands of a module. In particular, $\rk(\Lambda)$ denotes the number of indecomposable projective modules, or equivalently the number of simple modules. Given $M \in \mods\Lambda$, we denote by $\Fac(M)$ the set of quotient modules of $M$ and by $\add(M)$ the subcategory of direct summands of (finite) direct sums of copies of $M$. We also denote by $M^\perp$ (resp, $\lperp{M}$) the subcategory of those modules $X$ for which $\Hom_\Lambda(M, X) = 0$ (resp. $\Hom_\Lambda(X, M) = 0)$).

\subsection{$\tau$-exceptional sequences}\label{sec:tau_exceptional}

We denote by $\tau$ the Auslander-Reiten translation in the category $\mods\Lambda$. We will use the following characterization of $\tau$ in this paper.

\begin{proposition}\cite[Proposition~5.8]{AS}\label{prop:AS}
    Let $M, N \in \mods\Lambda$. Then $\Hom_\Lambda(N,\tau M) = 0$ if and only if $\Ext^1_\Lambda(M, N') = 0$ for all $N' \in \Fac(M)$.
\end{proposition}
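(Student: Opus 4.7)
The key input will be the Auslander--Reiten formula: for finitely generated modules $X,Y$ over the Artin algebra $\Lambda$, there is a natural isomorphism
\[ D\Ext^1_\Lambda(X,Y)\cong\overline{\Hom}_\Lambda(Y,\tau X),\]
where $\overline{\Hom}$ denotes $\Hom$ modulo morphisms factoring through an injective module. Applied with $X=M$, this rephrases the vanishing $\Ext^1_\Lambda(M,N')=0$ as the statement that every morphism $N'\to\tau M$ factors through an injective. (I will read the proposition as having $\Fac(N)$ on the right-hand side, since with $\Fac(M)$ the condition does not involve $N$.)

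The forward implication is essentially formal: if $\Hom_\Lambda(N,\tau M)=0$, then for any $N'\in\Fac(N)$ with surjection $\pi\colon N\twoheadrightarrow N'$, precomposition with $\pi$ gives an injection $\Hom_\Lambda(N',\tau M)\hookrightarrow\Hom_\Lambda(N,\tau M)=0$. Thus $\Hom_\Lambda(N',\tau M)=0$, a fortiori $\overline{\Hom}_\Lambda(N',\tau M)=0$, and the AR formula yields $\Ext^1_\Lambda(M,N')=0$.

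For the reverse direction, assume $\Ext^1_\Lambda(M,N')=0$ for every $N'\in\Fac(N)$; the plan is to show any $f\colon N\to\tau M$ is zero. I would set $N':=\im(f)$, which lies in $\Fac(N)$, and apply the AR formula to conclude that the inclusion $i\colon N'\hookrightarrow\tau M$ factors as $i=j\circ k$ with $k\colon N'\to I$ and $j\colon I\to\tau M$ for some injective $I$. My goal is then to turn this factorization into $N'=0$.

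Since $i$ is monic, so is $k$, and inside $I$ we may form the injective envelope $E$ of $k(N')$; as $E$ is injective it splits off $I$, so the restriction $j|_E\colon E\to\tau M$ is well-defined. The key step, which I expect to be the main obstacle, is showing that $j|_E$ is monic: any element of $\ker(j|_E)\cap k(N')$ maps to zero under $i=j\circ k$, hence is zero by monicity of $i$, and essentiality of $E$ over $k(N')$ upgrades $\ker(j|_E)\cap k(N')=0$ to $\ker(j|_E)=0$. Thus $E$ embeds as an injective submodule of $\tau M$, and injective submodules always split off. But $\tau M$ admits no nonzero injective direct summand: writing $M=P\oplus M_1\oplus\cdots\oplus M_r$ with $P$ projective and the $M_i$ non-projective indecomposable, one has $\tau M=\bigoplus_i\tau M_i$ with each $\tau M_i$ non-injective indecomposable, and any nonzero direct summand of such a sum is still non-injective. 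Therefore $E=0$, hence $N'=0$ and $f=0$. The decisive feature here is that the full $\Fac(N)$-hypothesis (not merely $\Ext^1_\Lambda(M,N)=0$) allows us to replace $f$ by its monic corestriction $N'\hookrightarrow\tau M$, which is what makes the essentiality/injective-envelope argument applicable.
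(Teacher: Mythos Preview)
The paper does not supply its own proof of this proposition; it is simply quoted from \cite[Proposition~5.8]{AS} and used as a black box. So there is no ``paper's proof'' to compare against, and your task reduces to checking whether your argument is sound on its own terms.

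It is. You correctly diagnose that the printed statement has a typo (the quantifier should range over $\Fac(N)$, not $\Fac(M)$, as the latter makes no reference to $N$); this is indeed the content of the Auslander--Smal{\o} result. Your forward direction is the standard two-line argument. For the converse, your strategy---pass to the image $N'=\im(f)$, factor the monomorphism $N'\hookrightarrow\tau M$ through an injective via the AR formula, then use essentiality of the injective envelope to force an injective summand of $\tau M$---is exactly the classical proof, and each step is carried out correctly. The one point worth making explicit is why $\tau M$ has no nonzero injective summand: you do say this, decomposing $M$ into its projective part (killed by $\tau$) and non-projective indecomposables (whose $\tau$-translates are non-injective indecomposable), which is the right justification.
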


A module $M$ is said to be \emph{$\tau$-rigid} if $\Hom_\Lambda(M, \tau M) = 0$. Such objects are central to the ``$\tau$-tilting theory'' of Adachi, Iyama, and Reiten \cite{AIR}, which has become one of the main research focuses in the study of finite-dimensional algebras in the past decade. Throughout this paper, we make the assumption that $\Lambda$ is \emph{$\tau$-tilting finite} \cite{DIJ}; that is, that there are only finitely many indecomposable $\tau$-rigid modules in $\mods\Lambda$. We denote by $\trig(\Lambda)$ and $\itrig(\Lambda)$ the sets of $\tau$-rigid and indecomposable $\tau$-rigid modules in $\mods\Lambda$, respectively.

A subcategory $\W \subseteq \mods\Lambda$ is called \emph{wide} if it is closed under extensions, kernels, and cokernels. Equivalently, $\W$ is an exact embedded abelian subcategory. Similarly, a subcategory $\mathcal{T} \subseteq \mods\Lambda$ is called a \emph{torsion class} if it is closed under extensions and quotients. We denote by $\wide(\Lambda)$ and $\tors(\Lambda)$ the posets of wide subcategories and torsion classes of $\mods\Lambda$, ordered by inclusion. It is well-known that these posets are (complete) lattices; that is, that any set of wide subcategories (resp. torsion classes) admits a unique least upper bound and a unique greatest lower bound. Moreover, it is a consequence of the $\tau$-tilting finite property that $\wide(\Lambda)$ and $\tors(\Lambda)$ are finite lattices, see \cite[Theorem~4.18]{DIRRT} and \cite[Theorem~3.8]{DIRRT}.

In order to introduce $\tau$-exceptional sequences, we will need the following definition due to Jasso \cite{jasso}.

\begin{definition}\label{def:tau_perp}
    Let $M \in \mods\Lambda$ be $\tau$-rigid. Then the \emph{$\tau$-perpendicular subcategory} of $M$ is
    $$\J(M) := M^\perp \cap \lperp{\tau M}.$$
\end{definition}

The following is the basis of ``$\tau$-tilting reduction'', specialized to the $\tau$-tilting finite case.

\begin{theorem}\label{thm:jasso}
    Let $M \in \mods\Lambda$ be $\tau$-rigid. Then the following hold.
    \begin{enumerate}
        \item \cite[Theorem~2.10]{AIR} There is a unique $\tau$-rigid module $B_M\in \mods\Lambda$ such that (i) $M \oplus B_M$ is basic and $\tau$-rigid, (ii) $\rk(M \oplus B_M) = \rk(\Lambda)$, and (iii) $\lperp{\tau M} = \lperp{\tau(M \oplus B_M)}$. The module $B_M$ is called the \emph{Bongartz complement} of $M$.
        \item \cite[Section~3]{jasso} \textnormal{(see also \cite[Theorem~4.12]{DIRRT})} Let $\Gamma_M = \End_\Lambda(B_M)/[M]$, where $[M]$ is the ideal of morphisms which factor through an object in $\add(M)$. Then there is an exact equivalence of categories $\Hom_\Lambda(B_M,-): \J(M) \rightarrow \mods\Gamma_M$. Moreover, the algebra $\Gamma_M$ is $\tau$-tilting finite and satisfies $\rk(\Gamma_M) + \rk(M) = \rk(\Lambda)$.
        \item \cite[Theorems~4.12 and~4.18]{DIRRT} \textnormal{(see also \cite[Remark~4.3]{BaH2})} The subcategory $\J(M)$ is wide, and every wide subcategory of $\mods\Lambda$ is of the form $\J(M)$ for some $\tau$-rigid module $M$.
    \end{enumerate}
\end{theorem}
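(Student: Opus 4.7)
The theorem gathers three well-known facts from \cite{AIR,jasso,DIRRT}; the plan is to sketch a unified proof built on the torsion-class framework and the characterization of $\tau$ in Proposition~\ref{prop:AS}.

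For part (1), I would first verify that $\lperp{\tau M}$ is a functorially finite torsion class containing $M$: closure under quotients and extensions reduces via Proposition~\ref{prop:AS} to the fact that $\Fac(M) \subseteq \lperp{\tau M}$, and functorial finiteness follows from $\tau$-tilting finiteness. The Bongartz complement is then built by a universal extension, taking a basis of $\Ext^1_\Lambda(M,\Lambda)$ to form a short exact sequence $0 \to \Lambda \to E \to M^k \to 0$; Proposition~\ref{prop:AS} then shows that $E \oplus M$ is $\tau$-rigid and that $\lperp{\tau(E \oplus M)} = \lperp{\tau M}$. Defining $B_M$ by discarding any summands of $E$ already in $\add M$ gives $\rk(M \oplus B_M) = \rk(\Lambda)$. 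Uniqueness reduces to the bijection between functorially finite torsion classes and basic $\tau$-tilting modules, applied to $\lperp{\tau M}$.

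For part (2), I would follow the standard reduction pattern. The module $U := M \oplus B_M$ is an Ext-projective generator of $\lperp{\tau M}$, so $\Hom_\Lambda(U,-)$ restricts to an exact equivalence from $\lperp{\tau M}$ onto a subcategory of $\mods\End_\Lambda(U)$. Restricting further to $\J(M) = M^\perp \cap \lperp{\tau M}$ picks out precisely the modules killed by the idempotent of $\End_\Lambda(U)$ projecting onto $M$, which are exactly the modules over $\End_\Lambda(B_M)/[M] = \Gamma_M$. The rank identity $\rk(\Gamma_M) + \rk(M) = \rk(\Lambda)$ follows from the equivalence, and $\tau$-tilting finiteness of $\Gamma_M$ is a formal consequence of the induced bijection between torsion classes in $\mods\Gamma_M$ and the torsion classes of $\mods\Lambda$ lying between $\Fac(M)$ and $\lperp{\tau M}$.

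For part (3), that each $\J(M)$ is wide follows immediately from (2) since $\mods\Gamma_M$ is abelian. The substantive direction is surjectivity: given a wide subcategory $\W$, I would consider the smallest torsion class $\T(\W)$ containing $\W$; by $\tau$-tilting finiteness this is functorially finite, hence is determined via part (1) by a basic $\tau$-rigid module $U_\W$. Separating the indecomposable summands of $U_\W$ into those lying in $\W$ and those not, the latter assemble into a basic $\tau$-rigid module $M_\W$, and one shows $\J(M_\W) = \W$ by matching Ext-projectives. The main obstacle is this last step: one must verify that the summands of $U_\W$ inside $\W$ serve simultaneously as Ext-projective generators of $\W$ and as the projective generators of $\mods\Gamma_{M_\W}$ under the equivalence of part (2). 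This requires a careful comparison of the canonical decomposition of $U_\W$ with the Jasso equivalence, and uses the fact that, in the $\tau$-tilting finite setting, a wide subcategory is determined by its Ext-projective generators.
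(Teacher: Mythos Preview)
The paper does not prove this theorem at all: it is stated as background with attributions to \cite{AIR}, \cite{jasso}, and \cite{DIRRT}, and is then used freely thereafter. So there is no ``paper's own proof'' to compare against; your task here was essentially to reconstruct the arguments from the cited sources.

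Your sketch for (1) is the standard Bongartz completion argument from \cite{AIR} and is fine as an outline. Your sketch for (3) follows the DIRRT strategy and correctly isolates the nontrivial direction; the description of how to recover $M_\W$ from the Ext-projectives of $\T(\W)$ is informal but accurate in spirit.

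The one place where your outline is loose is part~(2). The sentence ``$\Hom_\Lambda(U,-)$ restricts to an exact equivalence from $\lperp{\tau M}$ onto a subcategory of $\mods\End_\Lambda(U)$'' is not what Jasso proves, and the subsequent claim that restricting to $M^\perp$ corresponds exactly to killing the idempotent associated to $M$ needs justification beyond what you have written: one needs to know that $\Hom_\Lambda(B_M,-)$ is fully faithful and essentially surjective on $\J(M)$, not merely that objects of $\J(M)$ land in the correct subcategory. Jasso's actual argument passes through showing that the image of $B_M$ is a tilting object over an appropriate algebra and then invoking tilting theory; your sketch skips this step. This is a genuine gap if you intend the sketch to stand as a proof, though it is easily filled by pointing directly to \cite[Theorem~3.8]{jasso}.
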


\begin{definition}\label{def:rank}
    Let $\W = \J(M)$ in the setting of Theorem~\ref{thm:jasso}(2). Then the \emph{rank} of $\W$ is $\rk(\W) := \rk(\Gamma_M)$. Note that this depends only on the wide subcategory $\W$, and not on the module $M$ realizing $\W = \J(M)$.
\end{definition}

The module $M$ which allows one to realize an arbitrary wide subcategory as $\J(M)$ is generally not unique. We do, however, have the following.

\begin{theorem}\cite[Theorem~10.1]{BM_wide}
    Let $M, N \in \mods\Lambda$ be $\tau$-rigid, and suppose that $M$ is indecomposable. Then $\J(M) = \J(N)$ if and only if $M = N$.
\end{theorem}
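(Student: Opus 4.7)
The forward implication is trivial. For the converse, suppose $\J(M) = \J(N) =: \mathcal{W}$ with $M$ indecomposable $\tau$-rigid.

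\emph{Matching ranks.} By Theorem~\ref{thm:jasso}(2), the rank $\rk(\Gamma_M)$ equals the number of isomorphism classes of simple objects in the abelian category $\mathcal{W} \simeq \mods\Gamma_M$, and hence depends only on $\mathcal{W}$. Combined with the identities $\rk(M) + \rk(\Gamma_M) = \rk(\Lambda) = \rk(N) + \rk(\Gamma_N)$, this forces $\rk(N) = \rk(M) = 1$, so $N$ is also indecomposable.

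\emph{Uniqueness.} My plan is to reconstruct the Bongartz-completed $\tau$-tilting module $T_M := M \oplus B_M$ from $\mathcal{W}$, then identify $M$ as a specific indecomposable summand. The torsion class $\T_M := \lperp{\tau M} = \Fac(T_M)$ satisfies $\mathcal{W} = M^\perp \cap \T_M$ by definition of $\J(M)$. I would use the identification in \cite{enomoto_lattice} of the lattice of wide subcategories with the core-label order on $\tors(\Lambda)$ to characterize $\T_M$ as the unique maximal torsion class whose core label equals $\mathcal{W}$; this yields $\T_M = \T_N$, whence $T_M = T_N$ by the bijection between torsion classes and basic $\tau$-tilting modules \cite{AIR}. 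To identify $M$ among the indecomposable summands of this common $\tau$-tilting module, I will use the $g$-vector of $M$: since the $g$-vector map is injective on indecomposable $\tau$-rigid modules \cite{AIR}, and since the $g$-vector lattice of the modules in $\mathcal{W} \simeq \mods\Gamma_M$ spans a sublattice of codimension one inside $K_0(\mathrm{proj}\,\Lambda)$, the coset of $g(M)$ is determined by $\mathcal{W}$, with the sign pinned down by the requirement that $M$ appear as an honest indecomposable summand of the recovered $T_M$.

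The main difficulty will be the reconstruction of $\T_M$ from $\mathcal{W}$: proving that $\T_M$ is the unique maximum in its core-label fiber requires the lattice-theoretic structure of $\tors(\Lambda)$ specific to the $\tau$-tilting finite setting, as developed in \cite{DIRRT, enomoto_lattice}. Once this reconstruction step is secured, the identification of $M$ among the summands of $T_M$ via $g$-vectors is a comparatively routine application of standard facts in $\tau$-tilting theory.
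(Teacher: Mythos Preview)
The paper does not give its own proof of this statement; it is quoted from \cite[Theorem~10.1]{BM_wide} and used as a black box. So there is no in-paper argument to compare against, and I will evaluate your sketch on its own merits.

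Your rank-matching step is fine: $\rk(\mathcal W)$ is intrinsic to $\mathcal W$ (this is exactly Definition~\ref{def:rank}), so $\rk(N)=1$.

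The reconstruction step, however, does not work as written. The core-label map $\tors(\Lambda)\to\wide(\Lambda)$ of \cite{enomoto_lattice} is a bijection, but it does \emph{not} send $\T_M=\lperp{\tau M}$ to $\J(M)$. For the path algebra of $1\to 2$ and $M=P(1)$, one has $\T_M=\mods\Lambda$, whose core label is all of $\mods\Lambda$, whereas $\J(P(1))=P(1)^{\perp}=\add(S(2))$. So you cannot recover $\T_M$ from $\mathcal W$ as ``the torsion class with core label $\mathcal W$''; in particular the conclusion $\T_M=\T_N$ is unproven. What the core-label bijection does give is that corank-one wide subcategories biject with elements of $\tors(\Lambda)$ covered by the top; but matching those with indecomposable $\tau$-rigid modules is essentially the theorem you are trying to prove.

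Even if one grants $\T_M=\T_N$, the $g$-vector argument is incomplete. You would need that, among the indecomposable summands of the common $\tau$-tilting module $M\oplus B_M=N\oplus B_N$, the summand $M$ is singled out by $\mathcal W$. It is not true in general that $B_M\in\J(M)$ (for $M=S(2)=P(2)$ in the $A_2$ example, $B_M=P(1)\notin\J(S(2))=\add(S(1))$), so ``$M$ is the unique summand outside $\mathcal W$'' fails, and the coset of $g(M)$ modulo the span of $g$-vectors coming from $\mathcal W$ does not obviously pick out a unique summand. The argument in \cite{BM_wide} instead reconstructs $M$ through the brick $\beta(M)$ and the behaviour of $\tau$-rigid modules under Jasso reduction, rather than via $\T_M$.
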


As established in Jasso's seminal work \cite{jasso}, Theorem~\ref{thm:jasso} allows one to define $\tau$-rigid modules and $\tau$-perpendicular subcategories within a wide subcategory of $\mods\Lambda$. More precisely, we consider the following.

\begin{definition}\label{def:tau_in_wide}
    Let $\W \subseteq \mods\Lambda$ be a wide subcategory, write $\W = \J(M)$. Let $\tau_\W$ denote the Auslander-Reiten translation in $\W$ (or equivalently in $\Gamma_M$ in the notation of Theorem~\ref{thm:jasso}).
\begin{enumerate}
        \item We say that $N \in \W$ is \emph{$\tau_\W$-rigid} (or \emph{$\tau$-rigid in $\W$}) if $\Hom_\Lambda(N, \tau_\W N) = 0$.
        \item If $N$ is $\tau_\W$-rigid, we denote $\J_\W(N) := \W \cap N^\perp \cap \lperp{\tau_\W N}$, which we call the \emph{$\tau_\W$-perpendicular subcategory of $N$} (or the \emph{$\tau$-perpendicular subcategory of $N$ in $\W$}).
        \item We denote by $\trigW{\W}(\W)$ and $\itrigW{\W}(\W)$ the sets of $\tau_\W$-rigid and indecomposable $\tau_\W$-rigid modules in $\W$, respectively.
    \end{enumerate}
\end{definition}

\begin{remark}
    We note that the definition of $\tau_\W$ in Definition~\ref{def:tau_in_wide} does not depend on the choice of the $\tau$-rigid module $M$ realizing $\W = \J(M)$. Indeed, it follows from Theorem~\ref{thm:jasso} that if $\J(M) = \J(M')$, then there is an equivalence of categories $\mods \Gamma_M \rightarrow \mods\Gamma_{M'}$. Alternatively, if $N \in \J(M)$ is not projective in $\J(M)$, then $\tau_\W M$ is the unique module in $\J(M)$ admitting an exact sequence
    $0 \rightarrow \tau_\W M \rightarrow E \rightarrow M \rightarrow 0$
    which is almost split in $\J(M)$.
\end{remark}

The following are immediate consequences of Theorem~\ref{thm:jasso}, Proposition~\ref{prop:AS}, and the definitions.

\begin{corollary}\label{cor:AS}
    Let $\W \subseteq \mods\Lambda$ be a wide subcategory.
    \begin{enumerate}
        \item Let $M, N \in \W$. Then $\Hom_\Lambda(N, \tau_\W M) = 0$ if and only if $\Ext^1_\Lambda(M, N') = 0$ for all $N' \in \W \cap \Fac N$.
        \item Let $M \in \W$ be $\tau_\W$-rigid. Then $\J_\W(M)$ is a wide subcategory of $\mods\Lambda$.
        \item Let $\U, \W \subseteq \mods\Lambda$ be wide subcategories, and suppose that $\U \subseteq \W$. Then $\U$ is a wide subcategory of $\W$. In particular, there exists $M \in \W$ which is $\tau_\W$-rigid and satisfies $\J_\W(M) = \U$.
        \item In the setting of (3), we have that $\U = \W$ if and only if $\rk(\U) = \rk(\W)$.
    \end{enumerate}
\end{corollary}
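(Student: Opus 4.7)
The plan is to systematically transport each claim from the algebra $\Lambda$ (where the analogous statement is already known) to the wide subcategory $\W$ via Jasso's equivalence. Fix a $\tau$-rigid module $M_\W$ with $\W = \J(M_\W)$, and let $F := \Hom_\Lambda(B_{M_\W}, -) : \W \xrightarrow{\sim} \mods\Gamma_{M_\W}$ be the exact equivalence of Theorem~\ref{thm:jasso}(2). Because $F$ is an exact equivalence, it preserves Hom- and Ext-vanishing, preserves quotients (so $F(\W \cap \Fac N) = \Fac F(N)$ inside $\mods\Gamma_{M_\W}$), and intertwines the Auslander–Reiten translates $\tau_\W$ and $\tau_{\Gamma_{M_\W}}$.

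For (1), apply $F$ to both sides: the statement $\Hom_\Lambda(N, \tau_\W M) = 0$ becomes $\Hom_{\Gamma_{M_\W}}(F(N), \tau F(M)) = 0$, and the condition ``$\Ext^1_\Lambda(M, N') = 0$ for every $N' \in \W \cap \Fac N$'' becomes ``$\Ext^1_{\Gamma_{M_\W}}(F(M), N'') = 0$ for every $N'' \in \Fac F(N)$''. The equivalence is then immediate from Proposition~\ref{prop:AS} applied to $\mods\Gamma_{M_\W}$.

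For (2), if $M \in \W$ is $\tau_\W$-rigid, then $F(M)$ is $\tau$-rigid in $\mods\Gamma_{M_\W}$, and by Theorem~\ref{thm:jasso}(3) (applied to $\Gamma_{M_\W}$) the subcategory $\J(F(M)) \subseteq \mods\Gamma_{M_\W}$ is wide. Transporting back through $F$ gives that $\J_\W(M)$ is closed under extensions, kernels, and cokernels when computed inside $\W$. Since $\W$ is itself closed under extensions, kernels, and cokernels in $\mods\Lambda$, these operations in $\W$ agree with those in $\mods\Lambda$, so $\J_\W(M)$ is wide in $\mods\Lambda$.

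For (3), the inclusion $\U \subseteq \W$ makes $\U$ closed under the exact operations of $\W$ (again because these are inherited from $\mods\Lambda$), so $F(\U)$ is wide in $\mods\Gamma_{M_\W}$. Applying Theorem~\ref{thm:jasso}(3) inside $\mods\Gamma_{M_\W}$ produces a $\tau$-rigid module $N_0$ with $\J(N_0) = F(\U)$; then $M := F^{-1}(N_0)$ is $\tau_\W$-rigid and $\J_\W(M) = \U$. For (4), Theorem~\ref{thm:jasso}(2) gives $\rk(\J_\W(M)) = \rk(\Gamma_{M_\W}) - \rk(M) = \rk(\W) - \rk(M)$, so $\rk(\U) = \rk(\W)$ forces $\rk(M) = 0$, hence $M = 0$ and $\U = \J_\W(0) = \W$; the reverse implication is trivial. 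The only mildly delicate point is checking that exactness in $\W$ coincides with exactness in $\mods\Lambda$ (so that wideness is preserved under the inclusion $\U \subseteq \W \subseteq \mods\Lambda$), but this is exactly the content of $\W$ being wide in $\mods\Lambda$.
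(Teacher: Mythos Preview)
Your proof is correct and follows exactly the approach the paper indicates: the paper simply states that these are ``immediate consequences of Theorem~\ref{thm:jasso}, Proposition~\ref{prop:AS}, and the definitions,'' and you have spelled out precisely how to transport each claim through Jasso's exact equivalence $F:\W\xrightarrow{\sim}\mods\Gamma_{M_\W}$ to reduce to the known case of a module category. The only thing worth adding (though it is implicit in your argument) is that the rank of a wide subcategory, being the number of its simple objects, is an invariant of the underlying abelian category and hence is preserved by $F$; this is what justifies comparing $\rk(\U)$ computed in $\mods\Lambda$ with $\rk(\J(F(M)))$ computed in $\mods\Gamma_{M_\W}$ in part~(4).
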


We are now prepared to define $\tau$-exceptional sequences.

\begin{definition}\cite{BM_exceptional}\label{def:tau_exceptional}
    Let $\Delta = (M_k,\ldots,M_1)$ be a sequence of indecomposable modules in $\mods\Lambda$. We say that $\Delta$ is a \emph{$\tau$-exceptional sequence} if the following hold.
    \begin{enumerate}
        \item $M_1$ is $\tau$-rigid.
        \item $(M_k,\ldots,M_2)$ is a $\tau$-exceptional sequence in $\J(M)$.
    \end{enumerate}
    We refer to $k$ as the \emph{length} of $\Delta$. We say that $\Delta$ is a \emph{complete $\tau$-exceptional sequence} if in addition $k = \rk(\Lambda)$. For $k \in [\rk(\Lambda)]$, we denote by $\tex(\Lambda,k)$ the set of $\tau$-exceptional sequences of length $k$ in $\mods\Lambda$ and $\tex(\Lambda) = \bigcup_{k = 0}^{\rk(\Lambda)}\tex(\Lambda,k)$.
\end{definition}

Note that the length of a $\tau$-exceptional sequence cannot exceed $\rk(\Lambda)$. This is implicit in Buan and Marsh's original work on (signed) $\tau$-exceptional sequences \cite{BM_exceptional}, and follows from the fact that $\rk(M) \leq \rk(\Lambda)$ for any $M \in \trig(\Lambda)$.

We can also use Corollary~\ref{cor:AS} to characterize $\tau$-exceptional sequences without reference to the Auslander-Reiten translation as follows.

\begin{proposition}\label{prop:tau_exceptional}
    Let $\Delta = (M_k,\ldots,M_1)$ be a sequence of indecomposable modules. Then $\Delta$ is a $\tau$-exceptional sequence if and only if the following hold.
    \begin{enumerate}
        \item $\Hom_\Lambda(M_i,M_j) = 0$ for all $1 \leq i < j \leq k$.
        \item $\Ext_\Lambda^1(M_i,M_i') = 0$ for all $1 \leq i \leq k$ and for all $M'_i \in \left(\bigoplus_{\ell < i} M_\ell\right)^\perp \cap \Fac(M_j)$
        \item $\Ext^1_\Lambda(M_i,M'_j) = 0$ for all $1 \leq i < j \leq k$ and for all $M'_j \in \left(\bigoplus_{\ell \leq i} M_\ell\right)^\perp \cap \Fac(M_j)$.
    \end{enumerate}
\end{proposition}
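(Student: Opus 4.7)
The plan is to induct on $k$. For the base case $k=1$, conditions (1) and (3) are vacuous, and condition (2) reduces to $\Ext^1_\Lambda(M_1, M_1') = 0$ for all $M_1' \in \Fac(M_1)$. By Proposition~\ref{prop:AS}, this is precisely the statement that $M_1$ is $\tau$-rigid, which is Definition~\ref{def:tau_exceptional} in length one.

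The technical engine of the inductive step will be the following translation lemma. Assuming $M_1$ is $\tau$-rigid and $M_2,\ldots,M_k \in \J(M_1)$, the fact that $\lperp{\tau M_1}$ is closed under quotients (an immediate consequence of its definition) combined with $M_j \in \J(M_1) \subseteq \lperp{\tau M_1}$ forces $\Fac(M_j) \subseteq \lperp{\tau M_1}$ for every $j \geq 2$. Hence
\[
\Fac(M_j) \cap M_1^\perp \;=\; \Fac(M_j) \cap \J(M_1) \;=\; \Fac_{\J(M_1)}(M_j),
\]
the last equality using that $\J(M_1)$ is closed under cokernels. Because $\J(M_1)$ is wide, $\Ext^1$ computed in $\J(M_1)$ agrees with $\Ext^1_\Lambda$ on its objects. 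Thus for each $i \geq 2$, conditions (2) and (3) in $\mods\Lambda$ at that index are equivalent to the corresponding conditions for the shifted sequence $(M_k,\ldots,M_2)$ regarded inside $\J(M_1) \simeq \mods\Gamma_{M_1}$, which is itself $\tau$-tilting finite by Theorem~\ref{thm:jasso}(2); instances of condition (1) with $i \geq 2$ transfer unchanged.

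For the forward implication, I would assume $\Delta$ is $\tau$-exceptional, so $M_1$ is $\tau$-rigid and $(M_k,\ldots,M_2)$ is $\tau$-exceptional in $\J(M_1)$. Condition (2) with $i=1$ then follows from Proposition~\ref{prop:AS}, and the induction hypothesis applied inside $\J(M_1)$ combined with the translation lemma yields (1), (2), (3) for all $i \geq 2$. Condition (1) for $i = 1$ is immediate from $M_j \in \J(M_1) \subseteq M_1^\perp$, and condition (3) for $i = 1$ follows from $M_j \in \lperp{\tau M_1}$ via Proposition~\ref{prop:AS}, which actually supplies the stronger vanishing $\Ext^1_\Lambda(M_1, M_j') = 0$ for all $M_j' \in \Fac(M_j)$.

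The reverse implication is where I expect the main obstacle. Condition (2) with $i=1$ immediately forces $M_1$ to be $\tau$-rigid, and (1) yields $M_j \in M_1^\perp$ for $j \geq 2$, but the remaining membership $M_j \in \lperp{\tau M_1}$ demands $\Ext^1_\Lambda(M_1, M_j') = 0$ for every $M_j' \in \Fac(M_j)$, whereas (3) only hands this to us on $M_1^\perp \cap \Fac(M_j)$. To close this gap I would introduce the trace $T := \mathrm{tr}_{M_1}(M_j')$, so that $T \in \Fac(M_1)$ and $M_j'/T \in M_1^\perp \cap \Fac(M_j)$; condition (2) with $i = 1$ (via Proposition~\ref{prop:AS}) then gives $\Ext^1_\Lambda(M_1, T) = 0$, condition (3) gives $\Ext^1_\Lambda(M_1, M_j'/T) = 0$, and the long exact sequence for $0 \to T \to M_j' \to M_j'/T \to 0$ sandwiches $\Ext^1_\Lambda(M_1, M_j')$ between zeros. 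With $M_2,\ldots,M_k \in \J(M_1)$ in hand, the translation lemma converts conditions (1), (2), (3) for $i \geq 2$ into the hypotheses of the induction hypothesis applied to $(M_k,\ldots,M_2)$ in $\J(M_1)$, yielding that this shifted sequence is $\tau$-exceptional in $\J(M_1)$ and hence that $\Delta$ is $\tau$-exceptional.
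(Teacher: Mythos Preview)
Your argument is correct. The paper states Proposition~\ref{prop:tau_exceptional} without proof (it is immediately followed by Remark~\ref{rem:stratifying}), so there is nothing to compare against; your induction on $k$, together with the trace argument for the reverse implication, is a clean way to supply the missing details. Two minor remarks: the claim that $\Ext^1$ in $\J(M_1)$ agrees with $\Ext^1_\Lambda$ deserves a one-line justification (wide subcategories are extension-closed, so every ambient extension of objects of $\J(M_1)$ already lies in $\J(M_1)$), and note that the paper's statement of Proposition~\ref{prop:AS} contains a typo ($\Fac(M)$ should read $\Fac(N)$, as Corollary~\ref{cor:AS}(1) confirms)---you have used the correct version.
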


\begin{remark}\label{rem:stratifying}
   Proposition~\ref{prop:tau_exceptional} highlights some of the similarities of $\tau$-exceptional sequences and other definitions in the literature. For example, if one replaces (2) in Proposition~\ref{prop:tau_exceptional} with (2') $\Ext^1_\Lambda(M_i,M_j) = 0$ for all $1 \leq i \leq j \leq k$, the result is the definition of a \emph{stratifying system}. The explicit connection between stratifying systems and $\tau$-exceptional sequences is described in \cite{MT}. We also refer to the introduction of \cite{MT} and the references therein for information about the rich history and body of research related to stratifying systems.
\end{remark}

The following can be seen as a representation-theoretic analog of Corollary~\ref{cor:sat_chain}.

\begin{theorem}\cite[Theorem~10.1]{BM_wide}\label{thm:sat_top}
    There is a bijection $\tex(\Lambda) \rightarrow \stopc(\wide(\Lambda))$ given as follows. Let $\Delta = (M_k,\ldots,M_1) \in \tex(\Lambda)$ and denote $\W_0 = \mods\Lambda$. For $1 \leq i \leq k$, recursively define $\W_i = \J_{\W_{i-1}}(M_i)$. Then $\Delta$ corresponds to $(\W_k \subset \cdots \subset \W_0)$. Moreover, this map restricts to a bijection between the complete $\tau$-exceptional sequences in $\mods\Lambda$ and the maximal chains in the poset $\wide(\Lambda)$.
\end{theorem}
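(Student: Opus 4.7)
The plan is to verify that the assignment $\Delta \mapsto (\W_k \subset \cdots \subset \W_0)$ is well-defined, injective, and surjective, each by induction on $k$, and then handle the ``moreover'' clause via a rank count. The base case $k = 0$ is trivial: the empty sequence maps to the one-term chain $(\mods\Lambda)$. For the inductive step I will use the fact that, by Definition~\ref{def:tau_exceptional}, a $\tau$-exceptional sequence $(M_k,\ldots,M_1)$ in $\mods\Lambda$ is exactly the data of an indecomposable $\tau$-rigid $M_1$ together with a $\tau$-exceptional sequence $(M_k,\ldots,M_2)$ in $\W_1 = \J(M_1)$.

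For well-definedness, each $\W_i$ is wide by Corollary~\ref{cor:AS}(2)--(3), so it suffices to verify that $\W_1 \subset \W_0$ is a cover relation in $\wide(\Lambda)$. Theorem~\ref{thm:jasso}(2) gives $\rk(\W_1) = \rk(\Lambda) - \rk(M_1) = \rk(\Lambda) - 1$ since $M_1$ is indecomposable, and any intermediate wide subcategory $\W_1 \subseteq \U \subseteq \mods\Lambda$ has rank pinched between these two values, so Corollary~\ref{cor:AS}(4) forces $\U \in \{\W_1, \mods\Lambda\}$. The inductive step in $\W_1$ then shows that the entire chain is saturated.

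For injectivity, if two sequences map to the same chain then $\J(M_1) = \W_1 = \J(M'_1)$ with both $M_1$ and $M'_1$ indecomposable $\tau$-rigid, and the uniqueness theorem cited just before Definition~\ref{def:rank} gives $M_1 = M'_1$; induction on the tails inside $\wide(\W_1)$ finishes. For surjectivity, given a saturated top chain $\W_k \subset \cdots \subset \W_0$, Corollary~\ref{cor:AS}(3) produces a $\tau$-rigid $M_1$ with $\J(M_1) = \W_1$. I claim $M_1$ is forced to be indecomposable: if $M_1 = N_1 \oplus N_2$ with both summands nonzero, then $\J(N_1)$ has rank $\rk(\Lambda) - \rk(N_1)$, strictly between $\rk(\W_1)$ and $\rk(\mods\Lambda)$, so it is a proper intermediate wide subcategory of the claimed cover relation $\W_1 \subset \W_0$, a contradiction. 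Applying the inductive hypothesis inside $\W_1$ produces $(M_k,\ldots,M_2) \in \tex(\W_1)$ mapping to the tail chain, and Definition~\ref{def:tau_exceptional} reassembles everything into $\Delta \in \tex(\Lambda)$.

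Finally, the ``moreover'' clause follows from the rank telescoping $\rk(\W_k) = \rk(\Lambda) - k$: a saturated top chain is maximal precisely when $\W_k$ admits no proper wide subcategory, i.e.\ when $\rk(\W_k) = 0$, i.e.\ when $k = \rk(\Lambda)$, which is the definition of a complete $\tau$-exceptional sequence. The only genuinely hard ingredient in this argument is the uniqueness result for indecomposable $\tau$-rigids invoked in the injectivity step (the theorem of Buan--Marsh stated just before Definition~\ref{def:rank}); everything else is bookkeeping with ranks, Corollary~\ref{cor:AS}, and the recursive structure already built into Definition~\ref{def:tau_exceptional}.
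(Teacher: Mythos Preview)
The paper does not supply its own proof of this statement: Theorem~\ref{thm:sat_top} is quoted verbatim from \cite[Theorem~10.1]{BM_wide} as a background result, with no argument given. Your proposal is therefore not being compared against anything in the paper, but it is a correct and self-contained derivation from the tools assembled in Section~\ref{sec:background}. The well-definedness and rank-telescoping steps are straightforward, and your surjectivity argument is sound: the containment $\J(M_1)\subseteq\J(N_1)$ for $N_1$ a summand of $M_1$ follows directly from the definition of $\J(-)$, so a decomposable $M_1$ would indeed produce a wide subcategory strictly between $\W_1$ and $\mods\Lambda$. As you correctly identify, the only substantive external input is the uniqueness theorem of Buan--Marsh quoted just before Definition~\ref{def:rank}, which is itself part of \cite[Theorem~10.1]{BM_wide}; everything else is bookkeeping with Theorem~\ref{thm:jasso} and Corollary~\ref{cor:AS}. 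One small point worth making explicit in the inductive steps: you are implicitly using that $\wide(\W_1)$ coincides with the interval $[0,\W_1]$ in $\wide(\Lambda)$, and that the uniqueness theorem transfers to $\W_1$ via the equivalence $\W_1\simeq\mods\Gamma_{M_1}$ of Theorem~\ref{thm:jasso}(2); both are immediate, but stating them would make the induction airtight.
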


We note that Theorem~\ref{thm:sat_top} can be generalized to algebras which are not $\tau$-tilting finite by replacing $\wide(\Lambda)$ with the ``poset of $\tau$-perpendicular subcategories'', see \cite[Theorem~6.16]{BuH}.


\subsection{Bricks}\label{sec:bricks}

A module $X \in \mods\Lambda$ is called a \emph{brick} if $\End_\Lambda(X)$ is a division algebra. When $K$ is algebraically closed, this condition simplifies to $\End_\Lambda(X) \cong K$. This is also known to be the case for the type A preprojective algebras considered in the paper. We denote by $\brick(\Lambda)$ the set of bricks in $\mods\Lambda$. More generally, given a wide subcategory $\W \subseteq \mods\Lambda$, we denote by $\brick(\W)$ the set of bricks in $\W$. Note that by the definition we have $\brick(\W) = \W \cap \brick(\Lambda)$. This stands in contrast to $\tau$-rigid modules, where in general $\trigW{\W}(\W)$ and $\W \cap \trig(\Lambda)$ are not related by containment in either direction.

Bricks are intimately related with $\tau$-rigid modules, as the ``brick-$\tau$-rigid correspondence'' of Demonet, Iyama, and Jasso shows.

\begin{theorem}\cite[Theorem~4.1]{DIJ}\label{thm:DIJ}
    Let $M \in \mods\Lambda$ be an indecomposable $\tau$-rigid module. Let $r(M,M)$ be the set of endomorphisms of $M$ which are not invertible, and denote $$\beta(M) := M/\left(\sum_{f \in r(M,M)} \mathrm{im}(f)\right).$$ Then $\beta(M)$ is a brick. Moreover, the association $\beta: \itrig(\Lambda) \rightarrow \brick(\Lambda)$ is a bijection.
\end{theorem}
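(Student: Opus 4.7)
The plan is to prove Theorem~\ref{thm:DIJ} in three stages: first that $\beta(M)$ is a brick, then injectivity, then surjectivity of $\beta$.

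For the brick property, the key is to lift endomorphisms of $\beta(M)$ to endomorphisms of $M$. I would first observe that $\End_\Lambda(M)$ is a finite-dimensional local $K$-algebra with Jacobson radical $r(M,M)$. Set $K_M := \sum_{f \in r(M,M)} \mathrm{im}(f)$; as a sum of quotients of $M$, we have $K_M \in \Fac(M)$. By Proposition~\ref{prop:AS} applied to the $\tau$-rigid module $M$, this gives $\Ext^1_\Lambda(M, K_M) = 0$. Therefore any $g \in \End_\Lambda(\beta(M))$, composed with the projection $\pi: M \twoheadrightarrow \beta(M)$, lifts to some $h \in \End_\Lambda(M)$. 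If $h$ is invertible so is $g$; otherwise $h \in r(M,M)$, forcing $h(M) \subseteq K_M$ and hence $g = 0$. A Nakayama-type argument applied to the finite-dimensional $\End_\Lambda(M)$-module $M$ ensures $\beta(M) \neq 0$, so $\End_\Lambda(\beta(M))$ is a division ring.

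For injectivity and surjectivity, I would route the argument through the lattice $\tors(\Lambda)$. In the $\tau$-tilting finite setting, the cover relations in $\tors(\Lambda)$ are in bijection with $\itrig(\Lambda)$ via $M \mapsto \bigl(\Fac(M \oplus B_M) \gtrdot \Fac(B_M)\bigr)$, where $B_M$ is the Bongartz complement from Theorem~\ref{thm:jasso}(1). Independently, the same cover relations are labeled by bricks: the label of $\T \gtrdot \T'$ is the unique brick of $\T$ which does not lie in $\T'$ and is minimal with this property (equivalently, the simple object of the associated wide subcategory not contained in $\T'$). To prove the remaining parts of the theorem, I would verify that these two labelings coincide under $\beta$, i.e.\ that the brick labeling the cover relation attached to $M$ is precisely $\beta(M)$. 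This amounts to showing that $\beta(M)$ lies in $\Fac(M \oplus B_M) \setminus \Fac(B_M)$ and is the minimal such object; the first assertion can be checked directly from the surjection $M \twoheadrightarrow \beta(M)$ together with the characterization $\Fac(B_M) = M^\perp \cap \Fac(M \oplus B_M)$ afforded by $\tau$-tilting reduction (Theorem~\ref{thm:jasso}(2)), and the minimality follows because the endomorphism-lifting argument from the first paragraph identifies $\beta(M)$ as the unique maximal quotient of $M$ whose endomorphism ring is the residue division ring $\End_\Lambda(M)/r(M,M)$.

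The main obstacle will be packaging the compatibility of the two labelings cleanly; in particular one must show that the subobject lattice of $\beta(M)$ inside $\Fac(M \oplus B_M)$ forces $\beta(M)$ to be the smallest element of $\Fac(M \oplus B_M) \setminus \Fac(B_M)$, rather than merely a minimal one. The essential input throughout is Proposition~\ref{prop:AS}: it is what allows endomorphisms of $\beta(M)$ to be lifted through $K_M$, and iterating this lifting with the $\tau$-tilting reduction of Theorem~\ref{thm:jasso} is what ultimately identifies $\beta(M)$ with the canonical brick label of a cover relation in $\tors(\Lambda)$.
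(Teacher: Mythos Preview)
The paper does not give a proof of this statement: it is quoted verbatim as \cite[Theorem~4.1]{DIJ} and used as a black box. There is therefore nothing in the paper to compare your argument against.

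That said, since you have written a proof, let me comment on it. Your first stage (that $\beta(M)$ is a brick) is correct and is essentially the standard argument: the key point, that $\Ext^1_\Lambda(M,K_M)=0$ because $K_M\in\Fac(M)$, is exactly what makes the lifting work, and the Nakayama argument for $\beta(M)\neq 0$ is fine.

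Your second stage has a genuine error. You assert that in the $\tau$-tilting finite setting the cover relations of $\tors(\Lambda)$ are in bijection with $\itrig(\Lambda)$. This is false: already for the path algebra of $A_2$ there are five cover relations but only three indecomposable $\tau$-rigid modules. What is true is that the \emph{join-irreducible} elements of $\tors(\Lambda)$ are in bijection with $\itrig(\Lambda)$ via $M\mapsto\Fac(M)$, and independently with $\brick(\Lambda)$ via $B\mapsto\Filt\Fac(B)$; composing these two bijections is how one obtains $\beta$. Equivalently, many cover relations can carry the same brick label, and the content of the theorem is that the \emph{set} of labels is exactly $\brick(\Lambda)$ and that the label of any cover relation where $M$ is the mutating summand is $\beta(M)$. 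Your sketch conflates ``cover relation'' with ``join-irreducible'', and the map $M\mapsto\bigl(\Fac(M\oplus B_M)\gtrdot\Fac(B_M)\bigr)$ you write down is not a bijection onto all cover relations. Once you replace cover relations by join-irreducibles (or, equivalently, by brick-label classes of cover relations), the rest of your outline goes through and matches the argument in \cite{DIJ}.
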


We emphasize that the version of Theorem~\ref{thm:DIJ} stated here is only true under the assumption of $\tau$-tilting finiteness. Without this assumption, $\beta$ becomes an injection whose image can be described in terms of ``functorially finite torsion classes''.

We can also extend Theorem~\ref{thm:DIJ} to wide subcategories of $\mods\Lambda$ as follows.

\begin{corollary}\label{cor:DIJ}
    Let $\W \subseteq \mods\Lambda$ be a wide subcategory. For $M \in \trigW{\W}(\W)$, define $\beta(M)$ as in Theorem~\ref{thm:DIJ}. Then the association $\beta: \itrigW{\W}(\W) \rightarrow \brick(\W)$ is a bijection.
\end{corollary}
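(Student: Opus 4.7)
The plan is to reduce to Theorem~\ref{thm:DIJ} by transporting everything through the equivalence of categories provided by Theorem~\ref{thm:jasso}(2). The key observation is that the construction $\beta$ is purely categorical: it quotients $M$ by the sum of images of the ideal of non-invertible endomorphisms, and this operation is preserved by any exact equivalence.

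First, I would invoke Theorem~\ref{thm:jasso}(3) to write $\W = \J(N)$ for some $N \in \trig(\Lambda)$, and then Theorem~\ref{thm:jasso}(2) to obtain an exact equivalence $F := \Hom_\Lambda(B_N,-) : \W \xrightarrow{\sim} \mods \Gamma_N$, with $\Gamma_N$ itself $\tau$-tilting finite. Because $F$ is an exact equivalence of abelian categories, it restricts to a bijection $\brick(\W) \to \brick(\Gamma_N)$ (bricks are characterized by their endomorphism ring being a division algebra, which equivalences preserve), and it identifies the Auslander–Reiten translation $\tau_\W$ with $\tau_{\Gamma_N}$, hence restricts to a bijection $\itrigW{\W}(\W) \to \itrig(\Gamma_N)$ by Definition~\ref{def:tau_in_wide}.

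Next, I would verify that $F$ intertwines the $\beta$-maps. Since $F$ is additive and faithful, it identifies $r_\W(M,M)$ with $r_{\Gamma_N}(FM,FM)$, and since $F$ is exact it sends $\sum_{f \in r_\W(M,M)} \im(f)$ to $\sum_{g \in r_{\Gamma_N}(FM,FM)} \im(g)$. Consequently $F(\beta(M)) \cong \beta_{\Gamma_N}(FM)$ for each $M \in \itrigW{\W}(\W)$, i.e.\ we have a commutative square
\[
\begin{tikzcd}
\itrigW{\W}(\W) \arrow[r,"\beta"] \arrow[d,"F"'] & \brick(\W) \arrow[d,"F"] \\
\itrig(\Gamma_N) \arrow[r,"\beta_{\Gamma_N}"'] & \brick(\Gamma_N).
\end{tikzcd}
\]
Since the two vertical maps are bijections and the bottom horizontal map is a bijection by Theorem~\ref{thm:DIJ} (applicable because $\Gamma_N$ is $\tau$-tilting finite), the top horizontal map is also a bijection, which is the desired conclusion.

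The only mildly delicate step is checking that the intrinsic notion $\tau_\W$ of Definition~\ref{def:tau_in_wide} is transported by $F$ to the Auslander–Reiten translation of $\mods\Gamma_N$; this is ultimately a formal consequence of characterizing $\tau_\W$ via almost split sequences inside $\W$ and the fact that $F$ is an exact equivalence, so it preserves almost split sequences. Everything else is routine transport of structure.
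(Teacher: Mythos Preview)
Your proof is correct and follows essentially the same approach as the paper: the paper's proof simply says the result ``follows by combining Theorems~\ref{thm:DIJ} and~\ref{thm:jasso}(2)'' after noting that $\beta(M) \in \W$, and your commutative-square argument is exactly the detailed version of this reduction through the exact equivalence $\W \simeq \mods\Gamma_N$.
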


\begin{proof}
    It is straightforward to show that if $M \in \W$, then so is $\beta(M)$. The result thus follows by combining Theorems~\ref{thm:DIJ} and~\ref{thm:jasso}(2).
\end{proof}

\begin{remark}\label{rem:DIJ}
    It is a slight abuse of notation to use $\beta$ for both the brick-$\tau$-rigid correspondence in $\mods\Lambda$ and in a wide subcategory $\W$. To be precise, let us temporarily denote by $\beta_\Lambda$ and $\beta_\W$ the relevant bijections in $\mods\Lambda$ and in $\W$. In general, there may be modules $M \in \W$ which are $\tau_\W$-rigid, but which are not $\tau$-rigid in $\mods\Lambda$. If this is the case, then the domain of $\beta_\W$ will not be contained in that of $\beta_\Lambda$. On the other hand, one will always have that $\mathrm{im}(\beta_\W) = \W \cap \mathrm{im}(\beta_\Lambda)$. This is because any brick in $\W$ is also a brick in $\mods\Lambda$ by definition. Moreover, the formula for $\beta_\W$ and $\beta_\Lambda$ are the same. In light of these observation, we will use the symbol $\beta$ to represent both $\beta_\W$ and $\beta_\Lambda$; however, when we speak of the inverse of $\beta$, we will always indicate the relevant wide subcategory. For example, $\beta_\W^{-1}$ will indicate the inverse of $\beta$ in $\W$.
\end{remark}

In the present paper, we use the brick-$\tau$-rigid correspondence to associate a sequence of bricks to each $\tau$-exceptional sequence. This idea was used in~\cite{BaH2} to study the relationship between $\tau$-exceptional sequences and lattices of wide subcategories. In our work, this will allow us to relate the $\tau$-exceptional sequences of the type A preprojective algebra with those of a certain quotient algebra. To be precise, we make the following definition.

\begin{definition}\cite[Definition~1(3)]{HT}\label{def:tau_exceptional_brick}
    Let $(M_k,\ldots,M_1)$ be a $\tau$-exceptional sequence. We refer to $(\beta(M_k),\ldots,\beta(M_1))$ as the corresponding \emph{brick-$\tau$-exceptional sequence}. We denote by $\btex(\Lambda)$ the set of brick-$\tau$-exceptional sequences in $\mods\Lambda$.
\end{definition}

We conclude this section by recalling the definition semibricks and their relationship with wide subcategories.

\begin{definition}\label{def:semibricks}
    Let $\mathcal{X} \subseteq \mods\Lambda$ be a set of bricks. We say that $\mathcal{X}$ is a \emph{semibrick} if $\Hom_\Lambda(X, Y) = 0 = \Hom_\Lambda(Y,X)$ for all $X \neq Y \in \mathcal{X}$.
\end{definition}

We denote by $\sbrick(\Lambda)$ the set of semibricks in $\mods\Lambda$.

\begin{theorem}\label{thm:semibrick_wide}\cite[1.2]{ringel}
    There is a bijection $\sbrick(\Lambda) \rightarrow \wide(\Lambda)$ given as follows. Each semibrick $\mathcal{X}$ is sent to the subcategory $\Filt(\mathcal{X})$ consisting of those modules $Y$ admiting a filtration
    $$0 = Y_0 \subset \cdots \subset Y_m = Y$$
    such that $Y_j/Y_{j-1} \in \mathcal{X}$ for all $j$. Conversely, each wide subcategory $\W$ is sent to the set of objects which are simple in $\W$.
\end{theorem}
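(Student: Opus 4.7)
The plan is to verify that the two stated maps are mutually inverse. I would split the argument into three steps.

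First, for any semibrick $\mathcal{X}$, I would show that $\Filt(\mathcal{X})$ is a wide subcategory whose simple objects are precisely the elements of $\mathcal{X}$. Closure under extensions is immediate by concatenating filtrations. The crux is closure under kernels, images, and cokernels, which I would prove simultaneously by induction on the sum of filtration lengths of the source and target. The base case uses the semibrick property: if $X, X' \in \mathcal{X}$, then any nonzero morphism $X \to X'$ forces $X = X'$ (since $\Hom_\Lambda(X,X')=0$ for $X \neq X'$) and must be an isomorphism (since $\End_\Lambda(X)$ is a division algebra). For the inductive step, given $f : Y \to Y'$ with $0 \subset Y_1 \subset Y$ where $Y_1, Y/Y_1 \in \Filt(\mathcal{X})$, I would apply the snake lemma to the resulting two-row diagram and use the induction hypothesis on the restriction of $f$ to $Y_1$ and on the induced map $Y/Y_1 \to Y'$, with an analogous decomposition on the target side. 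To identify the simples of $\Filt(\mathcal{X})$, note that each $X \in \mathcal{X}$ is simple in $\Filt(\mathcal{X})$ because any proper subobject would have filtration factors mapping nontrivially into $X$, forcing them (by the semibrick condition) to be $X$ itself; conversely, any simple object of $\Filt(\mathcal{X})$ has a filtration whose bottom subobject lies in $\mathcal{X}$ and therefore equals the whole object.

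Second, for any wide subcategory $\W \subseteq \mods\Lambda$, I would show that the set $\mathcal{X}_\W$ of simple objects of $\W$ forms a semibrick. By Theorem~\ref{thm:jasso}(2), there is an exact equivalence $\W \simeq \mods\Gamma_M$ for some finite-dimensional $\tau$-tilting finite algebra $\Gamma_M$. Every module in $\mods\Gamma_M$ has finite length, so up to isomorphism there are only finitely many simple objects, and Schur's lemma in the abelian category $\mods\Gamma_M$ yields that $\Hom$ between non-isomorphic simples vanishes while $\End$ of a simple is a division algebra. Transferring across the equivalence, $\mathcal{X}_\W$ is a semibrick in $\mods\Lambda$.

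Third, I would verify $\W = \Filt(\mathcal{X}_\W)$. The inclusion $\Filt(\mathcal{X}_\W) \subseteq \W$ is immediate from $\mathcal{X}_\W \subseteq \W$ together with closure of $\W$ under extensions. Conversely, every object of $\mods\Gamma_M$ admits a composition series; transferring across the equivalence, every object of $\W$ admits a filtration with factors in $\mathcal{X}_\W$, i.e.\ belongs to $\Filt(\mathcal{X}_\W)$. Composing the two constructions in either order then gives the identity by the first step: $\Filt$ applied to the simples of $\Filt(\mathcal{X})$ recovers $\Filt(\mathcal{X})$, and the simples of $\Filt(\mathcal{X}_\W)$ are $\mathcal{X}_\W$.

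The main obstacle is the inductive verification that $\Filt(\mathcal{X})$ is closed under kernels and cokernels, since one must carefully chase the snake lemma while keeping track of which pieces lie in $\Filt(\mathcal{X})$ by induction. Every other ingredient reduces to either the semibrick axioms or the standard theory of composition series imported through Theorem~\ref{thm:jasso}(2).
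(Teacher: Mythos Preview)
The paper does not give its own proof of this statement; it is cited from Ringel without argument. Your proposal is essentially the standard proof and is correct.

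One remark: your second step invokes Theorem~\ref{thm:jasso} to identify $\W$ with a module category and then appeals to composition series there. This works under the paper's standing $\tau$-tilting finiteness hypothesis, but it is heavier than necessary and introduces a dependency that Ringel's original argument does not have. Since $\W$ is an exact abelian subcategory of $\mods\Lambda$, any strictly descending chain of $\W$-subobjects of some $Y \in \W$ is also a strictly descending chain in $\mods\Lambda$; finite length in $\mods\Lambda$ therefore forces finite length in $\W$ directly, so every object of $\W$ has a $\W$-composition series and Schur's lemma applies without passing through $\tau$-tilting reduction. With that simplification your three steps match the classical argument.
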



\subsection{Preprojective algebras and gentle quotients}\label{sec:preproj}

In this section, we give background information on preprojective algebras of type $A_n$ and their ``gentle quotients''. We begin with the definitions. (Note that while one may assign a ``preprojective algebra'' to any quiver, see e.g. \cite{CB}, we have simplified the definition for ease of exposition.)

\begin{definition}\label{def:preproj}
Let $\overline{A_n}$ be a double quiver of type $A_n$, with vertices $(A_n)_0 = \{1,\ldots n\}$ and arrows $(A_n)_{1} = \cup_{i = 1}^{n-1} \{a_i: i \rightarrow i+1, a_i^*: i+1 \rightarrow i\}$. Let $x = \sum_{i = 1}^{n-1} a_ia_i^* - a_i^*a_i$ and let $c_2 = \cup_{i = 1}^{n-1} \{a_ia_i^*,a_i^*a_i\}$.
We denote $\Pi(A_n) := K\overline{A_n}/(x)$ and $RA_n := K\overline{A_n}/(c_2)$, which we refer to as the \emph{preprojective algebra} and \emph{gentle quotient algebra} of type $A_n$, respectively.
\end{definition}

\begin{remark}
	Note in particular that $RA_n = \Pi(A_n)/((x)/(c_2))$ is a quotient of the preprojective algebra which has the same underlying quiver. Thus there is a fully faithful functor $\mods RA_n \rightarrow \mods \Pi(A_n)$ which acts as the identity on the level of quiver representations. Moreover, this functor induces a vector space inclusion $\Ext^1_{RA_n}(M,N) \subseteq \Ext^1_{\Pi(A_n)}(M,N)$ for any $M, N \in \mods RA_n$.
\end{remark}

The algebra $RA_n$ is an example of a ``gentle algebra'', and as such its representation theory has been well-studied, see e.g. the introduction of \cite{HI} and the references therein for historical detail. Preprojective algebras have similarly received a lot of attention since their introduction due to their connection with Lie theory, Coxter groups, and cluster algebras. See e.g. the introductions of \cite{GLS,IRRT} and the references therein for a more detailed discussion of both historical and modern perspective.

It was shown by Mizuno \cite{mizuno} that the lattice of torsion classes of $\Pi(A_n)$ is isomorphic to the weak order on $\mathfrak{S}_{n+1}$. (Mizuno proved this for all simply-laced finite Coxeter groups.) In particular, this means $\mods(\Pi(A_n))$ contains only finitely many bricks by Theorem~\ref{thm:DIJ}. These bricks were first modeled in \cite{asai} by applying the (dual) brick-$\tau$-rigid correspondence of \cite{DIJ} to the results of \cite{IRRT}. (See Section~\ref{sec:tau_exceptional_preproj} for an overview of the models of \cite{IRRT} and \cite{asai}.) In the same year, a bijection between the bricks over $RA_n$ and arcs on $n+1$ nodes was given in \cite{BCZ} based in part on classical works on ``string algebras'' \cite{BR,CB_string}. Furthermore, it is shown in both \cite{BCZ} and \cite{DIRRT} that the lattice of torsion classes of $RA_n$ is also isomorphic to the weak order on the corresponding Coxeter group (see also \cite{kase}), a consequence of which is that the fully faithful functor $\mods(RA_n) \rightarrow \mods(\Pi(A_n))$ induces an equality $\brick(RA_n) = \brick(\Pi(A_n))$. (The bijection between bricks and arcs is also established in \cite{enomoto,mizuno2} without explicit reference to the algebra $RA_n$.) It is also worth noting that, while the algebra $\Pi(A_n)$ has wild representation type for $n \geq 6$, the only indecomposable modules over $RA_n$ are the bricks. Thus there are advantages to working over both algebras.

In the remainder of this section, we recall the bijection between $\brick(\Pi(A_n)) = \brick(RA_n)$ and $\arc(n)$ and explain how to determine when the Hom- and Ext-spaces between bricks vanish using the corresponding arcs.

\begin{proposition}\cite[Proposition~4.6]{BCZ}\cite[Proposition~3.7]{mizuno2}\label{prop:arcBricks}
    \begin{enumerate}
        \item There is a bijection $\sigma: \brick(\Pi(A_n)) = \brick(RA_n) \rightarrow \arc(n)$ given as follows. Let $X \in \brick(\Pi(A_n))$. Then $X$ satisfies the following.
        \begin{enumerate}
            \item There exists $i \leq j \in [n]$ such that $\dim(X(k)) = 1$ for $i \leq k \leq j$ and $\dim(X(k)) = 0$ for $k < i$ or $k > j$.
            \item If $\dim(X(i)) = 1 = \dim(X(i+1))$, then one of $X(a_i)$ and $X(a_i^*)$ is 0 and the other is an isomorphism.
        \end{enumerate}
        Denoting $i$ and $j$ as in (a), we define $\sigma(X) = (i-1,s_i\cdots s_j)$ with
        $$s_k = \begin{cases} \textnormal{u} & \text{if } X(a_k^*) \neq 0\\\textnormal{o} & \text{if } X(a_k) \neq 0 \\ \textnormal{e} & \text{if }k = j.\end{cases}$$.

        \item Let $\gamma = (\ell(\gamma),s_{\ell(\gamma)+1}\cdots s_r) \in \arc(n)$. Then the brick $\sigma^{-1}(\gamma)$ is given by $$\sigma^{-1}(\gamma)(k) = \begin{cases} K & \ell(\gamma) < k \leq r(\gamma)\\0 & \text{ otherwise,}\end{cases}$$$$\sigma^{-1}(\gamma)(a_i) = \begin{cases} 1_K & s_k = \textnormal{o}\\0 & \text{otherwise},\end{cases}\qquad\qquad \sigma^{-1}(\gamma)(a_i^*) = \begin{cases} 0 & s_k = \textnormal{u}\\1_K & \text{otherwise}.\end{cases}$$
    \end{enumerate}
\end{proposition}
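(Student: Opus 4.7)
The plan is to prove the bijection $\sigma$ in two main steps: first reduce the classification of bricks from $\Pi(A_n)$ to the gentle quotient $RA_n$, and then directly classify bricks over $RA_n$ as thin ``one-way'' representations on interval supports.

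\textbf{Reduction to $RA_n$.} I would first show that every brick over $\Pi(A_n)$ is annihilated by the ideal $(c_2)$, hence is an $RA_n$-module. At each internal vertex $k$, the preprojective relation gives $X(a_k^*)\circ X(a_k) = X(a_{k-1})\circ X(a_{k-1}^*)$, and at each endpoint vertex the corresponding loop is forced to vanish. Denoting this common endomorphism of $X(k)$ by $\ell_k$, one checks — using the preprojective relation once again at the source and target of each arrow — that $\ell := \bigoplus_k \ell_k$ commutes with every arrow map of $X$, and hence defines an element of $\End_{\Pi(A_n)}(X)$. Since $\Pi(A_n)$ is finite-dimensional, $\ell$ is nilpotent; since $X$ is a brick, $\ell$ is a scalar. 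The only nilpotent scalar is zero, so $\ell_k = 0$ for every $k$, which is precisely the $RA_n$-relations $X(a_k^*)X(a_k) = 0 = X(a_k)X(a_k^*)$.

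\textbf{Classification over $RA_n$.} Let $X$ now be a brick over $RA_n$. The support of $X$ is a union of connected components of the underlying graph of $A_n$, so indecomposability forces it to be a single interval $[i,j]$. The $RA_n$-relations translate into the inclusions $\im X(a_k^*) \subseteq \ker X(a_k)$ and $\im X(a_k) \subseteq \ker X(a_k^*)$ at every edge $k$. From these, one argues that if $\dim X(k) > 1$ for some $k$, or both $X(a_k)$ and $X(a_k^*)$ are nonzero at some edge, or a nonzero arrow map fails to be an isomorphism, then one can explicitly construct a nontrivial nilpotent endomorphism of $X$ (built from projections onto and inclusions of the relevant subspaces), contradicting $\End_{RA_n}(X) = K$. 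The only thin representations satisfying these constraints are exactly the ones described in part (1).

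\textbf{Inverse map and bijectivity.} For such a brick $X$, the tuple $\sigma(X) = (i-1, s_i\cdots s_j)$ belongs to $\arc(n)$ by direct verification of Definition \ref{def:wd}. Conversely, given $\gamma \in \arc(n)$, the module $\sigma^{-1}(\gamma)$ defined in part (2) is thin with at most one nonzero arrow at each edge, so it automatically satisfies the $RA_n$-relations (and a fortiori the preprojective relations); it is a brick because any endomorphism acts as a scalar on each one-dimensional $X(k)$, and the isomorphism at each edge of the support forces all these scalars to coincide. The two constructions are manifestly mutually inverse.

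\textbf{Main obstacle.} The subtlest step is the reduction to $RA_n$: the argument hinges on verifying that the assembled endomorphism $\ell$ genuinely commutes with every arrow map of $X$, which is not automatic and requires applying the preprojective relation at both the source and target of each arrow. Once this is in hand, the remainder of the proof is a concrete (if somewhat finicky) analysis of endomorphisms of thin string-type representations.
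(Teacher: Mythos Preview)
The paper does not give its own proof of this proposition; it is stated with citations to \cite{BCZ} and \cite{mizuno2} and then used as a black box. The only hint the paper gives toward a proof is the remark (in Section~\ref{sec:preproj}) that the equality $\brick(RA_n)=\brick(\Pi(A_n))$ follows from the isomorphism $\tors(RA_n)\cong\tors(\Pi(A_n))$ established in \cite{BCZ,DIRRT,mizuno}.

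Your argument is a valid direct proof, and the reduction step is in fact cleaner than invoking the torsion-lattice isomorphism: the element $\theta=\sum_k\ell_k$ is central in $\Pi(A_n)$ and lies in the radical, so it acts as a nilpotent scalar on any brick, hence as zero. One small correction to your description of the ``main obstacle'': the commutation $X(a_k)\ell_k=\ell_{k+1}X(a_k)$ actually holds \emph{tautologically} once you write $\ell_k=X(a_k)X(a_k^*)$ and $\ell_{k+1}=X(a_k^*)X(a_k)$, without any appeal to the relations; the preprojective relation is needed only to show that the two candidate loops at each internal vertex agree (so that $\ell_k$ is well-defined) and that $\ell_1=\ell_n=0$, which together force all 2-cycles, not just half of them, to vanish. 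The classification step over $RA_n$ is the standard string-module analysis for this gentle algebra, as in \cite{BCZ}. So your route is essentially the union of the two cited approaches: the central-element trick to pass from $\Pi(A_n)$ to $RA_n$, followed by string combinatorics, whereas the paper simply imports the final statement.
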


For brevity, we defer examples of the bijection $\sigma$ to Example~\ref{ex:tex}.

Our ultimate goal is to describe (brick-)$\tau$-exceptional sequences using ppo-admissible sequences of arcs. Doing so requires us to understand when a sequence of bricks satisfies the vanishing conditions in Proposition~\ref{prop:tau_exceptional}. As such, we recall arc-characterizations for the existence of quotient maps and the vanishing of Hom and Ext$^1$. We begin with the following.

\begin{definition}\label{def:quotient_arc}
    Let $\gamma = \left(\ell(\gamma),s_{\ell(\gamma)+1}\cdots s_{r(\gamma)}\right), \rho =  \left(\ell(\rho),t_{\ell(\rho)+1}\cdots t_{r(\rho)}\right)$.
    \begin{enumerate}
        \item We say that $\rho$ is a \emph{restriction} of $\gamma$ if (i) $\ell(\gamma) \leq \ell(\rho) < r(\rho) \leq r(\gamma)$, and (ii) $s_i = t_i$ for every node $i \in (\ell(\rho),r(\rho)) \cap \mathbb{Z}$.
        \item We say that $\rho$ is a \emph{quotient arc} of $\gamma$ if it is a restriction of $\gamma$ which satisfies both of the following.
        \begin{enumerate}
            \item If $\gamma$ and $\rho$ do not have a shared left endpoint, then $s_{\ell(\rho)} = \text{o}$.
            \item If $\gamma$ and $\rho$ do not have a shared right endpoint, then $s_{r(\rho)} = \text{u}$.
        \end{enumerate}
        \item We say that $\rho$ is a \emph{quotient arc} of $\gamma$ if it is a restriction of $\gamma$ which satisfies both of the following.
        \begin{enumerate}
            \item If $\gamma$ and $\rho$ do not have a shared left endpoint, then $s_{\ell(\rho)} = \text{u}$.
            \item If $\gamma$ and $\rho$ do not have a shared right endpoint, then $s_{r(\rho)} = \text{o}$.
        \end{enumerate}
    \end{enumerate}
\end{definition}

For example, in Figure~\ref{fig:clockwise}, we have that $\gamma_1$ is a quotient arc of both $\gamma_2$ and $\gamma_4$ and that $\gamma_4$ is a submodule arc of $\gamma_2$. Moreover, $\gamma_1$ is a restriction of $\gamma_3$, but it is neither a quotient arc nor a submodule arc. The modules corresponding to these ares will be discussed in Example~\ref{ex:tex}(1).

The following is proved in \cite[Section~3.1]{mizuno2} and \cite[Propositions~4.7 and~4.8]{BCZ}. (We recall that the Hom-spaces over $RA_n$ and over $\Pi(A_n)$ always coincide.)

\begin{proposition}\label{prop:quotients}\
    \begin{enumerate}
        \item Let $X \in \brick(\Pi(A_n)) = \brick(RA_n)$ and let $Y \in \mods \Pi(A_n)$ be indecomposable. Then the following are equivalent.
        \begin{enumerate}
            \item $Y$ is a brick and $\sigma(Y)$ is a quotient arc of $\sigma(X)$.
            \item $Y$ is a quotient of $X$ in $\mods(\Pi(A_n))$.
            \item $Y \in \mods(RA_n)$.
        \end{enumerate}
        Moreover, if these equivalent conditions hold then $$\dim\Hom_{RA_n}(X,Y) = 1\qquad\text{and}\qquad\dim\Hom_{RA_n}(Y,X) = 0.$$
        \item Let $X \in \brick(\Pi(A_n)) = \brick(RA_n)$ and let $Y \in \mods \Pi(A_n)$ be indecomposable. Then the following are equivalent.
        \begin{enumerate}
            \item $Y$ is a brick and $\sigma(Y)$ is a submodule arc of $\sigma(X)$.
            \item $Y$ is a submodule of $X$ in $\mods(\Pi(A_n))$.
            \item $Y \in \mods(RA_n)$.
        \end{enumerate}
        Moreover, if these equivalent conditions hold then $$\dim\Hom_{RA_n}(X,Y) = 0\qquad\text{and}\qquad\dim\Hom_{RA_n}(Y,X) = 1.$$
    \end{enumerate}
\end{proposition}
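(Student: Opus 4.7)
The plan is to exploit the thin representation structure of bricks. By Proposition~\ref{prop:arcBricks}(1), any brick $X \in \brick(\Pi(A_n))$ has $\dim X(k) \leq 1$ with support a single interval, and between consecutive supported vertices exactly one of $X(a_k)$ and $X(a_k^*)$ is nonzero (hence an isomorphism). The relations $a_k a_k^*$ and $a_k^* a_k$ therefore act as zero on $X$, so every brick already lies in $\mods RA_n$. Moreover, any indecomposable submodule or quotient of such a thin ``alternating'' representation is again thin and alternating, hence a brick and a module over $RA_n$. This observation immediately yields the equivalence (b)$\Leftrightarrow$(c) in both parts.

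The core step is (a)$\Leftrightarrow$(b). Writing $\sigma(X) = (\ell,s_{\ell+1}\cdots s_r)$, the support of $X$ is the interval $[\ell+1,r]$. An indecomposable quotient $Y$ of $X$ in $\mods \Pi(A_n)$ is determined by a subrepresentation $X' \subseteq X$ whose complement is a union of subintervals; indecomposability of $Y$ forces $\mathrm{supp}(Y) = [\ell'+1,r']$ for some $\ell \leq \ell' < r' \leq r$, and $X'$ is the subspace of $X$ supported on $[\ell+1,\ell'] \cup [r'+1,r]$. By thinness, the condition that $X'$ is a subrepresentation reduces to a single vanishing condition at each boundary vertex of $[\ell'+1,r']$ not shared with $\{\ell+1,r\}$: the arrow of $X$ leaving the surviving interval into $X'$ must be zero. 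Translating via Proposition~\ref{prop:arcBricks}(1), these vanishing conditions are exactly the conditions on $s_{\ell(\sigma(Y))}$ and $s_{r(\sigma(Y))}$ that define a quotient arc in Definition~\ref{def:quotient_arc}, and the internal letters of $\sigma(Y)$ automatically match those of $\sigma(X)$ since $Y$ inherits its arrow action. Conversely, given any quotient arc $\rho$ of $\sigma(X)$, the subspace of $X$ supported on the complement of $[\ell(\rho)+1,r(\rho)]$ is a subrepresentation by the same boundary computation in reverse, with quotient isomorphic to $\sigma^{-1}(\rho)$ by the explicit formula in Proposition~\ref{prop:arcBricks}(2). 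Part~(2) follows from the symmetric analysis for submodules, with the roles of $a_k$ and $a_k^*$ (equivalently, $\textnormal{o}$ and $\textnormal{u}$) exchanged at the endpoints.

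For the moreover part, the quotient projection $X \twoheadrightarrow Y$ (resp.\ the inclusion $Y \hookrightarrow X$ in part~(2)) provides a nonzero morphism. This morphism is unique up to scalar because both modules are thin and indecomposable, so a morphism is determined by its value on any one one-dimensional weight space in $\mathrm{supp}(Y)$, and all other values are then forced by commutativity along the chain of isomorphism arrows. For the opposite vanishing: any nonzero morphism between thin indecomposables must be an isomorphism on each vertex in the intersection of supports, and the commutativity square at a boundary vertex of $\mathrm{supp}(Y)$ not shared with $\mathrm{supp}(X)$ would then require $X$ to exhibit the arrow configuration opposite to the one forced by the quotient/submodule arc condition, a contradiction. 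The main obstacle is precisely this boundary case analysis, but it reduces to a handful of explicit checks once the thin-module dictionary of Proposition~\ref{prop:arcBricks} is in hand.
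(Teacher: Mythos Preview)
The paper does not give its own proof of this proposition; it is quoted from \cite[Propositions~4.7 and~4.8]{BCZ} and \cite[Section~3.1]{mizuno2}, as stated in the sentence preceding the statement. Your argument is exactly the direct verification those references carry out: bricks over $\Pi(A_n)$ are thin with interval support and one nonzero arrow between consecutive supported vertices, so indecomposable subquotients are parametrized by subintervals subject to the boundary arrow-direction conditions encoded in Definition~\ref{def:quotient_arc}. This is correct and is the standard approach for string modules.

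One small imprecision worth tightening: the assertion that ``any nonzero morphism between thin indecomposables must be an isomorphism on each vertex in the intersection of supports'' is false for arbitrary pairs of string modules (a graph map can vanish on part of the overlap). It holds in your situation only because $\sigma(Y)$ is a \emph{restriction} of $\sigma(X)$, so the arrow pattern on $\mathrm{supp}(Y)$ is identical in both modules, forcing all component scalars of a morphism $Y\to X$ to agree. With that in hand your boundary argument goes through: a nonzero $f\colon Y\to X$ would exhibit $\sigma(Y)$ as a submodule arc of $\sigma(X)$, and no proper restriction can satisfy both the quotient-arc and submodule-arc endpoint conditions at once. You should state this restriction hypothesis explicitly rather than appeal to a general fact about thin modules that does not hold.
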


It is also shown in \cite{BCZ,mizuno2} that Proposition~\ref{prop:quotients} can be used to construct a basis of $\Hom_{RA_n}(X,Y)$ for any $X,Y \in \brick(RA_n)$. Furthermore, as shown explicitly in \cite[Proposition~4.9]{HY}, the cardinality of this basis can be computed by counting certain types of intersection points between representatives of the chosen arcs. The dimensions of the vectors spaces $\Ext^1_{RA_n}(X,Y)$ and $\Ext^1_{\Pi(A_n)}(X,Y)$ can then be determined using methods from \cite{BDMTY,CPS} and \cite{CB}, respectively, see \cite[Section~5]{HY}. For the purposes of this paper, we recall only the results we need in order to study $\tau$-exceptional sequences.

\begin{proposition}\label{prop:hom_ext}
	Let $X, Y \in \brick(\Pi(A_n)) = \brick(RA_n)$.
	\begin{enumerate}
		\item If there is a nontrivial crossing directed from $\sigma(X)$ to $\sigma(Y)$, then $\Hom_{RA_n}(X,Y) \neq 0$ and $\Ext^1_{RA_n}(Y,X) \neq 0$ (and thus also $\Ext^1_{\Pi(A_n)}(Y,X) \neq 0$).
		\item If $\sigma(X)$ and $\sigma(Y)$ have a contested endpoint, then $\Hom_{RA_n}(X,Y) = 0$ and $\Ext^1_{RA_n}(X,Y) \neq 0$ (and thus also $\Ext^1_{\Pi(A_n)}(X,Y) \neq 0$).
		\item If $\sigma(X)$ and $\sigma(Y)$ have a shared endpoint and $\sigma(Y)$ is clockwise of $\sigma(X)$, then (i) either $X$ is a quotient of $Y$ or $Y$ is a submodule of $X$, and (ii) $\Hom_{RA_n}(X,Y) = 0 = \Ext^1_{\Pi(A_n)}(X,Y)$ (and thus also $\Ext^1_{RA_n}(X,Y) = 0$).
		\item If $\sigma(X)$ and $\sigma(Y)$ do not have a nontrivial crossing, a contested endpoint, or a shared endpoint, then $\Hom_{RA_n}(X,Y) = 0 = \Ext^1_{\Pi(A_n)}(X,Y)$ (and thus also $\Ext^1_{RA_n}(X,Y) = 0$).
	\end{enumerate}
\end{proposition}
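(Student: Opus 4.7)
The plan is to verify each of the four claims directly using the arc-theoretic descriptions of $\Hom$ and $\Ext^1$ between bricks developed in \cite[Sections~4--5]{HY} (building on \cite{BR,CB_string,BCZ,mizuno2} for Hom and on \cite{CB,BDMTY,CPS} for Ext), combined with the explicit quotient/submodule identification in Proposition~\ref{prop:quotients}. Concretely, the Hom-space formula provides a basis in bijection with common ``restriction arcs'' that are simultaneously quotient arcs of $\sigma(X)$ and submodule arcs of $\sigma(Y)$, while the Ext-space formulas produce bases indexed by certain geometric interaction patterns, namely nontrivial crossings and contested-endpoint ``gluings''. The whole argument is then a four-way case analysis keyed to Remark~\ref{rem:crossing}, Definition~\ref{def:crossing}, and Definition~\ref{def:clockwise}.

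For~(1), the witness nodes $i<j$ provided by Remark~\ref{rem:crossing}(3a) carve out a common restriction arc $\rho$ supported on a segment of the overlap of $\sigma(X)$ and $\sigma(Y)$; the three bulleted conditions of~(3a) guarantee that $\rho$ is a quotient arc of $\sigma(X)$ and a submodule arc of $\sigma(Y)$, so Proposition~\ref{prop:quotients} yields a nonzero composition $X \twoheadrightarrow \sigma^{-1}(\rho) \hookrightarrow Y$ in $\Hom_{RA_n}(X,Y)$. A nonzero Ext-class in $\Ext^1_{RA_n}(Y,X)$ (and hence in $\Ext^1_{\Pi(A_n)}(Y,X)$ by the inclusion noted after Definition~\ref{def:preproj}) is then obtained by the standard crossing resolution: rerouting $\sigma(X)$ and $\sigma(Y)$ through the crossing produces a pair of noncrossing arcs whose associated module $E$ fits into a nonsplit short exact sequence $0\to X\to E\to Y\to 0$. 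Claim~(2) is analogous: a contested endpoint prevents any common restriction arc from being a quotient arc of $\sigma(X)$ and simultaneously a submodule arc of $\sigma(Y)$, so $\Hom_{RA_n}(X,Y)=0$, while gluing $\sigma(X)$ and $\sigma(Y)$ at their contested endpoint produces the module $E$ witnessing $\Ext^1_{RA_n}(X,Y)\ne 0$. Claim~(4) is immediate once one observes that the stated hypotheses rule out every geometric configuration contributing to a basis element of either $\Hom_{RA_n}(X,Y)$ or $\Ext^1_{\Pi(A_n)}(X,Y)$.

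Claim~(3) is the most delicate. I would split it into four subcases determined by which endpoint (left or right) is shared and which of the two arcs extends further on the non-shared side. In each subcase, the clockwise condition of Definition~\ref{def:clockwise} pins down the ``u''/``o'' letter at the free endpoint of the shorter arc, and direct comparison with Definition~\ref{def:quotient_arc} shows the shorter arc is a quotient arc or submodule arc of the longer one in precisely the direction claimed in~(3)(i); Proposition~\ref{prop:quotients} then yields both~(3)(i) and $\Hom_{RA_n}(X,Y)=0$. The main obstacle will be $\Ext^1_{\Pi(A_n)}(X,Y)=0$: over $\Pi(A_n)$ (unlike over $RA_n$) extensions may arise from shared-endpoint configurations, so one must invoke the $\Pi(A_n)$-Ext formula of \cite{CB} in the form of \cite[Section~5]{HY} to check that a shared endpoint equipped with clockwise orientation, being ``compatible'' rather than contested, contributes only to $\Hom$ and not to $\Ext^1_{\Pi(A_n)}$ in either direction. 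The key point is that the clockwise hypothesis simultaneously prohibits the shared endpoint from being contested and prohibits any directed crossing between $\sigma(X)$ and $\sigma(Y)$, leaving no remaining source of $\Ext^1_{\Pi(A_n)}(X,Y)$.
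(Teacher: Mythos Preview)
Your proposal is correct and aligns with the paper's treatment: the paper does not give an independent proof of this proposition but presents it as a summary of results extracted from \cite{BCZ,mizuno2,HY} (for Hom) and \cite{BDMTY,CPS,CB,HY} (for Ext), exactly the sources you invoke. Your case-by-case sketch is a faithful elaboration of how to read the statement off from those arc-theoretic Hom and Ext formulas together with Proposition~\ref{prop:quotients}.
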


In particular, we note that Proposition~\ref{prop:hom_ext} includes the characterization of semibricks over $RA_n$ in terms of noncrossing arc diagrams. (See also \cite[Theorem~3.13]{mizuno2} for an explicit proof of this characterization over $\Pi(A_n)$.) 

\begin{corollary}\label{cor:noncrossing}\cite[Section~4]{BCZ}
    The map $\sigma$ induces a bijection $\sbrick(RA_n) = \sbrick(\Pi(A_n)) \rightarrow \arc_{nc}(n)$.
\end{corollary}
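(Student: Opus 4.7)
The plan is to leverage the fact that $\sigma$ is already a bijection on the underlying sets (Proposition~\ref{prop:arcBricks}) and reduce the statement to a local, pairwise check. Since a set of bricks $\mathcal{X}$ is a semibrick exactly when $\Hom_{RA_n}(X,Y)=0=\Hom_{RA_n}(Y,X)$ for every pair of distinct $X,Y\in\mathcal{X}$, and a set of arcs is a noncrossing arc diagram exactly when every pair of distinct arcs has neither a shared endpoint nor a nontrivial crossing, it suffices to show that for any two distinct bricks $X,Y$ with arcs $\gamma=\sigma(X),\rho=\sigma(Y)$, the pair has vanishing Hom in both directions if and only if $\gamma,\rho$ have neither a shared endpoint nor a nontrivial crossing.

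First I would handle the ``if'' direction by exhausting the possibilities in Proposition~\ref{prop:hom_ext}. If $\gamma$ and $\rho$ have neither a shared endpoint nor a nontrivial crossing, then either they have a contested endpoint (so we are in case (2) of Proposition~\ref{prop:hom_ext} in both orderings, giving $\Hom_{RA_n}(X,Y)=0=\Hom_{RA_n}(Y,X)$) or they share no endpoint and do not cross at all (case (4) in both orderings, again giving vanishing Hom in both directions). Either way, $\{X,Y\}$ is a semibrick.

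For the ``only if'' direction, suppose that $\gamma,\rho$ either cross nontrivially or share an endpoint; I would show in each case that some Hom is nonzero. If there is a nontrivial crossing, it is directed (in the sense of Definition~\ref{def:hom_direction}) from one arc to the other, and case (1) of Proposition~\ref{prop:hom_ext} yields a nonzero Hom in that direction. If $\gamma$ and $\rho$ share an endpoint, then without loss of generality $\rho$ is clockwise of $\gamma$; case (3) of Proposition~\ref{prop:hom_ext} then asserts that either $X$ is a quotient of $Y$ or $Y$ is a submodule of $X$, and in either case (using $X\neq Y$, since $\sigma$ is a bijection and $\gamma\neq\rho$) Proposition~\ref{prop:quotients} furnishes a one-dimensional $\Hom_{RA_n}(Y,X)$. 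So $\{X,Y\}$ fails the semibrick condition. Combining this with the previous paragraph shows $\mathcal{X}\mapsto \{\sigma(X):X\in\mathcal{X}\}$ restricts to the desired bijection.

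The argument is genuinely a bookkeeping exercise once Proposition~\ref{prop:hom_ext} is in hand, so the only potential subtlety is the shared-endpoint case: one must observe that the nonzero Hom provided by case (3) points from $Y$ to $X$ rather than from $X$ to $Y$, and that this suffices to violate the symmetric vanishing required of a semibrick. No additional ingredients beyond the already-stated propositions are needed.
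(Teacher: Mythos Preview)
Your proposal is correct and matches the paper's approach: the paper states this corollary as an immediate consequence of Proposition~\ref{prop:hom_ext} (and cites \cite{BCZ}) without spelling out the case analysis, and you have simply filled in those details. One small point of exposition: your two cases ``nontrivial crossing'' and ``shared endpoint'' are not mutually exclusive, so in the shared-endpoint case you are implicitly assuming no nontrivial crossing (otherwise ``clockwise'' is undefined); this is harmless since the crossing case was already handled, but it may be worth saying so explicitly.
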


Another consequence of Proposition~\ref{prop:hom_ext} is the following, see also \cite[Section~6.2]{BaH}, \cite[Section~3.2]{mizuno2}, or \cite[Theorem~6.8]{HY}.

\begin{corollary}\label{cor:weak}
	Let $(X_k,\ldots,X_1)$ be a sequence of objects in $\brick(RA_n) = \brick(\Pi(A_n))$. Then the following are equivalent.
	\begin{enumerate}
		\item $\Hom_{RA_n}(X_i,X_j) = 0 = \Ext^1_{RA_n}(X_i,X_j)$ for all $1 \leq i < j \leq k$.
		\item $\Hom_{\Pi(A_n)}(X_i,X_j) = 0 = \Ext^1_{\Pi(A_n)}(X_i,X_j)$ for all $1 \leq i < j \leq k$.
		\item $(\sigma(X_k),\ldots,\sigma(X_1))$ is a clockwise-ordered arc diagram.
	\end{enumerate}
\end{corollary}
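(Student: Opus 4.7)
The plan is to verify the cycle of implications $(3) \Rightarrow (2) \Rightarrow (1) \Rightarrow (3)$, with Proposition~\ref{prop:hom_ext} doing essentially all the work.

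For $(2) \Rightarrow (1)$, recall from the remark after Definition~\ref{def:preproj} that the fully faithful inclusion $\mods RA_n \hookrightarrow \mods \Pi(A_n)$ induces an equality of Hom-spaces and a subspace inclusion $\Ext^1_{RA_n}(M,N) \subseteq \Ext^1_{\Pi(A_n)}(M,N)$. So any simultaneous Hom/Ext vanishing in $\Pi(A_n)$ descends to $RA_n$.

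For $(3) \Rightarrow (2)$, I would fix $1 \leq i < j \leq k$ and note that since $(\sigma(X_k), \ldots, \sigma(X_1))$ is a clockwise-ordered arc diagram, the pair $\sigma(X_i)$, $\sigma(X_j)$ falls into either case (a)/(b) of Definition~\ref{def:clockwise}(2) (a shared endpoint with $\sigma(X_j)$ clockwise of $\sigma(X_i)$) or case (c) (no shared endpoint, no contested endpoint, no nontrivial crossing). Proposition~\ref{prop:hom_ext}(3) disposes of the first case and Proposition~\ref{prop:hom_ext}(4) of the second, each giving $\Hom_{\Pi(A_n)}(X_i,X_j) = 0 = \Ext^1_{\Pi(A_n)}(X_i,X_j)$ as required.

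For $(1) \Rightarrow (3)$, I would argue by contrapositive. Suppose $(3)$ fails, so there exist $i < j$ for which $\sigma(X_i)$ and $\sigma(X_j)$ fall into none of the three cases of Definition~\ref{def:clockwise}(2). Examining Definition~\ref{def:clockwise}, the failure must manifest as one of: a nontrivial crossing, a contested endpoint, or a shared endpoint with $\sigma(X_i)$ clockwise of $\sigma(X_j)$ (rather than the other way around). In the crossing case, Proposition~\ref{prop:hom_ext}(1) applied to whichever direction the crossing is oriented yields either $\Hom_{RA_n}(X_i,X_j) \neq 0$ or $\Ext^1_{RA_n}(X_i,X_j) \neq 0$. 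In the contested endpoint case, Proposition~\ref{prop:hom_ext}(2) gives $\Ext^1_{RA_n}(X_i,X_j) \neq 0$. In the shared endpoint case, applying Proposition~\ref{prop:hom_ext}(3) with the roles of $X$ and $Y$ played by $X_j$ and $X_i$ respectively shows that $X_j$ is a quotient of $X_i$ or $X_i$ is a submodule of $X_j$; either way Proposition~\ref{prop:quotients} gives $\Hom_{RA_n}(X_i, X_j) \neq 0$. In all cases $(1)$ fails.

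No step is technically hard here --- the content is bookkeeping against Proposition~\ref{prop:hom_ext} --- but the one spot requiring care is the shared-endpoint sub-case of $(1) \Rightarrow (3)$, where one must be attentive to which argument of Hom is which, since Proposition~\ref{prop:hom_ext}(3) asserts vanishing of $\Hom(X,Y)$ but nonvanishing only indirectly through the quotient/submodule statement.
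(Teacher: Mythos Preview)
Your proof is correct and matches the paper's approach: the paper states this corollary as a direct consequence of Proposition~\ref{prop:hom_ext} without writing out a proof, and your case analysis against that proposition is exactly the intended argument. The only thing to note is that the paper does not actually spell out the details you have provided, so your write-up is in fact more complete than what appears there.
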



\section{Models for wide subcategories and (brick-)$\tau$-exceptional sequences}\label{sec:mainResults}

In this section, we explicitly relate the notions of ppo-reduction and ppo-admissible sequences from Section~\ref{sec:admissibility} to the $\tau$-tilting reduction and (brick-)$\tau$-exceptional sequences of the algebras $RA_n$ and $\Pi(A_n)$.


\subsection{Wide subcategories and permutation preorders}\label{sec:wide}

The aim of this section is to prove Theorem~\ref{thm:wide} below, which characterizes the wide subcategories over $RA_n$ and $\Pi(A_n)$ using permutation preorders and the notion of ppo-admissibility. Before stating the result, we establish some notation.

Recall from Corollary~\ref{cor:noncrossing} and Proposition~\ref{prop:bijection_ppo} that the map $\sigma^{-1} \circ \delta$ gives a bijection $\mathfrak{S}_{n+1} \rightarrow \sbrick(RA_n) = \sbrick(\Pi(A_n))$. We consider each of $\arc_{nc}(n)$, $\sbrick(RA_n)$ and $\sbrick{\Pi(A_n)}$ as partially ordered sets with the relations induced by the shard intersection order $\leq_{\mathrm{sio}}$ on $\mathfrak{S}_{n+1}$ (see Definition-Theorem~\ref{defthm:sio}) via the bijections $\delta$ and $\sigma^{-1}\circ \delta$. Moreover, we have from Theorem~\ref{thm:semibrick_wide} that there are bijections $\Filt_{RA_n}(-): \sbrick(RA_n) \rightarrow \wide(RA_n)$ and $\Filt_{\Pi(A_n)}(-): \sbrick(\Pi(A_n)) \rightarrow \wide(\Pi(A_n))$, where $\Filt_{RA_n}(-)$ and $\Filt_{\Pi(A_n)}$ denote $\Filt(-)$ considered in the categories $\mods RA_n$ and $\mods \Pi(A_n)$, respectively. We then have the following.

\begin{theorem}\label{thm:wide}\
    \begin{enumerate}
        \item Let $w\in \mathfrak{S}_{n+1}$ and $\gamma \in \arc(n)$. Then the following are equivalent.
        \begin{enumerate}
            \item $\gamma$ is $\mu(w)$-$\ppo$-admissible.
            \item $\sigma^{-1}(\gamma) \in \Filt_{RA_n}(\sigma^{-1} \circ \delta(w))$.
            \item $\sigma^{-1}(\gamma) \in \Filt_{\Pi(A_n)}(\sigma^{-1}\circ \delta(w))$.
        \end{enumerate}

        \item There is a map $\eta: \ppo(n) \rightarrow \wide(RA_n)$ given by $\eta(\P) = \Filt_{RA_n}\left(\sigma^{-1}(\arc_\P(n))\right)$ which fits into a commutative digram of order-preserving bijections as follows:
        $$\begin{tikzcd}[column sep =4em]
        (\mathfrak{S}_{n+1},\leq_{\mathrm{sio}}) \arrow[dr,"\mu" below left] \arrow[r,"\delta"] & \arc_{nc}(n) \arrow[d,"\mu'"] \arrow[r,"\sigma^{-1}"] & \sbrick(RA_n) \arrow[d,"\Filt_{RA_n}"] \arrow[r,equal]& \sbrick(\Pi(A_n))\arrow[d,"\Filt_{\Pi(A_n)}"]\\
        & \ppo(n) \arrow[r,"\eta"] & \wide(RA_n)\arrow[r,"\Filt_{\Pi(A_n)}",yshift = 0.1cm] & \wide(\Pi(A_n))\arrow[l,"(-) \cap \mods RA_n",yshift = -0.1cm].
    \end{tikzcd}$$
    \end{enumerate}
\end{theorem}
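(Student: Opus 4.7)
The plan is to prove part (1) first and then derive part (2). For part (1), the equivalence (b) $\Leftrightarrow$ (c) should be immediate: since $\brick(RA_n) = \brick(\Pi(A_n))$ and the inclusion $\mods RA_n \hookrightarrow \mods \Pi(A_n)$ preserves short exact sequences, a filtration of $\sigma^{-1}(\gamma)$ with factors in the common semibrick $\sigma^{-1}(\delta(w))$ exists in one category if and only if it exists in the other. The substantive content thus lies in (a) $\Leftrightarrow$ (b), which I would prove by induction on $n$ using the recursive structure established in Propositions~\ref{prop:perm_preorder_recursive} and~\ref{prop:ppo_admissible_recursive}. The base case $\mu(w) = \{[n]\}$ is direct: every arc is $\{[n]\}$-ppo-admissible by Example~\ref{ex:ppo_adm}(1), and $\Filt_{RA_n}(\sigma^{-1}(\delta(w))) = \mods RA_n$ because $\delta(w)$ is then a full semibrick of simple $RA_n$-modules.

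For the inductive step, a $\mu(w)$-ppo-admissible arc $\gamma$ has endpoints in some partition element $S_\gamma \in \mu(w)$; applying $\phi_{\mu(w)}$ from Proposition~\ref{prop:ppo_admissible_recursive} produces a ppo-admissible arc on $[|S_\gamma|-1]$ to which the induction hypothesis applies. The resulting filtration of the reduced brick lifts to a filtration of $\sigma^{-1}(\gamma)$ by bricks $\sigma^{-1}(\gamma_{w,j})$ with $j \in S_\gamma$, together with extensions accounting for the arcs in other partition elements that $\gamma$ passes over or under, the required Hom/Ext-vanishing being supplied by Proposition~\ref{prop:hom_ext} once one checks that the ppo-admissibility conditions in Definition~\ref{def:ppo_admissible} match exactly the combinatorial patterns in that proposition. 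For the converse (b) $\Rightarrow$ (a), a brick in $\Filt_{RA_n}(\sigma^{-1}(\delta(w)))$ is built from the bricks $\sigma^{-1}(\gamma_{w,i})$ via extensions, and Proposition~\ref{prop:hom_ext}(2) restricts such nontrivial extensions to contested endpoints between arcs in $\delta(w)$; tracing which arcs arise as the resulting ``composed'' arcs recovers exactly the conditions in Definition~\ref{def:ppo_admissible}.

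For part (2), the leftmost triangle commutes by Proposition~\ref{prop:bijection_ppo}. The middle square reduces to the identity $\Filt_{RA_n}(\sigma^{-1}(\arc_{\mu(w)}(n))) = \Filt_{RA_n}(\sigma^{-1}(\delta(w)))$: the inclusion $\supseteq$ uses $\delta(w) \subseteq \arc_{\mu(w)}(n)$ (Example~\ref{ex:ppo_adm}(3)), while $\subseteq$ is precisely (a) $\Rightarrow$ (b). The rightmost square follows from (b) $\Leftrightarrow$ (c) together with Theorem~\ref{thm:semibrick_wide} applied in both module categories, with the mutual inverses $\Filt_{\Pi(A_n)}$ and $(-) \cap \mods RA_n$ agreeing through the common semibrick description. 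That $\eta$ is an order-preserving bijection then follows by two-out-of-three in the diagram from the corresponding properties of $\mu'$, $\sigma^{-1}$, and $\Filt_{RA_n}$; order-preservation in one direction uses Lemma~\ref{lem:ppo_admissible} (so $\P \leq_\ppo \Q$ implies $\eta(\P) \subseteq \eta(\Q)$), and the reverse direction follows from bijectivity together with the fact that $\eta$ preserves cardinality and hence sends strict inequalities to strict inclusions. The main obstacle will be the inductive construction of the filtration in (a) $\Rightarrow$ (b): the recursive framework keeps the combinatorics tractable, but correctly identifying the composition factors at each step, and verifying that the ppo-admissibility conditions on the surrounding nodes translate precisely into the Hom/Ext-vanishing needed to form the extensions, will require careful case analysis.
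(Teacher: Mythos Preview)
Your approach to $(a) \Rightarrow (b)$ diverges from the paper's and carries a genuine gap. The paper does not induct on $n$ via the recursive bijection $\phi_{\mu(w)}$; instead it inducts on the arc length $r(\gamma) - \ell(\gamma)$. The base case is $\gamma \in \delta(w)$. For the inductive step one picks an intermediate node $j \in S_\gamma$ with $\ell(\gamma) < j < r(\gamma)$ (such a node exists exactly when $\gamma \notin \delta(w)$), splits $\gamma$ at $j$ into two shorter $\mu(w)$-ppo-admissible arcs $\gamma',\gamma''$, observes that one is a quotient arc and the other a submodule arc of $\gamma$, and uses Proposition~\ref{prop:quotients} to build a short exact sequence with middle term $\sigma^{-1}(\gamma)$ and outer terms in $\W$ by the induction hypothesis. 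The implication $(c) \Rightarrow (a)$ is handled dually by induction on $\dim X$.

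Your induction on $n$ does no real work: applying $\phi_{\mu(w)}$ lands in $\arc(|S_\gamma|-1)$ with the trivial preorder $\{[|S_\gamma|-1]\}$, so the ``induction hypothesis'' you invoke is the vacuous statement that every brick over $RA_{|S_\gamma|-1}$ lies in $\mods RA_{|S_\gamma|-1}$. All the content then resides in your unexplained ``lifting'' step. The natural tool for that lift---an exact equivalence between the $S_\gamma$-block of $\W$ and $\mods RA_{|S_\gamma|-1}$---is Corollary~\ref{cor:reduction_algebra}, but that corollary is proved only after Theorem~\ref{thm:wide} and depends on it through Lemma~\ref{lem:tau_perp}, so using it here would be circular. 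Your description of the lifted filtration as involving ``extensions accounting for the arcs in other partition elements that $\gamma$ passes over or under'' is also off: the $\W$-composition factors of $\sigma^{-1}(\gamma)$ are all of the form $\sigma^{-1}(\gamma_{w,i})$ with both endpoints of $\gamma_{w,i}$ in $S_\gamma$; the other partition elements constrain how $\gamma$ traverses intermediate nodes but contribute no composition factors.

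Your treatment of $(b)\Leftrightarrow(c)$ and of part~(2) is essentially fine and close to the paper's, though the paper cites \cite{enomoto_lattice} to anchor the order-preserving bijection on the $\Pi(A_n)$ side rather than deducing it internally.
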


\begin{proof}
    (1) $(a \implies b)$ Denote $\W := \Filt_{RA_n}(\sigma^{-1}\circ \delta(w))$ and let $\gamma \in \arc_{\mu(w)}(n)$. We will show that $\sigma^{-1}(\gamma) \in \W$ by induction on $r(\gamma) - \ell(\gamma)$. For the base case, we note that if $\gamma \in \delta(w)$, then clearly $\sigma^{-1}(\gamma) \in \W$. Thus suppose that $\gamma \notin \delta(w)$ and write $\gamma = (\ell(\gamma),s_{\ell(\gamma)+1}\cdots s_{r(\gamma)})$. Let $S \in \P$ be the partition element for which $\ell(\gamma), r(\gamma) \in S$. By the assumption that $\gamma \notin \delta(w)$, it follows that there exists $\ell(\gamma) < j < r(\gamma)$ such that $j \in S$. We then define arcs $\gamma' = \left(\ell(\gamma),s_{\ell(\gamma)+1}\cdots s_{j-1}\textnormal{e}\right)$ and $\gamma'' = \left(j,s_{j+1}\cdots s_{r(\gamma)}\right)$. Then $\gamma'$ and $\gamma''$ are both also $\mu(\delta)$-$\ppo$-admissible. By the induction hypothesis, this means $\sigma^{-1}(\gamma'), \sigma^{-1}(\gamma'') \in \W$. Moreover, we have that one of $\gamma'$ and $\gamma''$ is a quotient arc of $\gamma$ and the other is a submodule arc of $\gamma$. Since the modules $\sigma^{-1}(\gamma)$, $\sigma^{-1}(\gamma')$, and $\sigma^{-1}(\gamma'')$ are at most one-dimensional at every vertex and the supports of $\sigma^{-1}(\gamma')$ and $\sigma^{-1}(\gamma'')$ have no intersection, it then follows from Proposition~\ref{prop:quotients} that there is a short exact sequence in $\mods RA_n$ with middle term $\sigma^{-1}(\gamma)$ and outer terms $\sigma^{-1}(\gamma')$ and $\sigma^{-1}(\gamma'')$. We conclude that $\sigma^{-1}(\gamma) \in \W$.
    
    $(b \implies c)$ This follows from the observation that the fully faithful functor $\mods RA_n \rightarrow \mods \Pi(A_n)$ induces an inclusion $\Filt_{RA_n}(\mathcal{X}) \rightarrow \Filt_{\Pi(A_n)}(\mathcal{X})$ for any $\mathcal{X} \in \sbrick(RA_n) = \sbrick(\Pi(A_n))$.

    $(c \implies a)$ Denote $\W' := \Filt_{\Pi(A_n)}(\sigma^{-1}\circ \delta(w))$ and let $X \in \W'$ be a brick. We will show that $\sigma(X)$ is $\mu(w)$-$\ppo$-admissible by induction on $\dim X$. For the base case, we note that if $X$ is simple in $\W'$ then $X \in \delta(w)$, and so $\sigma(X) \in \ppo(n)$ by Example~\ref{ex:ppo_adm}(3). Otherwise, there exists a short exact sequence
    $$0 \rightarrow Y \rightarrow X \rightarrow Z \rightarrow 0$$
    in $\W'$ with $\dim Y, \dim Z < \dim X$. Since $X$ has dimension at most one at every vertex, it follows that $Y \oplus Z$ is a semibrick. Now decompose $Y \oplus Z = \bigoplus_{j = 1}^k X_j$ intro a direct sum of bricks. By the induction hypothesis, we have that $\sigma(X_j) \in \arc_{\mu(w)}(n)$ for all $j$. Moreover, each $\sigma(X_j)$ is either a subarc arc or quotient arc of $\sigma(X)$ by Proposition~\ref{prop:quotients}. For each $i \in (\ell(\sigma(X)),r(\sigma(X)) \cap \mathbb{Z}$, there are then two possibilities. Either (i) there exists a unique index $j$ such that (i.a) $\ell(X_j) < j < r(X_j)$, (i.b) both $\sigma(X)$ and $\sigma(X_j)$ pass on the same side of the node $i$, and (i.c) for all $j' \neq j$ either $i < \ell(X_{j'})$ or $i > r(X_{j'})$, or (ii) there exists a unique pair of indices $j, j'$ such that (ii.a) $i$ is a contested endpoint between $\sigma(X_j)$, and $\sigma(X_{j'})$, and (ii.b) for all $j' \neq j'' \neq j$ either $i < \ell(X_{j''})$ or $i > r(X_{j'})$. Likewise, there exist unique indices $j_\ell$ and $j_r$ such that $\ell(\sigma(X)) = \ell(\sigma(X_{j_\ell}))$ and $r(\sigma(X_j)) = r(\sigma(X_{j_r}))$. By the assumption that each $\sigma(X_j)$ is $\mu(w)$-$\ppo$-admissible, it follows that all of the endpoints of the arcs $\sigma(X_j)$ lie in the same partition element of $\mu(w)$. In particular, the endpoints of $\sigma(X)$ lie in some partition element $\P$. Moreover, for $\ell(\sigma(X)) < i < r(\sigma(X))$ with $i \notin \P$, we must be in case (i) above. We conclude that $\sigma(X)$ is $\mu(w)$-$\ppo$-admissible.

    (2) The fact that $\Filt_{\Pi(A_n)}\circ\sigma\circ \delta: (\mathfrak{S}_{n+1},\leq_{\mathrm{sio}}) \rightarrow \wide(\Pi(A_n))$  is an order-preserving bijection is implicit in \cite{thomas} and is made precise in \cite[Corollary~4.33]{enomoto_lattice}. Moreover, the fact that $\delta, \mu$, and $\mu'$ are (order-preserving) bijections and fit into the desired commutative diagram is established in Proposition~\ref{prop:bijection_ppo} and Definition-Theorem~\ref{defthm:sio}.

   The fact that $\eta$ is a bijection and that the diagram commutes follows from (1) and the fact that $\W = \Filt(\brick(\W))$ for any wide subcategory $\W$. (The latter is a consequence of Theorem~\ref{thm:semibrick_wide}.) The fact that $\eta$ is order-preserving then follows from the commutativity of the diagram.
\end{proof}

\begin{example}\label{ex:wide}
    Consider $w \in \mathfrak{S}_9$ as in Example~\ref{ex:bijection_ppo}(see also Figure~\ref{fig:bijection_ppo}). Then, using the names for the arc in Figure~\ref{fig:bijection_ppo}, $\eta(\mu(w))$ is the wide subcategory of $\mods RA_n$ whose simple objects are $\sigma^{-1}(\gamma_{w,0})$, $\sigma^{-1}(\gamma_{w,4})$, $\sigma^{-1}(\gamma_{w,3})$, and $\sigma^{-1}(\gamma_{w,6})$. The only bricks in this wide subcategory other than these simple modules are those corresponding to the arcs formed by gluing together $\gamma_{w,4}$ and $\gamma_{w,3}$ at their shared endpoint and perturbing the resulting arc so that it passes either above or below the node 4. As in Example~\ref{ex:ppo_adm}(3), these are precisely the arcs which are $\mu(\delta)$-$\ppo$-admissible.
\end{example}


\subsection{$\tau$-exceptional sequences over $RA_n$}\label{sec:tau_ex_RAn}

In this section, we will prove that the map $\sigma$ induces a bijection between the $\tau$-exceptional sequences over $RA_n$ and the $\ppo$-admissible sequences of arcs on $n$ nodes.

Recall that the bricks, indecomposable $\tau$-rigid modules, and indecomposable modules over $RA_n$ all coincide. In particular, the ``brick-$\tau$-rigid correspondence'' acts as the identity in any wide subcategory of $\mods(RA_n)$, and so the $\tau$-exceptional sequences and brick-$\tau$-exceptional sequences over $RA_n$ are one in the same. Proposition~\ref{prop:hom_ext} and Corollary~\ref{cor:AS} then imply that the arcs corresponding to the bricks in a $\tau$-exceptional sequence must form a clockwise-ordered arc diagram.
The purpose of this section is to show that a those clockwise-ordered arc diagrams which correspond to a $\tau$-exceptional sequences over $RA_n$ under $\sigma$ are precisely the ppo-admissible sequences. The key to this is the following lemma.

\begin{lemma}\label{lem:tau_perp}
    Let $\W \in \wide(RA_n)$ and let $X \in \brick(RA_n)$. Then $\J_\W(X) = \eta(\J_{\eta^{-1}(\W)}(\sigma(X)))$.
\end{lemma}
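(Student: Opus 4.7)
The plan is to prove equality of the two wide subcategories by showing they contain the same bricks; by Theorem~\ref{thm:semibrick_wide}, a wide subcategory is determined by its simples, and hence by its full set of bricks. Writing $\P := \eta^{-1}(\W)$ and $\gamma := \sigma(X) \in \arc_\P(n)$, I would translate each side into an arc-theoretic statement. By Theorem~\ref{thm:wide}(1), a brick $Y$ lies in $\eta(\J_\P(\gamma))$ if and only if $\sigma(Y)$ is $\J_\P(\gamma)$-ppo-admissible. By Corollary~\ref{cor:AS}(1), a brick $Y \in \W$ lies in $\J_\W(X)$ if and only if $\Hom_{RA_n}(X, Y) = 0$ and $\Ext^1_{RA_n}(X, Y') = 0$ for every $Y' \in \W \cap \Fac(Y)$. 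Since every indecomposable module over $RA_n$ is a brick, the Ext-condition reduces to checking indecomposable quotients of $Y$, which via Proposition~\ref{prop:quotients} are parametrized by quotient arcs of $\sigma(Y)$ whose corresponding bricks lie in $\W$.

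Next, I would reduce to the base case $\W = \mods RA_n$ using the recursive structure from Section~\ref{sec:recursive}. Corollary~\ref{cor:clockwise_reduction}(1), combined with Proposition~\ref{prop:hom_ext}(4), shows that bricks in $\W = \eta(\P)$ whose arcs have endpoints in distinct partition elements of $\P$ satisfy vanishing of all Hom- and Ext-spaces. Consequently, if $\sigma(Y)$ has endpoints in some $S \neq S_\gamma$ (where $S_\gamma \in \P$ contains the endpoints of $\gamma$), then $Y$ lies in $\J_\W(X)$ automatically; combinatorially, such a $\sigma(Y)$ is also automatically $\J_\P(\gamma)$-ppo-admissible, since neither $S$ nor its relations with partition elements other than $S_\gamma$ are affected by ppo-reduction at $\gamma$. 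For bricks $Y$ whose arc $\sigma(Y)$ has endpoints in $S_\gamma$, Proposition~\ref{prop:ppo_admissible_recursive}(2) together with Proposition~\ref{prop:reduction_ppo_compatible} transports the problem, via the bijection $\phi_\P$, to the smaller algebra $RA_{|S_\gamma|-1}$, reducing to the base case $\P = \{[|S_\gamma|-1]\}$.

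For the base case $\P = \{[n]\}$ and $\W = \mods RA_n$, the claim becomes $\J(X) = \eta(\J_{\{[n]\}}(\gamma))$. Here, ppo-reduction partitions $[n]$ into an ``upper'' set $U_\P(\gamma)$ and a ``lower'' set $L_\P(\gamma)$ with $U_\P(\gamma) \preceq L_\P(\gamma)$ (when the closures overlap). A brick $Y$ with $\rho := \sigma(Y)$ is $\J_{\{[n]\}}(\gamma)$-ppo-admissible precisely when both endpoints of $\rho$ lie in the same part (say $U_\P(\gamma)$), and any node of $L_\P(\gamma)$ strictly between $\ell(\rho)$ and $r(\rho)$ has the symbol $\textnormal{o}$ in $\rho$ (and dually for endpoints in $L_\P(\gamma)$). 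I would then verify, using Proposition~\ref{prop:hom_ext}, that this combinatorial condition is equivalent to the pair of conditions $\Hom(X, Y) = 0$ and $\Ext^1(X, Y') = 0$ for every indecomposable quotient $Y'$ of $Y$: the Hom-vanishing rules out both a nontrivial crossing directed $\gamma \to \rho$ and the shared-endpoint configuration with $\gamma$ clockwise of $\rho$, while the Ext-vanishing on each quotient arc of $\rho$ rules out contested endpoints and crossings directed from a quotient arc of $\rho$ towards $\gamma$.

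The main obstacle is expected to be the detailed case analysis at the base case: confirming that the $\textnormal{u}$/$\textnormal{o}$ patterns mandated by the definition of $U_\P(\gamma)$ and $L_\P(\gamma)$ correspond exactly to the vanishing of Hom and of Ext on all indecomposable quotients. This will require enumerating the possible geometric configurations (nontrivial crossing, contested endpoint, shared endpoint, or no intersection) between $\gamma$ and each quotient arc of $\rho$, and verifying in each case that the ppo-admissibility condition coincides with the Hom/Ext vanishing condition. Once this base case is settled, the general case follows from the reduction outlined in the previous paragraph.
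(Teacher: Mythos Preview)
Your approach is correct and will succeed, but it differs from the paper's in one important respect. You propose to prove equality of the two wide subcategories by showing both inclusions at the level of bricks, carrying out a full case analysis after reducing to the base case $\P = \{[n]\}$ via $\phi_\P$. The paper instead observes that both $\J_\W(X)$ and $\eta(\J_\P(\sigma(X)))$ have rank $\rk(\W)-1$ (using Theorem~\ref{thm:jasso} for the left-hand side and Proposition~\ref{prop:covers_ppo} together with Theorem~\ref{thm:wide} for the right-hand side), so by Corollary~\ref{cor:AS}(4) it suffices to prove a single inclusion $\J_\W(X) \subseteq \eta(\J_\P(\sigma(X)))$. This halves the work: the paper only needs to show, by contrapositive, that if $Y \in \brick(\W)$ fails one of the Hom/Ext vanishing conditions defining $\J_\W(X)$, then $\sigma(Y)$ is not $\J_\P(\sigma(X))$-ppo-admissible. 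Your approach, by contrast, must also verify the converse direction, which is exactly the more delicate half of the case analysis you anticipate in the final paragraph. A secondary difference is organizational: you use the $\phi_\P$-reduction as the main structural device to reach a clean base case, whereas the paper invokes $\phi_\P$ only locally (in the case where $\Hom$ or $\Ext^1$ already fails on $Y$ itself) and otherwise argues directly with arcs in $\arc_\P(n)$. Your route is more systematic and makes the recursive structure explicit; the paper's route is shorter because the rank argument absorbs one entire direction. One caution: your reduction step implicitly needs that quotient arcs (within $\W$, with endpoints in $S_\gamma$) transport to quotient arcs under $\phi_\P$, which is essentially the content of Corollary~\ref{cor:reduction_algebra}(2a); since that corollary is deduced \emph{from} this lemma in the paper, you would need to verify it directly from the definition of $\phi_\P$, which is straightforward but should be made explicit.
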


\begin{proof}
    For readability, denote $\P = \eta^{-1}(\W)$. Note that $\J_\W(X)$ and $\eta(\J_{\P}(\sigma(X)))$ both have rank $\rk(\W)-1$ by Theorem~\ref{thm:DIJ}, Proposition~\ref{prop:covers_ppo}, and the fact that $\eta$ is order-preserving (Theorem~\ref{thm:wide}). Thus by Corollary~\ref{cor:DIJ}(4), it suffices to show that $\J_\W(X) \subseteq \eta(\J_{\P}(\sigma(X)))$, or equivalently that every brick in $\J_\W(X)$ also lies in $\eta(\J_{\P}(\sigma(X)))$.

    Let $Y \in \brick(\W)$ and suppose that $Y \notin \J_\W(X)$. Denote $\sigma(X) =: (\ell_X,s_{\ell_X+1}^X\cdots s_{r_X}^X)$ and likewise for $\sigma(Y)$. We will show that $\sigma(Y)$ is not $\J_{\eta^{-1}(\W)}(\sigma(X))$-$\ppo$-admissible, so in particular $Y \notin \eta(\J_{\P}(\sigma(X)))$ by Theorem~\ref{thm:wide}(1). Let $U(X) := U_{\P}(\sigma(X))$ and $L(X) := L_{\P}(\sigma(X))$.
    
    Recall from Corollary~\ref{cor:DIJ}(1) that $\Hom_{RA_n}(Y,\tau_\W X) = 0$ if and only if $\Ext^1_{RA_n}(X,Y') = 0$ for every (indecomposable) quotient $Y'$ of $Y$ which lies in $\W$. We now have two cases to consider.

    Suppose first that one of $\Hom_{RA_n}(X,Y)$ and $\Ext^1_{RA_n}(X,Y)$ is nonzero. Then $(\sigma(Y),\sigma(X)) \notin \arc_{cw}(n,2)$ by Proposition~\ref{prop:hom_ext}. By Corollary~\ref{cor:clockwise_reduction}(1), this means there exists $S \in \P$ such that that the endpoints of $\sigma(X)$ and $\sigma(Y)$ all lie in $S$. Thus $(\phi_\P(\sigma(Y)),\phi_\P(\sigma(X)) \notin \arc_{cw}(|S|-1,2)$ by Corollary~\ref{cor:clockwise_reduction}(2). Corollary~\ref{cor:ppo_adm_clockwise_ordered} then implies that $\phi_\P(\sigma(Y))$ is not $\J_{\{[|S|-1]\}}(\phi_\P(\sigma(Y)))$-$\ppo$-admissible. We conclude that $\sigma(Y)$ is not $\J_{\P}(\sigma(Y))$-$\ppo$-admissible by Propositions~\ref{prop:reduction_ppo_compatible} and~\ref{prop:ppo_admissible_recursive}(2). This concludes the proof in this case.

    It remains to consider the case where $\Hom_{RA_n}(X,Y) = 0 = \Ext^1_{RA_n}(X,Y)$, or equivalently (by Proposition~\ref{prop:hom_ext}) where $(\sigma(Y),\sigma(X)) \in \arc_{cw}(n,2)$. In this case, there exists $Y' \neq Y$ an indecomposable quotient of $Y$ such that $Y' \in \W$ and $\Ext^1_{RA_n}(X,Y') \neq 0$. Note in particular that $Y'$ is a brick, and so $\sigma(Y') =: \gamma = (\ell(\gamma),s_{\ell(\gamma)+1}\cdots s_{r(\gamma)})$ is a quotient arc of $\sigma(X)$ by Proposition~\ref{prop:quotients}. We now have two cases to consider.
    
   Suppose first that $\sigma(X)$ and $\gamma$ have a nontrivial crossing. If there is a nontrivial crossing directed from $\sigma(X)$ to $\gamma$, then $\Hom_{RA_n}(X,Y') = 0$, contradicting the assumption that $Y' \in \W$. Thus there is a nontrivial crossing directed from $\gamma$ to $\sigma(X)$; that is, there exist nodes $i < j$ satisfying the conditions (1), (2), and (3a) in Remark~\ref{rem:crossing} for $\rho = \sigma(X)$. If $\ell(\gamma) = i$, the fact that $\gamma$ is a quotient arc of $\sigma(Y)$ implies that either $\ell_Y = i$ or $s_i^Y = \textnormal{o}$. Likewise we must have $s_j^Y \leq s_j$. Thus, again by Remark~\ref{rem:crossing}, $\sigma(X)$ and $\sigma(Y)$ have a nontrivial crossing, contradicting the assumption that $(\sigma(Y),\sigma(X)) \in \arc_{cw}(n,2)$.
    
    By Proposition~\ref{prop:hom_ext}, it remains to consider the case where $\sigma(X)$ and $\gamma$ have a contested endpoint. We consider the case $\ell(\gamma) = r_X$, the case where $\ell_X = r(\gamma)$ being similar. In particular, we must then have $\ell_Y < \ell(\gamma)$ by the assumption that $(\sigma(Y),\sigma(X)) \in \arc_{cw}(n,2)$. Let $S_X, S_Y \in \P$ be such that $\ell_X, r_X \in S_X$ and $\ell_Y, r_Y \in S_Y$. Since $\sigma(X)$ and $\gamma$ have a contested endpoint, we must also have that $r(\gamma) \in S_X$. The fact that $\gamma$ is a quotient arc of $\sigma(Y)$ with $\ell_Y < \ell(\gamma)$ then implies that $s_{r_X}^Y = \textnormal{o}$. Since $\sigma(Y)$ is $\P$-$\ppo$-admissible, this implies that $s_{r(\gamma)}^Y \neq \text{u}$. Again since $\gamma$ is a quotient arc of $\sigma(Y)$, we conclude that $r_Y = r(\gamma)$, and so $S_X = S_Y$. Moreover, $r(\gamma) \in L(X)$ and $r_X \in U(X)$, and by construction $U(X) \preceq_{\J_\P(\sigma(X))} L(X)$. The fact that $s_{r_X}^Y = \textnormal{o}$ then implies that $\sigma(Y)$ is not $\J_{\P}(\sigma(X))$-ppo-admissible. This concludes the proof.
\end{proof}

\begin{remark}
	Lemma~\ref{lem:tau_perp} in particular establishes a criteria for determining when two bricks $X, Y \in \brick(RA_n)$ satisfy $\Hom_{RA_n}(X,Y) = 0 = \Hom_{RA_n}(Y,\tau X)$. We note that another combinatorial criterion for the vanishing of $\Hom_{RA_n}(Y,tau X)$ is given independently in \cite[Lemma~3.14]{IW}.
\end{remark}

As a consequence of Lemma~\ref{lem:tau_perp}, we can explicitly describe the subcategory $\J(X)$ as a module category of the product of two smaller algebras which are also of the form $RA_n$.

\begin{corollary}\label{cor:reduction_algebra}
    Let $X \in \brick(RA_n)$. Denote $U(X) := U_{\{[n]\}}(\sigma(X))$ and $L(X) := L_{\{[n]\}}(\sigma(X))$ as in Definition~\ref{def:ppo_reduction}, and denote $\Lambda_X := RA_{|U(X)|-1} \times RA_{|L(X)|-1}$. Then the following hold.
    \begin{enumerate}
	\item The bijection $\phi_{\J_{\{[n]\}}(\sigma(X))}: \arc_{\J_{\{[n]\}}(\sigma(X))}(n) \rightarrow \arc(|U(X)|-1) \sqcup \arc(|U(X)|-1)$  from Proposition~\ref{prop:ppo_admissible_recursive} induces a bijection $F_X := \sigma^{-1} \circ \phi_{\J_{\{[n]\}}(\sigma(X))} \circ \sigma: \brick(\J(X)) \rightarrow \brick(\Lambda_X)$.
	\item Let $Y, Z \in \brick(\J(X))$. Then:
		\begin{enumerate}
			\item $Z$ is a quotient of $Y$ if and only if $F_X(Z)$ is a quotient of $F_X(Y)$.
			\item $\Hom_{RA_n}(Y,Z) = 0$ if and only if $\Hom_{\Lambda_X}(F_X(Y),F_X(Z)) = 0$.
			\item $\Ext^1_{RA_n}(Y,Z) = 0$ if and only if $\Ext^1_{RA_n}(Y,Z) = 0$.
		\end{enumerate}
    \end{enumerate}
\end{corollary}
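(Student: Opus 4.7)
The plan is to establish (1) as a three-step composition of bijections, and to handle (2) by splitting on whether the endpoints of $\sigma(Y)$ and $\sigma(Z)$ lie in the same partition element of $\P := \J_{\{[n]\}}(\sigma(X))$; note that $\P$ has exactly the two partition elements $U(X)$ and $L(X)$.

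For (1), I first apply Lemma~\ref{lem:tau_perp} (with $\{[n]\}$ as the ambient permutation preorder, so that $\mods RA_n = \eta(\{[n]\})$) to identify $\J(X) = \eta(\P)$. Combined with Theorem~\ref{thm:wide}(1), this gives a bijection $\sigma : \brick(\J(X)) \to \arc_\P(n)$. Proposition~\ref{prop:ppo_admissible_recursive}(1) then provides $\phi_\P : \arc_\P(n) \to \arc(|U(X)|-1) \sqcup \arc(|L(X)|-1)$, and Proposition~\ref{prop:arcBricks} applied componentwise identifies this disjoint union with $\brick(\Lambda_X) = \brick(RA_{|U(X)|-1}) \sqcup \brick(RA_{|L(X)|-1})$. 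The composition is precisely $F_X$.

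For (2), Case A (endpoints of $\sigma(Y)$ and $\sigma(Z)$ in different partition elements): Corollary~\ref{cor:clockwise_reduction}(1) rules out any shared endpoint, contested endpoint, or nontrivial crossing between the two arcs, so Proposition~\ref{prop:hom_ext}(4), applied symmetrically, forces the relevant $\Hom$ and $\Ext^1$ to vanish over $RA_n$; on the $\Lambda_X$ side, $F_X(Y)$ and $F_X(Z)$ sit in different factors, and hence their $\Hom$ and $\Ext^1$ vanish trivially. This yields (b) and (c). For (a), I argue by contradiction: if $\sigma(Z)$ were a quotient arc of $\sigma(Y)$, Definition~\ref{def:quotient_arc} would force the letters of $\sigma(Y)$ at the positions $\ell(\sigma(Z))$ and $r(\sigma(Z))$ to be o and u respectively (since neither is a shared endpoint), whereas $\P$-ppo-admissibility of $\sigma(Y)$ (Definition~\ref{def:ppo_admissible}) would force both letters to be uniformly o if $S_{\sigma(Y)} \preceq_\P S_{\sigma(Z)}$, uniformly u if $S_{\sigma(Z)} \preceq_\P S_{\sigma(Y)}$, and the case of incomparability is precluded because the nested-endpoint arrangement already forces $\overline{U(X)} \cap \overline{L(X)} \neq \emptyset$.

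For Case B (endpoints in a common partition element $S$), Corollary~\ref{cor:clockwise_reduction}(2) transports every geometric relation between $\sigma(Y)$ and $\sigma(Z)$---directed crossings, contested endpoints, and shared-endpoint-clockwise configurations---to the corresponding relation between $\phi_\P(\sigma(Y))$ and $\phi_\P(\sigma(Z))$; combined with Proposition~\ref{prop:hom_ext} applied on both sides, this yields (b) and (c). For (a), I would unpack Definition~\ref{def:quotient_arc} directly: $\phi_\P$ preserves endpoints (modulo reindexing) and letters at positions in $S$, while letters at positions in the complementary partition element are pinned down identically in $\sigma(Y)$ and $\sigma(Z)$ by $\P$-ppo-admissibility, so the restriction condition at those positions is automatic. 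The main obstacle is the contradiction argument for (a) in Case A, which requires carefully tracing the interaction between the quotient-arc constraints and the ppo-admissibility constraints on the letters of $\sigma(Y)$ at the endpoints of $\sigma(Z)$; once that is dispatched, everything else is essentially bookkeeping built on Proposition~\ref{prop:ppo_admissible_recursive}, Corollary~\ref{cor:clockwise_reduction}, and Proposition~\ref{prop:hom_ext}.
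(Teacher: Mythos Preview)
Your proposal is correct and follows essentially the same approach as the paper: both split on whether the endpoints of $\sigma(Y)$ and $\sigma(Z)$ lie in the same partition element of $\J_{\{[n]\}}(\sigma(X))$, and both reduce to Corollary~\ref{cor:clockwise_reduction} and Proposition~\ref{prop:hom_ext}. The only minor difference is that for (2a) in Case~A you give a direct combinatorial contradiction via Definition~\ref{def:quotient_arc}, whereas the paper simply observes that the vanishing of $\Hom$ on both sides (already established for (2b)) rules out any quotient map, which is a bit quicker.
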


\begin{proof}
    (1) This is an immediate consequence of Lemma~\ref{lem:tau_perp}.
    
    (2) Let $Y, Z \in \J(X)$. Then $\sigma(Y) = \left(\ell_Y,s_{\ell_Y+1}^Y\cdots s_{r_Y}\right)$ and $\sigma(Z) = \left(\ell_Z,s_{\ell_Z+1}^Z\cdots s_{r_Z}^Z\right)$ are $\J_{\{[n]\}}(\sigma(X))$-admissible by Lemma~\ref{lem:tau_perp}. Now suppose that $\ell(\sigma(Y)) \in U(X)$, the case where  $\ell(\sigma(Y)) \in L(X)$ being similar. If $\ell(\sigma(Z)) \in L(X)$, then $\sigma(Y)$ and $\sigma(Z)$ do not have a shared endpoint, a contested endpoint, or a nontrivial crossing by Corollary~\ref{cor:clockwise_reduction}. Thus the Hom- and Ext$^1$-spaces between $Y$ and $Z$ are all zero by Proposition~\ref{prop:hom_ext}. Moreover, the brick $\sigma^{-1} \circ \phi_{\J_{\{[n]\}}(\sigma(X))} \circ \sigma(Y)$ is a module over $RA_{|U(X)|-1}$ while the brick $\sigma^{-1} \circ \phi_{\J_{\{[n]\}}(\sigma(X))} \circ \sigma(Z)$ is a module over $RA_{|L(X)|-1}$. Thus the Hom- and Ext$^1$-spaces between $\sigma^{-1} \circ \phi_{\J_{\{[n]\}}(\sigma(X))} \circ \sigma(X)$ and $\sigma^{-1} \circ \phi_{\J_{\{[n]\}}(\sigma(X))} \circ \sigma(Z)$ are also zero. This proves (2a), (2b), and (2c) in this case.
    
	Suppose from now on that $\ell(\sigma(Z)) \in U(X)$. Now recall from Proposition~\ref{prop:quotients} that $Z$ is a quotient of $Y$ if and only if $\sigma(Z)$ is a quotient arc of $\sigma(Y)$. By the definition of $\phi_{\J_{\{[n]\}}(\sigma(X))}$, the latter condition is equivalent to $\sigma(F_X(Z))$ being a quotient arc of $\sigma(F_X(Y))$, which again by Proposition~\ref{prop:quotients} is equivalent to $F_X(Z)$ being a quotient of $F_X(Y)$. This proves (2a).
	
	To prove (2b), note that Proposition~\ref{prop:hom_ext} implies that $\Hom_{RA_n}(Y,Z) = 0$ if and only if both (i) there is not a nontrivial crossing directed from $\sigma(Y)$ to $\sigma(Z)$, and (ii) if $\sigma(Y)$ and $\sigma(Z)$ have a shared endpoint, then $\sigma(Z)$ is clockwise of $\sigma(Y)$. By Corollary~\ref{cor:clockwise_reduction}, (i) is equivalent to (i') there is not a nontrivial crossing directed from $\sigma(F_X(Y))$ to $\sigma(F_X(Z))$, and (ii) is equivalent to (ii') if $\sigma(F_X(Y))$ and $\sigma(F_X(Z))$ have a shared endpoint, then $\sigma(F_X(Z))$ is clockwise of $\sigma(F_X(Y))$. But (ii) and (ii') are equivalent to $\Hom_{\Lambda_X}(F_X(Y),F_X(Z)) = 0$ by another application of Proposition~\ref{prop:hom_ext}.
	
	(2c) similarly follows from combining the cases of Proposition~\ref{prop:hom_ext} with Corollary~\ref{cor:clockwise_reduction}. This concludes the proof.
\end{proof}

\begin{remark}\label{rem:reduction_algebra}
   In fact, one can further show that there is an exact equivalence of categories $\J(X) \rightarrow \Lambda_X$ as follows. It is shown in \cite{jasso} that there is a lattice isomorphism $\tors(\J(X)) \cong [\Filt\Fac(X),\lperp{\tau X}] \subseteq \tors(RA_n)$. Since $\tors(RA_n)$ is isomorphic to the weak order on the Coxeter group $A_n$, it follows that $\tors(\J(X))$ is isomorphic to the weak order on the Coxeter group $A_{|L(X)|-1} \times A_{|U(X)|-1}$, see e.g. \cite[Proposition~3.1.6]{BjB}. Now let $B_X$ be the Bongartz complement of $X$, and write $\Gamma_X := \End_{RA_n}(B_X)/[X] \cong KQ/I$ as a quotient of a path algebra by an admissible ideal. In particular, the vertices of $Q$ correspond to the indecomposable direct summands of $B_X$. Since each of these summands has trivial endomorphism ring, $Q$ cannot have any loops and every directed cycle in $Q$ must lie in the ideal $I$. It then follows from \cite[Theorem~3.3]{kase} that $Q = \overline{A_{|L(X)-1|}} \sqcup \overline{A_{|U(X)-1|}}$ and that $I$ is generated by all 2-cycles; i.e., that $KQ/I = RA_{|L(X)|-1} \times RA_{|U(X)|-1}$.
\end{remark}

We now prove the main result of this section.

\begin{theorem}[Theorem~\ref{thm:mainC}, part 1]\label{thm:tau_exceptional_RAn}
    Let $k \in \N$. Then the map $\sigma$ induces a bijection $\tex(RA_n,k)\rightarrow \ppoadm(n,k)$; that is, a sequence of bricks $(X_k,\ldots,X_1)$ is a $\tau$-exceptional sequence in $\mods RA_n$ if and only if $(\sigma(X_k),\ldots,\sigma(X_1))$ is a ppo-admissible sequence.
\end{theorem}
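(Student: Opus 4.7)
The plan is to prove the theorem by induction on $n$, with an internal induction on $k$. The base cases $n \leq 1$ or $k \leq 1$ are handled directly: a length-one $\tau$-exceptional sequence over $RA_n$ is simply an indecomposable $\tau$-rigid module, which coincides with $\brick(RA_n)$ since all indecomposables over $RA_n$ are bricks. Under $\sigma$ this bijects with $\arc(n)$, and every arc is $\{[n]\}$-ppo-admissible by Example~\ref{ex:ppo_adm}(1), so $(\sigma(X_1))$ is automatically in $\ppoadm(n,1)$.

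For the inductive step with $k \geq 2$, fix a sequence $(X_k,\ldots,X_1)$ of bricks over $RA_n$. By Definition~\ref{def:tau_exceptional}, the sequence is $\tau$-exceptional iff $(X_k,\ldots,X_2)$ is $\tau$-exceptional in $\J(X_1)$. By Corollary~\ref{cor:reduction_algebra} and Remark~\ref{rem:reduction_algebra}, there is an exact equivalence $\J(X_1) \simeq \mods\Lambda_{X_1}$ with $\Lambda_{X_1} = RA_{|U(X_1)|-1} \times RA_{|L(X_1)|-1}$, under which the brick-level bijection $F_{X_1}$ preserves Hom-spaces, Ext$^1$-spaces, and quotient relations. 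By Proposition~\ref{prop:tau_exceptional}, this equivalence therefore carries $\tau$-exceptional sequences to $\tau$-exceptional sequences.

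The next step is a short product-algebra lemma: a sequence $(Y_{k-1},\ldots,Y_1)$ of indecomposables over $\Lambda_1 \times \Lambda_2$ is $\tau$-exceptional iff, after partitioning the $Y_i$ by which factor they lie in, each of the two order-preserving subsequences is $\tau$-exceptional in its respective module category. This follows from Proposition~\ref{prop:tau_exceptional}: Hom- and Ext$^1$-spaces between modules from distinct factors vanish, quotients of a $\Lambda_1$-module remain in $\mods\Lambda_1$, and one verifies by induction that $\J_{\mods(\Lambda_1 \times \Lambda_2)}(M) = \J_{\mods\Lambda_1}(M) \times \mods\Lambda_2$ whenever $M \in \mods\Lambda_1$. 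Consequently $\tex(\Lambda_{X_1},k-1)$ is in bijection with $\shuff(k-1; \tex(RA_{|U(X_1)|-1}), \tex(RA_{|L(X_1)|-1}))$.

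Since $|U(X_1)|-1, |L(X_1)|-1 \leq n-1$, the inductive hypothesis identifies each $\tau$-exceptional sequence over these smaller algebras with a ppo-admissible sequence via $\sigma$. Combining the identity $\sigma \circ F_{X_1} = \phi_{\J_{\{[n]\}}(\sigma(X_1))} \circ \sigma$ (from Corollary~\ref{cor:reduction_algebra}(1)) with Corollary~\ref{cor:ppo_seq_shuffle}, which describes ppo-admissible sequences of length $k$ starting with $\sigma(X_1)$ as exactly the $(k-1)$-shuffles of ppo-admissible sequences in $\arc(|U(X_1)|-1)$ and $\arc(|L(X_1)|-1)$, closes the induction. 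The main obstacle is the product-algebra lemma: although routine in nature, it requires careful bookkeeping to verify that the recursive structure of $\tau$-exceptionality decouples across the product decomposition of the module category.
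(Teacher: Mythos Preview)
Your proposal is correct and follows essentially the same strategy as the paper's proof: reduce to $\J(X_1)$ via Corollary~\ref{cor:reduction_algebra}, decompose across the product $RA_{|U(X_1)|-1} \times RA_{|L(X_1)|-1}$, apply induction to each factor, and reassemble using Corollary~\ref{cor:ppo_seq_shuffle}. The only cosmetic difference is that you induct on $n$ (using $|U(X_1)|-1, |L(X_1)|-1 \leq n-1$) while the paper inducts on $k$ (using that the two shuffle components have length $<k$); both are valid, and your explicit statement of the product-algebra lemma makes transparent a step the paper leaves implicit.
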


\begin{proof}
    We prove the result by induction on $k$. For $k = 0$ and $k = 1$ there is nothing to show, as either both sets are empty ($k = 0$ case) or $\tex(RA_n,1) = \brick(RA_n)$ and $\ppoadm(n,1) = \arc(n)$ ($k = 1$ case). Thus let $k > 1$ and suppose the result holds for all $k' < k$.

    Let $(X_k,\ldots,X_1) \in \tex(RA_n, k)$, and denote $U(X_1)$ and $L(X_1)$ as in Corollary~\ref{cor:reduction_algebra}. It then follows from Lemma~\ref{lem:tau_perp} that $\sigma(X_j)$ is $\J_{\{[n]\}}(\sigma(X_1))$-$\ppo$-admissible for all $2 \leq j \leq k$. Now denote $\widetilde{\phi}:= \phi_{\J_{\{[n]\}}(\sigma(X_1)} \circ \sigma$. Then by Corollary~\ref{cor:reduction_algebra}, we have
    $$\Delta:= (\widetilde{\phi}(X_k),\ldots,\widetilde{\phi}(X_2)) \in \tex(RA_{|U(X_1)|-1} \times RA_{|L(X_1)|-1}).$$
  Since the algebra $RA_{|U(X_1)|-1} \times RA_{|L(X_1)|-1}$ decomposes as a direct product, there exist $\Delta_1 \in \tex(RA_{|U(X_1)|-1})$ and $\Delta_2 \in \text(RA_{|L(X_1)|-1})$ such that $\Delta \in \shuff(\omega_1,\omega_2)$. By the induction hypothesis, applying the map $\sigma$ to the bricks in $\Delta_1$ and $\Delta_2$ thus yields $\ppo$-admissible sequences $\omega_1 \in \ppoadm(|U(X_1)|-1)$ and $\omega_2 \in \ppoadm(|L(X_1)|-1)$. The map $\sigma$ also associates $\Delta$ to some $\omega \in \shuff(\omega_1,\omega_2)$. Finally, by Corollary~\ref{cor:ppo_seq_shuffle}, we conclude that $(\sigma(X_k),\ldots,\sigma(X_1)) = (\Phi_k^{\sigma(X_1)})^{-1} (\omega)$ is a $\ppo$-admissible sequence. The proof that applying $\sigma^{-1}$ to each element of a $\ppo$-admissible sequence yields a $\tau$-exceptional sequence is similar.
\end{proof}

\begin{example}\label{ex:tex}\
\begin{enumerate}
    \item Consider the clockwise-ordered arc diagram $(\gamma_4,\gamma_3,\gamma_2,\gamma_1)$, which is not a ppo-admissible sequence, from Example~\ref{ex:ppo_admissible_sequence}. The corresponding sequence of bricks in $\mods RA_n$ is $$\left({\footnotesize \begin{matrix}2\\1\end{matrix}}, \ {\scriptsize \begin{matrix}3\\2\\1\end{matrix}}, \ {\footnotesize\begin{matrix}2\\3\end{matrix}}, \ 2\right).$$
    The fact that this is not a $\tau$-exceptional sequence (over either algebra) follows from Proposition~\ref{prop:AS}. Indeed, we have that the simple module $S(3)$ is a quotient of ${\tiny\begin{matrix}3\\2\\1\end{matrix}}$ and that $\Ext^1_{RA_n}(S(2), S(3)) \cong K \cong \Ext^1{\Pi(A_n)}(S(2),S(3))$.
    \item Consider the ppo-admissible sequence $(\gamma_4,\gamma_3,\gamma_2,\gamma_1)$ from Example~\ref{ex:ppo_admissible_sequence}(2). The sequence of bricks in $\mods RA_n$ is $$\left(4,\ {\footnotesize\begin{matrix}3\\4\end{matrix}},\ {\scriptsize \begin{matrix}2\\3\\4\end{matrix}},\ {\tiny\begin{matrix}1\\2\\3\\4\end{matrix}}\right).$$
    It is also straightforward to verify directly that this is also a $\tau$-exceptional sequence, since each module is projective in the relevant wide subcategory.
    \item Consider theppo-admissible sequence $(\gamma_3,\gamma_2,\gamma_1)$ from Example~\ref{ex:ppo_admissible_sequence}(3). The corresponding sequence of bricks in $\mods RA_n$, which is a $\tau$-exceptional sequence by Theorem~\ref{thm:tau_exceptional_RAn}, is
    $$\left({\scriptsize \begin{matrix} \ 3\\2 \ 4\\\phantom{44}5\end{matrix}}, \ {\scriptsize\begin{matrix}\phantom{44} 4 \ 6\\ 3 \ 5\\2\ \phantom{44}\end{matrix}},\ {\scriptsize \begin{matrix}4\\3\\2\end{matrix}}\right).$$
\end{enumerate}
\end{example}


\subsection{(Brick-)$\tau$-exceptional sequences over $\Pi(A_n)$}\label{sec:tau_exceptional_preproj}

We next show how Theorem~\ref{thm:tau_exceptional_RAn} can be used to understand the $\tau$-exceptional sequences over the preprojective algebra $\Pi(A_n)$. We will first show that the brick-$\tau$-exceptional sequences over $RA_n$ and $\Pi(A_n)$ coincide. We will then explain how this can be leveraged into a model for the $\tau$-exceptional sequences over $\Pi(A_n)$.

The following is an immediate consequence of \cite[Theorem~8.10]{BaH2}.

\begin{proposition}\label{prop:BaH2}
    Let $\Lambda, \Lambda'$ be $\tau$-tilting finite-algebras. Suppose there is a lattice isomorphism $G: \tors\Lambda \rightarrow \tors\Lambda'$. In particular, for $X \in \brick(\Lambda)$ there exists a unique $O(X) \in \brick(\Lambda')$ such that $G(\Filt(\Fac X)) = \Filt(\Fac O(X))$. Then the map $O$ induces a bijection $\btex(\mods\Lambda) \rightarrow \btex(\mods\Lambda')$.
\end{proposition}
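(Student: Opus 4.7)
The plan is to reduce the statement to the cited result \cite[Theorem~8.10]{BaH2} by translating brick-$\tau$-exceptional sequences into data living on the lattice of torsion classes. First, I would verify that the map $O$ is well-defined. In the $\tau$-tilting finite setting, a torsion class is of the form $\Filt(\Fac X)$ for some brick $X$ if and only if it is a join-irreducible element of $\tors(\Lambda)$, and this assignment $X \mapsto \Filt(\Fac X)$ is itself a bijection from $\brick(\Lambda)$ onto the set of join-irreducibles of $\tors(\Lambda)$. Since $G$ is a lattice isomorphism, it preserves join-irreducibility, so for every $X \in \brick(\Lambda)$ the torsion class $G(\Filt(\Fac X))$ is join-irreducible in $\tors(\Lambda')$ and hence of the form $\Filt(\Fac O(X))$ for a unique brick $O(X) \in \brick(\Lambda')$. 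This gives the bijection $O: \brick(\Lambda) \to \brick(\Lambda')$ alluded to in the statement.

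Next, I would reinterpret brick-$\tau$-exceptional sequences as saturated top chains in $\wide(\Lambda)$ with an edge-labeling by bricks. By Theorem~\ref{thm:sat_top}, a $\tau$-exceptional sequence $(M_k,\ldots,M_1)$ corresponds to the saturated top chain $\W_k \subset \W_{k-1} \subset \cdots \subset \W_0 = \mods\Lambda$ with $\W_i = \J_{\W_{i-1}}(M_i)$, and under the brick-$\tau$-rigid correspondence in each wide subcategory (Corollary~\ref{cor:DIJ} and Remark~\ref{rem:DIJ}) the associated brick-$\tau$-exceptional sequence $(\beta(M_k),\ldots,\beta(M_1))$ is precisely the sequence of edge-labels of that chain. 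Because the lattice of wide subcategories is recovered as the core-label order of the lattice of torsion classes (as recalled in the introduction via \cite{enomoto_lattice}), the hypothesized lattice isomorphism $G$ induces a canonical lattice isomorphism $G': \wide(\Lambda) \to \wide(\Lambda')$ which takes saturated top chains to saturated top chains.

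The remaining step, which is the content of \cite[Theorem~8.10]{BaH2}, is to verify compatibility between $G'$ and the brick edge-labeling via $O$: if $\W_i \subset \W_{i-1}$ is a cover relation in $\wide(\Lambda)$ labeled by a brick $X \in \brick(\W_{i-1})$, then the cover relation $G'(\W_i) \subset G'(\W_{i-1})$ is labeled by $O(X) \in \brick(G'(\W_{i-1}))$. Granting this, Theorem~\ref{thm:sat_top} applied to both $\Lambda$ and $\Lambda'$ yields commutative squares identifying $\btex(\mods\Lambda)$ and $\btex(\mods\Lambda')$ with brick-labeled saturated top chains, and the bijection induced by $O$ on such labeled chains is precisely the one coming from $G'$. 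This produces the required bijection $\btex(\mods\Lambda) \to \btex(\mods\Lambda')$.

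The main obstacle is the compatibility step in the previous paragraph: even though $G'$ exists as a lattice isomorphism on wide subcategories, one must trace the canonical join-irreducible labeling used to define the core-label order and check that this labeling commutes with $G$ via $O$. This is not automatic from $G$ being only a lattice isomorphism of torsion classes and is the technical heart of the argument in \cite{BaH2}. Once that compatibility is in hand, the bijection on brick-$\tau$-exceptional sequences follows formally from the recursive definition of (brick-)$\tau$-exceptional sequences together with Theorem~\ref{thm:sat_top}.
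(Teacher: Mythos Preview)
Your proposal is correct and aligns with the paper's approach: the paper simply states that the proposition is an immediate consequence of \cite[Theorem~8.10]{BaH2}, and your sketch unpacks precisely why that citation suffices, correctly identifying the compatibility of the brick edge-labeling under the induced isomorphism of wide subcategories as the substantive content supplied by \cite{BaH2}.
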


Theorem~\ref{prop:BaH2} in particular implies the following.

\begin{corollary}[Theorem~\ref{thm:mainC}, part 2]\label{cor:brick_tau}
    For all $k$, the map $\beta$ induces a bijection $\tex(\Pi(A_n),k) \rightarrow \tex(RA_n,k)$; that is, the brick-$\tau$-exceptional sequences over $\Pi(A_n)$ are precisely the $\tau$-exceptional sequences over $RA_n$. In particular, $\sigma \circ \beta$ induces a bijection $\tex(\Pi(A_n),k) \rightarrow \ppoadm(n,k)$.
\end{corollary}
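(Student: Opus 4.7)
My approach is to apply Proposition~\ref{prop:BaH2} to the pair $(\Lambda, \Lambda') = (\Pi(A_n), RA_n)$. The required lattice isomorphism $G\colon \tors(\Pi(A_n)) \to \tors(RA_n)$ is furnished by the discussion preceding Proposition~\ref{prop:arcBricks}: both lattices are isomorphic to the weak order on $\mathfrak{S}_{n+1}$ (by \cite{mizuno,BCZ,DIRRT,kase}), with this isomorphism implemented concretely by $G(\T) = \T \cap \mods(RA_n)$, using the fully faithful embedding $\mods(RA_n) \hookrightarrow \mods(\Pi(A_n))$.

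The central step is to verify that the associated map $O\colon \brick(\Pi(A_n)) \to \brick(RA_n)$ of Proposition~\ref{prop:BaH2} is the identity, under the equality $\brick(\Pi(A_n)) = \brick(RA_n)$ recalled in Section~\ref{sec:preproj}. Fix a brick $X$ and write $\T_\Pi(X) := \Filt(\Fac_{\Pi(A_n)} X)$ and $\T_R(X) := \Filt(\Fac_{RA_n} X)$ for the smallest torsion classes of $\mods(\Pi(A_n))$ and $\mods(RA_n)$ containing $X$. Since $X \in \T_\Pi(X) \cap \mods(RA_n) = G(\T_\Pi(X))$ and $G(\T_\Pi(X)) \in \tors(RA_n)$, we obtain $\T_R(X) \subseteq G(\T_\Pi(X))$. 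Conversely, the preimage $\T := G^{-1}(\T_R(X))$ satisfies $\T \cap \mods(RA_n) = \T_R(X) \ni X$, so $X \in \T$, and thus $\T \supseteq \T_\Pi(X)$; applying the order-preserving map $G$ then yields $\T_R(X) \supseteq G(\T_\Pi(X))$. Hence $G(\T_\Pi(X)) = \T_R(X)$, i.e.\ $O(X) = X$.

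Proposition~\ref{prop:BaH2} then produces the equality $\btex(\Pi(A_n)) = \btex(RA_n)$ as sets of sequences of bricks. Because every indecomposable $RA_n$-module is already a brick (Section~\ref{sec:preproj}), the correspondence $\beta$ acts as the identity on indecomposable $\tau$-rigid $RA_n$-modules, so $\btex(RA_n) = \tex(RA_n)$ by Definition~\ref{def:tau_exceptional_brick}. Unwinding the same definition over $\Pi(A_n)$, the identification $\btex(\Pi(A_n)) = \tex(RA_n)$ is precisely the statement that $(M_k,\ldots,M_1) \mapsto (\beta(M_k),\ldots,\beta(M_1))$ defines a bijection $\tex(\Pi(A_n),k) \to \tex(RA_n,k)$, which proves the first half of the corollary; the ``in particular'' statement then follows by composing with the bijection of Theorem~\ref{thm:tau_exceptional_RAn}. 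The main (and essentially only non-routine) ingredient is confirming that the isomorphism $G$ in the cited references is indeed the intersection-with-$\mods(RA_n)$ map; once that is granted, the identity $O = \mathrm{id}$ and hence the corollary are formal.
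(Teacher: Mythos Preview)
Your proof is correct and follows essentially the same approach as the paper: apply Proposition~\ref{prop:BaH2} with $G(\T)=\T\cap\mods(RA_n)$, observe that $O$ is the identity, and finish using $\btex(RA_n)=\tex(RA_n)$ together with Theorem~\ref{thm:tau_exceptional_RAn}. The paper simply asserts (citing \cite[Section~4]{mizuno2}) that this $G$ is a lattice isomorphism inducing the identity on bricks, whereas you supply a short direct argument for $G(\Filt\Fac_{\Pi(A_n)}X)=\Filt\Fac_{RA_n}X$; this extra detail is welcome but does not change the route.
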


\begin{proof}
    As discussed in \cite[Section~4]{mizuno2}, the map $\T \mapsto \T \cap \mods(RA_n)$ is a lattice isomorphism $\tors(\Pi(A_n)) \rightarrow \tors(RA_n)$. Moreover, this isomorphism induces the identity on bricks as the map $O$ in Proposition~\ref{prop:BaH2}.
\end{proof}

In order to construct the inverse of the bijection $\sigma\circ \beta$ above, we need the following analog of Corollary~\ref{cor:reduction_algebra}.

\begin{proposition}\label{prop:reduction_algebra}
    Let $M \in \trig(\Pi(A_n))$ be indecomposable and choose a representative of $\sigma(\beta(M))$. Denote $U(M) := U_{[n]}(\sigma(\beta(M)))$ and $L(M) := L_{[n]}(\sigma(\beta(M)))$ as in Definition~\ref{def:ppo_admissible}. Then there is an exact equivalence of categories
    $$\J(M) \rightarrow \mods\left(\Pi\left(A_{|L(M)|-1}\right) \times \Pi\left(A_{|U(M)-1|}\right)\right)$$
    which is given on each brick by $\sigma^{-1}\circ \phi_{\J_{\{[n]\}}(\sigma(X))} \circ \sigma$.
\end{proposition}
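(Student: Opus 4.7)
The plan is to mimic the strategy sketched in Remark~\ref{rem:reduction_algebra}, but with $\Pi(A_n)$ in place of $RA_n$ and using Corollary~\ref{cor:brick_tau} to port brick-level information from $RA_n$ across. First, Theorem~\ref{thm:jasso}(2) furnishes an exact equivalence $\Hom_{\Pi(A_n)}(B_M,-): \J(M) \to \mods \Gamma_M$, where $\Gamma_M := \End_{\Pi(A_n)}(B_M)/[M]$ is a basic $\tau$-tilting finite algebra of rank $n-1 = (|U(M)|-1)+(|L(M)|-1)$. Writing $\Gamma_M \cong KQ/I$ with $I$ admissible, the vertices of $Q$ correspond to the non-$M$ indecomposable summands of $B_M$, each of which has trivial endomorphism ring; hence $Q$ is loopless. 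It remains to identify $Q$ and $I$ up to isomorphism.

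By Jasso \cite{jasso}, $\tors(\Gamma_M) \cong \tors(\J(M)) \cong [\Filt\Fac M,\lperp{\tau M}]$, an interval in $\tors(\Pi(A_n))$. Under Mizuno's isomorphism identifying $\tors(\Pi(A_n))$ with the weak order on $\mathfrak{S}_{n+1}$, this interval corresponds to an interval in the weak order of length $n-1$. Using that the arc $\sigma(\beta(M))$ partitions $[n]$ into $U(M) \sqcup L(M)$, together with the classical description of intervals in the weak order coming from Young subgroups (e.g.\ \cite[Proposition~3.1.6]{BjB}), one sees that the interval is lattice-isomorphic to the weak order on $\mathfrak{S}_{|U(M)|}\times \mathfrak{S}_{|L(M)|}$, and hence by Mizuno's theorem to $\tors(\Pi(A_{|U(M)|-1}) \times \Pi(A_{|L(M)|-1}))$. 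The product decomposition of this lattice forces a product decomposition $\Gamma_M \cong \Gamma_1\times\Gamma_2$ with $\rk(\Gamma_i)$ equal to $|U(M)|-1$ and $|L(M)|-1$ respectively. Each $\Gamma_i$ is then a connected $\tau$-tilting finite basic algebra whose quiver is loopless and whose torsion class lattice is the weak order on the appropriate symmetric group, which is enough to identify $\Gamma_i$ with the preprojective algebra of type $A$ of the appropriate rank.

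The bijection on bricks is then immediate: the equivalence $\J(M)\simeq\mods\Gamma_M$ restricts to a bijection $\brick(\J(M))\to\brick(\Gamma_M)$, and by Corollary~\ref{cor:brick_tau} every brick in $\J(M)$ is also a brick over $RA_n$ lying in $\J(\beta(M)) \cap \mods RA_n$. Comparing with the bijection $F_{\beta(M)}$ of Corollary~\ref{cor:reduction_algebra} then yields the asserted formula $\sigma^{-1}\circ \phi_{\J_{\{[n]\}}(\sigma(\beta(M)))}\circ \sigma$. The main obstacle is the last identification in the previous paragraph: lattice data on torsion classes does not in general determine a basic algebra on the nose, so pinning down each $\Gamma_i$ as a preprojective algebra requires either a direct computation of relations (via the Auslander--Reiten sequences ending in the summands of $B_M$) or an appeal to a uniqueness result for type-$A$ preprojective algebras among $\tau$-tilting finite algebras whose torsion-class lattice is a symmetric-group weak order.
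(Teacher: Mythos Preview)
Your overall strategy matches the paper's: get an exact equivalence $\J(M)\simeq\mods\Gamma_M$, identify $\tors(\Gamma_M)$ with the weak order on $\mathfrak{S}_{|U(M)|}\times\mathfrak{S}_{|L(M)|}$ via the interval $[\Filt\Fac M,\lperp{\tau M}]$, and then try to read off $\Gamma_M$ from this lattice. The brick-level description is likewise handled essentially as in Corollary~\ref{cor:reduction_algebra}.

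The genuine gap is exactly the one you flag in your final paragraph, and it is not closable by the methods you list. Knowing that $\Gamma_M$ is basic, $\tau$-tilting finite, has a loopless quiver, and has torsion-class lattice isomorphic to the weak order on a product of symmetric groups is \emph{not} enough to force $\Gamma_M\cong\Pi(A_{|U(M)|-1})\times\Pi(A_{|L(M)|-1})$. Indeed, $RA_{|U(M)|-1}\times RA_{|L(M)|-1}$ satisfies all of these hypotheses and is not isomorphic to the preprojective product once the factors have rank $\geq 2$. The Kase-type argument from Remark~\ref{rem:reduction_algebra} pins down the quiver and the $2$-cycle relations, but it cannot distinguish between the preprojective relation $\sum a_ia_i^*-a_i^*a_i$ and the stronger gentle relations, because those differences are invisible to $\tors$. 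So neither ``direct computation of relations via AR-sequences'' (which would be substantial and is not what the paper does) nor a general ``uniqueness among algebras with symmetric-group weak order'' (which is simply false) will work.

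The paper closes this gap with one external input you are missing: a theorem of Marks \cite[Theorem~A]{marks}, which says directly that for the preprojective algebra of a Dynkin quiver, every $\tau$-perpendicular category $\J(M)$ is exactly equivalent to $\mods\Pi(W)$ for some finite Coxeter group $W$. Once you know a priori that $\Gamma_M$ is a preprojective algebra, the torsion-lattice computation you carried out does determine $W$ uniquely (distinct finite Coxeter groups have non-isomorphic weak orders), and the argument is complete. Your proof becomes correct if you insert this citation at the point where you need to identify $\Gamma_i$; without it, the argument does not go through.
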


\begin{proof}
    \cite[Theorem~A]{marks} says that there exists some finite Coxeter group $W$ admitting an exact equivalence of categories $\J(M) \rightarrow \mods(\Pi(W))$. Moreover, as mentioned in Remark~\ref{rem:reduction_algebra}, we have that $\tors(\Pi(W))$ must be isomorphic to the weak order on $A_{|L(M)|-1} \times A_{|U(M)|-1}$. This implies that $W = A_{|L(M)|-1} \times A_{|U(M)|-1}$. The description of the equivalence on the level of bricks then follows from the same argument as in Corollary~\ref{cor:reduction_algebra}.
\end{proof}

We now give a recipe for turning a $\ppo$-admissible sequence on $n$ nodes into a $\tau$-exceptional sequence over $\Pi(A_n)$.

Let $X \in \brick(\Pi(A_n))$. Following \cite[Section~3.1]{asai}, we draw $X$ in ``Young-diagram notation" as follows. Denote $\sigma(X) = \left(\ell(\sigma(X)),s_{\ell(\sigma(X)+1},\ldots,s_{r(\sigma(X)}\right)$. Draw a box with the number $r(\sigma(X))$ at the origin $(0,0) \in \R^2$. For $\ell(\sigma(X)) < i < r(\sigma(X))$ in reverse order, recursively do the following:
\begin{itemize}
    \item If $s_i = \text{u}$, draw a box with the number $i$ one unit above the box with the number $i+1$.
    \item If $s_i = \text{o}$, draw a box with the number $i$ one unit to the left of the box with the number $i+1$.
\end{itemize}
We denote the resulting diagram by $\mathfrak{x}(X)$. (Note that Asai's version of this diagram is obtained by ours by reflecting across the $y$-axis. We have reversed the convention so that the integers increase left-to-right as they do in our arc diagrams.) For example, consider the arc $\gamma_2$ in Figure~\ref{fig:clockwise}. Then
$$
    \begin{tikzcd}
        \node [anchor = center] at (-3,0.25) {\mathfrak{x}(X_2) = };
        
        \node [anchor = center] at (0,0) {6};
        \node [anchor = center] at (-0.5,0) {5};
        \node [anchor = center] at (-0.5,0.5) {4};
        \node [anchor = center] at (-1,0.5) {3};
        \node [anchor = center] at (-1.5,0.5) {2};

        \draw (-1.75,0.75)--(-0.25,0.75);
        \draw (-1.75,0.25)--(0.25,0.25);
        \draw (-0.75,-0.25)--(0.25,-0.25);
        \draw (-1.75,0.75)--(-1.75,0.25);
        \draw (-1.25,0.75)--(-1.25,0.25);
        \draw (-0.75,0.75)--(-0.75,-0.25);
        \draw (-0.25,0.75)--(-0.25,-0.25);
        \draw (0.25,0.25)--(0.25,-0.25);
    \end{tikzcd}
$$

In \cite[Section~6.1]{IRRT}, Iyama, Reading, Reiten, and Thomas model the indecomposable $\tau^{-1}$-rigid modules over $\Pi(A_n)$ using ``Young-like diagrams''. In \cite[Section~3.1]{asai}, Asai obtained the visualization of $\mathfrak{x}(X)$ by passing this model through the dual of the brick-$\tau$-rigid correspondence of Demonet, Iyama, and Jasso. By dualizing the inverse of Asai's bijection, we associate to $\mathfrak{x}(X)$ a ``co-Young diagram'' as follows. Suppose the box of $\mathfrak{x}(X)$ containing the number $\ell(\sigma(X)) + 1$ is located at $(i,j)$. Then place a box at each point $(i',j')$ which satisfies $i \leq i' \leq 0$ and $0 \leq j' \leq j$. Place the number $r(\sigma(X))$ in the box located at $(0,0)$, then add numbers to the remaining boxes so that moving one unit up or to the right decreases the number by one. We denote the resulting diagram by $\mathfrak{y}(X)$. For example, if $X_2$ is as above, then
\begin{equation}\label{eqn:tex_ex}
    \begin{tikzcd}
        \node [anchor = center] at (-3,0.25) {\mathfrak{y}(X_2) = };
        
        \node [anchor = center] at (0,0) {6};
        \node [anchor = center] at (-0.5,0) {5};
        \node [anchor = center] at (-0.5,0.5) {4};
        \node [anchor = center] at (-1,0.5) {3};
        \node [anchor = center] at (-1.5,0.5) {2};
        \node [anchor = center] at (-1.5,0) {3};
        \node [anchor = center] at (-1,0) {4};

        \draw (-1.75,0.75)--(-0.25,0.75);
        \draw (-1.75,0.25)--(0.25,0.25);
        \draw (-1.75,-0.25)--(0.25,-0.25);
        \draw (-1.75,0.75)--(-1.75,-0.25);
        \draw (-1.25,0.75)--(-1.25,-0.25);
        \draw (-0.75,0.75)--(-0.75,-0.25);
        \draw (-0.25,0.75)--(-0.25,-0.25);
        \draw (0.25,0.25)--(0.25,-0.25);
    \end{tikzcd}
\end{equation}

The next step is to associate an indecomposable $\tau$-rigid module $\overline{X}$ to the diagram $\mathfrak{y}(X)$. For each vertex $i$, let $\overline{X}(i)$ be the vector spaces whose basis is the set of boxes in $\mathfrak{y}(X)$ with label $i$. If $B$ is such a box, then if there is a box $B'$ below $B$, then $X(a_i)$ sends the basis element corresponding to $B$ to that corresponding to $B'$. Likewise if there is a box $B''$ to the left of $B$, then the map $X(a_{i-1}^*)$ sends the basis element corresponding to $B$ to that corresponding to $B''$. It then follows from \cite{asai} that $\overline{X} = \beta_{\mods\Pi(A_n)}^{-1}(X)$. For example, the diagram in Equation~\ref{eqn:tex_ex} corresponds to the ($\tau$-rigid) $\Pi(A_4)$-module

\begin{equation}\label{eqn:tex_ex2}
    \begin{tikzcd}
        \beta_{\mods\Pi(A_6)}^{-1}(X_2) = & 0 \arrow[r,shift left]& K \arrow[r,shift left, "\text{\tiny$\begin{bmatrix} 0 \\1\end{bmatrix}$}"] \arrow[l,shift left] & K^2 \arrow[r,shift left, "\text{\tiny$\begin{bmatrix} 0 \ 0\\1 \ 0\end{bmatrix}$}"] \arrow[l,shift left,"\text{\tiny$\begin{bmatrix}1 \ 0\end{bmatrix}$}"]& K^2\arrow[r,shift left,"\text{\tiny$\begin{bmatrix} 1 \ 0\end{bmatrix}$}"]\arrow[l,shift left,"\text{\tiny$\begin{bmatrix} 1 \ 0\\0 \ 1\end{bmatrix}$}"] & K \arrow[r,shift left,"0"] \arrow[l,shift left,"\text{\tiny$\begin{bmatrix} 0\\1\end{bmatrix}$}"]& K\arrow[l,shift left,"1"]
    \end{tikzcd}
\end{equation}

Now let $\W \in \wide(\Pi(A_n))$ and denote by $\P \in \ppo(n)$ the permutation preorder which satisfies $\W = \Filt_{\Pi(A_n)}(\eta(\P))$. As in the proof of Proposition~\ref{prop:reduction_algebra}, it follows from \cite[Theorem~A]{marks} that each partition element $S \in \P$ corresponds to an exact embedding $\mods(\Pi(A_{|S|-1})) \rightarrow \W$. On the level of bricks, this bijection is given by $Z \mapsto \sigma^{-1}\circ \phi_\P^{-1}\circ \sigma(Z)$. By construction, we note that if $X$ is simple in $\mods\Pi(A_{|S|-1})$, then  $X \mapsto \sigma^{-1}\circ \phi_\P^{-1}\circ \sigma(X)$; that is, $ \phi_\P^{-1}\circ \sigma(X) \in (\mu')^{-1}(\P)$ by Theorem~\ref{thm:wide}. Thus for an arbitrary module $Z \in \mods(\Pi(A_{|S|-1}))$, we can describe the image of $Z$ under the embedding $\mods(\Pi(A_{|S|-1})) \rightarrow \W$ by applying the map $\sigma^{-1}\circ \phi_\P^{-1}\circ \sigma(-)$ to each term of its composition series.

More precisely, consider a brick $X \in \brick(\W)$ such that the endpoints of $\sigma(X)$ both lie in $S$. Then we can form the diagram $\mathfrak{y}(\sigma^{-1}\circ\phi_\P\circ\sigma(X))$. Identifying the numbers in the boxes of $\mathfrak{y}(\sigma^{-1}\circ\phi_\P\circ\sigma(X))$ with the simple modules over $\Pi(A_{|S|-1})$, we then apply the map $\sigma^{-1} \circ \phi_\P^{-1} \circ \sigma(-)$ to each of the boxes. We denote the resulting diagram by $\mathfrak{y}_\mathcal{W}(X)$.  For example, consider the wide subcategory $\W_1 = \Filt\left\{\text{\tiny$\begin{matrix}4\\3\\2\\1\end{matrix}$}, \ 2, 3, \text{\footnotesize$\begin{matrix}4\\5\end{matrix}$},\ 6\right\} \subseteq \mods \Pi(A_6)$ and let $X_2$ be as above. Then
\begin{equation*}
    \begin{tikzcd}
        \node [anchor = center] at (-6,0.25) {\mathfrak{y}_{\W_1}(X_2) = };

        \node [anchor = center] at (-4.5,0.25) {2};
        \node [anchor = center] at (-4,0.25) {3};
        \node [anchor = center] at (-3.5,0.25) {\text{\tiny$\begin{matrix}4\\5\end{matrix}$}};
        \node [anchor = center] at (-3,0.25) {6};

        \draw (-4.75,0.6) -- (-2.75,0.6);
        \draw (-4.75,-0.1) -- (-2.75,-0.1);
        \draw (-4.75,0.6) -- (-4.75,-0.1);
        \draw (-4.25,0.6) -- (-4.25,-0.1);
        \draw (-3.75,0.6) -- (-3.75,-0.1);
        \draw (-3.25,0.6) -- (-3.25,-0.1);
        \draw (-2.75,0.6) -- (-2.75,-0.1);

        \node [anchor = center] at (-2.25,0.25) {=};
        
        \node [anchor = center] at (0,0) {6};
        \node [anchor = center] at (-0.5,0) {5};
        \node [anchor = center] at (-0.5,0.5) {4};
        \node [anchor = center] at (-1,0.5) {3};
        \node [anchor = center] at (-1.5,0.5) {2};

        \draw (-1.75,0.75)--(-0.25,0.75);
        \draw (-1.75,0.25)--(0.25,0.25);
        \draw (-0.75,-0.25)--(0.25,-0.25);
        \draw (-1.75,0.75)--(-1.75,0.25);
        \draw (-1.25,0.75)--(-1.25,0.25);
        \draw (-0.75,0.75)--(-0.75,-0.25);
        \draw (-0.25,0.75)--(-0.25,-0.25);
        \draw (0.25,0.25)--(0.25,-0.25);

        \node [anchor = center] at (1.5,0.25) {= \mathfrak{x}(X_2).};
    \end{tikzcd}
\end{equation*}
In particular, we have that $\beta_{\W_1}^{-1}(X_2) = X_2$. Thus $X_2 \in \trigW{\W_1}(\W_1)$, but $X_2 \notin \trig(\Pi(A_6))$. Similarly, we have $\beta_{\mods\Pi(A_6)}^{-1}(X_2) \notin \W_1$, even though $X_2 \in \W_1$.

To summarize the above discussion, we have proven the following.

\begin{proposition}\label{prop:tau_An}
    The diagram $\mathfrak{y}_\mathcal{W}(X)$ contains precisely the data of a $\W$-composition series of the module $\beta_\mathcal{W}^{-1}(X)$.
\end{proposition}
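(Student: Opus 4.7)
The plan is to reduce to the case where $\W$ corresponds to a single partition element via the exact equivalence from Proposition~\ref{prop:reduction_algebra}, and then invoke Asai's description of indecomposable $\tau$-rigid modules in Young-like form \cite{asai,IRRT}.

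First, I would observe that, by iterating Proposition~\ref{prop:reduction_algebra} along a saturated chain in $\ppo(n)$ from $\{[n]\}$ down to $\P$, one obtains an exact equivalence of categories $\W \to \prod_{S\in \P}\mods\Pi(A_{|S|-1})$ which acts on bricks by $Y \mapsto \sigma^{-1}\circ\phi_\P\circ\sigma(Y)$. Since the brick $X$ has both endpoints in a single partition element $S \in \P$, its image under this equivalence lies in the factor $\mods\Pi(A_{|S|-1})$, where it equals $Z := \sigma^{-1}\circ\phi_\P\circ\sigma(X)$. Exact equivalences of categories both preserve composition series and commute with the brick-$\tau$-rigid correspondence (the latter because $\beta$ is defined purely categorically as $M \mapsto M/\sum_{f\in r(M,M)}\mathrm{im}(f)$). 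Consequently $\beta_\W^{-1}(X)$ is carried to $\beta_{\Pi(A_{|S|-1})}^{-1}(Z)$, and any $\mods\Pi(A_{|S|-1})$-composition series of the latter pulls back to a $\W$-composition series of the former whose simple factors are obtained by applying $\sigma^{-1}\circ\phi_\P^{-1}\circ\sigma$ term by term. Comparing with the construction of $\mathfrak{y}_\W(X)$ from $\mathfrak{y}(Z)$, this reduces the proposition to the case $\W = \mods\Pi(A_{n})$ and $\P = \{[n]\}$, in which $\mathfrak{y}_\W(X) = \mathfrak{y}(X)$.

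Second, I would show that the boxes of $\mathfrak{y}(X)$, read from the outside in, enumerate a composition series of $\beta^{-1}(X)$ in $\mods\Pi(A_n)$. From the explicit description given between Equations~(\ref{eqn:tex_ex}) and~(\ref{eqn:tex_ex2})---basis vectors indexed by boxes, with the arrows $a_i$ and $a_{i-1}^{*}$ acting by one-step moves down and left, respectively---it follows that removing any ``corner'' box from $\mathfrak{y}(X)$ (a box with no box below it and no box to its left) yields a subdiagram whose associated module is a submodule of $\beta^{-1}(X)$, and the resulting one-dimensional quotient is the simple module labeled by the integer in the removed box. Iterating this removal produces the required composition series. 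To conclude the proposition, one translates this filtration through the equivalence of the previous paragraph and identifies the simple factors with the bricks appearing in the boxes of $\mathfrak{y}_\W(X)$, using Theorem~\ref{thm:wide} to ensure that the simples of $\eta(\P) \cap \mods\Pi(A_{|S|-1})$ are precisely the bricks in $(\mu')^{-1}(\P)$ supported on $S$.

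The main obstacle will be the bookkeeping needed to verify that our co-Young convention for $\mathfrak{y}(X)$ genuinely recovers Asai's (dualized) diagrammatic bijection with $\beta^{-1}$, since the version in \cite{IRRT,asai} is formulated for $\tau^{-1}$-rigid modules using Young-like diagrams oriented in the opposite way. Once this translation is in place, the content of the proposition---that the ``corner-stripping'' filtration of $\mathfrak{y}(X)$ is genuinely a module filtration---follows directly from the definitions of the linear maps in each factor. A secondary technical point is that the order in which one strips corner boxes is not unique; independence of the resulting multiset of composition factors is automatic from the Jordan--Hölder theorem in $\W$, so any choice of removal order produces the same data recorded by $\mathfrak{y}_\W(X)$.
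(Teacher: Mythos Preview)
Your proposal is correct and follows essentially the same route as the paper: the paper treats this proposition as a summary of the preceding discussion, which likewise reduces to the factor $\mods\Pi(A_{|S|-1})$ via the exact equivalence coming from \cite[Theorem~A]{marks} (of which your iteration of Proposition~\ref{prop:reduction_algebra} is a repackaging) and then invokes Asai's diagrammatic description of $\beta^{-1}$. Your corner-stripping argument makes explicit what the paper leaves to the citation of \cite{asai,IRRT}, but the underlying strategy is the same.
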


An immediate consequence is the following.

\begin{corollary}\label{cor:tau_An}
    Let $\omega = (X_k,\ldots,X_1) \in \btex(\Pi(A_n))$, and denote $\W_0 := \mods\Pi(A_n)$. For $1 \leq i \leq k$, do the following in order:
    \begin{enumerate}
        \item Let $Y_i \in \mods\Pi(A_n)$ be the module whose composition series is encoded by the diagram $\mathfrak{y}_{\W_{i-1}}(X_i)$.
        \item Let $\W_i = \J_{\W_{i-1}}(Y_i)$.
    \end{enumerate}
    Then $(Y_k,\ldots,Y_1) \in \tex(\Pi(A_n))$ is the $\tau$-exceptional sequence which is sent to $\omega$ by $\beta$.
\end{corollary}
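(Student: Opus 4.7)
The plan is to unwind the definitions by induction on $i$, using Proposition~\ref{prop:tau_An} at every step. First, since $\omega$ is a brick-$\tau$-exceptional sequence, there exists a $\tau$-exceptional sequence $(M_k,\ldots,M_1) \in \tex(\Pi(A_n))$ with $X_j = \beta(M_j)$ for each $j$. Following Theorem~\ref{thm:sat_top}, define auxiliary wide subcategories $\W_0' := \mods\Pi(A_n)$ and, recursively, $\W_i' := \J_{\W_{i-1}'}(M_i)$ for $1 \leq i \leq k$. By the definition of $\tau$-exceptional sequence, each $M_i$ is an indecomposable $\tau_{\W_{i-1}'}$-rigid module living in $\W_{i-1}'$, and so $X_i = \beta_{\W_{i-1}'}(M_i)$ in the notation of Remark~\ref{rem:DIJ}.

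The main claim is then that $Y_i = M_i$ and $\W_i = \W_i'$ for all $1 \leq i \leq k$, which immediately implies the corollary. I would prove this by induction on $i$, with the base case being trivial since both $\W_0$ and $\W_0'$ are defined to be $\mods\Pi(A_n)$. For the inductive step, assume $\W_{i-1} = \W_{i-1}'$. Then the diagram $\mathfrak{y}_{\W_{i-1}}(X_i)$ is well-defined because $X_i = \beta_{\W_{i-1}'}(M_i) \in \brick(\W_{i-1}')$ by Corollary~\ref{cor:DIJ}, and by Proposition~\ref{prop:tau_An} this diagram encodes a $\W_{i-1}'$-composition series of $\beta_{\W_{i-1}'}^{-1}(X_i) = M_i$. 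Hence $Y_i$---defined to be the module with this composition series---coincides with $M_i$, from which we obtain $\W_i = \J_{\W_{i-1}}(Y_i) = \J_{\W_{i-1}'}(M_i) = \W_i'$, closing the induction.

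The main (and really only) obstacle is the bookkeeping of which wide subcategory the brick-$\tau$-rigid correspondence $\beta$ is being taken in at each step. One must be careful that $\beta(M_i)$, as it appears in the definition of a brick-$\tau$-exceptional sequence, is interpreted as $\beta_{\W_{i-1}'}(M_i)$ rather than $\beta_{\mods\Pi(A_n)}(M_i)$, and in particular that Proposition~\ref{prop:tau_An} is invoked with the correct ambient wide subcategory $\W_{i-1} = \W_{i-1}'$. Once this is kept straight, the corollary is essentially an immediate consequence of Proposition~\ref{prop:tau_An} combined with the recursive structure built into the definitions of both $\tex(\Pi(A_n))$ and the procedure constructing the $Y_i$.
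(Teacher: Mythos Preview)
Your proposal is correct and is exactly the argument the paper intends: the paper states the corollary as ``an immediate consequence'' of Proposition~\ref{prop:tau_An}, and you have simply unpacked that immediacy via the obvious induction on $i$. The only remark is that your caveat about the ambiguity of $\beta$ is unnecessary, since Remark~\ref{rem:DIJ} already records that the formula for $\beta$ is independent of the ambient wide subcategory; the genuine care is needed only for $\beta^{-1}$, and there you correctly invoke $\beta_{\W_{i-1}'}^{-1}$.
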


We conclude this section with a detailed example.

\begin{example}\label{ex:tau_An}\
\begin{enumerate}
    \item Consider brick-$\tau$-exceptional sequence $(X_4,X_3,X_2,X_1)$ corresponding to the ppo-admissible sequence $(\gamma_4,\gamma_3,\gamma_2,\gamma_1)$ from Example~\ref{ex:ppo_admissible_sequence}(2). Now denote $\W_0 = \mods\Pi(A_4)$, and for $i \in \{1,2,3,4\}$, denote $\W_i = \J_{\W_{i-1}}(X_{i+4})$. We then have the following data. (Note that the corresponding permutation preorders were also computed in Example~\ref{ex:ppo_admissible_sequence}(2).)
    \end{enumerate}
\begin{center}
    \begin{tabular}{|c||c|c|c||c|c|}
        \hline
        i & $\text{simples}(\W_i)$ & $\mathfrak{x}(X_{i+4})$ & $\mathfrak{y}_{\W_i}(X_{i+4})$ & $\eta^{-1}(\W_i) \in \ppo(4)$ & $(\eta \circ \mu)^{-1}(\W_i) \in \mathfrak{S}_5$\\
        \hline
        0 & 1, 2, 3, 4 & \begin{tikzpicture}[baseline = -3pt]
        \node at (0,0.3) {};
        \node at (0,-0.3) {};
        \node [anchor = center] at (0,0) {4};
        \node [anchor = center] at (-0.5,0) {3};
        \node [anchor = center] at (-1,0) {2};
        \node [anchor = center] at (-1.5,0) {1};

        \draw (-1.75,0.25)--(0.25,0.25);
        \draw (-1.75,-0.25)--(0.25,-0.25);
        \draw (-1.75,0.25)--(-1.75,-0.25);
        \draw (-1.25,0.25)--(-1.25,-0.25);
        \draw (-0.75,0.25)--(-0.75,-0.25);
        \draw (-0.25,0.25)--(-0.25,-0.25);
        \draw (0.25,0.25)--(0.25,-0.25);
    \end{tikzpicture} & \begin{tikzpicture}[baseline = -3pt]
        \node at (0,0.3) {};
        \node at (0,-0.3) {};
        \node [anchor = center] at (0,0) {4};
        \node [anchor = center] at (-0.5,0) {3};
        \node [anchor = center] at (-1,0) {2};
        \node [anchor = center] at (-1.5,0) {1};

        \draw (-1.75,0.25)--(0.25,0.25);
        \draw (-1.75,-0.25)--(0.25,-0.25);
        \draw (-1.75,0.25)--(-1.75,-0.25);
        \draw (-1.25,0.25)--(-1.25,-0.25);
        \draw (-0.75,0.25)--(-0.75,-0.25);
        \draw (-0.25,0.25)--(-0.25,-0.25);
        \draw (0.25,0.25)--(0.25,-0.25);
    \end{tikzpicture} & $\{\{0,1,2,3,4\}\}$ & 43210 \\
        \hline
        1 & 1, 2, 3 & \begin{tikzpicture}[baseline = -3pt]
        \node at (0,0.3) {};
        \node at (0,-0.3) {};
        \node [anchor = center] at (0,0) {3};
        \node [anchor = center] at (-0.5,0) {2};
        \node [anchor = center] at (-1,0) {1};

        \draw (-1.25,0.25)--(0.25,0.25);
        \draw (-1.25,-0.25)--(0.25,-0.25);
        \draw (-1.25,0.25)--(-1.25,-0.25);
        \draw (-0.75,0.25)--(-0.75,-0.25);
        \draw (-0.25,0.25)--(-0.25,-0.25);
        \draw (0.25,0.25)--(0.25,-0.25);
    \end{tikzpicture} & \begin{tikzpicture}[baseline = -3pt]
        \node at (0,0.3) {};
        \node at (0,-0.3) {};
        \node [anchor = center] at (0,0) {3};
        \node [anchor = center] at (-0.5,0) {2};
        \node [anchor = center] at (-1,0) {1};

        \draw (-1.25,0.25)--(0.25,0.25);
        \draw (-1.25,-0.25)--(0.25,-0.25);
        \draw (-1.25,0.25)--(-1.25,-0.25);
        \draw (-0.75,0.25)--(-0.75,-0.25);
        \draw (-0.25,0.25)--(-0.25,-0.25);
        \draw (0.25,0.25)--(0.25,-0.25);
    \end{tikzpicture} & $\{\{0,1,2,3\},\{4\}\}$ & 32104\\
        \hline
        2 & 1, 2 & \begin{tikzpicture}[baseline = -3pt]
        \node at (0,0.3) {};
        \node at (0,-0.3) {};
        \node [anchor = center] at (0,0) {2};
        \node [anchor = center] at (-0.5,0) {1};

        \draw (-0.75,0.25)--(0.25,0.25);
        \draw (-0.75,-0.25)--(0.25,-0.25);
        \draw (-0.75,0.25)--(-0.75,-0.25);
        \draw (-0.25,0.25)--(-0.25,-0.25);
        \draw (0.25,0.25)--(0.25,-0.25);
    \end{tikzpicture} &\begin{tikzpicture}[baseline = -3pt]
        \node at (0,0.3) {};
        \node at (0,-0.3) {};
        \node [anchor = center] at (0,0) {2};
        \node [anchor = center] at (-0.5,0) {1};

        \draw (-0.75,0.25)--(0.25,0.25);
        \draw (-0.75,-0.25)--(0.25,-0.25);
        \draw (-0.75,0.25)--(-0.75,-0.25);
        \draw (-0.25,0.25)--(-0.25,-0.25);
        \draw (0.25,0.25)--(0.25,-0.25);
    \end{tikzpicture} & $\{\{0,1,2\},\{3\},\{4\}\}$ & 21034\\
        \hline
        3 & 1 & \begin{tikzpicture}[baseline = -3pt]
        \node at (0,0.3) {};
        \node at (0,-0.3) {};
        \node [anchor = center] at (0,0) {1};

        \draw (-0.25,0.25)--(0.25,0.25);
        \draw (-0.25,-0.25)--(0.25,-0.25);
        \draw (-0.25,0.25)--(-0.25,-0.25);
        \draw (0.25,0.25)--(0.25,-0.25);
    \end{tikzpicture} & \begin{tikzpicture}[baseline = -3pt]
        \node at (0,0.3) {};
        \node at (0,-0.3) {};
        \node [anchor = center] at (0,0) {1};

        \draw (-0.25,0.25)--(0.25,0.25);
        \draw (-0.25,-0.25)--(0.25,-0.25);
        \draw (-0.25,0.25)--(-0.25,-0.25);
        \draw (0.25,0.25)--(0.25,-0.25);
    \end{tikzpicture} &  $\{\{0,1\},\{2\},\{3\},\{4\}\}$ & 10234\\
        \hline
        4 & $\emptyset$ & \begin{tikzpicture}[baseline = -3pt]
        \node at (0,0.3) {};
        \node at (0,-0.3) {};
    \end{tikzpicture} &  & $\{\{0\},\{1\},\{2\},\{3\},\{4\}\}$ & 01234 \\
        \hline
    \end{tabular}
\end{center}
    \begin{itemize}
    \item[(2)] Consider brick-$\tau$-exceptional sequence $(X_3,X_2,X_1)$ corresponding to the ppo-admissible sequence $(\gamma_3,\gamma_2,\gamma_1)$ from Example~\ref{ex:ppo_admissible_sequence}(3). Now denote $\W_0 = \mods\Pi(A_7)$, and for $i \in \{1,2,3\}$, denote $\W_i = \J_{\W_{i-1}}(X_{i})$. We then have the following data, where by convention we list partition elements of $\eta^{-1}(\W_i)$ so that reading left-to-right gives a linear extension of the partial order $\preceq_{\eta^{-1}(\W_i)}$.(Note that some of the corresponding permutation preorders were also computed in Example~\ref{ex:ppo_admissible_sequence}(3).)
    \end{itemize}

\begin{center}
    \begin{tabular}{|c||c|c|c||c|c|}
        \hline
        i & $\text{simples}(\W_i)$ & $\mathfrak{x}(X_{i})$ & $\mathfrak{y}_{\W_i}(X_{i})$ & $\eta^{-1}(\W_i) \in \ppo(6)$ & $(\eta \circ \mu)^{-1}(\W_i) \in \mathfrak{S}_7$\\
        \hline
        0 & 1, 2, 3, 4, 5, 6 & \begin{tikzpicture}[baseline = -2.5pt]
        \node at (0,0.3) {};
        \node at (0,-0.3) {};
        \node [anchor = center] at (0,0) {4};
        \node [anchor = center] at (-0.5,0) {3};
        \node [anchor = center] at (-1,0) {2};

        \draw (-1.25,0.25)--(0.25,0.25);
        \draw (-1.25,-0.25)--(0.25,-0.25);
        \draw (-1.25,0.25)--(-1.25,-0.25);
        \draw (-0.75,0.25)--(-0.75,-0.25);
        \draw (-0.25,0.25)--(-0.25,-0.25);
        \draw (0.25,0.25)--(0.25,-0.25);
    \end{tikzpicture} & \begin{tikzpicture}[baseline = -2.5pt]
        \node at (0,0.3) {};
        \node at (0,-0.3) {};
        \node [anchor = center] at (0,0) {4};
        \node [anchor = center] at (-0.5,0) {3};
        \node [anchor = center] at (-1,0) {2};

        \draw (-1.25,0.25)--(0.25,0.25);
        \draw (-1.25,-0.25)--(0.25,-0.25);
        \draw (-1.25,0.25)--(-1.25,-0.25);
        \draw (-0.75,0.25)--(-0.75,-0.25);
        \draw (-0.25,0.25)--(-0.25,-0.25);
        \draw (0.25,0.25)--(0.25,-0.25);
    \end{tikzpicture} & $\{\{0,1,2,3,4,5,6\}\}$ & 6543210 \\
        \hline
        1 & $\text{\tiny$\begin{matrix}4\\3\\2\\1\end{matrix}$}, \ 2, 3, \text{\footnotesize$\begin{matrix}4\\5\end{matrix}$},\ 6$ & 
    \begin{tikzpicture}[baseline=5pt]
        \node at (0,0.8) {};
        \node at (0,-0.3) {};
        \node [anchor = center] at (0,0) {6};
        \node [anchor = center] at (-0.5,0) {5};
        \node [anchor = center] at (-0.5,0.5) {4};
        \node [anchor = center] at (-1,0.5) {3};
        \node [anchor = center] at (-1.5,0.5) {2};

        \draw (-1.75,0.75)--(-0.25,0.75);
        \draw (-1.75,0.25)--(0.25,0.25);
        \draw (-0.75,-0.25)--(0.25,-0.25);
        \draw (-1.75,0.75)--(-1.75,0.25);
        \draw (-1.25,0.75)--(-1.25,0.25);
        \draw (-0.75,0.75)--(-0.75,-0.25);
        \draw (-0.25,0.75)--(-0.25,-0.25);
        \draw (0.25,0.25)--(0.25,-0.25);
    \end{tikzpicture}
 & \begin{tikzpicture}[baseline=5pt]
        \node [anchor = center] at (0,0) {6};
        \node [anchor = center] at (-0.5,0) {5};
        \node [anchor = center] at (-0.5,0.5) {4};
        \node [anchor = center] at (-1,0.5) {3};
        \node [anchor = center] at (-1.5,0.5) {2};

        \draw (-1.75,0.75)--(-0.25,0.75);
        \draw (-1.75,0.25)--(0.25,0.25);
        \draw (-0.75,-0.25)--(0.25,-0.25);
        \draw (-1.75,0.75)--(-1.75,0.25);
        \draw (-1.25,0.75)--(-1.25,0.25);
        \draw (-0.75,0.75)--(-0.75,-0.25);
        \draw (-0.25,0.75)--(-0.25,-0.25);
        \draw (0.25,0.25)--(0.25,-0.25);
    \end{tikzpicture} & $\{\{0,4\},\{1,2,3,5,6\}\}$ & 4065321\\
        \hline
        2 & $\text{\tiny$\begin{matrix}4\\3\\2\\1\end{matrix}$}, \ 2, 3, \text{\footnotesize$\begin{matrix}4\\5\end{matrix}$}$ & 
    \begin{tikzpicture}[baseline=10pt]
        \node at (0,1.25) {};
        \node [anchor = center] at (0,0) {5};
        \node [anchor = center] at (0,0.5) {4};
        \node [anchor = center] at (0,1) {3};
        \node [anchor = center] at (-0.5,1) {2};

        \draw (0.25,-0.25)--(0.25,1.25);
        \draw (-0.25,-0.25)--(-0.25,1.25);
        \draw (-0.75,0.75)--(-0.75,1.25);
        \draw (0.25,-0.25)--(-0.25,-0.25);
        \draw (0.25,0.25)--(-0.25,0.25);
        \draw (0.25,0.75)--(-0.75,0.75);
        \draw (0.25,1.25)--(-0.75,1.25);
    \end{tikzpicture}
 &  \begin{tikzpicture}[baseline=10pt]
        \node at (0,1.25) {};
        \node at (0,-0.25) {};
        \node [anchor = center] at (0,0) {5};
        \node [anchor = center] at (0,0.5) {4};
        \node [anchor = center] at (0,1) {3};
        \node [anchor = center] at (-0.5,1) {2};
        \node [anchor = center] at (-0.5,0.5) {3};

        \draw (0.25,-0.25)--(0.25,1.25);
        \draw (-0.25,-0.25)--(-0.25,1.25);
        \draw (-0.75,0.25)--(-0.75,1.25);
        \draw (0.25,-0.25)--(-0.25,-0.25);
        \draw (0.25,0.25)--(-0.75,0.25);
        \draw (0.25,0.75)--(-0.75,0.75);
        \draw (0.25,1.25)--(-0.75,1.25);
    \end{tikzpicture} & $\{\{0,4\},\{1,2,3,5\},\{6\}\}$ & 4053216\\
        \hline
        3 & $\text{\tiny$\begin{matrix}4\\3\\2\\1\end{matrix}$}, \ 2, \text{\footnotesize$\begin{matrix}4\\5\end{matrix}$}$ &  & &  $\{\{0,4\},\{1,2\},\{3,5\},\{6\}\}$ & 4021536\\
        \hline
    \end{tabular}
\end{center}
\end{example}

\section{Application: Shellability of the shard intersection order}\label{sec:EL}

In this section, we use ppo-admissible sequences to give a representation theoretic proof that the order complex of the shard intersection order on the Coxeter group $A_n$ is shellable. This is one of the main results of \cite{bancroft}, where it is proven by constructing an EL-labeling (see \cite[Defintion~3.2.1]{wachs} or the discussion preceeding Lemma~\ref{lem:increasing_endpt_unique} below for the definition). We also prove shellability by describing an EL-labeling, although one that differs from that in \cite{bancroft}.

From the perspective of representation theory, our EL-labeling comes from the labeling of the lattice of wide subcategories of a $\tau$-tilting finite algebra by (brick-)$\tau$-exceptional sequences. This labeling was implicitly constructed in \cite{BM_wide} and studied explicitly in \cite[Section~5]{BaH2}. Indeed, in \cite[Theorem~A]{BaH2}, it was shown that the labeling coming from (brick-)$\tau$-exceptional sequences is an EL-labeling under the assumption that the transitive closure of the Hom-relation on bricks is acyclic. While this is not the case for the algebras $RA_n$ and $\Pi(A_n)$, the arc-model established in this paper allows us to modify the proofs of \cite[Section~5]{BaH2} to prove we still have an EL-labeling for these algebras. It remains an open question whether the labeling coming from (brick-)$\tau$-exceptional sequences is an EL-labeling in full generality.

Recall from Theorem~\ref{thm:edge-labeling} that there is a map $\arclab: \cov(\ppo(n)) \rightarrow \arc(n)$ which, by Corollary~\ref{cor:sat_chain} induces a bijection $\arclab: \stopc(\ppo(n)) \rightarrow \ppoadm(n)$. Similarly, by Theorem~\ref{thm:sat_top}, there is a map $\brlab: \cov(\wide(RA_n)) \rightarrow \brick(RA_n)$ which induces a bijection $\brlab: \stopc(\wide(RA_n)) \rightarrow \tex(RA_n)$. Then by Lemma~\ref{lem:tau_perp} and Theorem~\ref{thm:tau_exceptional_RAn}, we have that $\brlab \circ \eta = \sigma^{-1} \circ \arclab$.

Recall that by convention we have $\textnormal{u} < \textnormal{e} < \textnormal{o}$, and let $\leq_{\word}$ be the lexicographic ordering on $\word(\textnormal{uoe})$. Let $\leq_{\wrd}$ be the induced lexicographic order on $[n-1] \times \word(\textnormal{uoe})$. We can then make $\brick(RA_n)$ and $\arc(n)$ into posets by setting $X \leq_{\wrd} Y$ if and only if $\sigma(X) \leq_{\wrd} \sigma(Y)$. This makes both $\brlab$ and $\arclab$ into examples of \emph{edge labelings} of the posets $\wide(RA_n)$ and $\ppo(n)$, respectively. We identify these labelings for the remainder of the section.

Given a saturated chain $\mathfrak{c} = (\P_{k} \lneq_{\ppo} \cdots \lneq_{\ppo} \P_0)$ in $\ppo(n)$, we associate a tuple $$\arclab(\mathfrak{c}) := \left(\arclab(\P_{k} \lneq_{\ppo} \P_{k-1}),\arclab(\P_{k-2} \lneq_{\ppo} \P_{k-1})\ldots,\arclab(\P_1\lneq_{\ppo} \P_0)\right).$$
If $\P_0 = \{[n]\}$, we note that $\arclab(\mathfrak{c})$ is a ppo-admissible sequence by Remark~\ref{rem:ppo_reduction}. More generally, by Corollary~\ref{cor:ppo_seq_shuffle} we have that $\arclab(\mathfrak{c})$ is a shuffle of $\ppo$-admissible sequences of arcs, one for each partition element of $\P_0$.

Following the convention in \cite[Section~5]{BaH2}, we totally order the set of all tuples in reflected lexicographic order (reading left-to-right). Equivalently, this is the usual lexicographic order, reading tuples from right-to-left. We say $\mathfrak{c}$, or equivalently $\arclab(\mathfrak{c})$, is \emph{increasing} if $\arclab(\P_{i}\lneq_{\ppo} \P_{i-1}) \lneq_{\wrd} \arclab(\P_{i+1}\lneq_{\ppo} \P_{i})$ for all $i$.

The goal of the remainder of this section is to show that the labeling $\arclab$ is an \emph{edge-lexicographic (EL) labeling}. By definition, this means that for each interval $[\P,\Q]$ in $\ppo(n)$ there is a unique increasing maximal chain, and furthermore that this increasing chain is smallest in the reflected lexicographic order.

\begin{lemma}\label{lem:increasing_endpt_unique}
    Let $\mathfrak{c}$ be an increasing chain in $\ppo(n)$, and denote $\arclab(\mathfrak{c}) = (\gamma_k,\ldots,\gamma_1)$. Then $\ell(\gamma_1),\ldots, \ell(\gamma_k)$ are all distinct.
\end{lemma}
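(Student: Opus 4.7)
The plan is to use the fact that in the lexicographic order on $[n-1] \times \wrd(\text{ueo})$, the left endpoint is the primary sort key, so $\gamma_1 <_{\wrd} \gamma_2 <_{\wrd} \cdots <_{\wrd} \gamma_k$ immediately yields $\ell(\gamma_1) \leq \ell(\gamma_2) \leq \cdots \leq \ell(\gamma_k)$. Consequently, it suffices to show that consecutive left endpoints are distinct. Suppose for contradiction that $\ell(\gamma_i) = \ell(\gamma_{i+1}) =: \ell$, write $\gamma_i = (\ell, s^i_{\ell+1} \cdots s^i_{r(\gamma_i)})$ and $\gamma_{i+1} = (\ell, s^{i+1}_{\ell+1} \cdots s^{i+1}_{r(\gamma_{i+1})})$, and let $k$ be the first position at which the two words disagree. (Since both words end in $\textnormal{e}$, neither can be a proper prefix of the other, so such $k \leq \min(r(\gamma_i)-\ell, r(\gamma_{i+1})-\ell)$ exists.) Because $s^i_{\ell+k} < s^{i+1}_{\ell+k}$, the only possibilities are (I) $s^i_{\ell+k} = \textnormal{e}$, $s^{i+1}_{\ell+k} = \textnormal{o}$; (IIa) $s^i_{\ell+k} = \textnormal{u}$, $s^{i+1}_{\ell+k} = \textnormal{e}$; or (IIb) $s^i_{\ell+k} = \textnormal{u}$, $s^{i+1}_{\ell+k} = \textnormal{o}$.

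The next step is to exploit the common ambient preorder. By Lemma~\ref{lem:ppo_admissible}, both $\gamma_i$ and $\gamma_{i+1}$ are $\P_{i-1}$-ppo-admissible; letting $S \in \P_{i-1}$ denote the partition element containing their common left endpoint $\ell$, the endpoints of both arcs lie in $S$. At any position $j$ strictly between $\ell$ and either right endpoint with $j \notin S$, the letters $s^i_j$ and $s^{i+1}_j$ are both forced to the same value by conditions (2)--(3) of Definition~\ref{def:ppo_admissible}. Combined with agreement of the words before position $k$, this forces $\ell + k \in S$ in cases (I) and (IIb), and in case (IIa) it holds automatically because $\ell + k = r(\gamma_{i+1}) \in S$.

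Now I would pass to the reduced preorder. Writing $U := U_{\P_{i-1}}(\gamma_i)$ and $L := L_{\P_{i-1}}(\gamma_i)$, we have $\ell \in L$, so $S_{\gamma_{i+1}} = L$ in $\P_i$. Consulting Definitions~\ref{def:ppo_reduction} for the rule placing elements of $S$ into $U$ or $L$, the node $\ell + k$ falls into $U$ in every case: in (I) it equals $r(\gamma_i) \in U$, and in (IIa)/(IIb) the letter $s^i_{\ell+k} = \textnormal{u}$ with $\ell + k \in S$ and $\ell < \ell+k < r(\gamma_i)$ puts it in $U$. An auxiliary check (tracing through the four clauses of Definition~\ref{def:ppo_reduction}) confirms that $U \preceq_{\P_i} L$ while $L \not\preceq_{\P_i} U$; any chain that would reverse this direction would force $S_{\gamma_i} = T$ for some other partition element $T$ of $\P_{i-1}$.

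Combining these observations yields the contradictions. In case (I) or (IIb), $\ell + k \in U$ is an interior node of $\gamma_{i+1}$, and $U \preceq_{\P_i} L = S_{\gamma_{i+1}}$ forces $s^{i+1}_{\ell+k} = \textnormal{u}$ by condition (3) of $\P_i$-ppo-admissibility, contradicting $s^{i+1}_{\ell+k} = \textnormal{o}$. In case (IIa), $r(\gamma_{i+1}) = \ell + k \in U$ contradicts the requirement $r(\gamma_{i+1}) \in L = S_{\gamma_{i+1}}$. The main obstacle in writing this up cleanly will be organizing the case split so that the assertions ``$\ell + k \in S$'' and ``$\ell + k \in U$'' are justified uniformly; once that bookkeeping is complete, each case collapses to a single sentence.
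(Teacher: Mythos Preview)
Your proof is correct and follows essentially the same approach as the paper: both identify the first position of disagreement between two arcs sharing a left endpoint and use the $U/L$ split of the ppo-reduction to force the later arc's letter at that position to be $\textnormal{u}$, contradicting the lex order; your upfront reduction to \emph{consecutive} arcs (via the weak monotonicity $\ell(\gamma_1)\leq\cdots\leq\ell(\gamma_k)$ coming from the primary sort key) is a clean organizational choice, whereas the paper inducts on $k$ and compares $\gamma_1$ directly with an arbitrary $\gamma_j$. One small writeup fix: in case~(I) the reason $\ell+k\in S$ is not the ``forcing'' argument (since $\ell+k=r(\gamma_i)$ is not strictly interior to $\gamma_i$) but simply that $r(\gamma_i)\in S$ by $\P_{i-1}$-ppo-admissibility of $\gamma_i$, exactly as you handle case~(IIa).
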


\begin{proof}
    We prove the result by induction on $k$. If $k = 1$, then there is nothing to show, so suppose that $k > 1$ and that the result holds for all $k' < k$.
    
    If there does not exist $j > 1$ such that $\ell(\gamma_1) = \ell(\gamma_j)$, the result follows from the induction hypothesis. Thus suppose otherwise. We will show that $\gamma_j \leq_{\wrd} \gamma_1$.
    
    Let $\P$ be the largest element (with respect to $\leq_{\ppo}$) in the chain $\mathfrak{c}$. Then $\gamma_j$ must be $\J_{\P}(\gamma_1)$-ppo-admissible. Since $\ell(\gamma_1) = \ell(\gamma_j)$, it follows that $r(\gamma_j) \in L_{\P}(\gamma_1)$. Denote $\gamma_1 = \left(\ell(\gamma_1),s_{\ell(\gamma_1)}^1\cdots s_{r(\gamma_1)}\right)$ and likewise for $\gamma_j$, and let $k$ be the smallest index for which $s_k^1 \neq s_k^j$. By the definition of ppo-admissibility, if $\ell(\gamma_1) + k \in U_{\P}(\gamma_1)$, then either $s_1^k = \text{u}$ or $s_1^k = \text{e}$. In the either case, it follows that $L_{\P}(\gamma_1) \preceq_{\{[n]\}} U_{\P}(\gamma_1)$. This means $s_j^k = \text{u}$, and so $s_1^k = \text{e}$ and $\gamma_j \leq_{\wrd} \gamma_1$. Similarly, we cannot have that $\ell(\gamma_1) + k \notin U_{\P}(\gamma_1) \cup L_{\P}(\gamma_1)$, since this would imply that $s_k^1 = s_k^j$ by the definition of $\J_\P(\gamma_1)$-ppo-admissibility. The remaining case to consider is $\ell(\gamma_1) + k \in L_{\P}(\gamma_1)$. If $\ell(\gamma_1) + k > r(\gamma_1)$, then there exists $k' < k$ such that $s_1^{k'} = \text{e} \neq s_j^{k'}$, contradicting the minimality of $k$. Thus $\ell(\gamma_1) + k < r(\gamma_1)$ and $s_1^k = \text{o} > s_j^k$. We conclude that $\gamma_j \leq_{\wrd} \gamma_1$, as claimed.
\end{proof}

\begin{lemma}\label{lem:quotients_subs_increasing} Let $\gamma_1,\gamma_2 \in \arc(n)$.
    \begin{enumerate}
        \item If $\gamma_2$ is a quotient arc of $\gamma_1$, then $\gamma_1 \leq_{\wrd} \gamma_2$.
        \item If $\ell(\gamma_1) = \ell(\gamma_2)$ and $\gamma_1$ is a submodule arc of $\gamma_2$, then $\gamma_1 \leq_{\wrd} \gamma_2$.
    \end{enumerate}
\end{lemma}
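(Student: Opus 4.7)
The plan is to prove both parts by direct unpacking of the definitions of quotient arc and submodule arc (Definition~\ref{def:quotient_arc}) and applying the lexicographic order on $[n-1] \times \word(\textnormal{uoe})$ with the convention $\textnormal{u} < \textnormal{e} < \textnormal{o}$. In each case the argument splits along whether the two arcs agree at the relevant endpoint, and in the non-agreement case the arc-type hypothesis forces exactly the letter comparison we want.

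For part (1), write $\gamma_1 = (\ell_1, s_{\ell_1+1}\cdots s_{r_1})$ and $\gamma_2 = (\ell_2, t_{\ell_2+1}\cdots t_{r_2})$. Since $\gamma_2$ is a quotient arc of $\gamma_1$, it is in particular a restriction, so $\ell_1 \leq \ell_2 < r_2 \leq r_1$. If $\ell_1 < \ell_2$, the first-coordinate comparison in the lexicographic order on $[n-1] \times \word(\textnormal{uoe})$ immediately yields $\gamma_1 <_\wrd \gamma_2$. Otherwise $\ell_1 = \ell_2$, and the restriction condition forces $s_i = t_i$ for $\ell_1 < i < r_2$, so the first $r_2 - \ell_1 - 1$ letters of the two words agree. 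At position $r_2$ we have $t_{r_2} = \textnormal{e}$; either $r_2 = r_1$, in which case $s_{r_2} = \textnormal{e} = t_{r_2}$ and $\gamma_1 = \gamma_2$, or $r_2 < r_1$, in which case the quotient-arc condition at the unshared right endpoint gives $s_{r_2} = \textnormal{u} < \textnormal{e} = t_{r_2}$, so $\gamma_1 <_\wrd \gamma_2$.

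For part (2), the assumption $\ell(\gamma_1) = \ell(\gamma_2)$ means the first coordinates agree, so only the word part matters. Write $\gamma_2 = (\ell, s_{\ell+1}\cdots s_{r_2})$ and $\gamma_1 = (\ell, t_{\ell+1}\cdots t_{r_1})$. Since $\gamma_1$ is a submodule arc of $\gamma_2$, it is a restriction, so $r_1 \leq r_2$, and $s_i = t_i$ for $\ell < i < r_1$. If $r_1 = r_2$ we get $\gamma_1 = \gamma_2$, and we are done. Otherwise $r_1 < r_2$, and the submodule-arc condition at the unshared right endpoint forces $s_{r_1} = \textnormal{o}$, while of course $t_{r_1} = \textnormal{e}$; since $\textnormal{e} < \textnormal{o}$, the lexicographic comparison gives $\gamma_1 <_\wrd \gamma_2$.

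There is no significant obstacle here, only a minor notational one: the excerpt contains a typo in which both item (2) and item (3) of Definition~\ref{def:quotient_arc} are introduced as ``quotient arc,'' but context (and the roles played in Proposition~\ref{prop:quotients}) makes clear that item (3) should read ``submodule arc,'' with $\textnormal{u}$ at an unshared left endpoint and $\textnormal{o}$ at an unshared right endpoint. Once one fixes this reading, the direction of the inequality in part (2) aligns with the convention $\textnormal{u} < \textnormal{e} < \textnormal{o}$ exactly as needed. The whole argument is essentially a one-line lexicographic comparison in each of the four subcases, and no appeal to ppo-reduction, representation theory, or any deeper structure is required.
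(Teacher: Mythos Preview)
Your proof is correct and follows the same approach as the paper's: both argue by comparing the first coordinates and then, when those agree, examining the first position where the words differ (namely the right endpoint of the shorter arc), using the quotient/submodule condition to pin down the letter there. Your version simply spells out the case analysis in more detail than the paper's terse two-line argument, and you correctly flag the typo in Definition~\ref{def:quotient_arc}(3).
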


\begin{proof}
    (1) If $\ell(\gamma_1) \neq \ell(\gamma_2)$ then we are done. Otherwise, we have either that $\gamma_1 = \gamma_2$ or that $\gamma_1$ must pass under $r(\gamma_2)$. We conclude that $\gamma_1 \leq_{\wrd} \gamma_2$.

    (2) If $\gamma_1=\gamma_2$ then there is nothing to show. Otherwise we have that $\gamma_1$ must pass over $r(\gamma_2)$. We conclude that $\gamma_1 \leq_{\wrd} \gamma_2$.
\end{proof}

For the remainder of the theorem, our proofs will rely explicitly on concepts from representation theory. It remains an interested problem, however, to find a purely combinatorial proof that $\arclab$ is an EL-labeling. As such, we have stated each result in combinatorial terms whenever possible.

Recall that a \emph{torsion class} is a full subcategory $\mathcal{T} \subseteq \mods RA_n$ which is closed under taking extensions and quotients. Given an arbitrary subcategory $
\mathcal{C} \subseteq \mods RA_n$, we have that $\Filt(\Fac \mathcal{C})$ and $\lperp{\mathcal{C}}$ are both torsion classes, see e.g. \cite[Section~2]{thomas_intro}. An object $X \in \mathcal{T}$ is called \emph{split projective} in $\mathcal{T}$ every surjection $Y \twoheadrightarrow X$ with $Y \in \mathcal{T}$ is split. Since $RA_n$ is $\tau$-tilting finite, for every torsion class $\mathcal{T}$ contains a basic $\tau$-rigid module $M \in \mathcal{T}$ which is split projective in $\mathcal{T}$ and satisfies $\mathcal{T} = \mathrm{Gen}(M)$. By definition, the latter conditions says that every element of $\mathcal{T}$ can be written as a quotient of something in $\add(M)$. See \cite[Lemma~2.8]{IT} and \cite[Theorem~2.7]{AIR}.

\begin{proposition}\label{prop:existence_increasing}
    Let $\Q \leq_{\ppo} \P \in \ppo(n)$. Then the lexicographically smallest chain in $\ppo(n)$ which starts at $\P$ and ends at $\Q$ is increasing.
\end{proposition}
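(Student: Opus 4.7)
The plan is to induct on $k := \rk(\P) - \rk(\Q)$, the length of the interval $[\Q,\P]$ in $\ppo(n)$. The base case $k \leq 1$ is immediate. For the inductive step, the lex smallest chain has topmost label $\gamma_1$ equal to the smallest arc such that $\Q \leq_\ppo \J_\P(\gamma_1) =: \P_1$, and its remaining labels $(\gamma_2,\ldots,\gamma_k)$ constitute the lex smallest chain in $[\Q,\P_1]$, which is increasing by the inductive hypothesis. The remaining task is thus to show $\gamma_1 <_\wrd \gamma_2$.

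I would argue by contradiction: suppose $\gamma_2 <_\wrd \gamma_1$. By Lemma~\ref{lem:ppo_admissible}, $\gamma_2$ is also $\P$-ppo-admissible, so $\P'_1 := \J_\P(\gamma_2)$ is a cover of $\P$ with label $\gamma_2 < \gamma_1$. To contradict the minimality of $\gamma_1$ it would suffice to show $\Q \leq_\ppo \P'_1$. Setting $X := \sigma^{-1}(\gamma_1)$ and $Y := \sigma^{-1}(\gamma_2)$, Theorem~\ref{thm:wide} and Lemma~\ref{lem:tau_perp} translate this inclusion to $\eta(\Q) \subseteq \J_{\eta(\P)}(Y)$. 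Since $\eta(\Q) \subseteq \eta(\P_2) = \J_{\eta(\P_1)}(Y) \subseteq Y^\perp \cap \eta(\P)$ is free, the nontrivial point is $\eta(\Q) \subseteq \lperp{\tau_{\eta(\P)} Y}$, which by Corollary~\ref{cor:AS}(1) amounts to verifying $\Ext^1_{RA_n}(Y,Z') = 0$ for every brick $Z \in \eta(\Q)$ and every $Z' \in \eta(\P) \cap \Fac(Z)$ with $Z' \notin \eta(\P_1)$ (the case $Z' \in \eta(\P_1)$ being automatic). Via Propositions~\ref{prop:arcBricks} and~\ref{prop:hom_ext}, this becomes the combinatorial statement that $\sigma(Z')$ admits no nontrivial crossing directed to $\gamma_2$ and no contested endpoint with $\gamma_2$.

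The main obstacle is verifying this combinatorial statement, which requires a case analysis organized by the relative positions of the endpoints of $\gamma_1, \gamma_2$, and $\sigma(Z)$ together with the u/o/e-labels along them. The hypotheses available in each case are the $\P$-ppo-admissibility of $\gamma_1$ and $\gamma_2$, the $\P_1$-ppo-admissibility of $\gamma_2$, the $\Q$-ppo-admissibility of $\sigma(Z)$, the description of $\sigma(Z')$ as a quotient arc of $\sigma(Z)$ (Definition~\ref{def:quotient_arc}) lying in $\eta(\P) \setminus \eta(\P_1)$, and crucially the lex inequality $\gamma_2 <_\wrd \gamma_1$; Remark~\ref{rem:crossing} and Definition~\ref{def:clockwise} supply the tools for ruling out each bad configuration. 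In borderline cases where the direct Ext-vanishing fails, one instead extracts from $\sigma(Z')$ a replacement arc $\gamma' <_\wrd \gamma_1$ that is $\P$-ppo-admissible and satisfies $\Q \leq_\ppo \J_\P(\gamma')$, which again contradicts the minimality of $\gamma_1$. Organizing this case analysis cleanly --- for instance by leveraging the structural observations in Lemmas~\ref{lem:increasing_endpt_unique} and~\ref{lem:quotients_subs_increasing} concerning left endpoints and sub/quotient arcs --- is the principal technical challenge.
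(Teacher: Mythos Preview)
Your overall induction scheme matches the paper's, but the core of the inductive step is organized quite differently, and your version leaves the hardest part as an uncompleted case analysis.

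The paper does \emph{not} argue by contradiction or attempt to prove $\Q \leq_{\ppo} \J_\P(\gamma_2)$. Instead, it invokes \cite[Corollary~1.5]{BM_wide}: given $X = \sigma^{-1}(\gamma_2) \in \eta(\P_1)$, there exists a brick $Y \in \eta(\P)$ with a surjection $Y \twoheadrightarrow X$ such that $\sigma^{-1}(\gamma_1)\oplus Y$ is $\tau_{\eta(\P)}$-rigid and $\J_{\eta(\P_1)}(X) = \J_{\eta(\P)}(\sigma^{-1}(\gamma_1)\oplus Y)$. This immediately gives $\eta(\Q) \subseteq \J_{\eta(\P)}(Y)$, so $\Q \leq_{\ppo} \J_\P(\sigma(Y))$; minimality of $\gamma_1$ forces $\gamma_1 \leq_{\wrd} \sigma(Y)$, and Lemma~\ref{lem:quotients_subs_increasing}(1) (quotient arcs go up in $\leq_{\wrd}$) gives $\sigma(Y) \leq_{\wrd} \gamma_2$. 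That is the whole argument---two lines once the lift $Y$ is in hand.

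Your ``borderline'' fallback, where you replace $\gamma_2$ by some $\gamma' <_{\wrd} \gamma_1$ with $\Q \leq_{\ppo} \J_\P(\gamma')$, is in fact exactly the lift $\sigma(Y)$; you have identified the right object but treated it as an exception rather than the main mechanism. The paper's point is that the representation-theoretic result from \cite{BM_wide} hands you this $\gamma'$ for free, with $\gamma' \leq_{\wrd} \gamma_2$ guaranteed by the quotient relation, so no case analysis on arc configurations is ever needed. Your direct attempt to show $\eta(\Q) \subseteq \lperp{\tau_{\eta(\P)}\sigma^{-1}(\gamma_2)}$ may well fail in general (there is no reason $\J_{\eta(\P_1)}(X) \subseteq \J_{\eta(\P)}(X)$), which is presumably why you found yourself needing the fallback; promoting the fallback to the main argument and sourcing it from \cite{BM_wide} is what closes the gap.
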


\begin{proof}
    Let $\mathfrak{c}$ be the lexicographically smallest chain which starts at $\P$ and ends at $\Q$, and denote $\arclab(\mathfrak{c}) = (\gamma_k,\ldots \gamma_1)$. We prove the result by induction on $k$.

    For $k = 1$, there is nothing to show, so suppose that $k > 1$ and that the result holds for $k-1$. Denote $\P_1 = \J_{\P}(\gamma_1)$. By construction, we have that $\gamma_2$ is minimal (with respect to $\leq_{\wrd}$) amongst those arcs in $\arc_{\P_1}(n)$ for which $\Q \leq_{\ppo} \J_{\P_1}$. Equivalently, by Theorem~\ref{thm:wide} and Lemma~\ref{lem:tau_perp}, $X:= \sigma^{-1}(\gamma_2)$ is minimal (with respect to $\leq_{\wrd}$) amonst those bricks in $\eta(\P_1)$ for which $\eta(\Q) \subseteq \J_{\eta(\P_1)}(X)$. Now by \cite[Corollary~1.5]{BM_wide}, there exists a brick $Y \in \eta(\P)$ and a quotient map $Y \twoheadrightarrow X$ such that $\sigma^{-1}(\gamma_1) \oplus Y \in \trigW{\eta(\P)}(\eta(\P))$ and $\J_{\eta(\P_1)}(X) = \J_\P(\sigma^{-1}(\gamma_1) \oplus Y)$. By Proposition~\ref{prop:quotients} and Lemma~\ref{lem:quotients_subs_increasing}, the minimality of $\gamma_1$ thus implies that $\gamma_1 \lneq_{\wrd} \sigma(Y) \leq_{\wrd} \gamma_2$. Therefore $\mathfrak{c}$ must be increasing by the induction hypothesis.
\end{proof}

\begin{theorem}\label{thm:EL_labeling}
    $\arclab$ is an EL-labeling.
\end{theorem}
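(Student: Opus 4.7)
The plan is to prove the theorem by strong induction on the length $k := \rk(\eta(\P)) - \rk(\eta(\Q))$ of the interval $[\Q, \P] \subseteq \ppo(n)$. Proposition~\ref{prop:existence_increasing} already shows that the lex-smallest maximal chain in $[\Q, \P]$ is increasing, so the existence of both an increasing chain and a lex-smallest chain is automatic. The substance of the theorem therefore reduces to \emph{uniqueness}: any increasing maximal chain in $[\Q, \P]$ must coincide with the lex-smallest one. The base case $k \leq 1$ is trivial.

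For the inductive step, it suffices to show that the first arc $\gamma_1$ of any increasing maximal chain $\mathfrak{c}$ from $\P$ to $\Q$ equals $\gamma_1^{\min}$, defined as the lex-smallest arc in $\arc_\P(n)$ satisfying $\Q \leq_{\ppo} \J_\P(\gamma_1^{\min})$. Once this is known, $(\gamma_k, \ldots, \gamma_2)$ is an increasing maximal chain in the shorter interval $[\Q, \J_\P(\gamma_1^{\min})]$, so the induction hypothesis forces it to coincide with the lex-smallest chain there. The minimality defining $\gamma_1^{\min}$ gives $\gamma_1^{\min} \leq_{\wrd} \gamma_1$ automatically, so I will proceed by contradiction, assuming $\gamma_1^{\min} <_{\wrd} \gamma_1$.

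Translating to representation theory via Theorem~\ref{thm:wide} and Lemma~\ref{lem:tau_perp}, set $\W_0 := \eta(\P)$, let $X_1 := \sigma^{-1}(\gamma_1)$ and $X^{\min} := \sigma^{-1}(\gamma_1^{\min})$, put $M_1 := \beta_{\W_0}^{-1}(X_1)$, and denote $\W_1 := \J_{\W_0}(M_1) = \eta(\J_\P(\gamma_1))$. Then $\eta(\Q) \subseteq \W_1$ and $\eta(\Q) \subseteq \J_{\W_0}(\beta_{\W_0}^{-1}(X^{\min}))$. In the favorable case $X^{\min} \in \W_1$, a direct application of Corollary~\ref{cor:AS}(1) (using $\W_1 \cap \Fac Y \subseteq \W_0 \cap \Fac Y$ for $Y \in \eta(\Q)$) yields $\eta(\Q) \subseteq \J_{\W_1}(\beta_{\W_1}^{-1}(X^{\min}))$, equivalently $\Q \leq_{\ppo} \J_{\J_\P(\gamma_1)}(\gamma_1^{\min})$. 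The induction hypothesis applied to $[\Q, \J_\P(\gamma_1)]$ then forces the second label of $\mathfrak{c}$ to satisfy $\gamma_2 \leq_{\wrd} \gamma_1^{\min} <_{\wrd} \gamma_1$, contradicting the increasing property of $\mathfrak{c}$.

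The main obstacle is the complementary case $X^{\min} \notin \W_1$, where either $\Hom_{RA_n}(M_1, X^{\min}) \neq 0$ or $\Hom_{RA_n}(X^{\min}, \tau_{\W_0} M_1) \neq 0$. The plan is to unpack $M_1$ via the composition-series description of Proposition~\ref{prop:tau_An}, parse each of these non-vanishings through the arc-geometric dictionary of Proposition~\ref{prop:hom_ext} (nontrivial directed crossings, contested endpoints, or shared-endpoint clockwise data), and combine with the endpoint constraints imposed on $\mathfrak{c}$: Lemma~\ref{lem:increasing_endpt_unique} forces the left endpoints appearing in $\arclab(\mathfrak{c})$ to be distinct and in fact strictly increasing, while Lemma~\ref{lem:quotients_subs_increasing} controls how quotient and submodule arcs compare in $\leq_{\wrd}$. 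The goal of the case analysis is to show that each geometric configuration producing a non-vanishing morphism is incompatible with the hypothesis $\gamma_1^{\min} <_{\wrd} \gamma_1$, contradicting either the increasing property of $\mathfrak{c}$ or the lex-minimality of $\gamma_1^{\min}$. This subcase is the technical heart of the proof and is where the representation theory of $RA_n$ developed in Section~\ref{sec:preproj} is indispensable.
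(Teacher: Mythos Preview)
Your overall architecture is sound and matches the paper's: induce on $k$, invoke Proposition~\ref{prop:existence_increasing} for existence, and reduce uniqueness to showing that the first label $\gamma_1$ of any increasing chain equals $\gamma_1^{\min}$. Your Case~1 ($X^{\min} \in \W_1$) is correct and clean; the inclusion $\W_1 \cap \Fac Y \subseteq \W_0 \cap \Fac Y$ does give $\eta(\Q) \subseteq \J_{\W_1}(X^{\min})$ via Corollary~\ref{cor:AS}(1), and the induction hypothesis then forces $\gamma_2 \leq_{\wrd} \gamma_1^{\min} <_{\wrd} \gamma_1$.

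The gap is Case~2. What you have written there is a plan, not a proof, and the paper does \emph{not} follow that plan. Two specific problems. First, your reference to Proposition~\ref{prop:tau_An} is misplaced: over $RA_n$ the brick--$\tau$-rigid correspondence is the identity, so $M_1 = X_1 = \sigma^{-1}(\gamma_1)$ already and there is no composition series to ``unpack''. Second, the object $\tau_{\W_0} M_1$ need not be a brick in $\mods RA_n$, so Proposition~\ref{prop:hom_ext} does not apply directly to $\Hom(X^{\min}, \tau_{\W_0} M_1)$; an arc-geometric case analysis on this morphism space would require further work you have not done.

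The paper's argument for this step is structurally different. Rather than analyze $X^{\min}$ against $\W_1$ locally, it associates to the whole increasing chain $\mathfrak{c}' = (\rho_k,\ldots,\rho_1)$ the torsion class $\mathcal{T}_1 = \eta(\P) \cap \Filt\Fac(\sigma^{-1}(\rho_k),\ldots,\sigma^{-1}(\rho_1))$, and invokes \cite[Theorem~5.1]{MT} to produce a $\tau_{\eta(\P)}$-rigid module $M = \bigoplus M_i$ with $\mathcal{T}_1 = \mathrm{Gen}(M)$ and surjections $M_i \twoheadrightarrow \sigma^{-1}(\rho_i)$. The core computation is that $\sigma^{-1}(\rho_1)$ is \emph{split projective} in $\mathcal{T}_1$; this is where Lemmas~\ref{lem:increasing_endpt_unique} and~\ref{lem:quotients_subs_increasing} are actually used, together with the one-dimensionality of bricks at each vertex. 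Then \cite[Theorem~5.9]{BM_wide} upgrades this to: every $M_i$ is split projective in $\mathcal{T}_1$. Since the lex-smallest chain gives another ordering of the same set of split projectives (with $\gamma_1^{\min} = \sigma(M_j)$ for some $j$), comparing first terms and using the left-endpoint constraints yields the contradiction. The argument is global in the chain $\mathfrak{c}'$ and relies on the external inputs \cite{MT} and \cite{BM_wide}; your local case analysis on a single brick does not obviously substitute for this.
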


\begin{proof}
    Let $\Q \leq_{\ppo} \P$. Let $\mathfrak{c}$ be the lexicographically smallest chain from $\P$ to $\Q$, and denote $\arclab(\mathfrak{c}) = (\gamma_k,\ldots,\gamma_1)$. We know that $\mathfrak{c}$ is increasing by Proposition~\ref{prop:existence_increasing}, so it remains only to show that there does not exist another increasing chain. We prove the result by induction on $k$. For $k = 1$, there is nothing to prove. Thus suppose that $k > 1$ and the result holds for $k-1$.
    
    Let $\mathfrak{c}'$ be any increasing chain which starts at $\P$ and ends at $\Q$, and denote $\arclab(\mathfrak{c}') = (\rho_k,\ldots \rho_1)$. Denote $\P_0 = \P$, and for $1 \leq i \leq k$ denote $\P_i = \J_{\P_{i-1}}(\rho_i)$ and $\mathcal{T}_i = \eta(\P_{i-1}) \cap \Filt\Fac(\sigma^{-1}(\rho_k),\ldots,\sigma^{-1}(\rho_i)) \in \tors(\eta(\P_{i-1}))$.

    We claim that $\sigma^{-1}(\rho_i)$ is split projective in $\mathcal{T}_i$ for all $i$. It suffices to show this for $i = 1$ then use induction. By \cite[Theorem~5.1]{MT}, there exists a unique sequence $(M_k,\ldots,M_1)$ of bricks such that (i) $M:= \bigoplus_{i = 1}^k M_i \in \trigW{\eta(\P)}(\eta(\P))$, (ii) for all $1 \leq i \leq k$ there is a surjection $h_i: M_i \twoheadrightarrow \sigma^{-1}(\rho_i)$ such that $\ker(h_i) \in \mathrm{Gen}(\oplus_{i' < i} M_{i'})$ and $\Hom(\oplus_{i' < i} M_i,\sigma^{-1}(\rho_i)) = 0$, and (iii) $\mathcal{T}_1 = \eta(\P) \cap \mathrm{Gen}(M)$. Now let $v$ be the smallest vertex on which the torsion class $\mathcal{T}$ is supported. Since $\mathfrak{c}$ is increasing, Lemma~\ref{lem:increasing_endpt_unique} then implies that $\sigma^{-1}(\rho_1) = M_1$ must be supported on $v$. Now if $M_1$ were not split projective in $\mathcal{T}$, then there would exist some $i > 1$ and a morphism $f: M_i \rightarrow M_1$ which is nonzero on the vertex $v$. Thus $\ell(\sigma(M_i)) = \ell(\rho_1)$ and there exists a brick $X \in \mathcal{T}$ with $\ell(\rho(X)) = \ell(\rho_1)$ admitting a factorization $M_i \twoheadrightarrow X \hookrightarrow M_1$. By Lemma~\ref{lem:quotients_subs_increasing}, it follows that $\sigma(M_i) \lneq_{\wrd} \rho_1$. But at the same time, we know that there is a surjection $g: M_i \twoheadrightarrow \sigma^{-1}(\rho_i)$. There are then two possibilities.

    If the surjection is not supported at the vertex $v$, then there exists some $j < i$ and a morphism $f': M_j \rightarrow M_i$ which is supported at $v$. But then we can replace $f: M_i \rightarrow M_1$ with $f' \circ f: M_j \rightarrow M_1$. The composition will be nonzero since $\dim M_j(v) = \dim M_i(v) = \dim M_1(v) = 1$ and both maps are supported at this vertex. We can then repeat the previous argument. Then since the modules $M_i$ and $M_j$ are bricks, we cannot have maps $M_i \rightarrow M_j$ and $M_j \rightarrow M_i$ which are both supported at vertex $v$. Thus, by iterating this argument, we can assume that the surjection $g$ is supported at the vertex $v$.
    
    If the surjection $g$ is supported at the vertex $v$, then Proposition~\ref{prop:quotients} implies that $\sigma(M_i)$ passes under the node $r(\rho_i)$ and passes on the same side as $\rho_i$ for every node between $\ell(\rho_i)$ and $r(\rho_i)$. Similarly, we have that $\sigma(\rho_i)$ passes under the node $r(\sigma(X))$ and either $r(\rho_1) = r(\rho(X))$ or $\rho_1$ passes over the node $r(\rho(X))$. But this means one of (i) $\rho_i$ passes under the node $r(\sigma(X))$ (if $r(\sigma(X)) < r(\rho_i)$), (ii) $X = \sigma^{-1}(\rho_i)$ (if $r(\sigma(X)) = r(\rho_i)$), or (iii) $\rho_i$ passes under $r(\sigma(X))$ (if $r(\sigma(X)) > r(\rho_i)$). In any case, it follows that $\rho_i \leq_{\wrd} \rho_1$, a contradiction.

    We have shown that $M_1 = \sigma^{-1}(\rho_1)$ is split projective in $\mathcal{T}_1$, and thus that $\sigma^{-1}(\rho_i)$ is split projective in $\mathcal{T}_i$ for all $i$ by induction. It then follows from \cite[Theorem~5.9]{BM_wide} that $M_i$ is split projective in $\mathcal{T}_1$ for all $i$. In particular, $(M_k,\ldots,M_1)$ is an ordering of the indecomposable split projective modules in $\T_1$.

    In particular, the paragraphs above imply that there is an ordering $(M_{i_k},\ldots,M_{i_1})$ of the indecomposable split projective modules in $\T_1$ such that, for all $1 \leq j \leq k$, there is a quotient map $h'_j: M_{i_j} \twoheadrightarrow \sigma^{-1}(\rho_j)$ with $\ker(h'_j) \in \mathrm{Gen}(\oplus_{j' < j} M_{i_{j'}})$ and $\Hom(\oplus_{j' < j} M_{i_{j'}},\sigma^{-1}(\rho_j)) = 0$. By our original induction hypothesis, it remains only to show that $i_1 = 1$; i.e., that $\gamma_1 = \rho_1$.
    
    Suppose for a contradiction that $M_{i_1} = M_j$ with $j > 1$. In particular, this means $\gamma_1 = \sigma(M_j)$ is minimal amongst all $\P$-$\ppo$-admissible arcs which satisfy $\Q \leq_{\ppo} \J_\P(\gamma_1)$, and so $\sigma(M_j) \leq_{\wrd} \sigma(M_1)$. If $\ell(\sigma(M_j)) < \ell(\sigma(M_1))$, then there must exist some $j' > 1$ such that $\ell(\rho_{j'}) \leq \ell(\sigma(M_j)) < \ell(\sigma(M_1))$, which contradicts the assumption that $\mathfrak{c}'$ is increasing. Thus we have $\ell(\sigma(M_j)) = \ell(\sigma(M_1)) = v-1$. Moreover, there is a quotient map $M_{i_1} \twoheadrightarrow \sigma^{-1}(\rho_j)$ whose kernel lies in $\mathrm{Gen}(\oplus_{j' = 1}^{j-1} M_{j'})$. By the minimality of $\sigma(M_j) = \sigma(M_{i_1})$, Lemma~\ref{lem:quotients_subs_increasing} then implies that this kernel cannot be supported at the vertex $v$, and so we must have $\ell(\rho_j) = \ell(\sigma(M_1)) = \ell(\rho_1)$. This contradicts Lemma~\ref{lem:increasing_endpt_unique}.
\end{proof}

\section*{Index of Notation}

{\small
\begin{center}        \renewcommand{\arraystretch}{1.25}
    \begin{longtable}{L{0.28\textwidth}C{0.3\textwidth}R{0.28\textwidth}}
        \hline
        Notation & Explanation & Reference\\
        \hline
        \hline
        $[n]$ & \{0,1,\ldots,n\} & Section~\ref{sec:perm_preorder}\\
        \hline
       $\gamma = \left(\ell(\gamma),s_{\ell(\gamma)+1}\cdots s_{r(\gamma)}\right)$ & arc & Def.~\ref{def:wd}\\
       \hline
        $\textnormal{o, e, u}$ & ``over'', ``under'', ``end'' & Def.~\ref{def:wd}\\
        \hline
        $\arc_{cw}(n), \arc_{cw}(n,k)$ & clockwise-ordered arc diagrams & Def.~\ref{def:clockwise}\\
        \hline
       $\arc_{nc}(n)$ & noncrossing arc diagrams & Def.~\ref{def:noncrossing}\\
       \hline
       $\cov(\X)$ & cover relations & Def.~\ref{def:cover}\\
       \hline
       $\overline{S}$ & closed integer support & Def.~\ref{def:perm_preorder}\\
       \hline
        $\ppo(n), \leq_{\ppo}$ & permutation preorders & Def.~\ref{def:perm_preorder}, Def.~\ref{def:perm_preorder2}\\
        \hline
         $\delta, \mu, \mu'$ & bijections between $\mathfrak{S}_{n+1}, \arc_{nc}(n)$, $\ppo(n)$ & Prop.~\ref{prop:bijection_ppo}\\
        \hline
        $\Sigma(\gamma), \Psi(\P)$ & shard and shard intersection & Def.~\ref{def:shards}\\
        \hline
        $\mathfrak{S}_{n+1}, \leq_{\mathrm{sio}}$ & shard intersection order on permutations & Def.-Thm.~\ref{defthm:sio}\\
        \hline
        $v_i^S$ & $i$-th node in partition element $S$ & Sec.~\ref{sec:recursive}\\
        \hline
        $\chi_{\P}, \widetilde{T}$ & gluing of ppo & Prop.~\ref{prop:perm_preorder_recursive}\\
        \hline
        $\xi_{\P}$, $\widehat{T}$ & splitting of ppo & Lem.~\ref{lem:preorder2}\\
        \hline
        $\ppo_{\P}(n)$ & $\P$-ppo-admissible arcs & Def.~\ref{def:ppo_admissible}\\
        \hline
        $\{[n]\}$ & permutation preorder with one element & Ex.~\ref{ex:ppo_adm}\\
        \hline
        $\phi_\P$, $\psi_\P$ & arcs under splitting/gluing of permutation preorders& Prop~\ref{prop:ppo_admissible_recursive}\\
        \hline
        $\J_\P(\gamma)$, $U_\P(\gamma)$, $L_\P(\gamma)$ & ppo-reduction & Def.~\ref{def:ppo_reduction}\\
        \hline
        $\arclab$ & arc label & after Thm.~\ref{thm:edge-labeling}\\
       \hline
        $\satc(-), \stopc(-)$ & saturated (top) chain & Def.~\ref{sat_top}\\
       \hline
	$\shuff(-,-), \shuff(-; -,-)$ & shuffles & Def.~\ref{def:shuffle}\\
	\hline
	$\Phi_k^{\gamma}$ & unshuffling of ppo-adm. seq. & Cor.~\ref{cor:ppo_seq_shuffle}\\
        \hline
	$\Lambda, K$ & finite-dimensional algebra over field $K$ & Sec.~\ref{sec:background}\\
	\hline
	$\mods(-)$ & finitely-generated right modules & Sec.~\ref{sec:background}\\
	\hline
	$\rk(-)$ & \# summands or simples & Sec.~\ref{sec:background}, Def.~\ref{def:rank}\\
        \hline
	$\add(-), \Fac(-)$ & additive closure, quotients & Sec.~\ref{sec:background}\\
	\hline
        $\rperp{(-)}$, $\lperp{(-)}$ & Hom-perpendicular categories & Sec.~\ref{sec:background}\\
	\hline
	$\tau$, $\tau_\W$ & AR translation (in $\W$) & Sec.~\ref{sec:tau_exceptional}, Def.~\ref{def:tau_in_wide}\\
	\hline
	$\trig(\Lambda), \trigW{\W}(\Lambda)$ & $\tau$-rigid modules (in $\W$) & Sec.~\ref{sec:tau_exceptional}\\
	\hline
	$\itrig(\Lambda), \itrigW{\W}(\Lambda)$ & indec. $\tau$-rigid modules (in $\W$) & Sec.~\ref{sec:tau_exceptional}\\
	\hline
	$\wide(\Lambda)$, $\tors(\Lambda)$ & wide subcategories, torsion classes & Sec.~\ref{sec:tau_exceptional}\\
	\hline
	$\J(M), \J_\W(M)$ & $\tau$-perpendicular category & Def.~\ref{def:tau_perp}, Def.~\ref{def:tau_in_wide}\\
        \hline
        $\tex(\Lambda)$, $\tex(\Lambda,k)$ & $\tau$-exceptional sequences & Def.~\ref{def:tau_exceptional}\\
        \hline
        $\brick(\Lambda), \sbrick(\Lambda)$ & bricks and semibricks & Sec.~\ref{sec:bricks}, Def.~\ref{def:semibricks}\\
        \hline
        $\beta, \beta^{-1}_\W$ & brick-$\tau$-rigid correspondence & Thm.~\ref{thm:DIJ}, Rem.~\ref{rem:DIJ}\\
        \hline
        $\btex(\Lambda)$ & brick-$\tau$-exceptional sequences & Def.~\ref{def:tau_exceptional_brick}\\
        \hline
        $\Pi(A_n), RA_n$ & preprojective algebra and quotient by 2-cycles& Def.~\ref{def:preproj}\\
        \hline
        $\sigma$ & bijection $\{\text{bricks}\} \rightarrow \{\text{arcs}\}$ & Prop.~\ref{prop:arcBricks}\\
        \hline
        $\eta$ & bijection $\{\text{perm. preorders}\} \rightarrow \{\text{wide subcats.}\}$ & Thm.~\ref{thm:wide}\\
        \hline
         $\mathfrak{x}(X), \mathfrak{y}(X)$, $\mathfrak{y}_\mathcal{W}(X)$ & diagrams associated to brick & Sec.~\ref{sec:tau_exceptional_preproj}\\
         \hline
         $\brlab$ & brick-label & Sec.~\ref{sec:EL}\\
        \hline
        $\leq_{\wrd}$ & word-order & Sec.~\ref{sec:EL}\\
        \hline
    \end{longtable}
\end{center}\renewcommand{\arraystretch}{1.25}}


\bibliographystyle{amsalpha}
\bibliography{biblio.bib}

\end{document}